\title[A diagrammatic approach to Springer theory]{Two-block Springer fibers of types C and D: \\a diagrammatic approach to Springer theory}
\author{Catharina Stroppel}
\address{C.S.: Mathematical Institute\\
	University of Bonn\\
	Endenicher Allee 60\\
	53115 Bonn\\
	Germany}
\email{stroppel@math.uni-bonn.de}
\author{Arik Wilbert}
\address{A.W.: School of Mathematics and Statistics\\
	University of Melbourne\\
	Parkville VIC 3010\\
	Peter Hall\\
	Australia}
\email{arik.wilbert@unimelb.edu.au}
\subjclass[2010]{Primary 14M15, 17B08, 17B10; Secondary 05E10, 20C08, 20F36} 
\keywords{Springer fiber, action on cohomology, Springer theory, diagram algebras, flag varieties, Betti numbers}
\numberwithin{equation}{section}
\theoremstyle{definition}
\newtheorem{defi}{Definition}
\theoremstyle{remark}
\newtheorem{ex}[defi]{Example}
\newtheorem{rem}[defi]{Remark}
\theoremstyle{plain}
\newtheorem{lem}[defi]{Lemma}
\newtheorem{cor}[defi]{Corollary}
\newtheorem{prop}[defi]{Proposition}
\newtheorem{thm}[defi]{Theorem}
\newtheorem{introthm}{Theorem}
\newcommand{\SOfiber}{\mathcal S^{2m-k,k}_\mathrm{KL}}
\newcommand{\SOfiberequalsize}{\mathcal S^{m,m}_\mathrm{KL}}
\newcommand{\cupdiags}{\mathbb B^{2m-k,k}_\mathrm{KL}}
\newcommand{\CupConnect}{\text{\----}}
\newcommand{\DCupConnect}{\text{\----}\hspace{-9pt}\filledsquare\hspace{2pt}}
\newcommand{\ba}{{\bf a}}
\newcommand{\bb}{{\bf b}}
\newcommand{\bigslant}[2]{{\raisebox{.2em}{$#1$}\left/\raisebox{-.2em}{$#2$}\right.}}
\newcounter{sarrow}
\newcommand\xrsquigarrow[1]{%
\stepcounter{sarrow}%
\begin{tikzpicture}[decoration=snake]
\node (\thesarrow) {\strut#1};
\draw[->,decorate] (\thesarrow.south west) -- (\thesarrow.south east);
\end{tikzpicture}%
}
\DeclareMathOperator{\Hom}{Hom}
\DeclareMathOperator{\im}{im}
\begin{document}

\begin{abstract}
We explain an elementary topological construction of the Springer representation on the homology of (topological) Springer fibers of types $C$ and $D$ in the case of nilpotent endomorphisms with two Jordan blocks. The Weyl group and component group actions admit a diagrammatic description in terms of cup diagrams which appear in the definition of arc algebras of types $B$ and $D$. We determine the decomposition of the representations into irreducibles and relate our construction to classical Springer theory. As an application we obtain presentations of the cohomology rings of all two-block Springer fibers of types $C$ and $D$. Moreover, we deduce explicit isomorphisms between the Kazhdan-Lusztig cell modules attached to the induced trivial module and the irreducible Specht modules in types $C$ and $D$. 
\end{abstract}

\maketitle
\tableofcontents
\section{Introduction}\label{section:introduction}

\subsection{Background and summary}
Let $G$ be a complex, connected, reductive, algebraic group with Lie algebra $\mathfrak{g}$. Given a nilpotent element $x\in\mathfrak{g}$, the associated (algebraic) Springer fiber $\mathcal B_G^x$ is defined as the variety of all Borel subgroups of $G$ whose Lie algebra contains $x$. Let $A^x_G$ be the component group associated with $x$, i.e.\ the quotient of the centralizer of $x$ in $G$ by its identity component. In seminal work \cite{Spr76}, \cite{Spr78}, Springer constructed a grading-preserving action of the Weyl group $\mathcal W_G$ associated with $G$ and a commuting action of $A^x_G$ on the (complex) cohomology $H^*(\mathcal B_G^x)$ of the Springer fiber. According to the Springer correspondence \cite[Theorem 6.10]{Spr76} the isotypic subspaces of the $A^x_G$-module $H^\mathrm{top}(\mathcal B_G^x)$ are irreducible $\mathcal W_G$-modules and all irreducible $\mathcal W_G$-modules can be constructed in this way. This yields thus a geometric construction of the irreducible representations of $\mathcal W_G$. 

This article connects classical Springer theory for types $C$ and $D$ with the cup diagram combinatorics of arc algebras of types $B$ and $D$. The latter provide combinatorial descriptions of interesting categories arising in the study of the representation theory of orthogonal Lie algebras~\cite{ES15}, non-semisimple Brauer algebras~\cite{ES16}, \cite{CDVMI}, and orthosymplectic Lie superalgebras~\cite{ES16super}, \cite{CH}. In type $A$, such a description was established in work of Russell and Tymoczko, \cite{RT11}, \cite{Rus11}, for nilpotent endomorphisms with two Jordan blocks. In this article we generalize their construction to all classical types (i.e.\ $C$ and $D$) and reconstruct the Springer representation on the homology of so-called topological Springer fibers with the advantage that the construction only uses elementary algebraic topology. The Weyl group action as well as the component group action both admit hereby a combinatorial description accessible for explicit computations (which appears to be difficult using Springer's original theory). This does not only provide a new, combinatorial perspective on the Springer representation but also presentations of the cohomology rings for all two-block Springer fibers of types $C$ and $D$, and moreover explicit isomorphisms relating the cup diagram basis of the Kazhdan-Lusztig cell modules attached to the induced trivial module and the bipolytabloid basis of irreducible Specht modules in types $C$ and $D$.

The surprising fact that two-block Springer theory in types $C$ and $D$ admits a concrete description using the combinatorics of arc algebras of types $B$ and $D$ should be interpreted as a form of Langlands duality between perverse sheaves on isotropic Grassmannians and coherent sheaves on compactified resolutions of Slodowy slices supported on the Springer fiber, see Subsection~\ref{subsection:Langlands}. 

\subsubsection*{Assumption} Throughout this article let $(2m-k,k)$, $1\leq k\leq m$, be a partition of $2m$ labeling a nilpotent orbit in $\mathfrak{so}_{2m}$, i.e.\ $k$ is odd or $m=k$ by the classification of nilpotent orbits for $\mathfrak{so}_{2m}$, see e.g.\ \cite{Wil37}, \cite{Ger61}.

\subsection{Topological Springer fibers and cup diagrams} \label{subsec:intro:cup_and_top_springer}

We begin by recalling the notion of a cup diagram from~\cite{LS13}, \cite{ES15}, as well as the definition of the topological Springer fiber from~\cite{ES12}, \cite{Wil15}, based on the earlier work, \cite{Kho04}, \cite{Rus11}, \cite{SW12}, in type $A$. Cup diagrams are the key diagrammatic tool throughout this article.  

Fix a rectangle in the plane together with $m$ vertices evenly spread along the upper horizontal edge of the rectangle. A {\it cup diagram} is a non-intersecting arrangement of cups and rays inside the rectangle. A cup is defined as a line segment connecting two distinct vertices and a ray is a line segment connecting a vertex with a point on the lower horizontal edge of the rectangle. Every vertex has to be joined with exactly one endpoint of a cup or ray. Additionally, any cup or ray which is accessible from the left side of the rectangle, meaning that there exists a path inside the rectangle connecting this cup or ray to the left vertical edge of the rectangle without intersecting any other part of the diagram, is allowed to be marked with one single marker (a black box). Diagrams related by a planar isotopy fixing the boundary are considered to be the same. 

Let $C_{\mathrm{KL}}(m)$ be the set of all cup diagrams on $m$ vertices such that the number of marked rays plus the number of unmarked cups is even.\footnote{The subscript ``$\mathrm{KL}$'' comes from the fact that these cup diagrams can be used to give a diagrammatic description of the \emph{K}azhdan-\emph{L}usztig basis of a parabolic Hecke module of type $D$ with maximal parabolic of type $A$, \cite{LS13}.} Here are all cup diagrams contained in $C_{\mathrm{KL}}(4)$:
\[
\begin{array}{ccccccc}
\begin{tikzpicture}[baseline={(0,-.9)}]
\draw[dotted] (-.5,0) -- (2,0) -- (2,-.9) -- (-.5,-.9) -- cycle;
\begin{footnotesize}
\node at (0,.2) {1};
\node at (.5,.2) {2};
\node at (1,.2) {3};
\node at (1.5,.2) {4};
\end{footnotesize}

\draw[thick] (0,0) -- +(0,-.9);
\draw[thick] (.5,0) -- +(0,-.9);
\draw[thick] (1,0) -- +(0,-.9);
\draw[thick] (1.5,0) -- +(0,-.9);
\end{tikzpicture}
&,&
\begin{tikzpicture}[baseline={(0,-.9)}]
\draw[dotted] (-.5,0) -- (2,0) -- (2,-.9) -- (-.5,-.9) -- cycle;
\begin{footnotesize}
\node at (0,.2) {1};
\node at (.5,.2) {2};
\node at (1,.2) {3};
\node at (1.5,.2) {4};
\end{footnotesize}
\draw[thick] (1,0) .. controls +(0,-.5) and +(0,-.5) .. +(.5,0);

\draw[thick] (0,0) -- +(0,-.9);
\draw[thick] (.5,0) -- +(0,-.9);
\fill ([xshift=-2.5pt,yshift=-2.5pt]0,-.45) rectangle ++(5pt,5pt);
\end{tikzpicture}
&,&
\begin{tikzpicture}[baseline={(0,-.9)}]
\draw[dotted] (-.5,0) -- (2,0) -- (2,-.9) -- (-.5,-.9) -- cycle;
\begin{footnotesize}
\node at (0,.2) {1};
\node at (.5,.2) {2};
\node at (1,.2) {3};
\node at (1.5,.2) {4};
\end{footnotesize}
\draw[thick] (0,0) .. controls +(0,-.5) and +(0,-.5) .. +(.5,0);

\draw[thick] (1,0) -- +(0,-.9);
\draw[thick] (1.5,0) -- +(0,-.9);
\fill ([xshift=-2.5pt,yshift=-2.5pt]1,-.45) rectangle ++(5pt,5pt);
\end{tikzpicture}
&,&
\begin{tikzpicture}[baseline={(0,-.9)}]
\draw[dotted] (-.5,0) -- (2,0) -- (2,-.9) -- (-.5,-.9) -- cycle;
\begin{footnotesize}
\node at (0,.2) {1};
\node at (.5,.2) {2};
\node at (1,.2) {3};
\node at (1.5,.2) {4};
\end{footnotesize}
\draw[thick] (0,0) .. controls +(0,-.5) and +(0,-.5) .. +(.5,0);

\draw[thick] (1,0) -- +(0,-.9);
\draw[thick] (1.5,0) -- +(0,-.9);
\fill ([xshift=-2.5pt,yshift=-2.5pt].25,-.365) rectangle ++(5pt,5pt);
\end{tikzpicture}\hspace{.5em},
\end{array}
\]
\[
\begin{array}{ccccccc}
\begin{tikzpicture}[baseline={(0,-.9)}]
\draw[dotted] (-.5,0) -- (2,0) -- (2,-.9) -- (-.5,-.9) -- cycle;
\begin{footnotesize}
\node at (0,.2) {1};
\node at (.5,.2) {2};
\node at (1,.2) {3};
\node at (1.5,.2) {4};
\end{footnotesize}
\draw[thick] (.5,0) .. controls +(0,-.5) and +(0,-.5) .. +(.5,0);

\draw[thick] (0,0) -- +(0,-.9);
\draw[thick] (1.5,0) -- +(0,-.9);
\fill ([xshift=-2.5pt,yshift=-2.5pt]0,-.45) rectangle ++(5pt,5pt);
\end{tikzpicture}
&,&
\begin{tikzpicture}[baseline={(0,-.9)}]
\draw[dotted] (-.5,0) -- (2,0) -- (2,-.9) -- (-.5,-.9) -- cycle;
\begin{footnotesize}
\node at (0,.2) {1};
\node at (.5,.2) {2};
\node at (1,.2) {3};
\node at (1.5,.2) {4};
\end{footnotesize}
\draw[thick] (0,0) .. controls +(0,-.5) and +(0,-.5) .. +(.5,0);
\draw[thick] (1,0) .. controls +(0,-.5) and +(0,-.5) .. +(.5,0);

\end{tikzpicture}
&,&
\begin{tikzpicture}[baseline={(0,-.9)}]
\draw[dotted] (-.5,0) -- (2,0) -- (2,-.9) -- (-.5,-.9) -- cycle;
\begin{footnotesize}
\node at (0,.2) {1};
\node at (.5,.2) {2};
\node at (1,.2) {3};
\node at (1.5,.2) {4};
\end{footnotesize}
\draw[thick] (0,0) .. controls +(0,-.5) and +(0,-.5) .. +(.5,0);
\draw[thick] (1,0) .. controls +(0,-.5) and +(0,-.5) .. +(.5,0);

\fill ([xshift=-2.5pt,yshift=-2.5pt].25,-.365) rectangle ++(5pt,5pt);
\fill ([xshift=-2.5pt,yshift=-2.5pt]1.25,-.365) rectangle ++(5pt,5pt);
\end{tikzpicture}
&,&
\begin{tikzpicture}[baseline={(0,-.9)}]
\draw[dotted] (-.5,0) -- (2,0) -- (2,-.9) -- (-.5,-.9) -- cycle;
\begin{footnotesize}
\node at (0,.2) {1};
\node at (.5,.2) {2};
\node at (1,.2) {3};
\node at (1.5,.2) {4};
\end{footnotesize}
\draw[thick] (.5,0) .. controls +(0,-.5) and +(0,-.5) .. +(.5,0);
\draw[thick] (0,0) .. controls +(0,-1) and +(0,-1) .. +(1.5,0);

\end{tikzpicture}\hspace{.5em}.
\end{array}
\]
Usually we neither draw the rectangle nor display the numbering of the vertices.

Let $\cupdiags$ denote the subset of $C_\mathrm{KL}(m)$ consisting of all cup diagrams with exactly $\lfloor\frac{k}{2}\rfloor$ cups. Consider the standard unit sphere $\mathbb S^2\subseteq\mathbb R^3$ on which we fix the points $p=(0,0,1)$ and $q=(1,0,0)$. Given a cup diagram $\ba\in\cupdiags$, define $S_\ba$ as the subset of $\left(\mathbb S^2\right)^m$ consisting of all $(x_1,\ldots,x_m)\in\left(\mathbb S^2\right)^m$ which satisfy the coordinate relations 
\begin{itemize}
\item $x_i=-x_j$ (resp.\ $x_i=x_j$) if the vertices $i$ and $j$ are connected by an unmarked cup (resp.\ marked cup), and
\item $x_i=p$ if the vertex $i$ is connected to a marked ray and $x_i=-p$ (resp. $x_i=q$) if $i$ is connected to an unmarked ray which is the leftmost ray in $\ba$ (resp.\ not the leftmost ray). 
\end{itemize}
\begin{defi}
The {\it topological Springer fiber} $\SOfiber$ is then defined as the union
\[
\SOfiber:=\bigcup_{\ba\in\cupdiags}S_\ba \subseteq \left(\mathbb S^2\right)^m.
\]
\end{defi}
The importance of the topologial Springer fiber is based on the following crucial remark.

\begin{rem} \label{rem:wilbert}
There exists a homeomorphism $\SOfiber\cong\mathcal B^{2m-k,k}_{\mathrm{SO}_{2m}}$ under which the manifolds $S_\ba$ are mapped to the irreducible components of the Springer fiber, \cite[Theorem A]{Wil15}. Moreover, there exists an isomorphism of algebraic varieties $\mathcal B^{2m-k-1,k-1}_{\mathrm{Sp}_{2(m-1)}}\cong\mathcal B^{2m-k,k}_{\mathrm{SO}_{2m}}$ between two-row Springer fibers of type $C$ and type $D$, \cite[Theorem B]{Wil15}. Thus, we have also a homeomorphism $\SOfiber\cong\mathcal B^{2m-k-1,k-1}_{\mathrm{Sp}_{2(m-1)}}$ which allows us to use $\SOfiber$ as a topological model for both, types $C$ and $D$.
\end{rem}

\subsection{An elementary construction of the Weyl group and component group action}

In this subsection we sketch the ideas behind our construction of the Springer representation and state the explicit combinatorial description.

Recall that by classical Springer theory the centralizer $Z_G(x)$ acts on $\mathcal B^x_G$ (from the left) and thus on $H^*(\mathcal B_G^x)$. Since the identity component of $Z_G(x)$ acts trivially on $H^*(\mathcal B_G^x)$ we obtain an induced left action of the component group $A^x_G$ on $H^*(\mathcal B_G^x)$. Unlike the component group action, the $\mathcal W_G$-action on $H^*(\mathcal B_G^x)$ is {\it not} induced from a geometric action on the Springer fiber. Instead, the construction requires more sophisticated machinery such as perverse sheaves, Fourier transform, correspondences or monodromy, see e.g.\ \cite[\S 1.5]{Yun16} for a survey. As a consequence, the Springer action becomes notoriously difficult to understand explicitly. Guided by the abstract theory and inspired by \cite{RT11}, \cite{Rus11}, we present an elementary approach to define these actions based on the following result (cf.\ also Proposition~\ref{prop:basis_general_case}).

\begin{prop} \label{prop:injection}
The map $\gamma_{2m-k,k}\colon H_*(\SOfiber)\to H_*((\mathbb S^2)^m)$ induced by the natural inclusion $\SOfiber\subseteq (\mathbb S^2)^m$ in homology is injective.
\end{prop}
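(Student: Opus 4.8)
The plan is to deduce injectivity from an explicit \emph{triangularity} statement: I will exhibit a spanning set of $H_*(\SOfiber)$ consisting of fundamental classes of suitable submanifolds, and show that their images under $\gamma:=\gamma_{2m-k,k}$ pair with the standard monomial basis $\{X_T\}_{T\subseteq\{1,\dots,m\}}$ of $H^*((\mathbb S^2)^m)\cong\mathbb Z[X_1,\dots,X_m]/(X_1^2,\dots,X_m^2)$, $\deg X_i=2$, via a matrix that is triangular with entries $\pm 1$ on the diagonal for a suitable combinatorial order on cup diagrams. Injectivity of $\gamma$ is then immediate.

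First I would analyse a single component $S_\ba$, $\ba\in\cupdiags$, with cups $c_1,\dots,c_d$ ($d=\lfloor k/2\rfloor$), where $c_r$ joins vertices $i_r<j_r$. The coordinate relations identify $S_\ba$ with $(\mathbb S^2)^d$ so that the $r$-th factor is embedded into the $i_r$-th and $j_r$-th factors of $(\mathbb S^2)^m$ by the diagonal $v\mapsto(v,v)$ (if $c_r$ is marked) or the antidiagonal $v\mapsto(v,-v)$ (if $c_r$ is unmarked), while every ray vertex contributes a constant factor ($p$, $-p$, or $q$). Since the (anti)diagonal $\mathbb S^2\to(\mathbb S^2)^2$ sends the generator of $H_2$ to $[\mathbb S^2]\otimes[\mathrm{pt}]\pm[\mathrm{pt}]\otimes[\mathbb S^2]$ and a point inclusion is an isomorphism on $H_0$, the Künneth isomorphism $H_*((\mathbb S^2)^m)\cong H_*(\mathbb S^2)^{\otimes m}$ identifies $\gamma|_{H_*(S_\ba)}$ with a tensor product of split injections of free abelian groups, hence it is split injective. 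Associating to $\ba$ the \emph{leading set} $T(\ba):=\{j_1,\dots,j_d\}$ of right cup-endpoints, and using that no vertex is simultaneously a left endpoint of one cup and a right endpoint of another, one sees that there is a unique choice of cup-endpoints realizing $T(\ba)$, so $\langle X_{T(\ba)},\gamma([S_\ba])\rangle=\pm 1$. The same computation applies to the submanifold $S_{\ba,J}\subseteq S_\ba$ obtained by pinning every cup outside a subset $J\subseteq\{c_1,\dots,c_d\}$ to a fixed point: it is again a product of diagonally embedded spheres, and $\langle X_{T(\ba,J)},\gamma([S_{\ba,J}])\rangle=\pm1$ with $T(\ba,J)=\{j_r:c_r\in J\}$.

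Next I would show that $H_*(\SOfiber)$ is generated by the classes $[S_{\ba,J}]$. Since $\SOfiber=\bigcup_\ba S_\ba$ is a finite union of closed submanifolds and every multi-intersection $S_{\ba_1}\cap\dots\cap S_{\ba_r}$ is again defined by coordinate relations of the same type — so it is empty or a single product of diagonally embedded spheres — the standard Mayer--Vietoris induction over the components (as in \cite{Rus11}, \cite{Wil15}) applies: all spaces occurring have torsion-free homology concentrated in even degrees, so the connecting homomorphisms vanish and the Mayer--Vietoris sequences break into short exact sequences. Hence $H_*(\SOfiber)$ is free abelian and is spanned by fundamental classes of coordinate sub-products of the $S_\ba$, i.e.\ by the $[S_{\ba,J}]$ (compare Proposition~\ref{prop:basis_general_case}).

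The heart of the argument is then to select from the (very redundant) family $\{[S_{\ba,J}]\}$ a subfamily $\mathcal Z$ that still spans $H_*(\SOfiber)$, on which the assignment $[S_{\ba,J}]\mapsto T(\ba,J)$ is injective, and for which the matrix $\big(\langle X_{T(Z')},\gamma(Z)\rangle\big)_{Z,Z'\in\mathcal Z}$ is triangular with $\pm1$ on the diagonal with respect to a partial order refining the dominance/nesting order on the underlying cup diagrams. Given such a $\mathcal Z$, that matrix is invertible over $\mathbb Z$, so the classes $\gamma(Z)$, $Z\in\mathcal Z$, are linearly independent; since $\mathcal Z$ spans $H_*(\SOfiber)$ it is then automatically a basis, and $\gamma$ is injective. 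The single-component computation and the Mayer--Vietoris bookkeeping are routine; the genuine obstacle is this last step. One has to pin down the correct canonical indexing set $\mathcal Z$ — tracking the markers (black boxes) and the type $D$ parity condition so that in particular $\mathcal Z$ has the right cardinality — and verify that the leading-term matrix is really (uni)triangular, i.e.\ that the antidiagonal contributions $[\mathbb S^2]\otimes[\mathrm{pt}]\pm[\mathrm{pt}]\otimes[\mathbb S^2]$ arising from nested and marked cups never conspire to produce an unexpected off-triangular entry or a vanishing diagonal entry.
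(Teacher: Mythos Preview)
Your overall strategy---compute the images $\gamma([S_{\ba,J}])$ via K\"unneth, show they span, and then establish linear independence by a leading-term argument---is exactly the skeleton the paper follows. The paper's ``standard enriched cup diagrams'' are precisely a canonical choice of your $\mathcal Z$, and Proposition~\ref{prop:image_of_homology_gen} is your K\"unneth computation. So the approach is not new; what is missing is the last step, and there the difficulty is sharper than you indicate.

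The concrete obstruction is that your leading-set map $T(\ba,J)$ is \emph{marker-blind}: it records only the right endpoints of the active cups, not whether they carry a black box. In type $D$ the basis of $H_{2l}(\SOfiber)$ genuinely contains several cup diagrams with the same underlying unmarked shape but different markers (already for $m=4$, $l=2$: the two side-by-side cups, once unmarked and once both marked, both have $T=\{2,4\}$). Their images under $\gamma$ are $l_{\{1,3\}}-l_{\{1,4\}}-l_{\{2,3\}}+l_{\{2,4\}}$ and $l_{\{1,3\}}+l_{\{1,4\}}+l_{\{2,3\}}+l_{\{2,4\}}$: linearly independent, but with the \emph{same} support and the same coefficient on $l_{\{2,4\}}$. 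So no assignment $Z\mapsto T(Z)$ of the kind you describe can be injective on a spanning set, and no single total order on subsets of $\{1,\dots,m\}$ produces a triangular matrix with units on the diagonal. The issue is not ``off-triangular entries conspiring''; it is that the putative leading terms collide.

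The paper resolves this with a two-tier argument. First (proof of Proposition~\ref{prop:technical_prop_1}) it orders the \emph{unmarked} shapes by the maximal element $U_M\in\mathcal U_M$ and observes that $l_{U_M}$ occurs in $L_{M'}$ only if $M'$ has the same unmarked shape as $M$; this gives block-triangularity. Then within each block (fixed unmarked shape, all admissible markings) it proves linear independence separately in Lemma~\ref{lem:lin_indep}, by an induction on the number of markable components: one deletes a chosen cup, splits the resulting equation according to whether $i$ or $j$ was selected, and obtains $\lambda_l+\mu_l=0$ and $\lambda_l-\mu_l=0$ from the two pieces. This is not a triangularity statement at all---it is the step that genuinely handles the markers and the type $D$ parity, and it is exactly the content you flagged as ``the genuine obstacle'' but did not supply.
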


The idea is to explicitly define commuting actions of the groups $\mathcal W_G$ and $A_G^x$ on the space $(\mathbb S^2)^m$ which induce commuting actions on $H_*((\mathbb S^2)^m)$. By the injectivity of the map $\gamma_{2m-k,k}$ from Proposition~\ref{prop:injection}, we can identify $H_*(\SOfiber)$ with its image under $\gamma_{2m-k,k}$ in $H_*((\mathbb S^2)^m)$. This turns out to be a  subspace of $H_*((\mathbb S^2)^m)$ which is stable with respect to both actions.
%\[
%A_G^x \circlearrowright\left(\mathbb S^2\right)^m \circlearrowleft\mathcal W_G
%\hspace{1em}
%\begin{array}{c@{\;}c@{\;}c@{\;}}
%A_G^x \circlearrowright & H_*((\mathbb S^2)^m) &   \circlearrowleft\mathcal W_G\\
%& \vsubseteq &  \\
%A_G^x \circlearrowright & \im\left(\gamma_{2m-k,k}\right) & \circlearrowleft\mathcal W_G
%\end{array}
%\]

In Section~\ref{section:section1} we construct a distinguished homology basis of $H_*(\SOfiber)$ using a cell decomposition of the $S_\ba$ which naturally arises from the cup diagram combinatorics. This yields the following diagrammatic description of the vector space $H_*(\SOfiber)$ (cf.\ again Proposition~\ref{prop:basis_general_case}).

\begin{prop}\label{introprop:diagrammatic_homology_basis}
There exists a distinguished basis of $H_{2l}(\SOfiber)$ labeled by the cup diagrams in $C_{\mathrm{KL}}(m)$ with precisely $l$ cups, i.e.\ $H_{2l}(\SOfiber)$ is isomorphic to the vector space freely generated by all cup diagrams in $C_{\mathrm{KL}}(m)$ with $l$ cups. 
\end{prop}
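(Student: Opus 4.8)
The plan is to build the homology basis cell by cell, using the combinatorial structure of the manifolds $S_\ba$ to produce a CW structure on $\SOfiber$ whose cells are indexed by the desired cup diagrams. First I would analyze a single $S_\ba$ for $\ba \in \cupdiags$. The coordinate relations show that $S_\ba$ is homeomorphic to a product $(\mathbb{S}^2)^{c}$, where $c$ is the number of cups in $\ba$ (each cup contributes one free $\mathbb{S}^2$-coordinate, while rays are pinned to fixed points $p$, $-p$, or $q$). Fixing the standard CW structure on $\mathbb{S}^2$ with one $0$-cell at $q$ and one $2$-cell, this gives $S_\ba$ a product CW structure with cells indexed by subsets of the cup set of $\ba$: the cell of dimension $2l$ corresponds to choosing $l$ of the $c$ cups to carry the $2$-cell factor and the rest the $0$-cell. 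The key observation is that such a choice of sub-collection of cups, together with the remaining data of $\ba$, can be encoded as a cup diagram in $C_{\mathrm{KL}}(m)$ with exactly $l$ cups: the $l$ chosen cups stay as cups, and the unchosen cups together with the rays of $\ba$ get "resolved" into rays, with markers tracked so that the resulting diagram lies in $C_{\mathrm{KL}}(m)$ (the parity condition on marked rays plus unmarked cups being preserved under the resolution is the point where one must be careful with the marker bookkeeping).

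Next I would check that these cell structures on the various $S_\ba$ are compatible along intersections, so that they glue to a genuine CW structure on $\SOfiber = \bigcup_\ba S_\ba$. Concretely, one needs that $S_\ba \cap S_{\bb}$ is a union of (closures of) cells common to both decompositions; this follows because an intersection is cut out by imposing additional coordinate identifications coming from superimposing the two diagrams, and the resulting subspace is again a sub-product of spheres and fixed points, compatible with the product CW structure. Once the CW structure on $\SOfiber$ is in place, its $2l$-cells are in bijection with cup diagrams in $C_{\mathrm{KL}}(m)$ having exactly $l$ cups (each such diagram arises from exactly one pair: a component $S_\ba$ and a choice of sub-collection of its cups realizing that diagram — or one argues the bijection directly by associating to each cup diagram its natural closed cell in $\SOfiber$). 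There are no odd-dimensional cells.

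The final step is to deduce that the cellular chain complex has zero differential, so that $H_{2l}(\SOfiber)$ is free on the $2l$-cells. Since every cell has even dimension, the cellular boundary maps vanish identically for dimension reasons, and hence $H_{2l}(\SOfiber) \cong \bigoplus \mathbb{Z}$ (or the relevant coefficient field) indexed by the $2l$-cells, i.e.\ by cup diagrams in $C_{\mathrm{KL}}(m)$ with $l$ cups. The classes of the closed cells $\overline{e}$, being cycles, then give the asserted distinguished basis. I expect the main obstacle to be the bookkeeping in the previous paragraph: verifying that the combinatorial resolution of unchosen cups into rays produces precisely the diagrams in $C_{\mathrm{KL}}(m)$ (respecting the parity condition and the marker-accessibility constraints), and that distinct cells of distinct components never collapse to the same subset of $\SOfiber$ in a way that breaks the bijection. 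Establishing that this "forget a sub-collection of cups" operation is a well-defined bijection between the cells of the glued complex and $\bigsqcup_l \{\text{cup diagrams in } C_{\mathrm{KL}}(m) \text{ with } l \text{ cups}\}$ is the combinatorial heart of the argument; the topological input (product CW structures, gluing, vanishing differential) is then routine.
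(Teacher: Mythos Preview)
Your overall plan has a genuine gap at the gluing step. The product CW structures on the individual $S_\ba$'s are \emph{not} compatible along intersections, so they do not assemble to a CW structure on $\SOfiber$. Concretely, take $m=k=4$ and the two cup diagrams $\ba$ with unmarked cups $(1,2),(3,4)$ and $\bb$ with unmarked cups $(2,3),(1,4)$. Then
\[
S_\ba=\{(x,-x,y,-y)\},\qquad S_\bb=\{(u,v,-v,-u)\},\qquad S_\ba\cap S_\bb=\{(x,-x,x,-x)\}.
\]
The intersection is the diagonal copy of $\mathbb{S}^2$ inside $S_\ba\cong\mathbb{S}^2\times\mathbb{S}^2$, and a diagonal is \emph{not} a subcomplex of the product CW structure (it meets the open top cell in a punctured sphere, not in a union of product cells). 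So your sentence ``the resulting subspace is again a sub-product of spheres and fixed points, compatible with the product CW structure'' is false in general, and the argument cannot proceed from there. This also breaks the proposed bijection: the cell you would assign to the single cup $(3,4)$ (namely $\{(p,-p,y,-y):y\neq p\}$ inside $S_\ba$) and the cell you would assign to the single cup $(2,3)$ inside $S_\bb$ have overlapping closures that are not identified in any controlled way, so ``distinct cells of distinct components never collapse to the same subset'' fails.

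The paper circumvents this by \emph{not} building a global CW structure with cells indexed by cup diagrams. Instead it works with two separate ingredients. First, it keeps the naive product cell decompositions on each $S_\ba$ only to produce classes in $H_*(S_\ba)$, then pushes a carefully chosen subset of them (the ``standard'' ones, one for each cup diagram in $C_\mathrm{KL}^{\leq\lfloor k/2\rfloor}(m)$) forward to $H_*((\mathbb{S}^2)^m)$ and proves by an explicit linear-algebra argument that their images are linearly independent; this forces the corresponding classes in $H_*(\SOfiber)$ to be independent. Second, the matching dimension count comes from a \emph{different} cell partition of $\SOfiber$ (constructed in the appendix) which is specifically engineered to be compatible with intersections: its pieces are cut out by diagonal and anti-diagonal conditions $y_q=y_{q'}$, $y_q\neq y_{q'}$ coming from a graph $\Gamma_\ba$, not by the product cells. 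Those cells are not naturally labeled by smaller cup diagrams at all. If you want to salvage a direct cell-counting proof, you would need to replace the product CW structure by something of this diagonal type and then set up a separate bijection between the resulting cells and $C_\mathrm{KL}^{\leq\lfloor k/2\rfloor}(m)$; that is essentially what the paper does, and it is substantially more work than the routine gluing you sketched.
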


Via this diagrammatic basis, the action of the Weyl group and of the component group can be described combinatorially.

\subsubsection*{The Weyl group action}

The Weyl group $\mathcal W_{D_m}=\mathcal W_{\mathrm{SO}_{2m}}$ is a Coxeter group with Coxeter generators $s_0,s_1,\ldots,s_{m-1}$, see Section~\ref{section:section2}. There is a right action of $\mathcal W_{D_m}$ on $(\mathbb S^2)^m$, where $s_i$ permutes the coordinates $i$ and $i+1$ and $s_0$ permutes the first two coordinates and additionally takes their antipodes. Following the construction outlined above we obtain the first main result. 

\begin{introthm} \label{thm:skein_calculus}
Setting $\ba.s:=\gamma_{2m-k,k}^{-1}(\gamma_{2m-k,k}(\ba).s)$, where $s\in\mathcal W_{D_m}$ and $\ba$ is a cup diagram (viewed as a basis element of homology via Proposition~\ref{introprop:diagrammatic_homology_basis}), yields a well-defined, grading-preserving right action of the Weyl group $\mathcal W_{D_m}$ on $H_*(\SOfiber)$ which can be described using a skein calculus. Diagrammatically, we represent the generators as follows: 
\[
s_0:=\begin{tikzpicture}[thick, scale=.8, baseline={(0,-.5)}]
\draw[thick] (0,0) -- +(.5,-1);
\draw[thick] (.5,0) -- +(-.5,-1);
\draw[thick] (1,0) -- +(0,-1);
\draw[thick] (1.5,0) -- +(0,-1);
\draw[thick] (2.5,0) -- +(0,-1);

\node at (2,-.5) {\dots};

\fill ([xshift=-2.5pt,yshift=-2.5pt].13,-.25) rectangle ++(5pt,5pt);
\fill ([xshift=-2.5pt,yshift=-2.5pt].13,-.75) rectangle ++(5pt,5pt);

\begin{footnotesize}
\node at (0,.2) {1};
\node at (.5,.2) {2};
\end{footnotesize}
\end{tikzpicture}
\hspace{5em}
s_i:=\begin{tikzpicture}[thick, scale=.8, baseline={(0,-.5)}]
\draw[thick] (0,0) -- +(0,-1);
\draw[thick] (.5,0) -- +(0,-1);
\draw[thick] (1.5,0) -- +(.5,-1);
\draw[thick] (2,0) -- +(-.5,-1);
\draw[thick] (3,0) -- +(0,-1);

\node at (1,-.5) {\dots};
\node at (2.5,-.5) {\dots};

\begin{footnotesize}
\node at (1.4,.22) {i};
\node at (2,.2) {i+1};
\end{footnotesize}
\end{tikzpicture}
\hspace{2em}
i\in\{1,\ldots,m-1\}.
\] 
For a cup diagram $\ba$ and $s\in\mathcal W_{D_m}$ with a reduced expression $s=s_{i_1}s_{i_2}\cdots s_{i_l}$  we obtain $\ba.s$ by first stacking the pictures corresponding to the generators $s_{i_1},\ldots,s_{i_l}$ on top of $\ba$ and then resolving crossings according to the rule
\[
\begin{tikzpicture}[thick, scale=.8]
\draw[dotted] (.25,-.5) circle(16pt);
\draw[dotted] (2.25,-.5) circle(16pt);
\draw[dotted] (4.25,-.5) circle(16pt);
\draw[thick] (1.9,-.1) .. controls +(0,-.4) and +(0,-.4) .. +(.7,0);
\draw[thick] (1.9,-.9) .. controls +(0,.4) and +(0,.4) .. +(.7,0);

\draw[thick] (0,0) -- +(.5,-1);
\draw[thick] (.5,0) -- +(-.5,-1);
\draw[thick] (3.9,-.1) .. controls +(.15,0) and +(.15,0) .. +(0,-.8);
\draw[thick] (4.6,-.1) .. controls +(-.15,0) and +(-.15,0) .. +(0,-.8);

\node at (1.2,-.5) {$=$};
\node at (3.2,-.5) {$+$};
\end{tikzpicture}
\]
using the following local relations
\[
\begin{tikzpicture}[thick, scale=.8]
\draw[dotted] (-.25,-.5) circle(16pt);
\draw[dotted] (2.7,-.5) circle(16pt);
\draw[thick] (-.25,-.5) circle(8pt);

\node at (1.3,-.5) {$= (-2)\cdot$};
\end{tikzpicture}
\hspace{3em}
\begin{tikzpicture}[thick, scale=.8]
\draw[dotted] (-.25,-.5) circle(16pt);
\draw[dotted] (1.9,-.5) circle(16pt);

\draw[thick] (-.75,-.5) -- +(1,0);
\draw[thick] (1.4,-.5) -- +(1,0);
\fill ([xshift=-2.5pt,yshift=-2.5pt]-.05,-.5) rectangle ++(5pt,5pt);
\fill ([xshift=-2.5pt,yshift=-2.5pt]-.45,-.5) rectangle ++(5pt,5pt);

\node at (.8,-.5) {$=$};
\end{tikzpicture}
\hspace{3em}
\begin{tikzpicture}[thick, scale=.8]
\draw[dotted] (-.25,-.5) circle(16pt);
\draw[thick] (-.25,-.5) circle(8pt);

\fill ([xshift=-2.5pt,yshift=-2.5pt]-.25,-.225) rectangle ++(5pt,5pt);

\node at (.8,-.5) {$=\,0$};
\end{tikzpicture}
\]
together with the rule that we kill all diagrams with a connected component where both endpoints are at the bottom of the diagram. We obtain $\ba.s$, which is a linear combination of cup diagrams. 
\end{introthm}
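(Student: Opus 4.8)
The plan is to establish Theorem~\ref{thm:skein_calculus} by reducing the well-definedness of the action to purely local, generator-level statements, and then verifying the skein calculus describes exactly the map $\ba\mapsto\ba.s_i$ induced by $\gamma_{2m-k,k}$. First I would set up the homology computation for a single coordinate factor: $H_*(\mathbb S^2)$ is free of rank one in degrees $0$ and $2$, so by the K\"unneth theorem $H_*((\mathbb S^2)^m)$ has a basis indexed by subsets $I\subseteq\{1,\dots,m\}$, where $I$ records the factors contributing the fundamental class $[\mathbb S^2]$ and the complementary factors contribute the point class. The key translation step is to compute, for a cup diagram $\ba\in\cupdiags$, the image $\gamma_{2m-k,k}(\ba)\in H_*((\mathbb S^2)^m)$ explicitly in this basis: each unmarked cup on vertices $i<j$ forces $x_i=-x_j$, so the corresponding factor of $S_\ba$ is a diagonally embedded $\mathbb S^2$ whose fundamental class maps to $[\mathbb S^2]_i - [\mathbb S^2]_j$ (the sign coming from the antipodal identification; a marked cup gives $[\mathbb S^2]_i + [\mathbb S^2]_j$), while rays contribute point classes. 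Thus $\gamma_{2m-k,k}(\ba)$ is a product over the arcs of $\ba$ of such two-term (or one-term) expressions, expanded into the subset basis. This makes the skein rules transparent: the loop value $(-2)$ is exactly $(-1)-(+1)$ from $[\mathbb S^2]-(-[\mathbb S^2])$ when two antipodal identifications compose into the antipodal map on a single sphere, the doubly-marked segment equals the unmarked one because two antipodes cancel, the marked circle vanishes because a sphere identified antipodally with itself contributes nothing new, and killing diagrams with a component having both ends at the bottom corresponds to the fact that a ray forced to a point has no fundamental class.

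Next I would verify the formula for the generators directly. For $s_i$ ($1\le i\le m-1$), the map on $(\mathbb S^2)^m$ swaps coordinates $i,i+1$, which on the subset basis is the transposition $(i\,i+1)$ acting on indexing subsets; I would check that applying this to the expanded form of $\gamma_{2m-k,k}(\ba)$ and re-expressing the result in the cup-diagram basis is precisely the operation of stacking the crossing picture on $\ba$ and resolving. Concretely, stacking a crossing and resolving via the displayed rule $(\text{crossing}) = (\text{cap–cup}) + (\text{identity})$ reproduces the linear algebra identity for how a transposition acts on a difference $[\mathbb S^2]_i - [\mathbb S^2]_{i+1}$ versus on the other terms; one then reduces stacked diagrams using the three local relations and the bottom-component-killing rule, and the claim is that the result always lands back in the span of $\cupdiags$ — which it must, since $\gamma_{2m-k,k}$ is injective with image stable under the coordinate action (the image is $H_*(\SOfiber)$, and $\SOfiber$ is preserved set-theoretically by permuting equal-size blocks, or more carefully, its homology image is the span of the $\gamma(\ba)$, which the reindexing clearly permutes up to the relations). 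For $s_0$ the same analysis applies with the extra antipodal twist on the first two coordinates, which is why the $s_0$ picture carries two markers.

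The genuinely substantive point — and the step I expect to be the main obstacle — is well-definedness of $\ba.s$ independent of the choice of reduced expression for $s$. Abstractly this is automatic because we are transporting an honest group action on $H_*((\mathbb S^2)^m)$ through the injection $\gamma_{2m-k,k}$ (Proposition~\ref{prop:injection}), so the only real content is that the diagrammatic stack-and-resolve procedure computes that transported action correctly, i.e.\ that after all local reductions the diagrammatic output for $s_{i_1}\cdots s_{i_l}$ equals $\gamma^{-1}$ of the composite coordinate map applied to $\gamma(\ba)$. I would prove this by induction on $l$: having established it for each single generator (the heart of the matter, handled as above), composition of reduced-word steps corresponds to composition of the corresponding coordinate maps, and since each intermediate result is a genuine element of $H_*(\SOfiber)$ re-expanded in the subset basis, the local relations suffice to bring it back to cup-diagram form at every stage. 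The subtlety to watch is that stacking generators naively may produce diagrams that are not yet cup diagrams (nested or ill-oriented arcs, stray circles, bottom-to-bottom components), and one must check the three local relations together with the killing rule are a confluent rewriting system reducing any such diagram to a $\Bbbk$-linear combination of honest elements of $\cupdiags$ — equivalently, that these relations generate exactly the kernel-and-image constraints imposed by $\gamma_{2m-k,k}$. Finally, grading-preservation is immediate since each generator acts by a degree-zero map on $H_*((\mathbb S^2)^m)$ (it permutes the subset basis), and resolving a crossing does not change the number of cups modulo the relations, so $H_{2l}$ is preserved.
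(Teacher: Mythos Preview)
Your approach is essentially the paper's: compute $\gamma_{2m-k,k}(\ba)$ explicitly in the K\"unneth (line-diagram) basis as a signed sum over choices of cup-endpoints, act by a generator, and check the result lies in the image and equals the skein output. The paper carries this out via an intermediate step you omit: it first proves (Proposition~\ref{proposition:well_defined_action}) that the transported action is given by two explicit tables, via a four-case analysis depending on whether vertices $i,i{+}1$ are two rays, one cup, a cup and a ray, or endpoints of two different cups; only afterwards does it match those tables to the skein calculus. Your proposal to go directly from the product formula for $\gamma(\ba)$ to the skein rules is the same computation reorganised, and would work, but you should expect roughly the same case split to appear.

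Two places to tighten. First, your parenthetical that $\SOfiber$ is preserved set-theoretically by the $\mathcal W_{D_m}$-action is false in general (a ray coordinate fixed to $-p$ or $q$ need not stay in any $S_{\bb}$ after swapping), so stability of $\im\gamma_{2m-k,k}$ is genuinely the content of the generator computation, not something you can assume; your ``or more carefully'' is the right instinct. Second, your heuristic explanations of the local relations (loop ${=}-2$, marked circle ${=}0$) are imprecise as stated; these values emerge from the signed-sum formula for $L_\ba$ under the action \eqref{actionbeforeProp29}, not from intrinsic properties of antipodal spheres, and will be justified only once the case analysis is done. The confluence worry you flag is a non-issue: since the action is \emph{defined} via $\gamma^{-1}$, independence of reduced expression is automatic, and the skein relations need only be checked to reproduce the single-generator action --- no rewriting-system argument is required.
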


By Remark~\ref{rem:wilbert} we have $\SOfiber\cong\mathcal B^{2m-k-1,k-1}_{\mathrm{Sp}_{2(m-1)}}$. According to Springer theory we therefore expect an interesting action of the Weyl group $\mathcal W_{C_{m-1}}\cong\mathcal W_{\mathrm{Sp}_{2(m-1)}}$ of type $C$ on $H_*(\SOfiber)$. The Weyl group $\mathcal W_{C_{m-1}}$ can be identified with the subgroup of $\mathcal W_{D_m}$ generated by $s_0s_1$ and $s_i$ for $i\in\{2,\ldots,m-1\}$. In particular, we obtain an action of the Weyl group of type $C$ on $H_*(\SOfiber)$ by restricting the action of $\mathcal W_{D_m}$. This action turns out to be the Springer action for type $C$. Note that Theorem~\ref{thm:skein_calculus} automatically yields an explicit description of this restricted action.

\subsubsection*{The component group action} 
In the case of classical groups, Spaltenstein showed in \cite[I.2.9]{Spa82} that the component group $A^x_G$ is isomorphic to a finite product of copies of $\mathbb Z/2\mathbb Z$. Moreover, $A^x_G$ depends (up to isomorphism) only on the Jordan type $\lambda$ of $x$ which allows us to use the notation $A^{\lambda}_G$. In the two-block case we have isomorphisms (see also Section~\ref{sec:group_actions} for more details),
\begin{eqnarray}
\label{compgroupsexplicit}
A^{2m-k-1,k-1}_{\mathrm{Sp}_{2(m-1)}} \cong\begin{cases}
\{e\} &\text{if }m=k\text{ is even}, \\
\mathbb Z/2\mathbb Z &\text{if }m=k\text{ is odd}, \\
(\mathbb Z/2\mathbb Z)^2 &\text{if }m\neq k,
\end{cases}
%\hspace{1.6em}
&
\text{and}
&
%\hspace{1.6em}
A^{2m-k,k}_{\mathrm{SO}_{2m}} \cong\begin{cases}
\{e\} &\text{if }m=k, \\
\mathbb Z/2\mathbb Z &\text{if }m\neq k.
\end{cases}
\end{eqnarray}

In type $D$ the action of the component group turns out to be trivial (even though the group itself is not trivial if $m\neq k$). Thus, we only consider the type $C$ case and construct an action of $A^x_{\mathrm{Sp}_{2(m-1)}}$ on $(\mathbb S^2)^m$ by setting $\alpha.(x_1,x_2,\ldots,x_m)=(-x_1,x_2,\ldots,x_m)$, where $\alpha\in A^x_{\mathrm{Sp}_{2(m-1)}}$ is a generator corresponding to a copy of $\mathbb Z/2\mathbb Z$ in $A^x_{\mathrm{Sp}_{2(m-1)}}$. 

\begin{introthm}
\label{thmB}
\begin{enumerate}
\item Setting $\alpha.\ba:=\gamma_{2m-k,k}^{-1}\left(\alpha.\gamma_{2m-k,k}(\ba)\right)$, where $\alpha\in A^{2m-k-1,k-1}_{\mathrm{Sp}_{2(m-1)}}$ and $\ba$ is a cup diagram, yields a well-defined grading-preserving left action of the component group on $H_*(\SOfiber)$. 
\item A generator $\alpha\in A^{2m-k-1,k-1}_{\mathrm{Sp}_{2(m-1)}}$ corresponding to a copy of $\mathbb Z/2\mathbb Z$ in $A^{2m-k-1,k-1}_{\mathrm{Sp}_{2(m-1)}}$ acts as the identity on $\ba$ if $\ba$ has a ray connected to the first vertex. Otherwise it creates (resp.\ kills) a marker on the cup connected to the first vertex if it is unmarked (resp.\ marked) and at the same time creates (resp.\ kills) a marker on the leftmost ray.
\end{enumerate}
\end{introthm}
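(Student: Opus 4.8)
\textbf{Proof proposal for Theorem~\ref{thmB}.}

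The plan is to reduce everything to an explicit computation inside $H_*((\mathbb S^2)^m)$ using the injection $\gamma_{2m-k,k}$ from Proposition~\ref{prop:injection} and the diagrammatic basis from Proposition~\ref{introprop:diagrammatic_homology_basis}. First I would establish part~(1) formally: the map $\alpha.(x_1,\ldots,x_m)=(-x_1,x_2,\ldots,x_m)$ is a homeomorphism of $(\mathbb S^2)^m$ of order two, so it induces a linear involution on $H_*((\mathbb S^2)^m)$; since the group $A^{2m-k-1,k-1}_{\mathrm{Sp}_{2(m-1)}}$ is a product of copies of $\mathbb Z/2\mathbb Z$ and in the non-trivial cases is generated by such antipodal-on-the-first-coordinate maps (and the trivial component $\{e\}$ when $m=k$ is even), this gives a genuine group action on $(\mathbb S^2)^m$. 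The key point for well-definedness on $H_*(\SOfiber)$ is that $\alpha$ preserves the subspace $\gamma_{2m-k,k}(H_*(\SOfiber))$; this follows once I show (via the explicit formula in part~(2)) that $\alpha$ sends each basis cup diagram $\ba\in\cupdiags$, viewed in homology, to $\pm$(another element of the same set), so that $\gamma_{2m-k,k}^{-1}(\alpha.\gamma_{2m-k,k}(\ba))$ makes sense. Grading-preservation is automatic because $\alpha$ is a homeomorphism and hence preserves $H_{2l}$. So part~(1) is essentially a corollary of part~(2).

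For part~(2), I would recall from Section~\ref{section:section1} the precise description of the homology class $[\ba]\in H_*((\mathbb S^2)^m)$ attached to a cup diagram $\ba$: it is the image in homology of the fundamental class of the submanifold $S_\ba\subseteq(\mathbb S^2)^m$, which is a product of spheres indexed by the cups, sitting inside $(\mathbb S^2)^m$ via the coordinate relations in the definition (antipodal identification for unmarked cups, equality for marked cups, and the fixed points $p$, $-p$, $q$ for the various rays). I would then analyze the three cases for the first vertex separately. If vertex $1$ carries a ray, then in $S_\ba$ the first coordinate is pinned to one of $p$, $-p$, $q$; the map $\alpha$ negates it. If the first coordinate was $q=(1,0,0)$, then $\alpha$ sends it to $-q=(-1,0,0)$, which is \emph{not} one of the allowed pinned values — here I need the geometric input that moving this point is homologous to not moving it, i.e.\ that the homology class of $S_\ba$ only depends on the homotopy class of the pinning point within $\mathbb S^2$, so negating $q$ changes nothing in $H_*$; similarly for $\pm p$. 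Hence $\alpha$ acts as the identity on $[\ba]$ in this case. The slightly delicate subcase is the unmarked leftmost ray, pinned to $-p$: negating gives $p$, which \emph{is} the value used for a marked ray, so one must be careful that the bookkeeping (parity condition: number of marked rays plus unmarked cups is even) is respected — but this is handled jointly with the cup at vertex $1$ below.

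If vertex $1$ is connected to a cup, say to vertex $j$, then in $S_\ba$ either $x_1=-x_j$ (unmarked cup) or $x_1=x_j$ (marked cup); applying $\alpha$ replaces the relation $x_1=\mp x_j$ by $x_1=\pm x_j$, i.e.\ it exactly toggles the marker on that cup. Simultaneously, the leftmost ray (pinned to $-p$ if unmarked, $p$ if marked) — note a leftmost \emph{ray} must exist whenever vertex $1$ sits on a cup, since $\cupdiags$ has $\lfloor k/2\rfloor<m$ cups so not all vertices are cup endpoints, and accessibility from the left forces a ray to be leftmost — has its pinning point negated by the commuting global sign, which toggles its marker too. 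Thus $\alpha.[\ba]=\pm[\ba']$ where $\ba'$ is obtained from $\ba$ by toggling the marker on the first cup and on the leftmost ray; I would check that this simultaneous toggle preserves the parity condition defining $C_{\mathrm{KL}}(m)$ (toggling one cup-marker and one ray-marker changes ``marked rays $+$ unmarked cups'' by an even amount) so $\ba'\in\cupdiags$, and I would track the sign to confirm it is $+1$ with the orientation conventions fixed in Section~\ref{section:section1}. The main obstacle I anticipate is precisely this last bookkeeping: reconciling the geometric antipodal map with the combinatorial marker rules \emph{including} the leftmost-ray correction and the accessibility condition, and making sure the homology class $[\ba']$ computed geometrically agrees on the nose (signs and all) with the basis element indexed by $\ba'$; the homotopy-invariance argument for the ray case (that negating a pinning point does not change the homology class) is the one geometric lemma I would need to state and prove carefully, but it is elementary since $\mathbb S^2$ is connected. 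Once these are in place, part~(2) gives the explicit formula, and part~(1) follows by the discussion above together with Proposition~\ref{prop:injection}.
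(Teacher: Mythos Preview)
Your geometric picture contains a concrete error in the cup case. The map $\alpha$ acts by $(x_1,x_2,\ldots,x_m)\mapsto(-x_1,x_2,\ldots,x_m)$: it negates \emph{only} the first coordinate. When vertex $1$ lies on a cup and the leftmost ray sits at some vertex $j>1$, the coordinate $x_j$ is untouched by $\alpha$; there is no ``commuting global sign'' that flips it. Thus geometrically $\alpha(S_\ba)=S_{\bb}$, where $\bb$ is obtained from $\ba$ by toggling the marker on the cup at vertex $1$ \emph{and nothing else}. This $\bb$ violates the parity condition defining $C_{\mathrm{KL}}(m)$, so $S_\bb$ is not one of the pieces of $\SOfiber$ and is not indexed by a basis element. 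Your argument, as written, does not explain why the homology class of $S_\bb$ nonetheless coincides with that of $S_{\ba^-}$ (the diagram with \emph{both} the cup marker and the leftmost ray marker toggled).

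The paper sidesteps this by never tracking the submanifolds $S_\ba$ at all. It uses the explicit description of $\gamma_{2m-k,k}$ from Proposition~\ref{prop:explicit_isom}: the image of $\ba$ is the line diagram sum $L_\ba=\sum_{U\in\mathcal U_\ba}(-1)^{\Lambda_\ba(U)}l_U$, and on line diagrams the induced action is simply $\alpha.l_U=(-1)^{[1\in U]}l_U$. If vertex $1$ carries a ray then $1\notin U$ for every $U\in\mathcal U_\ba$, so $\alpha.L_\ba=L_\ba$ immediately. If vertex $1$ lies on a cup to vertex $i$, one splits $\mathcal U_\ba=\mathcal U_{\ba,1}\sqcup\mathcal U_{\ba,i}$ according to which endpoint lies in $U$; acting by $\alpha$ flips the sign on the $\mathcal U_{\ba,1}$ part only, and a one-line check of $\Lambda$ shows this is exactly $L_{\ba^-}$. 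The point you were missing becomes transparent here: $L_\ba$ is completely insensitive to ray markers (rays contribute nothing to $\mathcal U_\ba$ or $\Lambda_\ba$), so the leftmost-ray toggle in $\ba^-$ is pure bookkeeping to restore the parity condition, not a geometric effect of $\alpha$. Your homotopy-invariance lemma for pinned points is precisely the geometric shadow of this insensitivity, but you applied it only in the ray-at-vertex-$1$ case and then invented a false mechanism for the cup case; had you invoked it again for the leftmost ray in the cup case, your approach would go through.
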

Here is an example, where the leftmost ray is connected to vertex $j$. Note that if $A^{2m-k-1,k-1}_{\mathrm{Sp}_{2(m-1)}}$ is not trivial there always exists a (leftmost) ray.  
\[
\begin{tikzpicture}[baseline={(0,-.45)}]
\begin{footnotesize}
\node at (0,.2) {$\mathrm{1}$};
\node at (1,.2) {$\mathrm{i}$};
\node at (2,.2) {$\mathrm{j}$};
\node at (.5,0) {\ldots};
\node at (1.5,0) {\ldots};
\node at (2.5,0) {\ldots};
\end{footnotesize}
\draw[thick] (0,0) .. controls +(0,-.75) and +(0,-.75) .. +(1,0);
\draw[thick] (2,0) -- +(0,-.9);
\fill ([xshift=-2.5pt,yshift=-2.5pt].5,-.55) rectangle ++(5pt,5pt);
\end{tikzpicture}
\hspace{1.4em}
\xrsquigarrow{$\mathrm{act}\hspace{.2em}\mathrm{by}\hspace{.25em}\alpha$}
\hspace{1.6em}
\begin{tikzpicture}[baseline={(0,-.45)}]
\begin{footnotesize}
\node at (0,.2) {$\mathrm{1}$};
\node at (1,.2) {$\mathrm{i}$};
\node at (2,.2) {$\mathrm{j}$};
\node at (.5,0) {\ldots};
\node at (1.5,0) {\ldots};
\node at (2.5,0) {\ldots};
\end{footnotesize}
\draw[thick] (0,0) .. controls +(0,-.75) and +(0,-.75) .. +(1,0);
\draw[thick] (2,0) -- +(0,-.9);
\fill ([xshift=-2.5pt,yshift=-2.5pt]2,-.45) rectangle ++(5pt,5pt);
\end{tikzpicture}.
\]

\subsection{Connection to Springer theory} To compare our results to classical Springer theory we determine the decomposition of our representations into irreducibles in each homological degree. 

\subsubsection*{Case 1: Two Jordan blocks of equal size:} Consider the generic Hecke algebra $\mathcal H(\mathcal W_{D_m})$ attached to $\mathcal W_{D_m}$ with its standard basis $H_w$, $w\in\mathcal W_{D_m}$ as a $\mathcal L=\mathbb C[q,q^{-1}]$-module. Inside $\mathcal H(\mathcal W_{D_m})$ we have the subalgebra $\mathcal H(S_m)$ attached to the maximal parabolic subgroup in $\mathcal W_{D_m}$ generated by $s_1,\ldots,s_{m-1}$. Letting $H_{s_i}$, $i\in\{1,\ldots,m-1\}$ act as $q^{-1}$ turns $\mathcal L$ into an $H(S_m)$-module. The parabolic Hecke module $\mathcal M_{D_m}$ is obtained by tensoring $\mathcal L$ over $\mathcal H(S_m)$ with $\mathcal H(\mathcal W_{D_m})$. By specializing $q=1$, $\mathcal M_{D_m}$ is isomorphic to the induced trivial module.

In \cite[Theorem 5.17]{LS13} the authors identified $\mathcal M_{D_m}$ with the free $\mathcal L$-module whose basis is given by all cup diagrams in $C_\mathrm{KL}(m)$. Via this identification a cup diagram corresponds to a Kazhdan-Lusztig basis vector and the right action of $\mathcal H(\mathcal W_{D_m})$ on $\mathcal M_{D_m}$ admits a diagrammatic description similar to the one of Theorem~\ref{thm:skein_calculus} (cf.\ Propositions~\ref{prop:diagrammatic_Hecke_actionI}). In Section~\ref{sec:diagrammatic_Hecke_module} we generalize this construction by providing a similar diagrammatic description of $\mathcal M_{\mathrm{Sp}_{2(m-1)}}$ (cf.\ Proposition~\ref{prop:diagrammatic_Hecke_actionII}). The diagrammatic description of the parabolic Hecke modules is used to prove the following theorem. 

\begin{introthm} \label{thm:induced_module}
\begin{enumerate}
\item The right $\mathcal W_{D_m}$-module $H^*(\SOfiberequalsize)$ constructed in Theorem~\ref{thm:skein_calculus} is isomorphic to the induced trivial module $\mathbb C\otimes_{\mathbb C[S_m]}\mathbb C[\mathcal W_{D_m}]$. 
\item Similarly, we have an isomorphism $H_*(\SOfiberequalsize)\cong\mathbb C\otimes_{\mathbb C[S_{m-1}]}\mathbb C[\mathcal W_{C_{m-1}}]$ of $\mathcal W_{C_{m-1}}$-modules, where the right $\mathcal W_{C_{m-1}}$-action on $H_*(\SOfiberequalsize)$ is obtained by restricting the $\mathcal W_{D_m}$-action.  
\end{enumerate}
\end{introthm}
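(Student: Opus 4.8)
The plan is to exploit the diagrammatic descriptions on both sides. Recall from Proposition~\ref{introprop:diagrammatic_homology_basis} that $H_*(\SOfiberequalsize)$ has a basis indexed by \emph{all} cup diagrams in $C_\mathrm{KL}(m)$ (since when $m=k$ the constraint on the number of cups disappears: $\cupdiags$ exhausts $C_\mathrm{KL}(m)$), and from \cite[Theorem 5.17]{LS13} that the parabolic Hecke module $\mathcal M_{D_m}$ has the same index set, with the $\mathcal H(\mathcal W_{D_m})$-action given diagrammatically as in Proposition~\ref{prop:diagrammatic_Hecke_actionI}. So for (1) I would first set up the obvious linear bijection sending a cup diagram $\ba$ (as a homology class) to the corresponding Kazhdan--Lusztig basis vector of $\mathcal M_{D_m}$, and then simply check that the two diagrammatic actions of the generators $s_0,s_1,\ldots,s_{m-1}$ agree. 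Both are described by stacking the same generator pictures on top of $\ba$ and resolving crossings; I would compare the skein relations in Theorem~\ref{thm:skein_calculus} (circle $=(-2)\cdot$ empty, doubled marker $=$ no marker, marked circle $=0$, and killing diagrams with a component resting entirely on the bottom) with the Hecke specialization $q=1$ of the relations in Proposition~\ref{prop:diagrammatic_Hecke_actionI}. Granting that these match generator-by-generator, the bijection is an isomorphism of $\mathcal W_{D_m}$-modules; combined with the fact (stated in the text above) that $\mathcal M_{D_m}$ specializes at $q=1$ to $\mathbb C\otimes_{\mathbb C[S_m]}\mathbb C[\mathcal W_{D_m}]$, this gives (1). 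One should also note the action is grading-preserving and matches the length filtration, but that is automatic from the degree statement in Theorem~\ref{thm:skein_calculus}.

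For (2) the strategy is parallel but uses the type $C$ objects. By Section~\ref{sec:diagrammatic_Hecke_module} (Proposition~\ref{prop:diagrammatic_Hecke_actionII}) the parabolic Hecke module $\mathcal M_{\mathrm{Sp}_{2(m-1)}}$ also has a cup-diagram basis indexed by $C_\mathrm{KL}(m)$ with a diagrammatic $\mathcal H(\mathcal W_{C_{m-1}})$-action, and it specializes at $q=1$ to $\mathbb C\otimes_{\mathbb C[S_{m-1}]}\mathbb C[\mathcal W_{C_{m-1}}]$. Since $\mathcal W_{C_{m-1}}$ is identified with the subgroup of $\mathcal W_{D_m}$ generated by $s_0s_1$ and $s_2,\ldots,s_{m-1}$, and the type $C$ Hecke action on cup diagrams is (by the construction in Section~\ref{sec:diagrammatic_Hecke_module}) nothing but the restriction of the type $D$ action to that subalgebra, the same cup-diagram bijection that proved (1) is simultaneously an isomorphism $H_*(\SOfiberequalsize)\cong \mathcal M_{\mathrm{Sp}_{2(m-1)}}|_{q=1}$ of $\mathcal W_{C_{m-1}}$-modules — using that, by construction in the Weyl-group section, the $\mathcal W_{C_{m-1}}$-action on homology is literally the restriction of the $\mathcal W_{D_m}$-action from Theorem~\ref{thm:skein_calculus}. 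Thus (2) follows from (1) essentially by restriction, together with the type $C$ analogue of the $q=1$ specialization statement.

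The main obstacle I anticipate is the verification in part (1) that the topologically-defined skein action of Theorem~\ref{thm:skein_calculus} coincides \emph{on the nose} with the $q=1$ specialization of the algebraically-defined Hecke action of \cite{LS13}/Proposition~\ref{prop:diagrammatic_Hecke_actionI}. The generator pictures are set up to look the same, but one must be careful with (a) normalization of the markers and the scalar $-2$ arising from a geometric $S^2$-computation versus the $q\to 1$ limit of a Hecke coefficient, (b) the "kill if a component rests on the bottom" rule versus the corresponding annihilation in the parabolic module coming from the trivial-module induction, and (c) making sure the identification is compatible with the leftmost-ray conventions and the parity condition defining $C_\mathrm{KL}(m)$. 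Concretely I would reduce everything to checking the action of each $s_i$ and of $s_0$ on a single cup or ray near vertices $i,i+1$ (resp.\ $1,2$), which is a short finite list of local cases; once those local moves match, associativity/braid relations on both sides do the rest. A secondary, more bookkeeping-type issue for (2) is to confirm that the chosen generators $s_0s_1, s_2,\dots,s_{m-1}$ of $\mathcal W_{C_{m-1}}$ act via the expected type $C$ generator pictures, i.e.\ that stacking the $s_0$-picture and then the $s_1$-picture and resolving reproduces precisely the type $C$ diagrammatic generator of Proposition~\ref{prop:diagrammatic_Hecke_actionII}; this is again a direct local skein computation.
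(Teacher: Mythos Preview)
Your approach for part~(1) has a genuine gap: the obvious bijection sending a cup diagram in $H_*(\SOfiberequalsize)$ to the same cup diagram viewed as a Kazhdan--Lusztig basis vector of $\mathcal M_{D_m}|_{q=1}$ is \emph{not} $\mathcal W_{D_m}$-equivariant. The discrepancy is exactly in the rule you flag as obstacle~(b), and it does not resolve in your favor. In Theorem~\ref{thm:skein_calculus} \emph{every} connected component with both endpoints on the bottom kills the diagram, whereas in Proposition~\ref{prop:diagrammatic_Hecke_actionI} (relations~\eqref{secondrow}) an unmarked such component kills the diagram but a \emph{marked} one is simply removed. Concretely, let $\ba\in C_\mathrm{KL}(m)$ be the diagram consisting of $m$ unmarked rays. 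Table~\ref{tab:s_0^D_action} gives $\ba.s_0^D=\ba$ in homology, while in the Hecke module at $q=1$ one has $\ba.H_{s_0^D}=\ba.C_{s_0^D}+\ba$, and $\ba.C_{s_0^D}$ is the nonzero cup diagram obtained from $\ba$ by replacing the two rays at vertices $1,2$ with a marked cup. Thus the skein action is honestly grading-preserving, but the Hecke action is only filtration-preserving (Lemma~\ref{lem:filtration}); no identification of cup-diagram bases can intertwine them.

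The paper therefore compares the modules only through the \emph{associated graded} of the cup-number filtration. Using $H_{s_i^D}=C_{s_i^D}+q^{-1}H_e$ one checks that the subquotient $\mathcal M^\textrm{comb}_{m,l}/\mathcal M^\textrm{comb}_{m,l+1}$ at $q=1$ is isomorphic to $H_{2l}(\SOfiberequalsize)$: the extra terms produced by relation~\eqref{secondrow} have strictly more cups and hence vanish in the quotient. Since the induced trivial module is multiplicity-free (equations~\eqref{eq:decomposition_into_irreps_odd} and~\eqref{eq:decomposition_into_irreps_even}), the filtration is already a composition series, so $H_*(\SOfiberequalsize)$ and $\mathcal M_{D_m}|_{q=1}$ have the same irreducible constituents and are isomorphic by semisimplicity over~$\mathbb C$. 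Your reduction of part~(2) to part~(1) by restriction is fine once~(1) is established in this way.
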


\begin{rem}
In contrast to the related work \cite{RT11} in type $A$ we work with the entire homology and identify it as an induced representation. This approach has the following advantages: 
\begin{enumerate}
\item Since the Springer representation $H^*(\SOfiberequalsize)$ (in Lusztig's version, \cite{Lus81}) is isomorphic to the induced trivial module, see \cite[Theorem 1.3]{Lus04}, Theorem~\ref{thm:induced_module} immediately implies that Theorem~\ref{thm:skein_calculus} indeed reconstructs the Springer representation (similarly for type $C$). 
\item Since the decomposition of the induced trivial module into irreducibles is known and multiplicity free, see e.g. \cite[Lemma~5.19]{ES12}, we can easily match each of the representations $H_{2l}(\SOfiberequalsize)$, $l\in\{0,\ldots,\lfloor\frac{m}{2}\rfloor\}$, with exactly one irreducible of the induced trivial module in type $D$, see Subsection~\ref{subsection:proof_induced_module} for details. Since the $\mathcal W_{C_{m-1}}$-action is the restriction of the $\mathcal W_{D_m}$-action (for which we have determined the irreducibles) we obtain the desired decomposition by applying results of \cite{Tok84}.
\end{enumerate}
\end{rem}

\subsubsection*{Case 2: Two Jordan blocks of different size:} For the case of two not necessarily equally-sized Jordan blocks, we can identify $H_*(\SOfiber)\cong H_{<k}(\SOfiberequalsize)$ as graded $\mathcal W_{D_m}$-module (and thus also as $\mathcal W_{C_{m-1}}$-module). In fact, this is evident from the diagrammatic description in Theorem~\ref{thm:skein_calculus} and reduces the general two-block case to {\it Case 1} above.

The Springer correspondence was described for the classical groups in work of Shoji, \cite{Sho83}, \cite{Sho79}. Given an irreducible $\mathcal W_G$-module $V$, Shoji gives an algorithm which determines the Jordan type of a nilpotent element $x\in\mathfrak{g}$ as well as the irreducible character $\phi$ of $A^x_G$ for which $H^{\mathrm{top}}_{\phi}(\mathcal B^x_G)\cong V$ in the Springer correspondence, see \cite[Theorem 3.3]{Sho79}. Here, $H^{\mathrm{top}}_{\phi}(\mathcal B^x_G)$ denotes the $\phi$-isotypic subspace in $H^{\mathrm{top}}(\mathcal B^x_G)$. In Section~\ref{subsec:Shoji} we recall this algorithm and use it to check that our construction yields the same results as classical Springer theory. 

\subsection{Applications of our construction}

We provide two applications of our construction.

\subsubsection*{Cohomology rings of two-block Springer fibers in types C and D}

By \cite[Theorem 1.1]{HS77}, the image of the canonical map $H^*(\mathcal B_G)\to H^*(\mathcal B_G^x)$ induced by the inclusion of the Springer fiber into the flag variety $\mathcal B_G$ associated with $G$ is given by the $A_G^x$-invariants in $H_*(\mathcal B_G^x)$. In particular, the canonical map is surjective if and only if the $A_G^x$-action is trivial. In type $A$ the component groups are themselves trivial, \cite[Lemma~3.6.3]{CG97}, and the canonical map can be used to provide presentations of $H^*(\mathcal B_{\mathrm{GL}_n}^x)$ as a quotient of $H^*(\mathcal B_{\mathrm{GL}_n})$, see \cite{DCP81}, \cite{Tan82}.

Outside of type $A$ the cohomology rings of the Springer fibers are much harder to study. In fact, even the Euler characteristic of Springer fibers was only recently computed in \cite{Kim16} (an independent proof for all two-block Springer fibers using completely different methods is included in the Appendix to this article). In \cite{KP12} the cohomology ring of Springer fibers was described as an affine coordinate ring for all cases in which the canonical map is surjective and the nilpotent operator is of standard Levi type. In \cite[Theorem B]{ES12} this description was used to obtain a presentation of the ring $H^*(\mathcal B^x_{\mathrm{SO}_{2m}})$, where $x\in\mathfrak{so}_{2m}$ has Jordan type $(m,m)$. In the general two-block case the methods of~\cite{KP12} are not applicable because the nilpotent operator is not of standard Levi type. In type $C$ even the surjectivity of the canonical map fails. However, by dualizing Proposition~\ref{prop:injection}, our topological approach enables us to realize the cohomology ring of $\SOfiber$ as a quotient of $H^*((\mathbb S^2)^m)$ which yields the following result:

\begin{introthm} \label{introthm:cohomology}
If $m\neq k$, then we have an isomorphism of graded algebras
\[
H^*\left(\SOfiber\right)\cong\bigslant{\mathbb C[X_1,\ldots,X_m]}{\left\langle X_i^2, X_I\;
  \begin{array}{|c}
  1 \leq i \leq m,\\
  I\subseteq  \{1,\ldots,m\}, |I|=\frac{k-1}{2}+1
  \end{array}
  \right\rangle},
\]
where $X_I=\prod_{i\in I}X_i$. In particular, in combination with \cite[Theorem B]{ES12} and Remark~\ref{rem:wilbert} this yields a description of the cohomology rings of all two-block Springer fibers of types $C$ and $D$.
\end{introthm}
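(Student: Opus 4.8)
The plan is to dualize Proposition~\ref{prop:injection}. I work throughout with complex coefficients. The inclusion $\iota\colon\SOfiber\hookrightarrow(\mathbb S^2)^m$ induces on cohomology a graded ring homomorphism $\iota^*$; since over a field cohomology is naturally the linear dual of homology, $\iota^*$ is the transpose of $\iota_*=\gamma_{2m-k,k}$, which is injective by Proposition~\ref{prop:injection}, so $\iota^*\colon H^*((\mathbb S^2)^m)\twoheadrightarrow H^*(\SOfiber)$ is surjective. By the Künneth theorem and $H^*(\mathbb S^2)\cong\mathbb C[X]/(X^2)$ with $X$ in degree $2$, we may identify $H^*((\mathbb S^2)^m)\cong\mathbb C[X_1,\dots,X_m]/\langle X_i^2\mid 1\le i\le m\rangle$, with $X_i$ the pullback of the fundamental class of the $i$-th sphere factor. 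Thus $H^*(\SOfiber)$ is a graded quotient ring of $\mathbb C[X_1,\dots,X_m]/\langle X_i^2\rangle$, and the remaining task is to identify $\ker\iota^*$.

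Next I would check that the monomials $X_I$ with $I\subseteq\{1,\dots,m\}$ and $|I|=\frac{k-1}{2}+1$ lie in $\ker\iota^*$. (Since $m\ne k$ forces $k$ odd, $\lfloor k/2\rfloor=\frac{k-1}{2}$ and $\frac{k-1}{2}+1$ is an integer.) Each $S_\ba$ with $\ba\in\cupdiags$ is, by its coordinate description, diffeomorphic to $(\mathbb S^2)^{\lfloor k/2\rfloor}$, hence a closed manifold of real dimension $2\lfloor k/2\rfloor=k-1$; therefore $\SOfiber$ has no homology above degree $k-1$ and $H^{k+1}(\SOfiber)=0$. As $X_I$ has degree $2\left(\frac{k-1}{2}+1\right)=k+1$, this gives $\iota^*(X_I)=0$, and together with $\iota^*(X_i^2)=0$ we obtain a surjective graded ring homomorphism
\[
\psi\colon\ \bigslant{\mathbb C[X_1,\dots,X_m]}{\langle\,X_i^2,\ X_I\ \mid\ 1\le i\le m,\ |I|=\tfrac{k-1}{2}+1\,\rangle}\ \twoheadrightarrow\ H^*(\SOfiber).
\]

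It then remains to see that $\psi$ is injective; being a graded surjection, it is enough to compare dimensions. The source is spanned by the squarefree monomials $X_I$ with $|I|\le\frac{k-1}{2}$, so its degree-$2l$ component has dimension $\binom ml$ for $0\le l\le\frac{k-1}{2}$ and is $0$ otherwise. By Proposition~\ref{introprop:diagrammatic_homology_basis} (valid in the range $0\le l\le\lfloor k/2\rfloor$ in which $H_*(\SOfiber)$ is supported) and the universal coefficient theorem, $\dim H^{2l}(\SOfiber)$ equals the number of cup diagrams in $C_{\mathrm{KL}}(m)$ with exactly $l$ cups. As $k<m$, this count is only ever needed for $l\le\frac{k-1}{2}<\frac m2$, and in that range it equals $\binom ml$ — either by a direct bijection with the $l$-element subsets of $\{1,\dots,m\}$, or via the identification $H_*(\SOfiber)\cong H_{<k}(\SOfiberequalsize)$ of the excerpt together with Theorem~\ref{thm:induced_module} and the classical multiplicity-free decomposition of the induced trivial module, whose graded pieces below the middle degree have dimension $\binom ml$. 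Hence source and target agree in every degree, $\psi$ is a graded ring isomorphism, and combining this with \cite[Theorem~B]{ES12} and Remark~\ref{rem:wilbert} yields the cohomology rings of all two-block Springer fibers of types $C$ and $D$.

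The topological core of the argument — dualizing Proposition~\ref{prop:injection}, Künneth, and the vanishing $H^{k+1}(\SOfiber)=0$ — is routine. The step where I expect the real work is the dimension count, i.e.\ that $C_{\mathrm{KL}}(m)$ has exactly $\binom ml$ cup diagrams with $l$ cups whenever $l<\frac m2$; this is also precisely where the hypothesis $m\ne k$ enters, since it keeps $l$ strictly below $\frac m2$, whereas at the middle index $l=\frac m2$ (occurring exactly when $m=k$) this number is strictly smaller than $\binom{m}{m/2}$ and the naive monomial ideal no longer accounts for all relations — in line with the genuinely different presentation of $H^*(\SOfiberequalsize)$ supplied in that case by \cite[Theorem~B]{ES12}.
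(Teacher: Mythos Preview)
Your proposal is correct and follows essentially the same route as the paper's proof: dualize Proposition~\ref{prop:injection} to get surjectivity of $\iota^*$, observe that cohomology vanishes above degree $k-1$ so the monomials $X_I$ with $|I|=\frac{k-1}{2}+1$ lie in the kernel, and finish by a dimension comparison. The only cosmetic differences are that the paper justifies the vanishing by citing the complex dimension of the algebraic Springer fiber (rather than the real dimension of the $S_\ba$), and it invokes the total dimension formula $\dim H^*(\SOfiber)=\sum_{i=0}^{(k-1)/2}\binom{m}{i}$ from the appendix directly rather than going through Proposition~\ref{introprop:diagrammatic_homology_basis} and a cup-diagram count; since Proposition~\ref{introprop:diagrammatic_homology_basis} itself rests on that appendix computation, the underlying input is the same.
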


\subsubsection*{Relating cup diagram and bipolytabloid bases of Specht modules}
%Although this is a result belonging to the field of combinatorial representation theory, our construction of these isomorphisms heavily rely on the topological methods explained above. 
The parabolic Hecke module $\mathcal M_{D_m}$ has a filtration 
\[
\{0\}\subseteq\mathcal M_{D_m}^{\lfloor\frac{m}{2}\rfloor}\subseteq\ldots\subset\mathcal M^l_{D_m}\subseteq\ldots\subseteq\mathcal M^0_{D_m}=\mathcal M_{D_m}
\]
by Kazhdan-Lusztig cells, see Section~\ref{sec:diagrammatic_Hecke_module}. From the diagrammatic description of $\mathcal M_{D_m}$ it follows that $\mathcal M_{D_m}^l$ has a basis given by all cup diagrams with $l$ or more cups. The associated graded with respect to this filtration (specialized at $q=1$) gives the $\mathcal W_{D_m}$-module $H_*(\SOfiber)$ from Theorem~\ref{thm:skein_calculus}, i.e.\ we have isomorphisms of $\mathcal W_{D_m}$-modules between the Kazhdan-Lusztig cell module $\mathcal M^l_{D_m}/\mathcal M^{l+1}_{D_m}$ and $H_{2l}(\SOfiber)$. In particular, the cup diagram basis of $H_*(\SOfiber)$ is not the Kazhdan-Lusztig basis (except in top degree) which disproves the type $D$ analog of \cite[Conjecture 4.4]{RT11} (in fact, this argument shows that this conjecture does not hold in type $A$ either).  

In type $D$ the cell modules are irreducible and in Theorem~\ref{thm:explicit_identification_D} we write down an explicit isomorphism between the cup diagram basis of $H_{2l}(\SOfiber)$ and the bipolytabloid basis of the Specht module. In type $C$ the $\mathcal W_{C_{m-1}}$-module $H_{2l}(\SOfiber)$ is not irreducible anymore (which can be seen geometrically from the nontrivial component group action). In Proposition~\ref{prop:basis_isotypic_comp} we explicitly determine the decomposition of $H_{2l}(\SOfiber)$ into irreducible summands by providing a basis in terms of sums of cup diagrams. As in type $D$ we obtain an explicit isomorphism between each irreducible summand and the corresponding Specht module, see Theorem~\ref{thm:explicit_identification_C1}. This generalizes results for type $A$, see~\cite{Nar89}, to types $C$ and $D$ and answers a question in~\cite[Remark 5.21]{ES12}.  

\subsection{An instance of Langlands duality}\label{subsection:Langlands}

We finish the introductory part of this article by discussing the reason for the appearance of the cup diagrams in our construction of the Springer representation from a higher, categorical point of view. We also explain a (conjectural) reason for the surprising fact that the Springer representations in types $C$ and $D$ both admit a neat description in terms of the same cup diagram combinatorics. 
  
In \cite{ES15}, the authors defined an algebra $\mathbb D_m$ whose underlying vector space has a basis given by oriented circle diagrams (these are obtained by putting a cup diagram upside down on top of another cup diagram and adding an orientation to the connected components of the resulting diagram). The construction of this so-called arc algebra of type $D$ was inspired by the arc algebras of type $A$ defined in \cite{Kho02}, \cite{CK14}, \cite{BS11}.

Let $\Upsilon_m^D$ denote the isotropic Grassmannian of $m$-dimensional isotropic subspaces in $\mathbb C^{2m}$ and let $\mathcal{P}erv\left(\Upsilon_m^D\right)$ be the associated category of perverse sheaves (constructible with respect to the Schubert stratification). Using results of Braden~\cite{Bra02} one can establish an equivalence of categories $\mathbb D_m$-$\mathrm{mod}\simeq\mathcal{P}erv\left(\Upsilon^D_m\right)$, see~\cite[Theorem 9.1]{ES15}, which shows that the cup diagram combinatorics is intricately connected to the geometry of perverse sheaves.

In \cite{SW12} it was shown that the category of finite-dimensional modules over the arc algebra of type $A$ is equivalent to the heart of an exotic $t$-structure (in the sense of \cite{Bez03}) on the bounded derived category $D^b(\mathrm{Coh}(Y^A_m))$ of coherent sheaves on a certain compactification $Y^A_m$ of a resolution of a Slodowy slice of type $A$. We also expect such an equivalence between the category $\mathcal{P}erv(\Upsilon_m^D)$ and the core of an exotic $t$-structure on the derived category of coherent sheaves (supported on the $(m,m)$ Springer fiber) on a compactification of a resolution of a Slowdowy slice $Y_m^D$ of type $D$. In particular, this equivalence relates the cup diagram combinatorics with the geometry of the Springer fiber and thus explains the appearance of the cup diagrams in our description of the Springer representation. As in type $A$, see~\cite{AS}, \cite{KS}, we also expect a direct connection to a Fukaya category related to $Y^D_m$ or to Kleinian singularities of type $D$. 

The constructions in \cite{SW12} show that the equivalence between perverse sheaves on the Grassmannian and the core of the exotic $t$-structure of $D^b(\mathrm{Coh}(Y_m^A))$ can be interpreted as an incarnation of Koszul duality which by \cite{BGS96} suggests that one should actually replace $Y_m^A$ by its Langlands dual $Y_m^{A^\vee}$. This is subtly hidden in the arguments since type $A$ (as well as type $D$) are Langlands self-dual. However, we expect an equivalence of categories between perverse sheaves on the isotropic Grassmannian of type $B$ and a core of an exotic $t$-structure on a derived category of coherent sheaves on a resolution of a Slodowy slice of type $C$. 

By \cite{BP99} (see also \cite[\S9.7]{ES15}) there exists an equivalence $\mathcal{P}erv(\Upsilon^B_{m-1})\cong\mathcal{P}erv(\Upsilon^D_m)$ of categories induced by an isomorphism (compatible with the Schubert stratification) between the respective isotropic Grassmannians. The combinatorics of these categories can be described in terms of the same cup diagrams. On the Langlands dual side, which is related to the Springer fibers, this then corresponds to an equivalence $D^b(\mathrm{Coh}(Y^C_{m-1}))\cong D^b(\mathrm{Coh}(Y^D_m))$ induced by the isomorphism $Y^C_{m-1}\cong Y^D_m$ between (compactified) resolutions of Slodowy slices recently discovered in~\cite[\S8.4]{Li18}. This gives a conceptual explanation of the surprising fact that the Springer representations in types $C$ and $D$ both admit a neat description in terms of the same combinatorics underlying the arc algebra $\mathbb D_m$.

\subsection*{Notation and conventions} 
In this article the term ``vector space'' means ``complex vector space'' and a ``graded vector space'' is a $\mathbb Z$-graded complex vector space. Given a finite set $S$ and a commutative ring $R$, we write $R[S]$ to denote the free $R$-module with basis $S$. If $X$ is a topological space, we write $H_*(X)$ and $H^*(X)$ to denote its singular homology and cohomology with complex coefficients, i.e.\ $H_*(X)=H_*(X,\mathbb C)$ and $H^*(X)=H^*(X,\mathbb C)$. Since we work over the complex numbers, homology and cohomology are dual, i.e.\ the universal coefficient theorem provides natural isomorphisms $H^*(X)\cong\Hom_{\mathbb C}(H_*(X),\mathbb C)$ of graded vector spaces, which enable us to transfer results obtained for cohomology to homology and vice versa. Throughout this article we view $\mathbb Z/2\mathbb Z\cong\{1,-1\}\subseteq\mathbb C^*$ as a subgroup of the multiplicative group.   

\subsection*{Acknowledgements}
The authors thank Michael Ehrig and Daniel Tubbenhauer for many helpful comments and interesting discussions and Daniel Juteau and Martina Lanini for raising the question how to make the action of the component group explicit in our construction. Parts of this article appeared in the second author's PhD thesis under the supervision of the first author. 

\begin{comment}

INTRODUCTION: Note that by \cite{Wil15}, there is an isomorphism of algebraic varieties between type $C$ and $D$. Hence, once we have constructed an action of $\mathcal W_{D_m}$ on the homology of the topological Springer fiber, we get an obvious $\mathcal W_{C_{m-1}}$-action for free by restricting the action of $\mathcal W_{D_m}$ to this subgroup. It turns out that this restricted action is actually the Springer action. Thus, in the two-row case, from the point of view of the Weyl group, it is enough to study the type $D_m$ case which contains type $C$ completely. However, if one wants to understand decompositions into simples, the action of the component group becomes interesting in type $C$, but is trivial in type $D$. Thus, there is some duality between type $C$ and $D$; the component group action is interesting in type $C$ but boring in $D$ and vice versa for the Weyl group. 

INTRODUCTION: Proof of the fact that $\mathcal W_{D_m}$-action on $H_*((\mathbb S^m)^2)$ restricts to homology of topological Springer fiber follows the argument in \cite{RT11}. However, there does not seem to be a simple reduction to their proof, because of the different signs appearing in our context.

\end{comment}

\section[sec]{\for{toc}{A diagrammatic homology basis}\except{toc}{A diagrammatic homology basis and the proof of Theorem~\ref{introthm:cohomology}}}
\label{section:section1} 

Recall the set $\cupdiags$ of cup diagrams from the introduction. Given $\ba\in\cupdiags$, let $\phi_\ba\colon S_\ba\hookrightarrow\SOfiber$, $\psi_\ba\colon S_\ba\hookrightarrow (\mathbb S^2)^m$ be the inclusions. We obtain a commutative diagram
\begin{equation} \label{eq:key_comm_diagram}
\begin{xy}
	\xymatrix{
		\bigoplus\limits_{\ba\in\cupdiags} H_*(S_\ba) \ar[rr]^{\phi_{2m-k,k}} \ar@/^2pc/[rrrr]^{\psi_{2m-k,k}} && H_*(\SOfiber) \ar[rr]^{\gamma_{2m-k,k}}	&& H_*((\mathbb S^2)^m)
	}
\end{xy},
\end{equation} 
where $\phi_{2m-k,k}$ (resp.\ $\psi_{2m-k,k}$) is the direct sum of the maps induced by $\phi_\ba$ (resp.\ $\psi_\ba$) and $\gamma_{2m-k,k}$ is induced by the inclusion $\SOfiber\hookrightarrow\left(\mathbb S^2\right)^m$. The goal of this section is to understand the diagram~(\ref{eq:key_comm_diagram}) from an explicit combinatorial point of view. In particular, this includes the construction of a diagrammatic basis of $H_*\left(\SOfiber\right)$. At the end of this section we will prove Theorem~\ref{introthm:cohomology}. 

\subsection{Homology bases via cell decompositions}

We begin by defining cell decompositions of the manifolds $\left(\mathbb S^2\right)^m$ and $S_\ba$, $\ba\in\cupdiags$, which are used to obtain a diagrammatic description of the homology of the respective spaces. Recall the notion of a line diagram, \cite[Definition 3.1]{RT11}:
\begin{defi}
A {\it line diagram} on $m$ vertices is obtained by attaching vertical rays, each possibly decorated with a single white dot, to $m$ vertices on a horizontal line. Given $U\subseteq \{1,\ldots,m\}$, let $l_U$  be the unique line diagram with white dots precisely on the lines connected to vertices not contained in $U$ (vertices are counted from left to right). We write $\mathfrak{L}_m$ to denote the set of all line diagrams on $m$ vertices. 
\end{defi}

\begin{ex} \label{ex:line_diagrams}
Here is a complete list of all line diagrams in $\mathfrak{L}_2$: 
\[
l_\emptyset=
\begin{tikzpicture}[baseline={(0,-.5)},scale=.8]
\draw[thick] (0,0) -- +(0,-1);
\draw[thick] (.5,0) -- +(0,-1);
\draw (0,-.5) circle(3pt);
\draw (.5,-.5) circle(3pt);
\end{tikzpicture}\hspace{1em},
\hspace{3em}
l_{\{1\}}=
\begin{tikzpicture}[baseline={(0,-.5)},scale=.8]
\draw[thick] (0,0) -- +(0,-1);
\draw[thick] (.5,0) -- +(0,-1);
\draw (.5,-.5) circle(3pt);
\end{tikzpicture}\hspace{1em},
\hspace{3em}
l_{\{2\}}=
\begin{tikzpicture}[baseline={(0,-.5)},scale=.8]
\draw[thick] (0,0) -- +(0,-1);
\draw[thick] (.5,0) -- +(0,-1);
\draw (0,-.5) circle(3pt);
\end{tikzpicture}\hspace{1em},
\hspace{3em}
l_{\{1,2\}}=
\begin{tikzpicture}[baseline={(0,-.5)},scale=.8]
\draw[thick] (0,0) -- +(0,-1);
\draw[thick] (.5,0) -- +(0,-1);
\end{tikzpicture}\,\,.
\]
\end{ex}

Now, the two-sphere $\mathbb S^2$ has a cell decomposition $\mathbb S^2=\left(\mathbb S^2\setminus\{p\}\right)\cup \{p\}$ consisting of a point and a two-cell. In the following we fix this CW-structure and equip $(\mathbb S^2)^m$ with the Cartesian product CW-structure. Then we obtain a one-to-one correspondence between $\mathfrak{L}_m$ and the cells of the CW-complex $(\mathbb S^2)^m$ by sending a line diagram $l_U$ to the cell $C_{l_U}$, where $C_{l_U}$ is defined by choosing the $0$-cell (resp.\ $2$-cell) for the $i$th sphere in $(\mathbb S^2)^m$ if there is a dot (resp.\ no dot) on the $i$th line of $l_U$. Since $H_*((\mathbb S^2)^m)$ has a basis given by the homology classes $[C_{l_U}]$ of the cells, we obtain an isomorphism of vector spaces
\begin{equation} \label{eq:homology_identification_for_spheres}
\mathbb C[\mathfrak{L}_m] \xrightarrow\cong H_*\left((\mathbb S^2)^m\right)\,,\,\,l_U\mapsto [C_{l_U}].
\end{equation}
Henceforth, we will view a line diagram as a basis element of $H_*\left((\mathbb S^2)^m\right)$ via the identification (\ref{eq:homology_identification_for_spheres}). Note that its homological degree is given by twice the number of lines without a dot. 

\begin{ex}
The cells of $\mathbb S^2\times\mathbb S^2$ corresponding to the line diagrams in Example~\ref{ex:line_diagrams} are given  (in the same order) respectively by
\[
\{(p,p)\},\hspace{.6em}\{(x,p)\mid x\in\mathbb S^2\setminus\{p\}\},\hspace{.6em}\{(p,x)\mid x\in\mathbb S^2\setminus\{p\}\},\hspace{.6em}\{(x,y)\mid x,y\in\mathbb S^2\setminus\{p\}\}.
\]
\end{ex}

Next we will construct a basis of $H_*(S_\ba)$ labeled by enriched cup diagrams.

\begin{defi}
An {\it enriched cup diagram} is a cup diagram (as defined in the introduction) in which some of the cups and all of the rays are decorated with a white dot. We allow at most one dot per component and we do not impose any accessibility condition as for the markers. If a cup (resp.\ ray) is marked with a marker and a dot we place the marker to the left of (resp.\ above) the dot. Let $\widetilde{\mathbb B}^{2m-k,k}_\mathrm{KL}$ be the set of all enriched cup diagrams on $m$ vertices with $\lfloor\frac{k}{2}\rfloor$ cups such that the number of marked rays plus unmarked cups is even.  
\end{defi}

\begin{ex}
The enriched cup diagrams corresponding to the cup diagrams in $\mathbb B^{3,3}_\mathrm{KL}$ are given by
\[
\begin{tikzpicture}[baseline={(0,-.5)},scale=.8]
\begin{scope}[xshift=3cm]
\draw[thick] (0,0) .. controls +(0,-.5) and +(0,-.5) .. +(.5,0);

\draw[thick] (1,0) -- +(0,-1);
\fill ([xshift=-2.5pt,yshift=-2.5pt]1,-.3) rectangle ++(5pt,5pt);
\draw (1,-.65) circle(3pt);

\draw (.25,-.365) circle(3pt);
\end{scope}
\end{tikzpicture}\hspace{1em},
\hspace{3em}
\begin{tikzpicture}[baseline={(0,-.5)}, scale=.8]
\begin{scope}[xshift=3cm]
\draw[thick] (0,0) .. controls +(0,-.5) and +(0,-.5) .. +(.5,0);

\draw[thick] (1,0) -- +(0,-1);
\fill ([xshift=-2.5pt,yshift=-2.5pt]1,-.3) rectangle ++(5pt,5pt);
\draw (1,-.65) circle(3pt);
\end{scope}
\end{tikzpicture}\hspace{1em},
\hspace{3em}
\begin{tikzpicture}[baseline={(0,-.5)}, scale=.8]
\begin{scope}[xshift=3cm]
\draw[thick] (.5,0) .. controls +(0,-.5) and +(0,-.5) .. +(.5,0);
\draw (.75,-.365) circle(3pt);

\draw[thick] (0,0) -- +(0,-1);
\fill ([xshift=-2.5pt,yshift=-2.5pt]0,-.3) rectangle ++(5pt,5pt);
\draw (0,-.65) circle(3pt);
\end{scope}
\end{tikzpicture}\hspace{1em},
\hspace{3em}
\begin{tikzpicture}[baseline={(0,-.5)}, scale=.8]
\begin{scope}[xshift=3cm]
\draw[thick] (.5,0) .. controls +(0,-.5) and +(0,-.5) .. +(.5,0);

\draw[thick] (0,0) -- +(0,-1);
\fill ([xshift=-2.5pt,yshift=-2.5pt]0,-.3) rectangle ++(5pt,5pt);
\draw (0,-.65) circle(3pt);
\end{scope}
\end{tikzpicture}\hspace{1em},
\hspace{3em}
\begin{tikzpicture}[baseline={(0,-.5)}, scale=.8]
\begin{scope}[xshift=3cm]
\draw[thick] (0,0) .. controls +(0,-.5) and +(0,-.5) .. +(.5,0);

\draw[thick] (1,0) -- +(0,-1);
\draw (1,-.5) circle(3pt);

\draw (.4,-.3) circle(3pt);
\fill ([xshift=-2.5pt,yshift=-2.5pt].1,-.3) rectangle ++(5pt,5pt);
\end{scope}
\end{tikzpicture}\hspace{1em},
\hspace{3em}
\begin{tikzpicture}[baseline={(0,-.5)}, scale=.8]
\begin{scope}[xshift=3cm]
\draw[thick] (0,0) .. controls +(0,-.5) and +(0,-.5) .. +(.5,0);

\draw[thick] (1,0) -- +(0,-1);
\draw (1,-.5) circle(3pt);

\fill ([xshift=-2.5pt,yshift=-2.5pt].25,-.365) rectangle ++(5pt,5pt);
\end{scope}
\end{tikzpicture}\,\,.
\]
\end{ex}

Given $\ba\in\mathbb B^{2m-k,k}_\mathrm{KL}$, let $i_1<i_2<\cdots<i_{\lfloor\frac{k}{2}\rfloor}$ denote the left endpoints of the cups in $\ba$. Then we have a homeomorphism
\begin{equation}\label{eq:homeo_giving_cell_decomp}
\xi_\ba\colon S_\ba \xrightarrow\cong (\mathbb S^2)^{\lfloor\frac{k}{2}\rfloor}\,,\,\,(x_1,\ldots,x_m) \mapsto (y_1,\ldots,y_{\lfloor\frac{k}{2}\rfloor}),
\end{equation}
where
$y_j=x_{i_j}$, if $i_j$ is an endpoint of an unmarked cup, and $y_j=-x_{i_j}$ if $i_j$ is an endpoint of a marked cup, $1\leq j\leq\lfloor\frac{k}{2}\rfloor$.

We equip $S_\ba$ with the structure of a CW-complex, where the cells are obtained as the preimages of the cells of $(\mathbb S^2)^{\lfloor\frac{k}{2}\rfloor}$  under the homeomorphism (\ref{eq:homeo_giving_cell_decomp}). We obtain a one-to-one correspondence between $\widetilde{\mathbb B}^{2m-k,k}_\mathrm{KL}$ and the cells of $S_\ba$ by sending an enriched cup diagram $M$ to the cell $C_M$, where a cup with a dot (resp.\ a cup without a dot) means that we have chosen the $0$-cell (resp.\ $2$-cell) for the corresponding sphere via (\ref{eq:homeo_giving_cell_decomp}). Since the basis elements of the homology $H_*(S_\ba)$ are in one-to-one correspondence with the homology classes of the cells of $S_\ba$ we obtain an isomorphism of vector spaces
\begin{equation} \label{eq:homology_identification_for_S_ba}
\mathbb C[\widetilde{\mathbb B}^{2m-k,k}_\mathrm{KL}] \xrightarrow\cong H_*(S_\ba)\,,\,\, M\mapsto [C_M]. 
\end{equation}
The homological degree of an enriched cup diagram is given as twice the number of cups without a dot.

\begin{rem}
The white dots in this article play exactly the same role as the black dots on the cup diagrams in the related work in type $A$, \cite{RT11}, \cite{Rus11}. In order to distinguish them from the black markers (which do not appear in type $A$) we chose to color them white. 
\end{rem}

\subsection{A combinatorial description of $\psi_{2m-k,k}$}

The next step is to explicitly describe the map $\psi_{2m-k,k}$ in~(\ref{eq:key_comm_diagram}) in terms of the diagrammatic bases we have chosen in the previous subsection.

\begin{defi}
Given an enriched cup diagram $M\in\widetilde{\mathbb B}^{2m-k,k}_\mathrm{KL}$, we define the associated {\it line diagram sum} as 
\[ 
L_M=\sum_{U\in\mathcal U_M}(-1)^{\Lambda_M(U)}l_U \in H_*\left((\mathbb S^2)^m\right),
\]
where $\mathcal U_M$ is the set of all subsets $U\subseteq\{1,\ldots,m\}$ containing precisely one endpoint of every cup without a dot, and $\Lambda_M(U)$ counts the total number of all endpoints of cups with a marker and no dot, plus the number of right endpoints of cups in $U$ with neither a marker nor a dot.
\end{defi}

Similar line diagram sums (for type $A$) appear in \cite[Definition 3.11]{RT11}. Their significance lies in the following result.  

\begin{prop} \label{prop:image_of_homology_gen}
Via identifications (\ref{eq:homology_identification_for_spheres}) and (\ref{eq:homology_identification_for_S_ba}), the map $(\psi_\ba)_*\colon H_*(S_\ba)\to H_*((\mathbb S^2)^m)$ induced by the natural inclusion $\psi_\ba\colon S_\ba\hookrightarrow (\mathbb S^2)^m$ is explicitly given by the assignment $M\mapsto L_M$.
\end{prop}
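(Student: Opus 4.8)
The plan is to unravel both cell-structure identifications and track what the inclusion $\psi_\ba$ does cell-by-cell, then read off the induced map in homology. First I would recall that $H_*((\mathbb S^2)^m)$ is concentrated in even degrees, has no differential in cellular homology (the CW-structure on $\mathbb S^2$ has only a $0$-cell and a $2$-cell, so the product complex has trivial boundary maps), and likewise $S_\ba \cong (\mathbb S^2)^{\lfloor k/2\rfloor}$ has cellular chain complex with zero differential. Hence $(\psi_\ba)_*$ is just the map on cellular chain groups induced by the inclusion, and it suffices to express the image of each cell class $[C_M]$ as a linear combination of the cell classes $[C_{l_U}]$.

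The key computation is therefore a \emph{degree calculation}: for an enriched cup diagram $M$ with cell $C_M \subseteq S_\ba$ and a line diagram $l_U$ with cell $C_{l_U}\subseteq (\mathbb S^2)^m$ of the same dimension, I need the coefficient of $[C_{l_U}]$ in $(\psi_\ba)_*[C_M]$, which is the degree of the composite $\overline{C_M} \to S_\ba \hookrightarrow (\mathbb S^2)^m \to (\mathbb S^2)^m/((\mathbb S^2)^m)^{(2l-1)} \to \mathbb S^{2l}$ collapsing all cells except $C_{l_U}$. A cup of $M$ without a dot contributes a $2$-sphere factor, and under $\xi_\ba^{-1}$ composed with the inclusion into $(\mathbb S^2)^m$, the two coordinates $i,j$ attached to that cup are $(x,\pm x)$ or $(x,\mp x)$ depending on whether the cup is marked. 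Projecting onto the $i$-th or $j$-th sphere factor of $(\mathbb S^2)^m$ is then either $x\mapsto x$ or $x\mapsto -x$; the antipodal map on $\mathbb S^2$ has degree $-1$ (it reverses orientation, being a composition of three reflections, or equivalently because $\mathbb S^2$ is even-dimensional). This is precisely where the signs $(-1)^{\Lambda_M(U)}$ come from: $\Lambda_M(U)$ should be counting exactly the coordinates where an antipodal map is incurred — every endpoint of a marked dotless cup carries the marked-cup sign $x_i = -x_{i_j}$ or $x_j = x_{i_j}$ built into $\xi_\ba$ (cf.\ the convention after (\ref{eq:homeo_giving_cell_decomp})), and for the dotless cups, choosing the right endpoint into $U$ versus the left one toggles whether the surviving coordinate is $x$ or $-x$. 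Meanwhile, a cup \emph{with} a dot contributes a $0$-cell (a single point, the basepoint $p$ or $-p$), and a dot on a ray or the fixed coordinates $x_i = \pm p, q$ coming from rays likewise land in the $0$-skeleton; so the only $l_U$'s that can appear are those whose dotted lines are exactly the complement of the set of coordinates "activated" by dotless cups — which is precisely the indexing set $\mathcal U_M$ of subsets $U$ containing one endpoint of each dotless cup (all other vertices get dots because they are endpoints of dotted cups or of rays, hence sit at $0$-cells).

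So the steps, in order: (1) record that both relevant cellular chain complexes have vanishing differential, so homology equals cellular chains and $(\psi_\ba)_*$ is induced degreewise by inclusion of cells; (2) fix compatible orientations/characteristic maps for the product cells on both sides and describe $C_M$ explicitly via $\xi_\ba^{-1}$ and the coordinate relations defining $S_\ba$; (3) for a dotless cup on vertices $i<j$, observe the cell $C_M$ maps onto the diagonal-type $2$-sphere $\{(x_i,x_j): x_j = \epsilon x_i\}$ (with $\epsilon = \pm 1$ determined by whether the cup is marked), so its image meets exactly the two product cells $C_{l_U}$ where $U$ contains $i$ or $j$ (not both), with relative degree $1$ when the surviving coordinate equals $x$ and $-1$ when it equals $-x$; (4) for a dotted cup or a ray, the cell sits in a $0$-cell, forcing a dot on all the corresponding lines of any $l_U$ that appears, which pins down $\mathcal U_M$; (5) multiply the local degrees over all dotless cups using that the degree of a product of maps on a product of spheres is the product of the degrees, and check the exponent of $-1$ matches $\Lambda_M(U) = (\#\text{endpoints of marked dotless cups}) + (\#\text{right endpoints of unmarked dotless cups in } U)$ — the first summand comes from the $\epsilon=-1$ built into $\xi_\ba$ for marked cups, the second from the right-vs-left endpoint choice for unmarked cups. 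The main obstacle is bookkeeping step (5): getting every sign convention — orientation of the product CW-cells, the direction of the homeomorphism $\xi_\ba$, which endpoint is "left", and the marked-cup twist — to line up so that the parity works out on the nose rather than off by a global or $U$-dependent sign; a clean way to handle this is to do the rank-one case ($\lfloor k/2\rfloor = 1$, a single cup) by hand on $\mathbb S^2 \hookrightarrow (\mathbb S^2)^2$ first, verifying the four sub-cases (marked/unmarked, dotted/dotless), and then invoke multiplicativity of the homology cross product (Künneth) together with the product CW-structure to assemble the general formula, since $\psi_\ba$ is literally a product of such rank-one inclusions across the cups times identity/constant maps on ray-coordinates.
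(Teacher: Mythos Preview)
Your approach is essentially the same as the paper's: verify the rank-one cases (single cup, single ray) by hand and then invoke the K\"unneth multiplicativity to assemble the general formula. The paper carries this out by first treating the completely unnested diagram (where $\psi_\ba$ genuinely splits as a product of single-cup inclusions) and then reducing the general case to that one via a coordinate permutation $\tau_\ba$ of $(\mathbb S^2)^m$; your phrase ``$\psi_\ba$ is literally a product of such rank-one inclusions'' is only true after such a permutation when cups are nested, so either insert that step or note that your direct degree computation in steps (3)--(5) already works coordinate-wise and does not actually require the product decomposition.
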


\begin{ex}
If $m=1$ the maps induced on homology by the natural inclusion $S_{\begin{tikzpicture}[scale=.5]
\draw[thick] (0,0) -- +(0,-.4);
\fill ([xshift=-2.5pt,yshift=-2.5pt]0,-.2) rectangle ++(5pt,5pt);
\end{tikzpicture}}\hookrightarrow\mathbb S^2$ respectively by the inclusion $S_{\begin{tikzpicture}[scale=.5]
\draw[thick] (0,0) -- +(0,-.4);
\end{tikzpicture}}\hookrightarrow\mathbb S^2$ are given by 
\begin{eqnarray*}
\begin{tikzpicture}[baseline={(0,-.4)},scale=.8]
\draw[thick] (0,0) -- +(0,-.8);
\fill ([xshift=-2.5pt,yshift=-2.5pt]0,-.25) rectangle ++(5pt,5pt);
\draw (0,-.55) circle(3.5pt);
\end{tikzpicture}
\,
\mapsto
\,
\begin{tikzpicture}[baseline={(0,-.4)},scale=.8]
\draw[thick] (0,0) -- +(0,-.8);
\draw (0,-.4) circle(3.5pt);
\end{tikzpicture}
&\quad\text{respectively}\quad&
\begin{tikzpicture}[baseline={(0,-.4)},scale=.8]
\draw[thick] (0,0) -- +(0,-.8);
\draw (0,-.4) circle(3.5pt);
\end{tikzpicture}
\,
\mapsto
\,
\begin{tikzpicture}[baseline={(0,-.4)},scale=.8]
\draw[thick] (0,0) -- +(0,-.8);
\draw (0,-.4) circle(3.5pt);
\end{tikzpicture}
\end{eqnarray*}
since the first map sends $p$ to the $0$-cell of $\mathbb S^2$, and the second  is clearly homotopic to the map $S_{\begin{tikzpicture}[scale=.5]
\draw[thick] (0,0) -- +(0,-.4);
\end{tikzpicture}}\to\mathbb S^2$, $-p\mapsto p$, respecting the $0$-skeleta. More generally, the map induced by $S_\ba\hookrightarrow(\mathbb S^2)^m$, where $\ba$ consists of $m$ subsequent rays, sends the generator corresponding to the point $S_\ba$ to the line diagram consisting of $m$ subsequent rays with a dot. Furthermore, by arguing similarly as in \cite[Lemma~3.2]{RT11}, one can show that the maps induced by the inclusions $S_{\begin{tikzpicture}[scale=.5]
\draw[thick] (.5,0) .. controls +(0,-.5) and +(0,-.5) .. +(.5,0);
\fill ([xshift=-2.5pt,yshift=-2.5pt].75,-.365) rectangle ++(5pt,5pt);
\end{tikzpicture}}\hookrightarrow\mathbb S^2\times\mathbb S^2$ respectively $S_{\begin{tikzpicture}[scale=.5]
\draw[thick] (.45,0) .. controls +(0,-.45) and +(0,-.45) .. +(.45,0);
\end{tikzpicture}}\hookrightarrow(\mathbb S^2)^2$ are given by
\begin{eqnarray} \label{eq:induced_map_cup_1}
\quad \quad
\begin{tikzpicture}[baseline={(0,-.2)},scale=.9]
\draw[thick] (0,0) .. controls +(0,-.5) and +(0,-.5) .. +(.5,0);
\draw (0.4,-.3) circle(3pt);
\fill ([xshift=-2.5pt,yshift=-2.5pt].1,-.3) rectangle ++(5pt,5pt);
\end{tikzpicture}
\,
\mapsto
\,
\begin{tikzpicture}[baseline={(0,-.4)},scale=.8]
\draw[thick] (0,0) -- +(0,-.8);
\draw[thick] (.5,0) -- +(0,-.8);
\draw (0,-.4) circle(3pt);
\draw (.5,-.4) circle(3pt);
\end{tikzpicture}
\hspace{3.5em}
\begin{tikzpicture}[baseline={(0,-.3)},scale=.9]
\draw[thick] (0,0) .. controls +(0,-.5) and +(0,-.5) .. +(.5,0);
\fill ([xshift=-2.5pt,yshift=-2.5pt].25,-.365) rectangle ++(5pt,5pt);
\end{tikzpicture}
\,
\mapsto
\,
-\,
\begin{tikzpicture}[baseline={(0,-.4)},scale=.8]
\draw[thick] (0,0) -- +(0,-.8);
\draw[thick] (.5,0) -- +(0,-.8);
\draw (.5,-.4) circle(3pt);
\end{tikzpicture}
-
\begin{tikzpicture}[baseline={(0,-.4)},scale=.8]
\draw[thick] (0,0) -- +(0,-.8);
\draw[thick] (.5,0) -- +(0,-.8);
\draw (0,-.4) circle(3pt);
\end{tikzpicture}
%\end{equation}
%and the map induced by the the inclusion $S_{\begin{tikzpicture}[scale=.5]
%\draw[thick] (.45,0) .. controls +(0,-.45) and +(0,-.45) .. +(.45,0);
%\end{tikzpicture}}\hookrightarrow(\mathbb S^2)^2$ is given by 
&\text{respectively}&
%\begin{equation} \label{eq:induced_map_cup_2}
\begin{tikzpicture}[baseline={(0,-.3)},scale=.9]
\draw[thick] (0,0) .. controls +(0,-.5) and +(0,-.5) .. +(.5,0);
\draw (0.25,-.365) circle(3pt);
\end{tikzpicture}
\,
\mapsto
\,
\begin{tikzpicture}[baseline={(0,-.4)},scale=.8]
\draw[thick] (0,0) -- +(0,-.8);
\draw[thick] (.5,0) -- +(0,-.8);
\draw (0,-.4) circle(3pt);
\draw (0.5,-.4) circle(3pt);
\end{tikzpicture}
\hspace{3.5em}
\begin{tikzpicture}[baseline={(0,-.25)},scale=.9]
\draw[thick] (0,0) .. controls +(0,-.5) and +(0,-.5) .. +(.5,0);
\end{tikzpicture}
\,
\mapsto
\,
\begin{tikzpicture}[baseline={(0,-.4)},scale=.8]
\draw[thick] (0,0) -- +(0,-.8);
\draw[thick] (.5,0) -- +(0,-.8);
\draw (0.5,-.4) circle(3pt);
\end{tikzpicture}
-
\begin{tikzpicture}[baseline={(0,-.4)},scale=.8]
\draw[thick] (0,0) -- +(0,-.8);
\draw[thick] (.5,0) -- +(0,-.8);
\draw (0,-.4) circle(3pt);
\end{tikzpicture}
\end{eqnarray}
which shows that Proposition~\ref{prop:image_of_homology_gen} is true for all cup diagrams with one connected component. 
\end{ex}

Before we prove Proposition~\ref{prop:image_of_homology_gen} in general, we note the following useful combinatorial fact whose (straightforward) proof is omitted (cf.\ \cite[Lemma~3.11]{RT11} for a similar statement).

\begin{lem}
Suppose that the enriched cup diagram $M'$ is obtained from $M$ by deleting a cup connecting vertices $i$ and $j$, $i<j$. For $U\in\mathcal U_{M'}$ and $S\subseteq\{i,j\}$ we define $\widetilde{U}_S\subseteq\{1,\ldots,m\}$ as 
\[
\widetilde{U}_S=S\cup\{x\in U\mid x<i\}\cup\{x+1\mid x\in U \text{ and } i\leq x<j\}\cup\{x+2\mid x\in U \text{ and }j\leq x\}.
\]
Then we can write $L_M=\sum_{U\in\mathcal U_{M'}}(-1)^{\Lambda_{M'}(U)}l_{\widetilde{U_\emptyset}}$, if the cup connecting vertices $i$ and $j$ has a dot. If the cup does not have a dot, then we have
\[
L_M=(-1)^\sigma \sum_{U\in\mathcal U_{M'}}(-1)^{\Lambda_{M'}(U)}l_{\widetilde{U_i}} - \sum_{U\in\mathcal U_{M'}}(-1)^{\Lambda_{M'}(U)}l_{\widetilde{U_j}},
\]
where $\sigma=1$, if the cup has a marker, and $\sigma=0$, if it has neither a marker nor a dot. 
\end{lem}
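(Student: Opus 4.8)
The plan is to prove the identity by a direct unwinding of the definitions of $L_M$ and $\Lambda_M$ together with a careful bookkeeping of how the reindexing maps $U \mapsto \widetilde U_S$ interact with the sign $\Lambda_M$. The statement is purely combinatorial once we accept the definition of the line diagram sum, so no topology is needed here; the content is entirely in comparing the summand indexed by $U \in \mathcal U_{M'}$ with the corresponding summand(s) indexed by elements of $\mathcal U_M$.

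First I would set up the correspondence between $\mathcal U_{M'}$ and $\mathcal U_M$. Deleting a cup connecting $i<j$ removes two vertices, so there is a shift: a subset $U \subseteq \{1,\dots,m-2\}$ that meets every dotless cup of $M'$ in exactly one endpoint is turned into a subset of $\{1,\dots,m\}$ by the prescription in the lemma, where the two new positions $i,j$ are filled according to $S$. I would observe that: (a) if the deleted cup carries a dot, then $M$ has exactly the same dotless cups as $M'$, so $\mathcal U_M = \{\widetilde U_\emptyset \mid U \in \mathcal U_{M'}\}$ (the new vertices $i,j$ are never selected, being endpoints of a dotted cup), giving a bijection; (b) if the deleted cup is dotless, then $M$ has one more dotless cup than $M'$, namely the one on $\{i,j\}$, and every $V \in \mathcal U_M$ contains exactly one of $i,j$, so $\mathcal U_M = \{\widetilde U_i \mid U \in \mathcal U_{M'}\} \sqcup \{\widetilde U_j \mid U\in \mathcal U_{M'}\}$, again a bijection on each piece. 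In both cases I must check that $l_{\widetilde U_S}$ really is the line diagram attached to the set $\widetilde U_S$ by unraveling $l_U$ — this is just the observation that the white dots on $l_U$ sit on vertices outside $U$, and the shift $x \mapsto x, x+1, x+2$ is compatible with this.

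The main work is the sign comparison, i.e.\ relating $\Lambda_M(\widetilde U_S)$ to $\Lambda_{M'}(U)$. Here I would split $\Lambda_M$ into its two constituent counts: the number of endpoints of marked dotless cups (call it $a_M$), and the number of right endpoints of unmarked-undotted cups lying in the selection set (call it $b_M(\cdot)$). For the dotted-cup case, the deleted cup contributes nothing to either count in $M$ versus $M'$ — it is dotted, hence not counted in $a$, and its endpoints are never in $\widetilde U_\emptyset$ so it is not counted in $b$ either — so $a_M = a_{M'}$ and $b_M(\widetilde U_\emptyset) = b_{M'}(U)$, whence $\Lambda_M(\widetilde U_\emptyset)=\Lambda_{M'}(U)$, and the first formula follows by summing. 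For the dotless-cup case: if the cup is marked, it adds $2$ to $a$ (both endpoints counted), so $a_M = a_{M'}+2$, an even change leaving the sign unaffected, but one must still account for $b$; selecting $i$ (a left endpoint) contributes $0$ to $b$, while selecting $j$ (a right endpoint) of a cup that is marked contributes $0$ to $b$ as well since $b$ only counts right endpoints of cups with \emph{neither} marker nor dot — so there is no $b$-contribution from the new cup in the marked subcase, and the overall sign $(-1)^{a_M} = (-1)^{a_{M'}}$; the asymmetry between the $\widetilde U_i$ and $\widetilde U_j$ terms must therefore come from somewhere else, namely from the factor $(-1)^\sigma$ with $\sigma=1$, and indeed in the marked case the map $M'\to M$ recreating the cup introduces a global sign $-1$ on the $\widetilde U_i$-block — I would trace this to the fact that the homeomorphism $\xi_\ba$ in \eqref{eq:homeo_giving_cell_decomp} flips the sphere coordinate for marked cups, matching the $(-1)^\sigma$; for the unmarked-undotted case, selecting $i$ contributes $0$ to $b$ and selecting $j$ contributes $+1$ to $b$, producing the relative sign between the two blocks with no extra global factor, i.e.\ $\sigma = 0$.

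The step I expect to be the genuine obstacle is keeping the sign bookkeeping honest, in particular disentangling which signs come from $b_M$ (the right-endpoint count, sensitive to whether we pick $i$ or $j$) and which come from $a_M$ and the marker-induced global sign $(-1)^\sigma$. A clean way to organize this is to treat the four sub-cases (cup dotted; cup dotless and marked; cup dotless and unmarked) separately and in each case write $\Lambda_M(\widetilde U_S) = \Lambda_{M'}(U) + (\text{contribution of the new cup})$ as an explicit integer, then observe the contribution is independent of $U$ and depends only on $S$ and on whether the cup is marked. Once that is done, the two displayed formulas are immediate by factoring the constant sign out of the sum over $U \in \mathcal U_{M'}$ and renaming the index.
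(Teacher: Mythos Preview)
The paper omits this proof entirely, calling it a ``straightforward'' combinatorial fact. Your overall plan is exactly the right one --- and in fact the only reasonable one: establish the bijections between $\mathcal U_{M'}$ and (pieces of) $\mathcal U_M$ via $U\mapsto\widetilde U_S$, then compare $\Lambda_M(\widetilde U_S)$ with $\Lambda_{M'}(U)$ case by case. Your treatment of the dotted-cup case and of the unmarked undotted cup case is correct.

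There is, however, a genuine error in your marked-cup case, stemming from a misreading of the definition of $\Lambda_M(U)$. The first summand in $\Lambda_M(U)$ is the number of endpoints of marked undotted cups \emph{lying in $U$}, not the total number of such endpoints. (The paper's wording is admittedly ambiguous, but the intended reading is forced by the single-marked-cup example in~(\ref{eq:induced_map_cup_1}) and by the cleaner restatement in Proposition~\ref{prop:explicit_isom}.) With the correct reading, inserting a marked undotted cup and selecting exactly one of its endpoints increases $\Lambda$ by~$1$, not by~$2$. Hence $\Lambda_M(\widetilde U_{\{i\}}) = \Lambda_M(\widetilde U_{\{j\}}) = \Lambda_{M'}(U)+1$, and both blocks acquire the sign $(-1)^{\Lambda_{M'}(U)+1}$, matching the lemma's $(-1)^\sigma$ with $\sigma=1$ on the $i$-block and the explicit minus on the $j$-block. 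There is no asymmetry to explain, and no need to invoke the homeomorphism $\xi_\ba$: as you yourself note at the outset, the statement is purely combinatorial, and your appeal to topology in this case is a symptom of the miscount rather than a valid argument.
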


%\begin{ex}
%Explain the above formulas in an example
%\end{ex}

\begin{proof}[Proof of Proposition~\ref{prop:image_of_homology_gen}.]
We begin by using the small examples above in order to deduce the claim for the cup diagram $\ba$ which consists of $\lfloor\frac{k}{2}\rfloor$ subsequent cups connecting neighboring vertices with (possibly) an additional collection of subsequent rays to the right of the cups. Assume that the cup connecting the first two vertices is unmarked and identify $S_\ba$ with $S_{\begin{tikzpicture}[scale=.5]
\draw[thick] (.5,0) .. controls +(0,-.5) and +(0,-.5) .. +(.5,0);
\end{tikzpicture}}\times S_{\ba'}$ and the inclusion $S_\ba\hookrightarrow (\mathbb S^2)^m$ with $\psi_{\begin{tikzpicture}[scale=.5]
\draw[thick] (.5,0) .. controls +(0,-.5) and +(0,-.5) .. +(.5,0);
\end{tikzpicture}}\times\psi_{\ba'}$. Here $\ba'$ is the cup diagram obtained by removing the leftmost cup. Via the K\"unneth isomorphism, the map induced by $\psi_{\begin{tikzpicture}[scale=.5]
\draw[thick] (.5,0) .. controls +(0,-.5) and +(0,-.5) .. +(.5,0);
\end{tikzpicture}}\times\psi_{\ba'}$ is the tensor product $(\psi_{\begin{tikzpicture}[scale=.5]
\draw[thick] (.5,0) .. controls +(0,-.5) and +(0,-.5) .. +(.5,0);
\end{tikzpicture}})_*\otimes(\psi_{\ba'})_*$. Hence, if the cup connecting the two leftmost vertices in $M$ has a dot, we compute
\[
(\psi_{\begin{tikzpicture}[scale=.5]
\draw[thick] (.5,0) .. controls +(0,-.5) and +(0,-.5) .. +(.5,0);
\end{tikzpicture}})_*(\begin{tikzpicture}[scale=.5]
\draw[thick] (.5,0) .. controls +(0,-.5) and +(0,-.5) .. +(.5,0);
\draw (0.75,-.365) circle(3.5pt);
\end{tikzpicture})\otimes(\psi_{\ba'})_*(M')=l_\emptyset\otimes\left(\sum_{U\in\mathcal U_{M'}}(-1)^{\Lambda_{M'}(U)}l_U\right)=\sum_{U\in\mathcal U_{M'}}(-1)^{\Lambda_{M'}(U)}l_\emptyset\otimes l_U,
\]     
where the first equality follows by induction and the small cases above. Here $M'$ denotes the enriched cup diagram obtained by removing the leftmost cup. Applying the K\"unneth isomorphism to the above equation yields the claim. 

On the other hand, if the leftmost cup in $M$ does not have a dot, we compute
\begin{align*}
(\psi_{\begin{tikzpicture}[scale=.5]
\draw[thick] (.5,0) .. controls +(0,-.5) and +(0,-.5) .. +(.5,0);
\end{tikzpicture}})_*(\begin{tikzpicture}[scale=.5]
\draw[thick] (.5,0) .. controls +(0,-.5) and +(0,-.5) .. +(.5,0);
\end{tikzpicture})\otimes(\psi_{\ba'})_*(M') &=\left(l_{\{1\}}-l_{\{2\}}\right)\otimes\left(\sum_{U\in\mathcal U_{M'}}(-1)^{\Lambda_{M'}(U)}l_U\right) \\
																					  &=\sum_{U\in\mathcal U_{M'}}(-1)^{\Lambda_{M'}(U)}l_{\{1\}}\otimes l_U - \sum_{U\in\mathcal U_{M'}}(-1)^{\Lambda_{M'}(U)}l_{\{2\}}\otimes l_U.
\end{align*} 
If the leftmost cup has a marker we argue similarly. 

For the general case let $\ba$ be a cup diagram and $\ba_0$ the cup diagram obtained by rearranging the cups in such a way that the diagram is completely unnested. Let $\tau_\ba$ be the permutation of the vertices which realizes the change of passing from $\ba$ to $\ba'$. This induces a homeomorphism $\tau_\ba\colon(\mathbb S^2)^m\to(\mathbb S^2)^m$ which permutes the coordinates accordingly and hence restricts to a homeomorphism $S_\ba\to S_{\ba_0}$. The inclusion $S_\ba\hookrightarrow(\mathbb S^2)^m$ can be written as the composition $\tau_\ba\circ\psi_{\ba_0}\circ\tau^{-1}_\ba\vert_{S_\ba}$. Let $M_0$ be the cup diagram obtained from $\ba_0$ whose $i$th component has a dot if and only if the $i$th component of $M$ has a dot. Then the claim follows from 
\[
\tau_\ba(\psi_{\ba_0}(\tau^{-1}_\ba\vert_{S_\ba}(M)))=\tau_\ba(\psi_{\ba_0}(M_0))=\tau_\ba(\sum_{U\in\mathcal U_{M_0}}(-1)^{\Lambda_{M_0}(U)}l_U)=\sum_{U\in\mathcal U_M}(-1)^{\Lambda_M(U)}l_U.\qedhere
\]      
\end{proof}

%\begin{ex}
%\end{ex}

\subsection{A linearly independent set of line diagram sums}

In this subsection we introduce a subset of $\widetilde{\mathbb B}^{2m-k,k}_\mathrm{KL}$ having the property that the images of the elements from this set under $\psi_{2m-k,k}$ are linearly independent. This will be an important technical tool in obtaining a diagrammatic basis of $H_*(\SOfiber)$ in the next subsection.

\begin{defi}
A {\it standard enriched cup diagram}, or shorter {\it standard cup diagram}, is an enriched cup diagram in which all cups marked with a dot can be connected to the right side of the rectangle %(the one from Definition~\ref{defi:cup_diagrams}) 
by a path which does neither intersect the rest of the diagram nor any given path connecting a marker with the left side. The set of all standard cup diagrams on $m$ vertices with $\lfloor\frac{k}{2}\rfloor$ cups such that the number of marked rays plus unmarked cups is even is denoted by $s\widetilde{\mathbb B}^{2m-k,k}_\mathrm{KL}$.   
\end{defi}

%\begin{rem}
%Explain that this means that cups with a white and a black marker are separated

%This is precisely the condition appearing in type $A$ (except that the role of left and right are reversed) \cite{Rus11}.
%\end{rem}

\begin{ex} \label{ex:standard_cup_diagrams}
Here are the standard cup diagrams contained in $s\widetilde{\mathbb B}^{3,3}_\mathrm{KL}$:
\[
\begin{tikzpicture}[scale=.8]
\begin{scope}[xshift=3cm]
\draw[thick] (.5,0) .. controls +(0,-.5) and +(0,-.5) .. +(.5,0);
\draw (.75,-.365) circle(3pt);

\draw[thick] (0,0) -- +(0,-1);
\fill ([xshift=-2.5pt,yshift=-2.5pt]0,-.3) rectangle ++(5pt,5pt);
\draw (0,-.65) circle(3pt);
\end{scope}
\end{tikzpicture}
\hspace{3em}
\begin{tikzpicture}[scale=.8]
\begin{scope}[xshift=3cm]
\draw[thick] (.5,0) .. controls +(0,-.5) and +(0,-.5) .. +(.5,0);

\draw[thick] (0,0) -- +(0,-1);
\fill ([xshift=-2.5pt,yshift=-2.5pt]0,-.3) rectangle ++(5pt,5pt);
\draw (0,-.65) circle(3pt);
\end{scope}
\end{tikzpicture}
\hspace{3em}
\begin{tikzpicture}[scale=.8]
\begin{scope}[xshift=3cm]
\draw[thick] (0,0) .. controls +(0,-.5) and +(0,-.5) .. +(.5,0);

\draw[thick] (1,0) -- +(0,-1);
\fill ([xshift=-2.5pt,yshift=-2.5pt]1,-.3) rectangle ++(5pt,5pt);
\draw (1,-.65) circle(3pt);
\end{scope}
\end{tikzpicture}
\hspace{3em}
\begin{tikzpicture}[scale=.8]
\begin{scope}[xshift=3cm]
\draw[thick] (0,0) .. controls +(0,-.5) and +(0,-.5) .. +(.5,0);

\draw[thick] (1,0) -- +(0,-1);
\draw (1,-.5) circle(3pt);

\fill ([xshift=-2.5pt,yshift=-2.5pt].25,-.365) rectangle ++(5pt,5pt);
\end{scope}
\end{tikzpicture}
\]
\end{ex}

Let $C_\mathrm{KL}^{\leq\lfloor\frac{k}{2}\rfloor}(m)$ denote the subset of $C_\mathrm{KL}(m)$ consisting of all cup diagrams with at most $\lfloor\frac{k}{2}\rfloor$ cups. If $m=k$ we have $C_\mathrm{KL}^{\leq\lfloor\frac{m}{2}\rfloor}(m)=C_\mathrm{KL}(m)$.

We define a map $\mathsf{C}^{2m-k,k}\colon s\widetilde{\mathbb B}^{2m-k,k}_\mathrm{KL}\to C_\mathrm{KL}^{\leq\lfloor\frac{k}{2}\rfloor}(m)$, called {\it cutting}, which takes a standard cup diagram $M\in s\widetilde{\mathbb B}^{2m-k,k}_\mathrm{KL}$ and replaces each dotted cup by two unmarked rays (this can be thought of as cutting the cup along the dot). Furthermore, the dots on the rays of $M$ are deleted (again, the reader might think of this as cutting along the dots and throw away the resulting line segment which is not connected to the vertex). If the number of marked rays plus the number of unmarked cups is not even in the resulting diagram we additionally either delete or put a marker on the leftmost ray.   

The map $\mathsf{G}^{2m-k,k}\colon C_\mathrm{KL}^{\leq\lfloor\frac{k}{2}\rfloor}(m)\to s\widetilde{\mathbb B}^{2m-k,k}_\mathrm{KL}$, called {\it gluing}, is defined as follows: given a cup diagram $\ba\in C_\mathrm{KL}^{\leq\lfloor\frac{k}{2}\rfloor}(m)$ we replace the two rightmost rays by a cup with a dot (this can be thought of as gluing the two endpoints of the rays resulting in a cup in which the gluing point is decorated with a dot). The resulting cup has a marker if and only if one of the rays from which it resulted had a marker. Take the new diagram and repeat the procedure until it contains exactly $\lfloor\frac{k}{2}\rfloor$ cups. 

%\begin{ex}
%Cutting and Gluing

%Here is a picture illustrating the gluing map $\mathsf{G}$ in a specific example:
%\[
%\begin{tikzpicture}[scale=.8]
%\draw[thick] (0,0) .. controls +(0,-.5) and +(0,-.5) .. +(.5,0);
%\draw[thick] (2.5,0) .. controls +(0,-.5) and +(0,-.5) .. +(.5,0);

%\draw[thick] (1,0) -- +(0,-1);
%\draw[thick] (1.5,0) -- +(0,-1);
%\draw[thick] (2,0) -- +(0,-1);
%\draw[thick] (3.5,0) -- +(0,-1);

%\fill (1,-.5) circle(3pt);
%\end{tikzpicture}
%\begin{tikzpicture}[scale=.8]
%\draw[thick] (0,0) .. controls +(0,-.5) and +(0,-.5) .. +(.5,0);
%\draw[thick] (1,0) .. controls +(0,-.5) and +(0,-.5) .. +(.5,0);
%\draw[thick] (2,0) .. controls +(0,-1) and +(0,-1) .. +(1.5,0);
%\draw[thick] (2.5,0) .. controls +(0,-.5) and +(0,-.5) .. +(.5,0);

%\draw (2.75,-.74) circle(3pt);
%\draw (1.4,-.3) circle(3pt);
%\fill (1.1,-.3) circle(3pt);
%\end{tikzpicture}
%\]
%\end{ex}

\begin{ex}
The standard cup diagrams in Example~\ref{ex:standard_cup_diagrams} are obtained by applying $\mathsf{G}^{3,3}$ to the following diagrams in $C_{\mathrm{KL}}(3)$ (in the same order) respectively. 
\[
\begin{array}{ccccccc}
\begin{tikzpicture}[baseline={(0,-.9)}]
\draw[thick] (0,0) -- +(0,-.9);
\draw[thick] (.5,0) -- +(0,-.9);
\draw[thick] (1,0) -- +(0,-.9);
\end{tikzpicture}
&,&
\begin{tikzpicture}[baseline={(0,-.9)}]
\draw[thick] (.5,0) .. controls +(0,-.5) and +(0,-.5) .. +(.5,0);

\draw[thick] (0,0) -- +(0,-.9);
\fill ([xshift=-2.5pt,yshift=-2.5pt]0,-.45) rectangle ++(5pt,5pt);
\end{tikzpicture}
&,&
\begin{tikzpicture}[baseline={(0,-.9)}]
\draw[thick] (0,0) .. controls +(0,-.5) and +(0,-.5) .. +(.5,0);

\draw[thick] (1,0) -- +(0,-.9);
\fill ([xshift=-2.5pt,yshift=-2.5pt]1,-.45) rectangle ++(5pt,5pt);
\end{tikzpicture}
&,&
\begin{tikzpicture}[baseline={(0,-.9)}]
\draw[thick] (0,0) .. controls +(0,-.5) and +(0,-.5) .. +(.5,0);

\draw[thick] (1,0) -- +(0,-.9);

\fill ([xshift=-2.5pt,yshift=-2.5pt].25,-.365) rectangle ++(5pt,5pt);
\end{tikzpicture}\hspace{.5em}.
\end{array}
\]
\end{ex}

\begin{lem} \label{lem:comb_bij} 
The assignments $\mathsf{G}^{2m-k,k}$ and $\mathsf{C}^{2m-k,k}$ define mutually inverse maps between 
\[
s\widetilde{\mathbb B}^{2m-k,k}_\mathrm{KL}\quad\text{and}\quad C_\mathrm{KL}^{\leq\lfloor\frac{k}{2}\rfloor}(m).
\]
\end{lem}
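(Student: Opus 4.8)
The plan is to verify that the cutting map $\mathsf{C}^{2m-k,k}$ and the gluing map $\mathsf{G}^{2m-k,k}$ are well-defined (i.e.\ land in the claimed target sets) and then check directly that their composites in both orders are the identity. I would organize the argument around a single structural observation: a standard cup diagram $M\in s\widetilde{\mathbb B}^{2m-k,k}_\mathrm{KL}$ is essentially determined by its underlying \emph{undotted} part together with the parity data, because the standardness condition forces the dotted cups and the dotted rays to sit in the ``rightmost'' positions that are accessible from the right side of the rectangle without crossing marker-paths. Concretely, if $\ba\in C_\mathrm{KL}^{\leq\lfloor\frac{k}{2}\rfloor}(m)$ has $j\leq\lfloor\frac{k}{2}\rfloor$ cups, then it has $m-2j$ rays, and $\mathsf{G}^{2m-k,k}$ pairs up the $2(\lfloor\frac{k}{2}\rfloor-j)$ rightmost rays into $\lfloor\frac{k}{2}\rfloor-j$ dotted cups, leaving $m-2\lfloor\frac{k}{2}\rfloor$ rays which become the dotted rays; one must check this never produces a crossing, which is clear since the rightmost rays are precisely the ones that cannot be ``blocked'' on the right.

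First I would check well-definedness of $\mathsf{G}^{2m-k,k}$: starting from $\ba$, gluing the two rightmost rays into a dotted cup, then repeating, is non-intersecting at each step (new cups are nested outside nothing, as we always take the two rightmost free rays), the dot-per-component and accessibility-from-the-right conditions hold by construction, the marker-on-the-new-cup rule keeps the resulting cup legitimately markable (a glued ray that was accessible from the left remains accessible from the left after gluing, since gluing only affects the right side), and the parity ``marked rays plus unmarked cups is even'' is preserved because gluing two rays into one undotted $\dots$ wait, the new cup \emph{has} a dot, so it does not count towards ``unmarked cups'', while each glued ray that carried a marker ceases to be a marked ray and its marker moves to the dotted cup which is not counted either; thus the parity-relevant count changes by an even amount (a marked ray and an unmarked ray glue with no change; two marked rays or two unmarked rays also produce no change of parity in the monitored quantity). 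Then $\mathsf{C}^{2m-k,k}$ is well-defined because cutting a dotted cup produces two undotted rays and cutting a dotted ray removes a dot; the only subtlety is the monitored parity, which is exactly why the definition of $\mathsf{C}^{2m-k,k}$ includes the correcting step of adding or removing a marker on the leftmost ray. After cutting, the number of cups drops from $\lfloor\frac{k}{2}\rfloor$ to $j$ where $j$ is the number of undotted cups of $M$, and $j\leq\lfloor\frac{k}{2}\rfloor$, so the image lies in $C_\mathrm{KL}^{\leq\lfloor\frac{k}{2}\rfloor}(m)$.

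Next I would prove $\mathsf{C}^{2m-k,k}\circ\mathsf{G}^{2m-k,k}=\mathrm{id}$. Given $\ba$, applying $\mathsf{G}^{2m-k,k}$ glues the rightmost rays into dotted cups and leaves a string of dotted rays on the left; applying $\mathsf{C}^{2m-k,k}$ then cuts exactly those dotted cups back into rays and deletes exactly those dots, recovering $\ba$'s ray/cup arrangement literally, with the marker placement matching by the gluing/cutting marker rules; the parity-correction step of $\mathsf{C}^{2m-k,k}$ does nothing because $\ba$ already satisfies the parity condition and we have not altered it (the round trip of the monitored count is zero). For the other composite $\mathsf{G}^{2m-k,k}\circ\mathsf{C}^{2m-k,k}=\mathrm{id}$ on $s\widetilde{\mathbb B}^{2m-k,k}_\mathrm{KL}$, the key point is precisely the structural observation above: in a standard cup diagram the dotted cups occupy the rightmost accessible-from-the-right slots and the dotted rays lie further left, so after cutting and then re-gluing the two rightmost rays repeatedly, the same cups get reconstructed in the same nesting (standardness is what rules out any other possibility), and the dots land back where they were. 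One must also confirm the marker data survives the round trip, again using that markers on dotted cups correspond under cutting to markers on the resulting rays, and the parity-correcting step recreates the leftmost-ray marker that cutting may have removed. I expect the main obstacle to be exactly this last verification: carefully checking that standardness pins down the dotted cups uniquely and that the somewhat ad hoc parity-correction in the definition of $\mathsf{C}^{2m-k,k}$ is inverted correctly by the marker rule in $\mathsf{G}^{2m-k,k}$ — essentially a bookkeeping argument about which component carries the ``excess'' marker, done most cleanly by induction on $\lfloor\frac{k}{2}\rfloor-j$, the number of dotted cups.
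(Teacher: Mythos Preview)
Your approach---direct verification that $\mathsf{G}^{2m-k,k}$ and $\mathsf{C}^{2m-k,k}$ are well-defined and mutually inverse---is exactly what the paper does; the paper's entire proof reads ``This is evident from the definitions.'' So there is no difference in strategy.

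There is, however, a genuine slip in your parity bookkeeping. You write that the new glued cup ``has a dot, so it does not count towards `unmarked cups'\,''. This conflates the two decorations: \emph{unmarked} means ``no black marker'', not ``no white dot''. A dotted cup without a black marker is still an unmarked cup and contributes to the monitored parity. With the correct reading, a single gluing step changes the count ``marked rays plus unmarked cups'' by $\pm 1$: gluing two unmarked rays produces one new unmarked cup ($+1$); gluing a marked ray with an unmarked ray removes a marked ray and produces a marked cup ($-1$). So your claim that gluing preserves parity is false as stated.

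In fact, if you compare the paper's own Example (three unmarked rays in $C_{\mathrm{KL}}(3)$ mapping under $\mathsf{G}^{3,3}$ to the first standard diagram in $s\widetilde{\mathbb B}^{3,3}_\mathrm{KL}$), you will see a marker appearing on the leftmost ray that was not present in the input. This shows that the gluing map, like the cutting map, implicitly includes a parity-correcting toggle of the marker on the leftmost ray---even though the definition as written does not say so. Once you build this correction into $\mathsf{G}^{2m-k,k}$ as well, the two parity-correction steps cancel under composition, and your inductive argument on the number of dotted cups goes through cleanly. The structural observation you isolate (standardness forces the dotted cups to be exactly the ones accessible from the right) is the right organizing principle for the shape-level bijection; only the marker/parity layer needs the fix above.
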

\begin{proof}
This is evident from the definitions. 
\end{proof}

%Before we explicitly construct a basis of $H_*(\mathcal S^k_\mathrm{even})$ and relate it to the combinatorics of the previous subsection, we derive some general properties of the map $\gamma$ appearing in the commutative diagram (\ref{eq:key_comm_diagram}) from the beginning of this section. 
 
\begin{prop} \label{prop:technical_prop_1}
The line diagram sums $L_M$, i.e.\ the images of $M$ under $\psi_{m,m}$, where $M$ varies over all standard cup diagrams in $s\widetilde{\mathbb B}^{m,m}_\mathrm{KL}$, are linearly independent in $H_*((\mathbb S^2)^m)$. 
\end{prop}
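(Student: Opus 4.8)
The plan is to exhibit an explicit triangular structure relating the line diagram sums $L_M$, $M \in s\widetilde{\mathbb B}^{m,m}_\mathrm{KL}$, to the line diagram basis $\{l_U\}$ of $H_*((\mathbb S^2)^m)$ under a suitable partial order, so that linear independence follows formally. First I would observe that each $L_M$ is a homogeneous element of $H_*((\mathbb S^2)^m)$ of degree $2r$, where $r$ is the number of undotted cups in $M$; hence it suffices to prove linear independence separately in each homological degree, i.e.\ among those $M$ with a fixed number $r$ of undotted cups. Fixing such an $r$, the line diagrams $l_U$ appearing in $L_M = \sum_{U \in \mathcal U_M} (-1)^{\Lambda_M(U)} l_U$ all have $|U^c| = m - 2r$ dots, where $U$ ranges over transversals picking one endpoint of each undotted cup (the dotted cups contribute their vertices to $U^c$ together with all ray-vertices).

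The key combinatorial step is to single out, for each standard cup diagram $M$, a distinguished ``leading term'' line diagram $l_{U(M)}$ and to check two things: (i) the coefficient of $l_{U(M)}$ in $L_M$ is $\pm 1$, in particular nonzero; and (ii) the assignment $M \mapsto U(M)$ is injective, and more strongly, if $l_{U(M)}$ occurs in $L_{M'}$ (with $M'$ also standard, same degree), then $M' = M$ — or at least that occurrence is controlled by a partial order in which $M$ is maximal. The natural candidate for $U(M)$ is the transversal that selects the \emph{left} endpoint of every undotted cup (equivalently, one could use right endpoints; I would pick whichever makes the triangularity cleanest given the cutting map $\mathsf C^{m,m}$). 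Then $U(M)^c$ records: the right endpoints of undotted cups, both endpoints of dotted cups, and all ray vertices. The crucial point is that the standardness condition — every dotted cup can be connected to the right edge of the rectangle without crossing the diagram or the marker-paths — is exactly what is needed to reconstruct $M$ from the combinatorial data of which vertices lie in $U(M)^c$ and which undotted-cup nesting pattern is forced; this is essentially the content of Lemma~\ref{lem:comb_bij}, which tells us $\mathsf C^{m,m}$ is a bijection onto $C_\mathrm{KL}^{\le \lfloor m/2\rfloor}(m)$, and I would use $\mathsf C^{m,m}(M)$ as the bookkeeping device encoding the leading term.

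Concretely, I would argue as follows. Suppose $\sum_M c_M L_M = 0$ with not all $c_M = 0$, all $M$ of degree $2r$. Introduce the partial order on standard cup diagrams of degree $2r$ by declaring $M' \preceq M$ if $U(M')^c \subseteq U(M)^c$ is false in the appropriate refined sense — more robustly, order the diagrams by, say, the lexicographically-read sequence of positions of the undotted-cup left endpoints, or by nestedness (more nested $<$ less nested), mirroring the standard arguments of \cite[\S3]{RT11}. Pick $M$ maximal with $c_M \ne 0$. I claim $l_{U(M)}$ appears in no $L_{M'}$ with $c_{M'}\ne 0$ other than $M' = M$: indeed, if $U(M) \in \mathcal U_{M'}$ then $U(M)$ is a transversal of the undotted cups of $M'$, forcing the dotted cups and rays of $M'$ to live on the vertex set $U(M)^c$, and the standardness of $M'$ together with that of $M$ then pins down $M' = M$ by the bijectivity in Lemma~\ref{lem:comb_bij} (one recovers the cut diagram, hence $M'$, from this data). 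Since the coefficient of $l_{U(M)}$ in $L_M$ is $(-1)^{\Lambda_M(U(M))} = \pm 1 \ne 0$, the coefficient of $l_{U(M)}$ in $\sum_M c_M L_M$ is $\pm c_M \ne 0$, contradicting that the sum vanishes. Hence all $c_M = 0$.

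The main obstacle I anticipate is verifying claim (ii) rigorously: showing that the leading transversal $U(M)$ determines $M$ uniquely among \emph{standard} cup diagrams, and that the proposed partial order is genuinely a partial order under which this triangularity holds (the sign factors $\Lambda_M$ and the markers must not interfere). This is where the precise definition of ``standard'' does the real work — without the accessibility-to-the-right condition on dotted cups, different cup diagrams could produce the same leading line diagram — and it is also where one must be careful that the marker on a possibly-adjusted leftmost ray (introduced in the definition of $\mathsf C^{m,m}$ to fix parity) does not spoil injectivity. I expect this to reduce, after unwinding, to the combinatorial bijection already granted in Lemma~\ref{lem:comb_bij} plus a short check that the cutting map is compatible with reading off $U(M)^c$; the homological and sign bookkeeping is then routine, paralleling \cite[Lemma~3.11, Proposition~3.12]{RT11}.
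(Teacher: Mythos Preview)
Your triangularity strategy is the right opening move and matches the paper's first step, but claim (ii) fails as stated, and this is not a minor bookkeeping issue. The index set $\mathcal U_M$ depends only on which cups are undotted, not on the markers: if $M$ and $M'$ are standard cup diagrams with identical cup/ray/dot structure but different marker placements, then $\mathcal U_M = \mathcal U_{M'}$, so in particular $U(M) \in \mathcal U_{M'}$. (Concretely, in $s\widetilde{\mathbb B}^{3,3}_\mathrm{KL}$ the two diagrams with a single undotted cup on vertices $1,2$ and a ray on $3$, differing only in whether the cup or the ray carries the marker, have $L_M = l_{\{1\}} - l_{\{2\}}$ and $L_{M'} = -l_{\{1\}} - l_{\{2\}}$; your leading term $l_{\{1\}}$ occurs in both.) The bijection of Lemma~\ref{lem:comb_bij} does not help here because the cutting map $\mathsf C^{m,m}$ retains marker data, whereas your leading term $U(M)$ does not; you cannot reconstruct $\mathsf C^{m,m}(M)$ from $U(M)$ alone.

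What your argument actually achieves is a decoupling: the triangularity (with respect to the total order on right-endpoint sets $U_M$, as the paper uses) reduces the linear-independence question to one equation
\[
\sum_{\substack{M\,:\,f(M)=N}} \lambda_M L_M = 0
\]
for each fixed unmarked underlying diagram $N$, where $f$ forgets markers. But within each such packet the $L_M$ share the same support and differ only in signs, so a genuinely new idea is needed. The paper supplies this as a separate lemma (Lemma~\ref{lem:lin_indep}), proved by induction on the number of markable components: one singles out a markable undotted cup, splits each $L_M$ along whether $i$ or $j$ lies in $U$, and uses the sign discrepancy between marked and unmarked versions of that cup to extract the system $\lambda_l + \mu_l = 0$, $\lambda_l - \mu_l = 0$ from the inductive hypothesis. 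Your proposal is missing this second layer entirely.
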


%In order to prove the above proposition we first prove the following lemma which reduces the proof to the type $A$ situation. We follow the train of thought of the proof in type $A$ (see also \cite[Corollary 3.13]{RT11}).
For the proof of this proposition we use the following technical lemma.

\begin{lem} \label{lem:lin_indep}
Let $M\in s\widetilde{\mathbb B}^{m,m}_\mathrm{KL}$ be standard cup diagram without markers. Then the line diagram sums $L_M$, where $M$ varies over all standard cup diagrams in $s\widetilde{\mathbb B}^{m,m}_\mathrm{KL}$ which can be obtained by decorating $M$ with markers, are linearly independent.
\end{lem}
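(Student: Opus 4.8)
The plan is to exploit the explicit formula for $L_M$ in terms of line diagrams together with the combinatorial description of how a marker changes the coefficients $(-1)^{\Lambda_M(U)}$. Fix the underlying marker-free standard cup diagram $M$ with cups $c_1,\dots,c_r$ ($r=\lfloor m/2\rfloor$) and rays; write $a$ for the leftmost ray (recall $M$ being standard and marker-free forces a very restricted nesting pattern, essentially unnested cups with rays to the right, or rather the standardness condition pins down which cups may carry dots). For a subset $T$ of the cups-and-rays that are \emph{accessible from the left}, let $M_T$ denote the diagram obtained by placing markers exactly on the components indexed by $T$; the claim is that the $L_{M_T}$, as $T$ ranges over the admissible marking-sets (those for which ``\#marked rays $+$ \#unmarked cups'' is even), are linearly independent in $H_*((\mathbb S^2)^m)$. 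First I would record, from the definition of $L_M$ and of $\Lambda_M$, exactly how $L_{M_T}$ differs from $L_M$: putting a marker on a \emph{dotted} component does nothing to the coefficients (the marker only matters on cups with no dot, and on the leftmost ray), while putting a marker on an \emph{undotted} cup $c$ flips the sign of every term $l_U$ and also, since a marker toggles which endpoint is ``right'', effectively swaps the roles of the two endpoints of $c$ — but in the line-diagram-sum both endpoints already appear symmetrically, so the net effect on $L_M$ is multiplication of the whole sum by $(-1)$. The marker on the leftmost ray, by contrast, does not change $\Lambda_M$ at all (a ray has no cup-endpoint), so it leaves $L_M$ literally unchanged; what it does change is \emph{which diagram} $M_T$ is, i.e. it is a bookkeeping device enforcing the parity condition.

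Consequently the map $T \mapsto L_{M_T}$ collapses: on undotted cups each marker contributes a global sign $(-1)$, on dotted cups and on the leftmost ray it contributes nothing. So the second step is to check that the parity constraint ``\#marked rays $+$ \#unmarked cups even'' together with the standardness condition forces the marking-sets $T$ to be essentially unique once we prescribe the markers on the dotted cups and the undotted cups; more precisely, I would argue that two admissible $T, T'$ can give proportional (indeed equal up to sign) $L_{M_T}, L_{M_{T'}}$ only if they differ at most on the leftmost ray, and that the parity condition then forces $T=T'$. This is where one must be careful: the set $s\widetilde{\mathbb B}^{m,m}_\mathrm{KL}$ only contains those $M_T$ satisfying the parity condition, so ``decorating $M$ with markers'' ranges over a constrained family, and the whole content of the lemma is that this constrained family maps \emph{injectively} (in fact to a set that is linearly independent, not just injective) under $\psi_{m,m}$.

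To get genuine linear independence rather than just injectivity, the third step is to separate the $L_{M_T}$ by their supports and leading terms in $\mathbb{C}[\mathfrak{L}_m]$. Since $L_{M_T} = \pm L_M$ with the sign determined by the number of markers on undotted cups, I would instead compare different underlying $M$'s via a chosen ``distinguished'' line diagram: in each $L_M$ there is a canonical term $l_{U}$ — e.g. the one where $U$ selects the right endpoint of every undotted cup — and I would check (as in the type $A$ argument of \cite[Lemma~3.11]{RT11}) that this term appears with nonzero coefficient and is not cancelled, and that it determines $M$ (and the marker-pattern, via the sign) uniquely. Then a linear dependence $\sum_T \lambda_T L_{M_T} = 0$ would, after reading off coefficients of these distinguished line diagrams, force all $\lambda_T$ to vanish. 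The main obstacle I anticipate is precisely the interplay between the parity condition and the leftmost-ray marker: one must verify that the diagrams in $s\widetilde{\mathbb B}^{m,m}_\mathrm{KL}$ obtained by marking a given marker-free $M$ really are in bijection (under $M_T \mapsto$ (pattern on cups, forced parity bit)) with the data that the sign $(-1)^{\#\text{markers on undotted cups}}$ sees, so that no two \emph{distinct} elements of the family collapse to the same vector — otherwise linear independence fails and the statement would need the leftmost-ray marker to be suppressed from the count. I would resolve this by invoking the standardness hypothesis (dotted cups are exactly the ``removable'' ones, hence accessible and markable independently) and the accessibility condition defining $C_\mathrm{KL}(m)$, which together pin down the family precisely.
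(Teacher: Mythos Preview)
Your proposal contains a fundamental computational error that undermines the whole plan. You assert that placing a marker on an undotted cup $c$ multiplies $L_M$ by a global sign $(-1)$, so that $L_{M_T} = \pm L_M$ depends only on the number of markers on undotted cups. This is false. From the definition, for an unmarked undotted cup the contribution to $\Lambda_M(U)$ is $1$ if $U$ contains the \emph{right} endpoint and $0$ if $U$ contains the left endpoint; for a marked undotted cup the contribution is $1$ regardless of which endpoint lies in $U$. Hence adding a marker flips the sign of exactly those terms $l_U$ with the \emph{left} endpoint in $U$, and leaves the others unchanged. The small example in the paper already shows this: for a single cup on vertices $1,2$ one has
\[
L_{\text{unmarked}} = l_{\{1\}} - l_{\{2\}}, \qquad L_{\text{marked}} = -\,l_{\{1\}} - l_{\{2\}},
\]
which are not proportional. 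So different marker patterns give genuinely different vectors, not just sign-multiples of a single $L_M$; in fact, if your claim were correct the family would collapse to $\{\pm L_M\}$ and the lemma would be \emph{false}.

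Because of this, your subsequent steps --- separating the $L_{M_T}$ by a distinguished leading term, and reducing to injectivity of the marker pattern --- never get off the ground: there is no single ``leading term'' common to all $L_{M_T}$, and the sign data you propose to read off simply isn't there. The paper's proof proceeds quite differently: it fixes one markable undotted cup, pairs the admissible decorations according to whether that cup carries a marker (adjusting the parity bit on the leftmost dotted component), deletes that cup, and uses the explicit splitting of $L_M$ into the $\widetilde{U}_i$- and $\widetilde{U}_j$-parts to reduce to a linear independence statement for diagrams with one fewer markable component. This is exactly where the asymmetry between the two endpoints (the thing you erased) is used.
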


%\begin{rem}
%The summands occuring in the line diagram sums are the same for all elements inside a fixed subset as above. Only the signs in front of the summands differ. 
%\end{rem}

\begin{proof}
We prove the statement by induction on the total number of components of $M$ which can be marked with a marker. If there is precisely one decorable component in $M$, then there exists precisely one cup diagram with the claimed properties and the claim follows. In case that $M$ consists of a single cup without a dot we note that all the possible decorations with markers (without fixing the parity) are linearly independent. 

So suppose there is more than one component in $M$ which is allowed to have a marker. In particular, since $M$ is standard, there exists a cup in $M$ without a dot which may be marked with a marker. In the following we fix such a cup connecting $i$ and $j$. Let $M=M_1,N_1,M_2,N_2,\ldots,M_r,N_r$ be the standard cup diagrams obtained from all the possible decorations of $M$ with markers and a fixed parity of markers. The diagrams are listed in such a way that $M_i$ and $N_i$ are the same except that $M_i$ does not have a marker on the fixed cup, but a marker on the leftmost component with a dot. On the other hand, $N_i$ has a marker on the fixed cup but no marker on the leftmost component with a dot. In case that $M$ has no dots we simply consider all standard cup diagrams $M_1,N_1,\ldots,M_r,N_r$ (not only the ones with a fixed parity of markers) and drop the condition on the leftmost component with a dot. Let $M^\prime_1,N^\prime_1,\ldots,M^\prime_r,N^\prime_r$ be the cup diagrams obtained by deleting the cup connecting $i$ and $j$. We compute
\begin{align*}
0 &= \sum_{l=1}^r \lambda_lL_{M_l} + \sum_{l=1}^r \mu_lL_{N_l} \\
	&= \sum_{l=1}^r \lambda_l\left(-\sum_{U\in\mathcal U_{M^\prime_l}}(-1)^{\Lambda_{M^\prime_l}(U)}l_{\widetilde{U_j}}+\sum_{U\in\mathcal U_{M^\prime_l}}(-1)^{\Lambda_{M^\prime_l}(U)}l_{\widetilde{U_i}}\right)\\
	&\hspace{1em} + \sum_{l=1}^r \mu_l\left(-\sum_{U\in\mathcal U_{N^\prime_l}}(-1)^{\Lambda_{N^\prime_l}(U)}l_{\widetilde{U_j}}-\sum_{U\in\mathcal U_{N^\prime_l}}(-1)^{\Lambda_{N^\prime_l}(U)}l_{\widetilde{U_i}}\right) \\
	&= \sum_{U\in\mathcal U_{M^\prime}} \left(\sum_{l=1}^r(-\mu_l-\lambda_l)(-1)^{\Lambda_{M^\prime_l}(U)}\right)l_{\widetilde{U_j}} + \sum_{U\in\mathcal U_{M^\prime}} \left(\sum_{l=1}^r(-\mu_l+\lambda_l)(-1)^{\Lambda_{M^\prime_l}(U)}\right)l_{\widetilde{U_i}}. 
\end{align*}  
Since the family of vectors $l_{\widetilde{U}_i}, l_{\widetilde{U}_j}$ is linearly independent if $U$ varies over all elements in $\mathcal U_{M^\prime}$, we deduce the equations 
\begin{equation} \label{eq:equations_lin_independence}
\sum_{l=1}^r(-\mu_l-\lambda_l)(-1)^{\Lambda_{M^\prime_l}(U)}=0 \hspace{1.6em}\text{and}\hspace{1.6em} \sum_{l=1}^r(-\mu_l+\lambda_l)(-1)^{\Lambda_{M^\prime_l}(U)}=0
\end{equation}
for all $U\in\mathcal U_{M^\prime}$. Using the left equation in (\ref{eq:equations_lin_independence}) we compute
\[
0 = \sum_{U\in\mathcal U_{M^\prime}}\left(\sum_{l=1}^r(-\mu_l-\lambda_l)(-1)^{\Lambda_{M^\prime_l}(U)}\right)l_U = \sum_{l=1}^r(-\mu_l-\lambda_l)\left(\sum_{U\in\mathcal U_{M^\prime}}(-1)^{\Lambda_{M^\prime_l}(U)}l_U\right) ,
\]
which equals $\sum_{l=1}^r(-\mu_l-\lambda_l)L_{M^\prime_l}$ and thus implies $\lambda_l=-\mu_l$ for all $l\in\{1,\ldots,r\}$ because the $L_{M^\prime_l}$ are linearly independent by induction. Similarly we  obtain $\lambda_l=\mu_l$ by repeating the calculation with the right equation in (\ref{eq:equations_lin_independence}). Hence, we deduce that $\lambda_l=\mu_l=0$ for all $l\in\{1,\ldots,r\}$.  
\end{proof}

\begin{proof}[Proof of Proposition~\ref{prop:technical_prop_1}]
It suffices to prove that the elements $L_M$, where $M$ varies over all standard cup diagrams in $s\widetilde{\mathbb B}^{m,m}_{\mathrm{KL},l}$ with precisely $l$ cups without a dot, are linearly independent in $H_{2l}((\mathbb S^2)^m)$, $0\leq l\leq m$. We define a total order on the subsets of $\{1,\ldots,m\}$ of cardinality $l$:
\begin{equation} \label{eq:total_order}
\{i_1<\cdots<i_l\}<\{i'_1<\cdots<i'_l\} :\Leftrightarrow \exists r\colon i_r<i_r' \text{ and }i_{r+1}=i'_{r+1},\ldots,i_l=i'_l
\end{equation}
which induces a total order on the line diagrams $l_U$, where $U\subseteq\{1,\ldots,m\}$, $|U|=l$, i.e.\ an order on our basis of $H_{2l}((\mathbb S^2)^m)$. Define $f\colon s\widetilde{\mathbb B}^{m,m}_{\mathrm{KL},l}\to s\widetilde{\mathbb B}^{m,m}_{\mathrm{unmar},l}$ as the map which forgets all markers.

Given a standard cup diagram $M$, we write $U_M\in\mathcal U_M$ to denote the set containing all right endpoints of the cups without a dot. Note that $U_M$ is maximal in $\mathcal U_M$ with respect to the order (\ref{eq:total_order}). Assume that, for some $\lambda_M\in\mathbb C$,
\begin{equation} \label{eq:lin_indep_ansatz}
0=\sum_{M\in s\widetilde{\mathbb B}^{m,m}_\mathrm{KL}} \lambda_ML_M = \sum_{N\in s\widetilde{\mathbb B}^{m,m}_\mathrm{unmar}}\sum_{\begin{subarray}{l}
M\in s\widetilde{\mathbb B}^{m,m}_\mathrm{KL}\\ f(M)=N\end{subarray}}\lambda_ML_M. 
\end{equation}
 
Fix $N_\text{max}$ such that $U_{N_\text{max}}$ is maximal amongst all $N\in s\widetilde{\mathbb B}^{m,m}_\mathrm{unmar}$. Since the cup diagrams without markers are determined by the right endpoints of cups there is a unique such $N$. 

Note that the basis vector $l_{U_M}$ occurs (with non-zero coefficient) in each line diagram sum $L_{M^\prime}$ with $f(M^\prime)=M$ but it does not occur in any $L_{M^\prime}$, where $f(M^\prime)\neq M$ because these $L_{M^\prime}$ only contain basis vectors which are strictly smaller with respect to the total order (this follows from our maximality assumption on $M$ and the fact that $U_M$ is maximal in $\mathcal U_M$ for every $M$).

Hence, equation (\ref{eq:lin_indep_ansatz}) decouples into two independent equations    
\[
\sum_{\begin{subarray}{l}
M\in s\widetilde{\mathbb B}^{m,m}_\mathrm{KL}\\ f(M)=M'_\text{max}\end{subarray}}\lambda_ML_M = 0 \hspace{2.5em}\text{and} \hspace{2.5em}
\sum_{\begin{subarray}{l}
M'\in s\widetilde{\mathbb B}^{m,m}_\mathrm{unmar}\\ M'\neq M'_\text{max}\end{subarray}}\sum_{\begin{subarray}{l}
M\in s\widetilde{\mathbb B}^{m,m}_\mathrm{KL}\\ f(M)=M'\end{subarray}}\lambda_ML_M = 0.
\]
By iterating the above argument we obtain equations 
$
\sum_{\begin{subarray}{l}
M\in s\widetilde{\mathbb B}^{m,m}_\mathrm{KL}\\ f(M)=N\end{subarray}}\lambda_ML_M = 0,
$
for all $N\in s\widetilde{\mathbb B}^{m,m}_\mathrm{unmar}$. Hence, Proposition~\ref{prop:technical_prop_1} follows from Lemma~\ref{lem:lin_indep}.       
\end{proof}

\subsection{An explicit homology basis}

In this subsection we use the results obtained so far to construct a basis of the homology of $\SOfiber$ and give an explicit description of the map $\gamma_{2m-k,k}$.

\begin{prop} \label{prop:basis_general_case}
The elements $\phi_{2m-k,k}(\mathsf{G}^{2m-k,k}(\ba))$, where $\ba\in C_\mathrm{KL}^{\leq\lfloor\frac{k}{2}\rfloor}(m)$, form a basis of the homology $H_*(\SOfiber)$. 
Moreover, we have a commutative diagram
\[
\begin{xy}
	\xymatrix{
H_{<k}(\SOfiberequalsize) \ar@^{(->}[dr]^{\gamma_{m,m}} \ar[d]^\cong_{\Gamma^{2m-k,k}} \\ 
H_*(\SOfiber) \ar@^{(->}[r]^{\gamma_{2m-k,k}} & H_*((\mathbb S^2)^m) 
	}
\end{xy}
\]
where $\Gamma^{2m-k,k}$ is the isomorphism of vector spaces sending a basis element $\phi_{m,m}\left(\mathsf{G}^{2m-k,k}(\ba)\right)$ of $H_{<k}(\SOfiberequalsize)$ to $\phi_{2m-k,k}(\mathsf{G}^{2m-k,k}(\ba))\in H_*(\SOfiber)$.  
\end{prop}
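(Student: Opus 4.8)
The plan is to combine the combinatorial bijection of Lemma~\ref{lem:comb_bij} with the linear independence result of Proposition~\ref{prop:technical_prop_1} and a dimension count coming from the topology of $\SOfiber$. First I would fix $\ba \in C_\mathrm{KL}^{\leq\lfloor\frac{k}{2}\rfloor}(m)$ and set $M_\ba := \mathsf{G}^{2m-k,k}(\ba) \in s\widetilde{\mathbb B}^{2m-k,k}_\mathrm{KL}$, so that the candidate basis elements are the classes $\phi_{2m-k,k}(M_\ba)$. Chasing the commutative diagram~(\ref{eq:key_comm_diagram}), one has $\psi_{2m-k,k}(M_\ba) = \gamma_{2m-k,k}(\phi_{2m-k,k}(M_\ba)) = L_{M_\ba}$ by Proposition~\ref{prop:image_of_homology_gen}. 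The key point is then: the $L_{M_\ba}$ for $\ba$ ranging over $C_\mathrm{KL}^{\leq\lfloor\frac{k}{2}\rfloor}(m)$ are linearly independent in $H_*((\mathbb S^2)^m)$. For the equal-block case $m=k$ this is exactly Proposition~\ref{prop:technical_prop_1}, since $\mathsf{G}^{m,m}$ surjects onto all of $s\widetilde{\mathbb B}^{m,m}_\mathrm{KL}$ by Lemma~\ref{lem:comb_bij}; for the general two-block case $C_\mathrm{KL}^{\leq\lfloor\frac{k}{2}\rfloor}(m) \subseteq C_\mathrm{KL}(m) = C_\mathrm{KL}^{\leq\lfloor\frac{m}{2}\rfloor}(m)$, so $\{M_\ba\}$ is a subset of $s\widetilde{\mathbb B}^{m,m}_\mathrm{KL}$ and linear independence of the corresponding subfamily of line diagram sums follows a fortiori from Proposition~\ref{prop:technical_prop_1}. (Here I'd note that the maps $\psi$ and $L_M$ only depend on the underlying spheres $(\mathbb S^2)^m$, not on which topological Springer fiber we sit inside, so reading the general-case $M_\ba$ as a standard cup diagram in the equal-size setting is legitimate.)

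Linear independence of $\{L_{M_\ba}\}$ forces, via the diagram, both that $\gamma_{2m-k,k}$ is injective (this reproves Proposition~\ref{prop:injection}) and that the classes $\phi_{2m-k,k}(M_\ba)$ are linearly independent in $H_*(\SOfiber)$. To conclude they span, I would count: the number of $\ba \in C_\mathrm{KL}^{\leq\lfloor\frac{k}{2}\rfloor}(m)$ with exactly $l$ cups equals the number of cup diagrams in $C_\mathrm{KL}(m)$ with exactly $l$ cups, and by Proposition~\ref{introprop:diagrammatic_homology_basis} this is $\dim H_{2l}(\SOfiber)$. Since the homological degree of $\phi_{2m-k,k}(M_\ba)$ is $2l$ when $\ba$ has $l$ cups (the dotted cups created by gluing contribute $0$-cells, so $\mathsf{G}^{2m-k,k}(\ba)$ has exactly $l$ undotted cups and hence degree $2l$ by the formula after~(\ref{eq:homology_identification_for_S_ba})), a linearly independent set of the right size in each graded piece is automatically a basis. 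Alternatively, if one prefers not to invoke Proposition~\ref{introprop:diagrammatic_homology_basis} as a black box, one can argue that $\phi_{2m-k,k}$ is surjective: every cell of every $S_\ba$ maps into $\SOfiber$, so $H_*(\SOfiber)$ is spanned by the images of the cellular classes of the $S_\ba$, and each such class is (by the combinatorics of enriched cup diagrams and the cut/glue moves) a linear combination of the standard-diagram classes $\phi_{2m-k,k}(\mathsf{G}^{2m-k,k}(\ba))$ — but this spanning argument is more delicate and I'd only fall back on it if needed.

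For the commutative triangle, the content is purely formal once the basis statement is in place. The map $\Gamma^{2m-k,k}$ is \emph{defined} on basis elements by $\phi_{m,m}(\mathsf{G}^{2m-k,k}(\ba)) \mapsto \phi_{2m-k,k}(\mathsf{G}^{2m-k,k}(\ba))$; since both sides are bases (of $H_{<k}(\SOfiberequalsize)$ and of $H_*(\SOfiber)$ respectively — the former because the basis elements of $H_{m,m}$ indexed by $\ba\in C_\mathrm{KL}(m)$ with $<\lfloor k/2\rfloor+1$, i.e. with at most $\lfloor k/2\rfloor$, cups span exactly $H_{<k}$), $\Gamma^{2m-k,k}$ is a well-defined isomorphism of graded vector spaces. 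Commutativity $\gamma_{2m-k,k}\circ\Gamma^{2m-k,k} = \gamma_{m,m}|_{H_{<k}}$ is then checked on basis elements: both send $\phi_{m,m}(\mathsf{G}^{2m-k,k}(\ba))$ to the line diagram sum $L_{\mathsf{G}^{2m-k,k}(\ba)} \in H_*((\mathbb S^2)^m)$, by Proposition~\ref{prop:image_of_homology_gen} applied in each of the two topological Springer fibers — and, crucially, $L_M$ depends only on the enriched cup diagram $M$ and not on the ambient fiber, so the two computations agree.

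I expect the main obstacle to be the spanning half of the basis claim, i.e. showing the linearly independent family $\{\phi_{2m-k,k}(M_\ba)\}$ has full size. The cleanest route uses Proposition~\ref{introprop:diagrammatic_homology_basis} (proved independently via the cell decomposition of $\SOfiber$ earlier in Section~\ref{section:section1}) to supply $\dim H_{2l}(\SOfiber)$; the delicate point is matching the grading correctly, namely that $\mathsf{G}^{2m-k,k}$ sends a cup diagram with $l$ cups to a standard enriched cup diagram with exactly $l$ \emph{undotted} cups (all the glued-in cups carry dots), so that degrees are preserved under the bijection and the count is performed degree by degree.
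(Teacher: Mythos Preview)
Your linear-independence argument is correct and matches the paper's: Proposition~\ref{prop:image_of_homology_gen} gives $L_{\mathsf{G}^{2m-k,k}(\ba)}=L_{\mathsf{G}^{m,m}(\ba)}$ (only undotted cups enter the definition of $L_M$, and the two gluing maps produce the same undotted cups), so the family $\{L_{M_\ba}\}$ coincides with a subfamily of the linearly independent set of Proposition~\ref{prop:technical_prop_1}. (Strictly speaking $\mathsf{G}^{2m-k,k}(\ba)$ has only $\lfloor k/2\rfloor$ cups and hence is \emph{not} literally an element of $s\widetilde{\mathbb B}^{m,m}_\mathrm{KL}$, but your parenthetical remark that $L_M$ does not depend on the ambient fiber is exactly the right patch.) The commutativity argument is likewise fine and agrees with the paper.

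The spanning argument, however, has a genuine gap: Proposition~\ref{introprop:diagrammatic_homology_basis} is stated in the introduction with the pointer ``cf.\ again Proposition~\ref{prop:basis_general_case}''---it \emph{is} the result you are proving, not an independent input, and there is no separate proof of it earlier in Section~\ref{section:section1}. Invoking it for the dimension count is therefore circular. The paper instead appeals to the appendix (Proposition~\ref{prop:dim_formula}), where an explicit cell partition of $\SOfiber$ computes $\dim H^*(\SOfiber)=\sum_{i=0}^{(k-1)/2}\binom{m}{i}$, which is then matched with $|C_\mathrm{KL}^{\leq\lfloor k/2\rfloor}(m)|$. Your fallback via surjectivity of $\phi_{2m-k,k}$ is in the right spirit, but note that the union $\SOfiber=\bigcup_\ba S_\ba$ does not by itself force $\bigoplus H_*(S_\ba)\to H_*(\SOfiber)$ to be onto; one needs a Mayer--Vietoris or CW argument (the paper cites this for the equal-size case in the proof of Lemma~\ref{lem:iso_onto_image}), and making it work in general would again amount to building the cell partition of the appendix.

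A smaller point: the claim that linear independence of $\{L_{M_\ba}\}$ alone ``reproves Proposition~\ref{prop:injection}'' is premature. A linear map that sends some finite set to a linearly independent set need not be injective unless that set spans the domain---which is exactly what you have not yet shown. The paper deduces injectivity of $\gamma_{2m-k,k}$ only after the basis statement (equivalently, from injectivity of $\gamma_{m,m}$ via the commutative triangle).
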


\begin{rem}
According to Proposition~\ref{prop:basis_general_case} above, we can identify $\mathbb C[C_\mathrm{KL}^{\leq\lfloor\frac{k}{2}\rfloor}(m)]$ with $H_*(\SOfiber)$ via the following isomorphism of vector spaces, 
\begin{equation} \label{eq:combinatorial_springer_homology}
\mathbb C[C_\mathrm{KL}^{\leq\lfloor\frac{k}{2}\rfloor}(m)]\xrightarrow\cong H_*(\SOfiber)\,,\,\,\ba\mapsto\phi_{2m-k,k}\left(\mathsf{G}^{2m-k,k}(\ba)\right),
\end{equation}
thereby obtaining a diagrammatic basis of the homology. Under this identification the cup diagrams with $l$ cups form a basis of $H_{2l}(\SOfiber)$, $0\leq l\leq\lfloor\frac{k}{2}\rfloor$.
\end{rem}

\begin{lem} \label{lem:iso_onto_image}
The map $\gamma_{m,m}\colon H_*(\SOfiberequalsize) \to H_*((\mathbb S^2)^m)$ induced by the inclusion $\SOfiberequalsize\hookrightarrow(\mathbb S^2)^m$ is injective.
\end{lem}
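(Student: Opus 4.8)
The statement to prove is Lemma~\ref{lem:iso_onto_image}: injectivity of $\gamma_{m,m}\colon H_*(\SOfiberequalsize)\to H_*((\mathbb S^2)^m)$. The natural strategy is to exhibit, inside the image of $\gamma_{m,m}$, a spanning set that is already known to be linearly independent, and to simultaneously show that this spanning set has at least as many elements as $\dim H_*(\SOfiberequalsize)$. Concretely, I would run the following three-step argument. First, recall from~\eqref{eq:key_comm_diagram} that $\gamma_{m,m}\circ\phi_{m,m}=\psi_{m,m}$, so the image of $\gamma_{m,m}$ contains all the line diagram sums $L_M=\psi_{m,m}(M)$ for $M\in\widetilde{\mathbb B}^{m,m}_\mathrm{KL}$; in particular it contains $L_M$ for every \emph{standard} cup diagram $M\in s\widetilde{\mathbb B}^{m,m}_\mathrm{KL}$, and by Proposition~\ref{prop:technical_prop_1} these are linearly independent in $H_*((\mathbb S^2)^m)$. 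Second, I would count: by Lemma~\ref{lem:comb_bij} the set $s\widetilde{\mathbb B}^{m,m}_\mathrm{KL}$ is in bijection (via $\mathsf{G}^{m,m},\mathsf{C}^{m,m}$) with $C_\mathrm{KL}^{\le\lfloor m/2\rfloor}(m)=C_\mathrm{KL}(m)$, so $|s\widetilde{\mathbb B}^{m,m}_\mathrm{KL}|=|C_\mathrm{KL}(m)|$. Third, I would bound $\dim H_*(\SOfiberequalsize)$ from above by $|C_\mathrm{KL}(m)|$ using the cell decomposition: each $S_\ba$ is a CW-complex with cells indexed by $\widetilde{\mathbb B}^{m,m}_\mathrm{KL}$ (via~\eqref{eq:homology_identification_for_S_ba}, with $S_\ba\cong(\mathbb S^2)^{\lfloor m/2\rfloor}$), and since $\SOfiberequalsize=\bigcup_{\ba}S_\ba$ is covered by finitely many such pieces, the Mayer--Vietoris/long-exact-sequence argument (equivalently, the surjectivity of $\bigoplus_\ba H_*(S_\ba)\twoheadrightarrow H_*(\SOfiberequalsize)$, i.e.\ surjectivity of $\phi_{m,m}$) gives $\dim H_*(\SOfiberequalsize)\le$ (number of cells in the union) which, after the standard overcounting-correction that the cells on overlaps $S_\ba\cap S_{\bb}$ are identified, is exactly $|C_\mathrm{KL}(m)|$.

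\textbf{Putting it together.} Once these three facts are in place the lemma is immediate: the composite $\psi_{m,m}=\gamma_{m,m}\circ\phi_{m,m}$ maps the $|C_\mathrm{KL}(m)|$-element set $\{\mathsf{G}^{m,m}(\ba)\mid \ba\in C_\mathrm{KL}(m)\}$ onto a linearly independent set $\{L_{\mathsf{G}^{m,m}(\ba)}\}$ in $H_*((\mathbb S^2)^m)$, so $\{\phi_{m,m}(\mathsf{G}^{m,m}(\ba))\}$ is a linearly independent subset of $H_*(\SOfiberequalsize)$ of size $|C_\mathrm{KL}(m)|\ge\dim H_*(\SOfiberequalsize)$; hence it is a basis and $\phi_{m,m}$ is an isomorphism (this is essentially Proposition~\ref{prop:basis_general_case} in the equal-block case). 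Since $\gamma_{m,m}$ sends this basis to the linearly independent family $\{L_{\mathsf{G}^{m,m}(\ba)}\}$, it is injective. Note this also shows $\gamma_{m,m}$ is injective \emph{on a basis} and hence on all of $H_*(\SOfiberequalsize)$.

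\textbf{Where the work is.} The genuinely nontrivial input is the upper bound $\dim H_*(\SOfiberequalsize)\le|C_\mathrm{KL}(m)|$, equivalently the surjectivity of $\phi_{m,m}\colon\bigoplus_{\ba}H_*(S_\ba)\to H_*(\SOfiberequalsize)$ together with the correct count of how the cell-homology generators of the pieces collapse on overlaps. I expect to handle this by an induction on the number of components $S_\ba$ in the union, peeling off one component at a time and applying Mayer--Vietoris: the point is that $S_\ba\cap(\bigcup_{\bb\neq\ba}S_{\bb})$ is again a union of spaces of the same combinatorial type (lower-dimensional products of spheres, glued along sub-cup-diagram data), so the connecting maps are controlled and no new homology is created beyond the cells counted by $C_\mathrm{KL}(m)$. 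This is exactly the type-$A$ argument of Russell--Tymoczko adapted to the marked setting; the markers only affect the \emph{labels} of cells (and hence appear in the bookkeeping of $s\widetilde{\mathbb B}^{m,m}_\mathrm{KL}$ versus $\widetilde{\mathbb B}^{m,m}_\mathrm{KL}$), not the topology of the pieces, so the combinatorial count goes through once Lemma~\ref{lem:comb_bij} is invoked. An alternative, possibly cleaner route to the same upper bound is to use the homeomorphism $\SOfiberequalsize\cong\mathcal B^{m,m}_{\mathrm{SO}_{2m}}$ from Remark~\ref{rem:wilbert} and the known fact that the Springer fiber has a paving by affines indexed by $C_\mathrm{KL}(m)$ (so that its total Betti number equals $|C_\mathrm{KL}(m)|$); I would mention this as the quick geometric justification and keep the topological induction as the self-contained one.
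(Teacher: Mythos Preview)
Your approach is essentially the paper's: use the factorization $\psi_{m,m}=\gamma_{m,m}\circ\phi_{m,m}$ together with Proposition~\ref{prop:technical_prop_1} and Lemma~\ref{lem:comb_bij} to produce $|C_\mathrm{KL}(m)|$ linearly independent vectors in the image of $\gamma_{m,m}$, and then conclude by a dimension count $\dim H_*(\SOfiberequalsize)=|C_\mathrm{KL}(m)|$. Your packaging is in fact slightly cleaner than the paper's, which invokes surjectivity of $\phi_{m,m}$ separately (citing \cite{ES12}) to get $\im(\gamma_{m,m})=\im(\psi_{m,m})$; you correctly observe that only the trivial inclusion $\im(\gamma_{m,m})\supseteq\im(\psi_{m,m})$ is needed.

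The one place to be careful is your third step. The paper obtains $\dim H_*(\SOfiberequalsize)=2^{m-1}$ not from a Mayer--Vietoris induction but from an explicit cell \emph{partition} of the union $\SOfiberequalsize=\bigcup_\ba S_\ba$ (Remark~\ref{rem:dim_homology_equal-row}, relying on \cite[Prop.~4.28]{ES12}); this is precisely what you call the ``overcounting correction'', and it is not a consequence of surjectivity of $\phi_{m,m}$ alone (surjectivity only bounds the dimension by $\sum_\ba 2^{\lfloor m/2\rfloor}$, which is far too large). Your Mayer--Vietoris sketch does not actually supply this correction, so as written that route has a gap. Your alternative route via Remark~\ref{rem:wilbert} and an affine paving of the algebraic Springer fiber is perfectly valid and is the quickest self-contained substitute for the appendix computation.
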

\begin{proof}
By considering the commutative diagram (\ref{eq:key_comm_diagram}) we deduce $\im\left(\gamma_{m,m}\right)=\im\left(\psi_{m,m}\right)$, because $\phi_{m,m}$ is surjective (cf.\ \cite[p.\ 23]{ES12}). Since $\dim\left(\im\left(\psi_m\right)\right)\geq\vert s\widetilde{\mathbb B}^{m,m}_\mathrm{KL}\vert=2^{m-1}$ (the inequality follows from Proposition~\ref{prop:technical_prop_1} and the equality by combining Lemma~\ref{lem:comb_bij} with the fact that $\vert C_\mathrm{KL}(m)\vert=2^{m-1}$ which will be shown in Subsection~\ref{sec:diagrammatic_Hecke_module}) we deduce that $\dim\left(\im\left(\gamma_{m,m}\right)\right)=\dim\left(\im\left(\psi_{m,m}\right)\right)\geq 2^{m-1}$. Hence, since $\dim H_*(\SOfiberequalsize)=2^{m-1}$ by Remark~\ref{rem:dim_homology_equal-row}, the rank nullity theorem implies $\dim\left(\im\left(\gamma_{m,m}\right)\right)=2^{m-1}$ and $\ker(\gamma_{m,m})=\{0\}$.
\end{proof}

\begin{lem}
The set $\phi_{m,m}\left(\mathsf{G}^{m,m}(\ba)\right)$, where $\ba\in C_\mathrm{KL}(m)$, is a basis of $H_*(\SOfiberequalsize)$. 
\end{lem}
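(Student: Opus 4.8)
The plan is to show that the set $\{\phi_{m,m}(\mathsf{G}^{m,m}(\ba)) : \ba\in C_\mathrm{KL}(m)\}$ spans $H_*(\SOfiberequalsize)$ and has the right cardinality, whence it is a basis by dimension count. First I would invoke Lemma~\ref{lem:comb_bij} (the case $k=m$), which tells us that $\mathsf{G}^{m,m}$ is a bijection from $C_\mathrm{KL}(m)$ onto $s\widetilde{\mathbb B}^{m,m}_\mathrm{KL}$; in particular the proposed set has exactly $|C_\mathrm{KL}(m)| = |s\widetilde{\mathbb B}^{m,m}_\mathrm{KL}| = 2^{m-1}$ elements.

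Next I would establish spanning. Since $\phi_{m,m}$ is surjective (as cited from \cite[p.\ 23]{ES12}, and used in the proof of Lemma~\ref{lem:iso_onto_image}), the images of \emph{all} enriched cup diagrams $M\in\widetilde{\mathbb B}^{m,m}_\mathrm{KL}$ under $\phi_{m,m}$ span $H_*(\SOfiberequalsize)$. It therefore suffices to see that the images of the \emph{standard} enriched cup diagrams already span, i.e.\ that every $\phi_{m,m}(M)$ for arbitrary $M$ lies in the span of the $\phi_{m,m}(M')$ with $M'$ standard. This is the combinatorial heart of the argument: one rewrites a non-standard dotted cup (one that cannot be connected to the right side avoiding the rest of the diagram and the marker paths) in terms of standard ones. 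The key relations are exactly the local skein-type identities already appearing in the paper — a dotted cup nested inside another component can be resolved, and markers can be slid using the relation that two adjacent markers cancel and that a marker on a dotted component can be moved. Combined with induction on the "degree of non-standardness" (e.g.\ the number of dotted cups violating the accessibility condition, or nesting depth), this shows $\operatorname{span}\{\phi_{m,m}(M): M\text{ standard}\} = \operatorname{span}\{\phi_{m,m}(M): M\in\widetilde{\mathbb B}^{m,m}_\mathrm{KL}\} = H_*(\SOfiberequalsize)$.

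Finally I would combine this with the injectivity statement. By Lemma~\ref{lem:iso_onto_image}, $\gamma_{m,m}$ is injective, so $\gamma_{m,m}\circ\phi_{m,m} = \psi_{m,m}$ has image of dimension equal to $\dim H_*(\SOfiberequalsize)$. By Proposition~\ref{prop:technical_prop_1} the line diagram sums $L_M = \psi_{m,m}(M)$, $M\in s\widetilde{\mathbb B}^{m,m}_\mathrm{KL}$, are linearly independent, so $\dim\operatorname{im}(\psi_{m,m})\geq 2^{m-1}$; together with $\dim H_*(\SOfiberequalsize) = 2^{m-1}$ (Remark~\ref{rem:dim_homology_equal-row}) this forces equality, and since $\gamma_{m,m}$ is injective the $2^{m-1}$ elements $\phi_{m,m}(\mathsf{G}^{m,m}(\ba)) = (\psi_{m,m}\circ\mathsf{G}^{m,m})(\ba)$ pulled back along $\gamma_{m,m}$ are linearly independent in $H_*(\SOfiberequalsize)$. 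A linearly independent set of size equal to the dimension is a basis.

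The main obstacle I expect is the spanning step: making precise the reduction of an arbitrary enriched cup diagram to a linear combination of standard ones. One has to choose the right inductive statistic and carefully track the markers and signs through the Künneth/skein manipulations — in particular one must confirm that the accessibility (standardness) condition is exactly what is needed so that no further relations are available and the count closes. Everything else is bookkeeping with results already in hand (Lemma~\ref{lem:comb_bij}, Proposition~\ref{prop:technical_prop_1}, Lemma~\ref{lem:iso_onto_image}, Remark~\ref{rem:dim_homology_equal-row}, and surjectivity of $\phi_{m,m}$).
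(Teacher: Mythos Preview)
Your third paragraph is essentially the paper's entire proof, and it is complete on its own: the paper writes $\phi_{m,m}(\mathsf{G}^{m,m}(\ba)) = \gamma_{m,m}^{-1}(\psi_{m,m}(\mathsf{G}^{m,m}(\ba)))$ using the commutative diagram and Lemma~\ref{lem:iso_onto_image}, observes that $\{\mathsf{G}^{m,m}(\ba)\mid\ba\in C_\mathrm{KL}(m)\}=s\widetilde{\mathbb B}^{m,m}_\mathrm{KL}$, invokes Proposition~\ref{prop:technical_prop_1} for linear independence of the $\psi_{m,m}$-images, transports this back via the isomorphism $\gamma_{m,m}^{-1}$, and then matches cardinality $2^{m-1}$ with $\dim H_*(\SOfiberequalsize)$.

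Your second paragraph---the spanning argument via reducing arbitrary enriched cup diagrams to standard ones by skein relations---is therefore entirely superfluous. You recognise this yourself in the last line (``a linearly independent set of size equal to the dimension is a basis''), yet you present spanning as ``the combinatorial heart of the argument'' and the ``main obstacle''. It is neither: once you have linear independence and the dimension count, spanning is automatic. The paper never touches the question of rewriting non-standard enriched diagrams in terms of standard ones, and you should simply delete that step. What remains is exactly the paper's proof.
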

\begin{proof}
The commutative diagram (\ref{eq:key_comm_diagram}) combined with Lemma~\ref{lem:iso_onto_image} gives a commutative diagram
\begin{eqnarray}
\label{diagProp24}
\begin{xy}
	\xymatrix{
		\bigoplus\limits_{\ba\in\mathbb B^{m,m}_\mathrm{KL}} H_*(S_\ba) \ar[r]^{\phi_{m,m}} \ar@/^1.5pc/[rr]^{\psi_{m,m}} & H_*(\SOfiberequalsize) \ar[r]^{\gamma_{m,m}}_\cong	& \im(\psi_{m,m})\subseteq H_*((\mathbb S^2)^m),
	}
\end{xy}
\end{eqnarray}
which allows us to write $\phi\left(\mathsf{G}^{m,m}(\ba)\right)=\gamma^{-1}_{m,m}\left(\psi_{m,m}\left(\mathsf{G}^{m,m}(\ba)\right)\right)$ for all  cup diagrams $\ba$ on $m$ vertices. Since we have $\{\mathsf{G}^{m,m}(\ba)\mid\ba\in C_\mathrm{KL}(m)\}=s\widetilde{\mathbb B}^{m,m}_\mathrm{KL}$, it follows from Proposition~\ref{prop:technical_prop_1} that the collection of elements $\psi_{m,m}(\mathsf{G}^{m,m}(\ba))\in H_*((\mathbb S^2)^m)$, where $\ba$ varies over all cup diagrams in $C_\mathrm{KL}(m)$, is linearly independent. This remains true after applying the linear isomorphism $\gamma_{m,m}^{-1}$.  

As $\vert\{\mathsf{G}^{m,m}(\ba)\mid\ba\in C_\mathrm{KL}(m)\}\vert = 2^{m-1} = \dim H_*(\SOfiberequalsize)$ we indeed have a basis of $H_*(\SOfiberequalsize)$. 
\end{proof}

\begin{proof}[Proof of Proposition~\ref{prop:basis_general_case}]
From the appendix we deduce that the dimension of $H^*(\SOfiber)$ (and hence also of its homology by duality) is given by counting the number of all cup diagrams in $C_\mathrm{KL}^{\leq\lfloor\frac{k}{2}\rfloor}(m)$. Thus, in order to check that the elements $\phi_{2m-k,k}(\mathsf{G}^{2m-k,k}(\ba))$ form a basis, where $\ba$ varies over all cup diagrams in $C_\mathrm{KL}^{\leq\lfloor\frac{k}{2}\rfloor}(m)$, it suffices to see that they are linearly independent. Under the assumption that they are not linearly independent, it follows that the vectors $\gamma_{2m-k,k}\left(\phi_{2m-k,k}(\mathsf{G}^{2m-k,k}(\ba)\right)=\psi_{2m-k,k}(\mathsf{G}^{2m-k,k}(\ba))$ are not linearly independent either. But Proposition~\ref{prop:image_of_homology_gen} directly implies that $\psi_{2m-k,k}(\mathsf{G}^{2m-k,k}(\ba))=\psi_{m,m}(\mathsf{G}^{m,m}(\ba))$ and the $\psi_{m,m}(\mathsf{G}^{m,m}(\ba))$ are linearly independent (as a subset of the linearly independent vectors in Proposition~\ref{prop:technical_prop_1}), a contradiction. In particular, the map $\Gamma^{2m-k,k}$ defines an isomorphism. 

The commutativity of the diagram follows from $\psi_{2m-k,k}(\mathsf{G}^{2m-k,k}(\ba))=\psi_{m,m}(\mathsf{G}^{m,m}(\ba))$. Since $\gamma_{m,m}$ is injective by Lemma~\ref{lem:iso_onto_image}, the same is true for $\gamma_{2m-k,k}$ by the commutativity of \eqref{diagProp24}.
\end{proof}

The next proposition (which is an immediate consequence of Proposition~\ref{prop:image_of_homology_gen} and the commutative diagram (\ref{eq:key_comm_diagram})) provides a combinatorial description of the map $\gamma_{2m-k,k}$.  

\begin{prop} \label{prop:explicit_isom}
The isomorphism $\gamma_{2m-k,k}\colon H_*(\SOfiber)\xrightarrow\cong\im(\gamma_{2m-k,k})\subseteq H_*((\mathbb S^2)^m)$ is given by 
\[
\ba\mapsto L_\ba=\sum_{U\in\mathcal U_\ba}(-1)^{\Lambda_\ba(U)}l_U,
\]
where $\mathcal U_\ba$ is the set of all subsets $U\subseteq\{1,\ldots,m\}$ containing precisely one endpoint of every cup of $\ba$, and $\Lambda_\ba(U)$ is the number of right endpoints of unmarked cups plus the number of endpoints of marked cups in $U$.
\end{prop}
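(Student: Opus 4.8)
The plan is to deduce Proposition~\ref{prop:explicit_isom} directly from the two ingredients already assembled: the combinatorial description of $\psi_{2m-k,k}$ in Proposition~\ref{prop:image_of_homology_gen} and the commutativity of the diagram~\eqref{eq:key_comm_diagram}, together with the identification of the homology basis in Proposition~\ref{prop:basis_general_case}. Recall that the diagrammatic basis element of $H_*(\SOfiber)$ associated to $\ba \in C_\mathrm{KL}^{\leq\lfloor\frac{k}{2}\rfloor}(m)$ is by definition $\phi_{2m-k,k}(\mathsf{G}^{2m-k,k}(\ba))$, where $\mathsf{G}^{2m-k,k}(\ba)$ is a \emph{standard enriched} cup diagram. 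So the key point is to chase $\mathsf{G}^{2m-k,k}(\ba)$ through the diagram and compare $L_{\mathsf{G}^{2m-k,k}(\ba)}$ with $L_\ba$ as defined in the statement.

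First I would recall from~\eqref{eq:key_comm_diagram} that $\gamma_{2m-k,k} \circ \phi_{2m-k,k} = \psi_{2m-k,k}$, so that $\gamma_{2m-k,k}$ applied to the basis element labeled by $\ba$ equals $\psi_{2m-k,k}(\mathsf{G}^{2m-k,k}(\ba))$. By Proposition~\ref{prop:image_of_homology_gen} this is exactly $L_{\mathsf{G}^{2m-k,k}(\ba)}$, the line diagram sum of the standard enriched cup diagram $\mathsf{G}^{2m-k,k}(\ba)$. So it remains to check the purely combinatorial identity $L_{\mathsf{G}^{2m-k,k}(\ba)} = L_\ba$, where the right-hand side uses the simplified description in the statement (sum over $U$ choosing one endpoint of \emph{each} cup of $\ba$, with sign counting right endpoints of unmarked cups plus endpoints of marked cups). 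The reason this should hold: gluing replaces the two rightmost rays of $\ba$ by a \emph{dotted} cup, and by the definition of $\mathcal U_M$ a dotted cup contributes no choice of endpoint and no sign; similarly dots on rays are simply deleted by cutting, and rays carry dots in $\mathsf{G}^{2m-k,k}(\ba)$ so they too contribute nothing to $\mathcal U$ or $\Lambda$. Hence only the \emph{undotted} cups of $\mathsf{G}^{2m-k,k}(\ba)$ matter, and these are precisely the cups of $\ba$ (gluing does not alter the existing cups of $\ba$, only turns pairs of rays into dotted cups). One must also check the sign contributions $\Lambda_{\mathsf{G}^{2m-k,k}(\ba)}(U)$ match $\Lambda_\ba(U)$: in the definition of $\Lambda_M$, the term ``endpoints of cups with a marker and no dot'' plus ``right endpoints of cups with neither marker nor dot in $U$'' restricts, once all dotted cups are ignored, to exactly ``endpoints of marked cups'' plus ``right endpoints of unmarked cups in $U$'', which is the $\Lambda_\ba(U)$ of the statement. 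The marker-parity correction in the definition of cutting/gluing does not affect which cups are marked among those of $\ba$, only possibly the leftmost ray, which carries a dot and so is irrelevant.

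Putting this together: $\gamma_{2m-k,k}(\ba) = \psi_{2m-k,k}(\mathsf{G}^{2m-k,k}(\ba)) = L_{\mathsf{G}^{2m-k,k}(\ba)} = \sum_{U\in\mathcal U_\ba}(-1)^{\Lambda_\ba(U)}l_U = L_\ba$, and Proposition~\ref{prop:basis_general_case} already guarantees this $\gamma_{2m-k,k}$ is an isomorphism onto its image. The main obstacle, such as it is, is purely bookkeeping: one has to be careful that $\mathcal U_{\mathsf{G}^{2m-k,k}(\ba)}$ really is in canonical bijection with $\mathcal U_\ba$ (i.e.\ that gluing genuinely produces only dotted cups and dotted rays and never modifies a pre-existing undotted cup of $\ba$), and that the marker-parity adjustment in the definition of $\mathsf{G}^{2m-k,k}$ touches only a dotted ray. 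Both are immediate from the definitions of cutting and gluing and from Lemma~\ref{lem:comb_bij}, so the proof is short; the conceptual content was already done in Propositions~\ref{prop:image_of_homology_gen} and~\ref{prop:basis_general_case}.
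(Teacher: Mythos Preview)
Your proposal is correct and is exactly the argument the paper has in mind: the paper records the proposition as ``an immediate consequence of Proposition~\ref{prop:image_of_homology_gen} and the commutative diagram~\eqref{eq:key_comm_diagram}'', and what you have done is unpack that sentence, in particular the combinatorial identity $L_{\mathsf{G}^{2m-k,k}(\ba)}=L_\ba$ (dotted cups and dotted rays contribute neither to $\mathcal U$ nor to $\Lambda$, so only the original cups of $\ba$ survive). Your caution about the marker-parity adjustment is well placed and your resolution is right: any such adjustment in gluing touches only the leftmost ray, which is dotted and hence invisible in $L_M$.
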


\begin{ex}
If $m=3$ we obtain the diagrammatic homology basis
\[
H_0\left(\mathcal S^{3,3}_\mathrm{KL}\right) = \Biggl\langle\;
\begin{tikzpicture}[baseline={(0,-.45)}, scale=.8]
\draw[thick] (0,0) -- +(0,-.9);
\draw[thick] (.5,0) -- +(0,-.9);
\draw[thick] (1,0) -- +(0,-.9);
\end{tikzpicture}
\;\Biggr\rangle,
\hspace{2.2em}
H_2\left(\mathcal S^{3,3}_\mathrm{KL}\right) = \Biggl\langle
\begin{array}{cccccc}
\begin{tikzpicture}[scale=.8]
\draw[thick] (.5,0) .. controls +(0,-.5) and +(0,-.5) .. +(.5,0);
\draw[thick] (0,0) -- +(0,-.9);
\fill ([xshift=-2.5pt,yshift=-2.5pt]0,-.5) rectangle ++(5pt,5pt);
\end{tikzpicture}
&,&
\begin{tikzpicture}[scale=.8]
\draw[thick] (0,0) .. controls +(0,-.5) and +(0,-.5) .. +(.5,0);
\draw[thick] (1,0) -- +(0,-.9);
\fill ([xshift=-2.5pt,yshift=-2.5pt]1,-.5) rectangle ++(5pt,5pt);
\end{tikzpicture}
&,&
\begin{tikzpicture}[scale=.8]
\draw[thick] (0,0) .. controls +(0,-.5) and +(0,-.5) .. +(.5,0);
\fill ([xshift=-2.5pt,yshift=-2.5pt].25,-.365) rectangle ++(5pt,5pt);
\draw[thick] (1,0) -- +(0,-.9);
\end{tikzpicture}
\end{array}
\Biggr\rangle,
\]
and according to Proposition~\ref{prop:explicit_isom} the map $\gamma_{m,m}$ is given by 
\[
\begin{tikzpicture}[baseline={(0,-.45)}, scale=.8]
\draw[thick] (0,0) -- +(0,-.9);
\draw[thick] (.5,0) -- +(0,-.9);
\draw[thick] (1,0) -- +(0,-.9);
\end{tikzpicture}
\,\,\longmapsto\,\,
\begin{tikzpicture}[baseline={(0,-.45)}, scale=.8]
\draw[thick] (0,0) -- +(0,-.9);
\draw[thick] (.5,0) -- +(0,-.9);
\draw[thick] (1,0) -- +(0,-.9);
\draw (0,-.45) circle(3pt);
\draw (.5,-.45) circle(3pt);
\draw (1,-.45) circle(3pt);
\end{tikzpicture}
\hspace{9em}
\begin{tikzpicture}[baseline={(0,-.45)}, scale=.8]
\draw[thick] (0,0) .. controls +(0,-.5) and +(0,-.5) .. +(.5,0);
\draw[thick] (1,0) -- +(0,-.9);
\fill ([xshift=-2.5pt,yshift=-2.5pt]1,-.5) rectangle ++(5pt,5pt);
\end{tikzpicture}
\,\,\longmapsto\,\,
\begin{tikzpicture}[baseline={(0,-.45)}, scale=.8]
\draw[thick] (0,0) -- +(0,-.9);
\draw[thick] (.5,0) -- +(0,-.9);
\draw[thick] (1,0) -- +(0,-.9);
\draw (.5,-.45) circle(3pt);
\draw (1,-.45) circle(3pt);
\end{tikzpicture}
\,-\,
\begin{tikzpicture}[baseline={(0,-.45)}, scale=.8]
\draw[thick] (0,0) -- +(0,-.9);
\draw[thick] (.5,0) -- +(0,-.9);
\draw[thick] (1,0) -- +(0,-.9);
\draw (0,-.45) circle(3pt);
\draw (1,-.45) circle(3pt);
\end{tikzpicture}
\]
\[
\hspace{1.4em}
\begin{tikzpicture}[baseline={(0,-.45)}, scale=.8]
\draw[thick] (.5,0) .. controls +(0,-.5) and +(0,-.5) .. +(.5,0);
\draw[thick] (0,0) -- +(0,-.9);
\fill ([xshift=-2.5pt,yshift=-2.5pt]0,-.5) rectangle ++(5pt,5pt);
\end{tikzpicture}
\,\,\longmapsto\,\,
\begin{tikzpicture}[baseline={(0,-.45)}, scale=.8]
\draw[thick] (0,0) -- +(0,-.9);
\draw[thick] (.5,0) -- +(0,-.9);
\draw[thick] (1,0) -- +(0,-.9);
\draw (0,-.45) circle(3pt);
\draw (1,-.45) circle(3pt);
\end{tikzpicture}
\,-\,
\begin{tikzpicture}[baseline={(0,-.45)}, scale=.8]
\draw[thick] (0,0) -- +(0,-.9);
\draw[thick] (.5,0) -- +(0,-.9);
\draw[thick] (1,0) -- +(0,-.9);
\draw (0,-.45) circle(3pt);
\draw (.5,-.45) circle(3pt);
\end{tikzpicture}
\hspace{5em}
\begin{tikzpicture}[baseline={(0,-.45)}, scale=.8]
\draw[thick] (0,0) .. controls +(0,-.5) and +(0,-.5) .. +(.5,0);
\fill ([xshift=-2.5pt,yshift=-2.5pt].25,-.365) rectangle ++(5pt,5pt);
\draw[thick] (1,0) -- +(0,-.9);
\end{tikzpicture}
\,\,\longmapsto\,\,
-\,
\begin{tikzpicture}[baseline={(0,-.45)}, scale=.8]
\draw[thick] (0,0) -- +(0,-.9);
\draw[thick] (.5,0) -- +(0,-.9);
\draw[thick] (1,0) -- +(0,-.9);
\draw (.5,-.45) circle(3pt);
\draw (1,-.45) circle(3pt);
\end{tikzpicture}
\,-\,
\begin{tikzpicture}[baseline={(0,-.45)}, scale=.8]
\draw[thick] (0,0) -- +(0,-.9);
\draw[thick] (.5,0) -- +(0,-.9);
\draw[thick] (1,0) -- +(0,-.9);
\draw (0,-.45) circle(3pt);
\draw (1,-.45) circle(3pt);
\end{tikzpicture}
\]
\end{ex}

%\begin{rem}
%We could have worked with the standard cup diagrams as well (as in Russell etc). Since we already have type $D$ markers we have gotten rid of all white markers which simplifies the combinatorics and makes it easier to relate our topological construction to the usual Kazhdan-Lusztig combinatorics (cf.\ Section ??). White marker combinatorics captures the geometry of the cell decomposition better, i.e.\ we can read off the diagram exactly from which irreducible component the cell stems, explain how to recover this info for cup diagrams without white markers...   
%\end{rem} 

%\begin{rem}
%In \cite{RT11},\cite{Rus11} the authors had to use a sign-modified inclusion instead of the natural inclusion in order for their construction to work. The reason for this is that in the original definition of topological Springer fibers, \cite{Kho04}, the coordinates at endpoints of unmarked cups were identified without twisting with the antipodal map. The sign conventions used in this article seem preferable, not only for the purposes of Springer theory, but also from the point of view of functorial Khovanov homology as developed in \cite{EST16Blanchet},\cite{EST16link} based on ideas from \cite{Bla10}. Here, passing from one endpoint of an unmarked cup to the other one can be interpreted as passing a singularity (which produces a sign) in the singular cobordism.   
%\end{rem}

\subsection{Proof of Theorem~\ref{introthm:cohomology}}

Using the results obtained so far we can provide a presentation of the cohomology ring of $\SOfiber$. We assume that $m\neq k$.

Firstly, the natural inclusion $\SOfiber\subseteq (\mathbb S^2)^m$ induces a surjective homomorphism of algebras $H^*((\mathbb S^2)^m)\twoheadrightarrow H^*(\SOfiber)$. The surjectivity follows directly from dualizing Proposition~\ref{prop:basis_general_case}, where we proved that the induced map in homology $H_*(\SOfiber)\hookrightarrow H_*((\mathbb S^2)^m)$ is injective. 

Secondly, the complex dimension of the algebraic variety $\mathcal B^{2m-k,k}_{\mathrm{SO}_{2m}}$ is known to be $\frac{k-1}{2}$, see e.g.~\cite[Theorem 6.5]{ES12}, and thus $H^i(\SOfiber)\cong H^i(\mathcal B^{2m-k,k}_{\mathrm{SO}_{2m}})=0$ for all $i>2\cdot\frac{k-1}{2}=k-1$. In particular, the graded ideal $H^{\geq k}((\mathbb S^2)^m)=\oplus_{i\geq k} H^i((\mathbb S^2)^m)$ is contained in the kernel of our surjection and we obtain an induced surjective homomorphism of algebras 
\begin{equation} \label{eq:surjection_onto_cohomology}
\bigslant{H^*((\mathbb S^2)^m)}{H^{\geq k}((\mathbb S^2)^m)} \twoheadrightarrow H^*(\SOfiber),
\end{equation} 
which we claim to be an isomorphism. We prove this by comparing their dimensions. Recall that 
\begin{equation} \label{eq:cohomology_of_product_of_spheres}
H^*((\mathbb S^2)^m)\cong \bigslant{\mathbb C[X_1,\ldots,X_m]}{\langle X_i^2 \;| 1\leq i\leq m \rangle}       
\end{equation}
with $\textrm{deg}(X_i)=2$. Thus, the dimension of $H^{2i}((\mathbb S^2)^m)$ is given by counting all monomials of degree $i$ with pairwise different factors, i.e.\ $\dim(H^{2i}(\mathbb S^2)^m))=\tbinom{m}{i}$. It follows that the dimension of the quotient in (\ref{eq:surjection_onto_cohomology}) is $\sum_{i=0}^{\frac{k-1}{2}}\tbinom{m}{i}$ which equals $\dim H^*(\SOfiber)$ by Proposition~\ref{prop:dim_formula}.

Hence, the surjection (\ref{eq:surjection_onto_cohomology}) is in fact an isomorphism which we can translate (using the isomorphism (\ref{eq:cohomology_of_product_of_spheres}) and the fact that the ideal $H^{\geq k}((\mathbb S^2)^m)$ is generated by all monomials of degree~$k$) into the description of $H^*(\SOfiber)$ given in Theorem~\ref{introthm:cohomology} from the introduction.\hfill $\square$

\section[sec]{\for{toc}{Diagrammatic description of the actions}\except{toc}{Diagrammatic description of the actions and the proof of Theorems~\ref{thm:skein_calculus} and~\ref{thmB}}}
\label{section:section2}
In this section we prove Theorem~\ref{thm:skein_calculus} and Theorem~\ref{thmB} from the introduction.

\subsection{Weyl group actions on homology}
Assume $m\geq 4$ and let $\mathcal W_{D_m}$ be the Weyl group of type $D_m$, i.e.\ the Coxeter group with generators $s^D_0,s^D_1,\ldots,s^D_{m-1}$ subject to $\left(s^D_is^D_j\right)^{\alpha^D_{ij}}=e$ where
\begin{equation*}
\begin{array}{cc}
\alpha^D_{ij}=\begin{cases}
	1 & \text{if }i=j,\\
	3 & \text{if }i\,\begin{tikzpicture}[thick, scale=.55, baseline={(0,-.1)}]
\draw (-.4,0) -- +(.8,0);
\end{tikzpicture}\,j\text{ are connected in }\Gamma_{D_m},\\
	2 & \text{else}.
\end{cases}
&\hspace{3em}
\begin{tikzpicture}[thick, scale=.55, baseline={(0,-.1)}]
\node at (-1.3,0) {$\Gamma_{D_m}:$};
\draw (1,0) -- +(1.4,0);
\draw (2.4,0) -- +(.6,0);
\draw (0,-1) -- +(1,1);
\draw (0,1) -- +(1,-1);
\draw (4.6,0) -- +(.6,0);
\fill (0,1) circle(4pt);
\fill (0,-1) circle(4pt);
\fill (1,0) circle(4pt);
\fill (2.4,0) circle(4pt);
\fill (5.2,0) circle(4pt);
\begin{footnotesize}
\node at (0,-1.5) {$0$};
\node at (0,1.5) {$1$};
\node at (1,.5) {$2$};
\node at (2.4,.5) {$3$};
\node at (5.2,.5) {$m-1$};
\end{footnotesize}
\node at (3.8,0) {$\dots$};
\end{tikzpicture}
\end{array}
\end{equation*}
The Weyl group $\mathcal W_{C_m}$ of type $C_m$ (which is isomorphic to the Weyl group of type $B_m$) is the Coxeter group with generators $s_0^C,\ldots,s_{m-1}^C$ and relations $\left(s^C_is^C_j\right)^{\alpha^C_{ij}}=e$, where
\begin{equation*}
\begin{array}{cc}
\alpha^C_{ij}=\begin{cases}
	1 & \text{if }i=j,\\
	3 & \text{if }i\,\begin{tikzpicture}[thick, scale=.55, baseline={(0,-.1)}]
\draw (-.4,0) -- +(.8,0);
\end{tikzpicture}\,j\text{ are connected in }\Gamma_{C_m},\\
	4 & \text{if }i\,\begin{tikzpicture}[thick, scale=.55, baseline={(0,-.1)}]
\draw (-.4,.1) -- +(.8,0);
\draw (-.4,-.1) -- +(.8,0);
\end{tikzpicture}\,j\text{ are connected in }\Gamma_{C_m},\\
	2 & \text{else}.
\end{cases}
&\hspace{3em}
\begin{tikzpicture}[thick, scale=.55, baseline={(0,-.1)}]
\node at (-1.9,0) {$\Gamma_{C_m}:$};
\draw (1,0) -- +(1.4,0);
\draw (2.4,0) -- +(.6,0);
\draw (-.4,.1) -- +(1.4,0);
\draw (-.4,-.1) -- +(1.4,0);
\draw (4.6,0) -- +(.6,0);
\fill (-.4,0) circle(4pt);
\fill (1,0) circle(4pt);
\fill (2.4,0) circle(4pt);
\fill (5.2,0) circle(4pt);
\begin{footnotesize}
\node at (-.4,.5) {$0$};
\node at (1,.5) {$1$};
\node at (2.4,.5) {$2$};
\node at (5.2,.5) {$m-1$};
\end{footnotesize}
\node at (3.8,0) {$\dots$};
\end{tikzpicture}
\end{array}
\end{equation*}

\begin{rem} \label{rem:embedding_of_Weyl_grps}
One easily checks, see also \cite[\S 2.1]{Tok84}, that the maps 
\begin{eqnarray*}
\mathcal W_{C_{m-1}} \hookrightarrow\mathcal W_{D_m}\,,\,\,s_i^C \mapsto\begin{cases}
s_0^Ds^D_1 & \text{if }i=0,\\
s^D_{i+1} & \text{if }i\neq 0,
\end{cases}
&&
\mathcal W_{D_m} \hookrightarrow\mathcal W_{C_m}\,,\,\,s_i^D \mapsto\begin{cases}
s_0^Cs^C_1s_0^C & \text{if }i=0,\\
s^C_i & \text{if }i\neq 0,
\end{cases}
\end{eqnarray*}
define embeddings of groups. 
\end{rem}

The group $\mathcal W_{D_m}$ (and thus by Remark~\ref{rem:embedding_of_Weyl_grps} also the subgroup $W_{C_{m-1}}$) acts from the right on $\left(\mathbb S^2\right)^m$ according to the following rules:
\begin{align*}
&(x_1,x_2,x_3,\ldots,x_m).s^D_0=(-x_2,-x_1,x_3,\ldots,x_m), \\
&(x_1,\ldots,x_i,x_{i+1},\ldots,x_m).s^D_i=(x_1,\ldots,x_{i+1},x_i,\ldots,x_m), \hspace{2em} i\in\{1,\ldots,m-1\}.
\end{align*}

Given $i\in\{1,\ldots,m-1\}$, let $\tau_i\colon\{1,\ldots,m\}\to\{1,\ldots,m\}$ be the involution of sets which exchanges $i$ and $i+1$ and fixes all other elements. Given a subset $U\subseteq\{1,\ldots,m\}$, we define $\tau_iU$ as the subset of $\{1,\ldots,m\}$ containing the images of the elements of $U$ under the map $\tau_i$. 

By standard algebraic topology we can then describe the induced action by the formulas 
\begin{eqnarray}
\label{actionbeforeProp29}
l_U.s^D_0=(-1)^{\chi(U)}l_{\tau_1U}\,\hspace{1.6em} l_U.s^D_i=l_{\tau_iU}\,,\,\,i\in\{1,\ldots,m-1\},
\end{eqnarray}
where $\chi(U)$ is the cardinality of the set $\{1,2\}\cap U$.

\subsection{Proof of Theorem~\ref{thm:skein_calculus}}
\begin{proof}[]
The main insight is the following observation.
\begin{prop}\label{proposition:well_defined_action}
Given a cup diagram $\ba\in H_*(\SOfiber)$ and a generator $s_i^D\in\mathcal W_{D_m}$, $i \in\{0,\ldots,m-1\}$, we claim that 
\[
\ba.s_i^D:=\gamma_{2m-k,k}^{-1}\left(\gamma_{2m-k,k}(\ba).s_i^D\right)
\]
yields a well-defined right action of $\mathcal W_{D_m}$ on $H_*(\SOfiber)$ which is given by Table~\ref{tab:s_i^D_action} and Table~\ref{tab:s_0^D_action}.
\end{prop}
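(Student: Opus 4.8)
The plan is to reduce the whole statement to one assertion: the subspace $\im(\gamma_{2m-k,k})\subseteq H_*((\mathbb S^2)^m)$ is stable under the induced right $\mathcal W_{D_m}$-action recorded in~\eqref{actionbeforeProp29}. Granting this, well-definedness of $\ba.s_i^D:=\gamma_{2m-k,k}^{-1}(\gamma_{2m-k,k}(\ba).s_i^D)$ is automatic, since the right-hand expression lies in $\im(\gamma_{2m-k,k})$ and $\gamma_{2m-k,k}$ is injective by Proposition~\ref{prop:injection}; and the fact that the assignment is a right action — and, since each $s_i^D$ acts as a homeomorphism of $(\mathbb S^2)^m$ and hence by a degree-preserving map on homology, a grading-preserving one — follows by transport of structure: for $s,t\in\mathcal W_{D_m}$ one has $(\ba.s).t=\gamma_{2m-k,k}^{-1}\big((\gamma_{2m-k,k}(\ba).s).t\big)=\gamma_{2m-k,k}^{-1}\big(\gamma_{2m-k,k}(\ba).(st)\big)=\ba.(st)$, using that~\eqref{actionbeforeProp29} is a genuine right $\mathcal W_{D_m}$-action (it is induced from the right action of $\mathcal W_{D_m}$ on $(\mathbb S^2)^m$ as signed permutations of the factors with an even number of sign changes, whose Coxeter relations are routine, cf.\ Remark~\ref{rem:embedding_of_Weyl_grps} and \cite[\S2.1]{Tok84}). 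Since the $s_i^D$ generate $\mathcal W_{D_m}$, it is enough to prove stability under each generator.

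By Propositions~\ref{prop:basis_general_case} and~\ref{prop:explicit_isom}, the (graded, injective) map $\gamma_{2m-k,k}$ identifies $H_*(\SOfiber)$ with the subspace $V:=\big\langle L_\ba \mid \ba\in C_\mathrm{KL}^{\leq\lfloor k/2\rfloor}(m)\big\rangle$, where $L_\ba=\sum_{U\in\mathcal U_\ba}(-1)^{\Lambda_\ba(U)}l_U$, and the cup diagrams with exactly $l$ cups span $\gamma_{2m-k,k}(H_{2l}(\SOfiber))$. The heart of the proof is therefore an explicit computation: for each generator $s=s_i^D$ and each cup diagram $\ba$ compute $L_\ba.s=\sum_{U\in\mathcal U_\ba}(-1)^{\Lambda_\ba(U)}\,l_U.s$ using~\eqref{actionbeforeProp29} — that is, $l_U.s_i^D=l_{\tau_iU}$ for $i\geq 1$ and $l_U.s_0^D=(-1)^{\chi(U)}l_{\tau_1U}$ — and reorganise the resulting sum according to the local shape of $\ba$ at the two vertices moved by $s$ (vertices $i,i+1$ for $s_i^D$, vertices $1,2$ for $s_0^D$): whether each of these vertices lies on a ray or a cup; if both lie on cups, whether these cups are equal or distinct and, if distinct, how they are nested; and, for $s_0^D$, the marker decorations on the component(s) at vertices $1,2$ and on the leftmost ray. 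In each case one checks that the re-indexed sum equals $\pm L_{\ba'}$, or a two-term combination $\pm L_{\ba'}\pm L_{\ba''}$, of line-diagram sums attached to cup diagrams obtained from $\ba$ by the local modification recorded in Table~\ref{tab:s_i^D_action} (for $i\geq 1$) or Table~\ref{tab:s_0^D_action} (for $i=0$); in particular $L_\ba.s\in V$. This is a finite case analysis, most of whose cases reduce — via the K\"unneth isomorphism, exactly as in the proof of Proposition~\ref{prop:image_of_homology_gen} — to the one- and two-vertex computations already recorded in~\eqref{eq:induced_map_cup_1}, so that the new content lies entirely in the bookkeeping at the two active vertices.

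The main obstacle is precisely that sign and marker bookkeeping. When $s_i^D$ ($i\geq 1$) acts across two distinct cups, the relative left/right order of the selected cup-endpoints can change under $\tau_i$, so that $\Lambda_\ba(U)$ and $\Lambda_{\ba'}(\tau_iU)$ differ by $1$ on part of the index set; this sign flip is what forces a two-term answer and is the homological shadow of resolving a crossing in the skein calculus. The $s_0^D$ case is the most delicate: the antipodal sign $(-1)^{\chi(U)}$ must be absorbed into the change of $\Lambda$, and, when a marker is present, into a relabelling of markers; one must also verify that the output diagrams again satisfy the parity condition defining $C_\mathrm{KL}(m)$ (number of marked rays plus unmarked cups even), i.e.\ that one lands inside $C_\mathrm{KL}^{\leq\lfloor k/2\rfloor}(m)$ rather than outside it. I would organise this by first settling all unmarked configurations and then tracking the migration of a single marker, using that moving a marker across the diagram is governed by the elementary relations already implicit in~\eqref{eq:induced_map_cup_1}; this is also where the vanishing rules and the identification of two markers on a ray from Theorem~\ref{thm:skein_calculus} arise. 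Once the case analysis is complete, stability of $V$ holds, the transported action is well defined and grading-preserving by the first paragraph, and its values on generators are exactly Tables~\ref{tab:s_i^D_action} and~\ref{tab:s_0^D_action}, which proves the proposition. (The skein-calculus reformulation of Theorem~\ref{thm:skein_calculus} then follows by checking that these tables implement the stated local relations, and Theorem~\ref{thmB} by the analogous, shorter computation for the component-group generator $\alpha$.)
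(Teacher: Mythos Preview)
Your proposal is correct and follows essentially the same approach as the paper: reduce to stability of $\im(\gamma_{2m-k,k})$ under the $\mathcal W_{D_m}$-action, then compute $L_\ba.s_i^D$ by a case analysis according to the local configuration of $\ba$ at the two active vertices (two rays; a single cup joining them; one cup and one ray; two distinct cups), verifying in each case that the result matches the tables. The paper carries out exactly this four-case split with explicit manipulations of the index sets $\mathcal U_\ba$ and the sign function $\Lambda_\ba$, whereas you propose to organise the same computation via K\"unneth-style factorisation and marker-tracking; these are cosmetic differences in bookkeeping, not in substance.
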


Theorem~\ref{thm:skein_calculus} is now a consequence of Proposition~\ref{proposition:well_defined_action}: A straightforward case-by-case analysis shows that the $\mathcal W_{D_m}$-action from Tables~\ref{tab:s_i^D_action} and~\ref{tab:s_0^D_action} coincides with that from the skein calculus.\end{proof}

\begin{table}
\centering
\begin{tabular}{ c | c || c | c}
$\ba$ & $\ba.s_i^D$ & $\ba$ & $\ba.s_i^D$ \\
\hline\hline
\begin{tikzpicture}[scale=.8]
\draw[thick] (0,0) -- +(0,-.9);
\draw[thick] (.5,0) -- +(0,-.9);
\node at (0,0) {$\times$};
\node at (.5,0) {$\times$};
\end{tikzpicture}
&\begin{tikzpicture}[scale=.8]
\draw[thick] (0,0) -- +(0,-.9);
\draw[thick] (.5,0) -- +(0,-.9);
\node at (0,0) {$\times$};
\node at (.5,0) {$\times$};
\end{tikzpicture}
&
\begin{tikzpicture}[scale=.8]
\draw[thick] (0,0) -- +(0,-.9);
\fill ([xshift=-2.5pt,yshift=-2.5pt]0,-.5) rectangle ++(5pt,5pt);
\draw[thick] (.5,0) -- +(0,-.9);
\node at (0,0) {$\times$};
\node at (.5,0) {$\times$};
\end{tikzpicture}
&\begin{tikzpicture}[scale=.8]
\draw[thick] (0,0) -- +(0,-.9);
\fill ([xshift=-2.5pt,yshift=-2.5pt]0,-.5) rectangle ++(5pt,5pt);
\draw[thick] (.5,0) -- +(0,-.9);
\node at (0,0) {$\times$};
\node at (.5,0) {$\times$};
\end{tikzpicture}
\\ 
\begin{tikzpicture}[scale=.8]
\draw[thick] (0,0) .. controls +(0,-.5) and +(0,-.5) .. +(.5,0);
\node at (0,0) {$\times$};
\node at (.5,0) {$\times$};
\end{tikzpicture}
&
$-$\begin{tikzpicture}[scale=.8]
\draw[thick] (0,0) .. controls +(0,-.5) and +(0,-.5) .. +(.5,0);
\node at (0,0) {$\times$};
\node at (.5,0) {$\times$};
\end{tikzpicture}
&
\begin{tikzpicture}[scale=.8]
\draw[thick] (0,0) .. controls +(0,-.5) and +(0,-.5) .. +(.5,0);
\fill ([xshift=-2.5pt,yshift=-2.5pt].25,-.365) rectangle ++(5pt,5pt);
\node at (0,0) {$\times$};
\node at (.5,0) {$\times$};
\end{tikzpicture}
&
\begin{tikzpicture}[scale=.8]
\draw[thick] (0,0) .. controls +(0,-.5) and +(0,-.5) .. +(.5,0);
\fill ([xshift=-2.5pt,yshift=-2.5pt].25,-.365) rectangle ++(5pt,5pt);
\node at (0,0) {$\times$};
\node at (.5,0) {$\times$};
\end{tikzpicture}
\\
\begin{tikzpicture}[scale=.8,baseline={(0,-.5)}]
\draw[thick] (0,0) .. controls +(0,-.5) and +(0,-.5) .. +(.5,0);
\draw[thick] (1,0) -- +(0,-.9);
\node at (.5,0) {$\times$};
\node at (1,0) {$\times$};
\end{tikzpicture}
&
\begin{tikzpicture}[scale=.8,baseline={(0,-.5)}]
\draw[thick] (0,0) .. controls +(0,-.5) and +(0,-.5) .. +(.5,0);
\draw[thick] (1,0) -- +(0,-.9);
\node at (.5,0) {$\times$};
\node at (1,0) {$\times$};
\end{tikzpicture}
$+$
\begin{tikzpicture}[scale=.8,baseline={(0,-.5)}]
\draw[thick] (.5,0) .. controls +(0,-.5) and +(0,-.5) .. +(.5,0);
\draw[thick] (0,0) -- +(0,-.9);
\node at (.5,0) {$\times$};
\node at (1,0) {$\times$};
\end{tikzpicture}
&
\begin{tikzpicture}[scale=.8,baseline={(0,-.5)}]
\draw[thick] (0,0) .. controls +(0,-.5) and +(0,-.5) .. +(.5,0);
\draw[thick] (1,0) -- +(0,-.9);

\fill ([xshift=-2.5pt,yshift=-2.5pt].25,-.365) rectangle ++(5pt,5pt);
\fill ([xshift=-2.5pt,yshift=-2.5pt]1,-.5) rectangle ++(5pt,5pt);
\node at (.5,0) {$\times$};
\node at (1,0) {$\times$};
\end{tikzpicture}
&
\begin{tikzpicture}[scale=.8,baseline={(0,-.5)}]
\draw[thick] (0,0) .. controls +(0,-.5) and +(0,-.5) .. +(.5,0);
\draw[thick] (1,0) -- +(0,-.9);

\fill ([xshift=-2.5pt,yshift=-2.5pt].25,-.365) rectangle ++(5pt,5pt);
\fill ([xshift=-2.5pt,yshift=-2.5pt]1,-.5) rectangle ++(5pt,5pt);
\node at (.5,0) {$\times$};
\node at (1,0) {$\times$};
\end{tikzpicture}
$+$
\begin{tikzpicture}[scale=.8,baseline={(0,-.5)}]
\draw[thick] (.5,0) .. controls +(0,-.5) and +(0,-.5) .. +(.5,0);
\draw[thick] (0,0) -- +(0,-.9);
\node at (.5,0) {$\times$};
\node at (1,0) {$\times$};
\end{tikzpicture}
\\ 
\begin{tikzpicture}[scale=.8,baseline={(0,-.5)}]
\draw[thick] (.5,0) .. controls +(0,-.5) and +(0,-.5) .. +(.5,0);
\draw[thick] (0,0) -- +(0,-.9);
\node at (0,0) {$\times$};
\node at (.5,0) {$\times$};
\end{tikzpicture}
&
\begin{tikzpicture}[scale=.8,baseline={(0,-.5)}]
\draw[thick] (.5,0) .. controls +(0,-.5) and +(0,-.5) .. +(.5,0);
\draw[thick] (0,0) -- +(0,-.9);
\node at (0,0) {$\times$};
\node at (.5,0) {$\times$};
\end{tikzpicture}
$+$
\begin{tikzpicture}[scale=.8,baseline={(0,-.5)}]
\draw[thick] (0,0) .. controls +(0,-.5) and +(0,-.5) .. +(.5,0);
\draw[thick] (1,0) -- +(0,-.9);
\node at (0,0) {$\times$};
\node at (.5,0) {$\times$};
\end{tikzpicture}
&
\begin{tikzpicture}[scale=.8,baseline={(0,-.5)}]
\draw[thick] (0,0) .. controls +(0,-.5) and +(0,-.5) .. +(.5,0);
\draw[thick] (1,0) -- +(0,-.9);

\fill ([xshift=-2.5pt,yshift=-2.5pt]1,-.5) rectangle ++(5pt,5pt);
\node at (.5,0) {$\times$};
\node at (1,0) {$\times$};
\end{tikzpicture}
&
\begin{tikzpicture}[scale=.8,baseline={(0,-.5)}]
\draw[thick] (0,0) .. controls +(0,-.5) and +(0,-.5) .. +(.5,0);
\draw[thick] (1,0) -- +(0,-.9);

\fill ([xshift=-2.5pt,yshift=-2.5pt]1,-.5) rectangle ++(5pt,5pt);
\node at (.5,0) {$\times$};
\node at (1,0) {$\times$};
\end{tikzpicture}
$+$
\begin{tikzpicture}[scale=.8,baseline={(0,-.5)}]
\draw[thick] (.5,0) .. controls +(0,-.5) and +(0,-.5) .. +(.5,0);
\draw[thick] (0,0) -- +(0,-.9);
\fill ([xshift=-2.5pt,yshift=-2.5pt]0,-.5) rectangle ++(5pt,5pt);
\node at (.5,0) {$\times$};
\node at (1,0) {$\times$};
\end{tikzpicture}
\\
\begin{tikzpicture}[scale=.8,baseline={(0,-.5)}]
\draw[thick] (0,0) .. controls +(0,-.5) and +(0,-.5) .. +(.5,0);
\draw[thick] (1,0) -- +(0,-.9);

\fill ([xshift=-2.5pt,yshift=-2.5pt].25,-.365) rectangle ++(5pt,5pt);
\node at (.5,0) {$\times$};
\node at (1,0) {$\times$};
\end{tikzpicture}
&
\begin{tikzpicture}[scale=.8,baseline={(0,-.5)}]
\draw[thick] (0,0) .. controls +(0,-.5) and +(0,-.5) .. +(.5,0);
\draw[thick] (1,0) -- +(0,-.9);

\fill ([xshift=-2.5pt,yshift=-2.5pt].25,-.365) rectangle ++(5pt,5pt);
\node at (.5,0) {$\times$};
\node at (1,0) {$\times$};
\end{tikzpicture}
$+$
\begin{tikzpicture}[scale=.8,baseline={(0,-.5)}]
\draw[thick] (.5,0) .. controls +(0,-.5) and +(0,-.5) .. +(.5,0);
\draw[thick] (0,0) -- +(0,-.9);
\fill ([xshift=-2.5pt,yshift=-2.5pt]0,-.5) rectangle ++(5pt,5pt);
\node at (.5,0) {$\times$};
\node at (1,0) {$\times$};
\end{tikzpicture}
&
\begin{tikzpicture}[scale=.8,baseline={(0,-.5)}]
\draw[thick] (.5,0) .. controls +(0,-.5) and +(0,-.5) .. +(.5,0);
\draw[thick] (0,0) -- +(0,-.9);

\fill ([xshift=-2.5pt,yshift=-2.5pt]0,-.5) rectangle ++(5pt,5pt);
\node at (0,0) {$\times$};
\node at (.5,0) {$\times$};
\end{tikzpicture}
&
\begin{tikzpicture}[scale=.8,baseline={(0,-.5)}]
\draw[thick] (.5,0) .. controls +(0,-.5) and +(0,-.5) .. +(.5,0);
\draw[thick] (0,0) -- +(0,-.9);

\fill ([xshift=-2.5pt,yshift=-2.5pt]0,-.5) rectangle ++(5pt,5pt);
\node at (0,0) {$\times$};
\node at (.5,0) {$\times$};
\end{tikzpicture}
$+$
\begin{tikzpicture}[scale=.8,baseline={(0,-.5)}]
\draw[thick] (0,0) .. controls +(0,-.5) and +(0,-.5) .. +(.5,0);
\draw[thick] (1,0) -- +(0,-.9);
\fill ([xshift=-2.5pt,yshift=-2.5pt]1,-.5) rectangle ++(5pt,5pt);
\node at (0,0) {$\times$};
\node at (.5,0) {$\times$};
\end{tikzpicture}
\\
\begin{tikzpicture}[scale=.8]
\draw[thick] (0,0) .. controls +(0,-.5) and +(0,-.5) .. +(.5,0);
\draw[thick] (1,0) .. controls +(0,-.5) and +(0,-.5) .. +(.5,0);
\node at (.5,0) {$\times$};
\node at (1,0) {$\times$};
\end{tikzpicture}
&
\begin{tikzpicture}[scale=.8]
\draw[thick] (0,0) .. controls +(0,-.5) and +(0,-.5) .. +(.5,0);
\draw[thick] (1,0) .. controls +(0,-.5) and +(0,-.5) .. +(.5,0);
\node at (.5,0) {$\times$};
\node at (1,0) {$\times$};
\end{tikzpicture}
$+$
\begin{tikzpicture}[scale=.8]
\draw[thick] (0,0) .. controls +(0,-1) and +(0,-1) .. +(1.5,0);
\draw[thick] (.5,0) .. controls +(0,-.5) and +(0,-.5) .. +(.5,0);
\node at (.5,0) {$\times$};
\node at (1,0) {$\times$};
\end{tikzpicture}
&
\begin{tikzpicture}[scale=.8]
\draw[thick] (0,0) .. controls +(0,-.5) and +(0,-.5) .. +(.5,0);
\draw[thick] (1,0) .. controls +(0,-.5) and +(0,-.5) .. +(.5,0);

\fill ([xshift=-2.5pt,yshift=-2.5pt]1.25,-.365) rectangle ++(5pt,5pt);
\fill ([xshift=-2.5pt,yshift=-2.5pt].25,-.365) rectangle ++(5pt,5pt);
\node at (.5,0) {$\times$};
\node at (1,0) {$\times$};
\end{tikzpicture}
&
\begin{tikzpicture}[scale=.8]
\draw[thick] (0,0) .. controls +(0,-.5) and +(0,-.5) .. +(.5,0);
\draw[thick] (1,0) .. controls +(0,-.5) and +(0,-.5) .. +(.5,0);

\fill ([xshift=-2.5pt,yshift=-2.5pt]1.25,-.365) rectangle ++(5pt,5pt);
\fill ([xshift=-2.5pt,yshift=-2.5pt].25,-.365) rectangle ++(5pt,5pt);
\node at (.5,0) {$\times$};
\node at (1,0) {$\times$};
\end{tikzpicture}
$+$
\begin{tikzpicture}[scale=.8]
\draw[thick] (0,0) .. controls +(0,-1) and +(0,-1) .. +(1.5,0);
\draw[thick] (.5,0) .. controls +(0,-.5) and +(0,-.5) .. +(.5,0);
\node at (.5,0) {$\times$};
\node at (1,0) {$\times$};
\end{tikzpicture}
\\
\begin{tikzpicture}[scale=.8]
\draw[thick] (0,0) .. controls +(0,-.5) and +(0,-.5) .. +(.5,0);
\draw[thick] (1,0) .. controls +(0,-.5) and +(0,-.5) .. +(.5,0);

\fill ([xshift=-2.5pt,yshift=-2.5pt].25,-.365) rectangle ++(5pt,5pt);
\node at (.5,0) {$\times$};
\node at (1,0) {$\times$};
\end{tikzpicture}
&
\begin{tikzpicture}[scale=.8]
\draw[thick] (0,0) .. controls +(0,-.5) and +(0,-.5) .. +(.5,0);
\draw[thick] (1,0) .. controls +(0,-.5) and +(0,-.5) .. +(.5,0);

\fill ([xshift=-2.5pt,yshift=-2.5pt].25,-.365) rectangle ++(5pt,5pt);
\node at (.5,0) {$\times$};
\node at (1,0) {$\times$};
\end{tikzpicture}
$+$
\begin{tikzpicture}[scale=.8]
\draw[thick] (0,0) .. controls +(0,-1) and +(0,-1) .. +(1.5,0);
\draw[thick] (.5,0) .. controls +(0,-.5) and +(0,-.5) .. +(.5,0);

\fill ([xshift=-2.5pt,yshift=-2.5pt].75,-.75) rectangle ++(5pt,5pt);
\node at (.5,0) {$\times$};
\node at (1,0) {$\times$};
\end{tikzpicture}
&
\begin{tikzpicture}[scale=.8]
\draw[thick] (0,0) .. controls +(0,-.5) and +(0,-.5) .. +(.5,0);
\draw[thick] (1,0) .. controls +(0,-.5) and +(0,-.5) .. +(.5,0);
\fill ([xshift=-2.5pt,yshift=-2.5pt]1.25,-.365) rectangle ++(5pt,5pt);
\node at (.5,0) {$\times$};
\node at (1,0) {$\times$};
\end{tikzpicture}
&
\begin{tikzpicture}[scale=.8]
\draw[thick] (0,0) .. controls +(0,-.5) and +(0,-.5) .. +(.5,0);
\draw[thick] (1,0) .. controls +(0,-.5) and +(0,-.5) .. +(.5,0);

\fill ([xshift=-2.5pt,yshift=-2.5pt]1.25,-.365) rectangle ++(5pt,5pt);
\node at (.5,0) {$\times$};
\node at (1,0) {$\times$};
\end{tikzpicture}
$+$
\begin{tikzpicture}[scale=.8]
\draw[thick] (0,0) .. controls +(0,-1) and +(0,-1) .. +(1.5,0);
\draw[thick] (.5,0) .. controls +(0,-.5) and +(0,-.5) .. +(.5,0);

\fill ([xshift=-2.5pt,yshift=-2.5pt].75,-.74) rectangle ++(5pt,5pt);
\node at (.5,0) {$\times$};
\node at (1,0) {$\times$};
\end{tikzpicture}
\\
\begin{tikzpicture}[scale=.8]
\draw[thick] (0,0) .. controls +(0,-1) and +(0,-1) .. +(1.5,0);
\draw[thick] (.5,0) .. controls +(0,-.5) and +(0,-.5) .. +(.5,0);
\node at (1,0) {$\times$};
\node at (1.5,0) {$\times$};
\end{tikzpicture}
&
\begin{tikzpicture}[scale=.8]
\draw[thick] (0,0) .. controls +(0,-1) and +(0,-1) .. +(1.5,0);
\draw[thick] (.5,0) .. controls +(0,-.5) and +(0,-.5) .. +(.5,0);
\node at (1,0) {$\times$};
\node at (1.5,0) {$\times$};
\end{tikzpicture}
$+$
\begin{tikzpicture}[scale=.8]
\draw[thick] (0,0) .. controls +(0,-.5) and +(0,-.5) .. +(.5,0);
\draw[thick] (1,0) .. controls +(0,-.5) and +(0,-.5) .. +(.5,0);
\node at (1,0) {$\times$};
\node at (1.5,0) {$\times$};
\end{tikzpicture}
&
\begin{tikzpicture}[scale=.8]
\draw[thick] (0,0) .. controls +(0,-1) and +(0,-1) .. +(1.5,0);
\draw[thick] (.5,0) .. controls +(0,-.5) and +(0,-.5) .. +(.5,0);

\fill ([xshift=-2.5pt,yshift=-2.5pt].75,-.74) rectangle ++(5pt,5pt);
\node at (1,0) {$\times$};
\node at (1.5,0) {$\times$};
\end{tikzpicture}
&
\begin{tikzpicture}[scale=.8]
\draw[thick] (0,0) .. controls +(0,-1) and +(0,-1) .. +(1.5,0);
\draw[thick] (.5,0) .. controls +(0,-.5) and +(0,-.5) .. +(.5,0);

\fill ([xshift=-2.5pt,yshift=-2.5pt].75,-.74) rectangle ++(5pt,5pt);
\node at (1,0) {$\times$};
\node at (1.5,0) {$\times$};
\end{tikzpicture}
$+$
\begin{tikzpicture}[scale=.8]
\draw[thick] (0,0) .. controls +(0,-.5) and +(0,-.5) .. +(.5,0);
\draw[thick] (1,0) .. controls +(0,-.5) and +(0,-.5) .. +(.5,0);

\fill ([xshift=-2.5pt,yshift=-2.5pt].25,-.365) rectangle ++(5pt,5pt);
\node at (1,0) {$\times$};
\node at (1.5,0) {$\times$};
\end{tikzpicture}
\\
\begin{tikzpicture}[scale=.8]
\draw[thick] (0,0) .. controls +(0,-1) and +(0,-1) .. +(1.5,0);
\draw[thick] (.5,0) .. controls +(0,-.5) and +(0,-.5) .. +(.5,0);
\node at (0,0) {$\times$};
\node at (.5,0) {$\times$};
\end{tikzpicture}
&
\begin{tikzpicture}[scale=.8]
\draw[thick] (0,0) .. controls +(0,-1) and +(0,-1) .. +(1.5,0);
\draw[thick] (.5,0) .. controls +(0,-.5) and +(0,-.5) .. +(.5,0);
\node at (0,0) {$\times$};
\node at (.5,0) {$\times$};
\end{tikzpicture}
$+$
\begin{tikzpicture}[scale=.8]
\draw[thick] (0,0) .. controls +(0,-.5) and +(0,-.5) .. +(.5,0);
\draw[thick] (1,0) .. controls +(0,-.5) and +(0,-.5) .. +(.5,0);
\node at (0,0) {$\times$};
\node at (.5,0) {$\times$};
\end{tikzpicture}
&
\begin{tikzpicture}[scale=.8]
\draw[thick] (0,0) .. controls +(0,-1) and +(0,-1) .. +(1.5,0);
\draw[thick] (.5,0) .. controls +(0,-.5) and +(0,-.5) .. +(.5,0);

\fill ([xshift=-2.5pt,yshift=-2.5pt].75,-.74) rectangle ++(5pt,5pt);
\node at (0,0) {$\times$};
\node at (.5,0) {$\times$};
\end{tikzpicture}
&
\begin{tikzpicture}[scale=.8]
\draw[thick] (0,0) .. controls +(0,-1) and +(0,-1) .. +(1.5,0);
\draw[thick] (.5,0) .. controls +(0,-.5) and +(0,-.5) .. +(.5,0);

\fill ([xshift=-2.5pt,yshift=-2.5pt].75,-.74) rectangle ++(5pt,5pt);
\node at (0,0) {$\times$};
\node at (.5,0) {$\times$};
\end{tikzpicture}
$+$
\begin{tikzpicture}[scale=.8]
\draw[thick] (0,0) .. controls +(0,-.5) and +(0,-.5) .. +(.5,0);
\draw[thick] (1,0) .. controls +(0,-.5) and +(0,-.5) .. +(.5,0);

\fill ([xshift=-2.5pt,yshift=-2.5pt]1.25,-.365) rectangle ++(5pt,5pt);
\node at (0,0) {$\times$};
\node at (.5,0) {$\times$};
\end{tikzpicture}
\end{tabular}
\caption{The action of a generator $s_i^D\in\mathcal W_{D_m}$, $i\in\{1,\ldots,m-1\}$, on a cup diagram (crosses indicate vertices $i$ and $i+1$).}
\label{tab:s_i^D_action}
\end{table}

\begin{table}
\centering
\begin{tabular}{ c | c || c | c}
$\ba$ & $\ba.s^D_0$ & $\ba$ & $\ba.s^D_0$ \\
\hline
\begin{tikzpicture}[scale=.8,baseline={(0,-.5)}]
\draw[thick] (0,0) -- +(0,-.9);
\draw[thick] (.5,0) -- +(0,-.9);
\node at (0,0) {$\times$};
\node at (.5,0) {$\times$};
\end{tikzpicture}
&\begin{tikzpicture}[scale=.8,baseline={(0,-.5)}]
\draw[thick] (0,0) -- +(0,-.9);
\draw[thick] (.5,0) -- +(0,-.9);
\node at (0,0) {$\times$};
\node at (.5,0) {$\times$};
\end{tikzpicture}
&
\begin{tikzpicture}[scale=.8,baseline={(0,-.5)}]
\draw[thick] (0,0) -- +(0,-.9);
\fill ([xshift=-2.5pt,yshift=-2.5pt]0,-.5) rectangle ++(5pt,5pt);
\draw[thick] (.5,0) -- +(0,-.9);
\node at (0,0) {$\times$};
\node at (.5,0) {$\times$};
\end{tikzpicture}
&\begin{tikzpicture}[scale=.8,baseline={(0,-.5)}]
\draw[thick] (0,0) -- +(0,-.9);
\fill ([xshift=-2.5pt,yshift=-2.5pt]0,-.5) rectangle ++(5pt,5pt);
\draw[thick] (.5,0) -- +(0,-.9);
\node at (0,0) {$\times$};
\node at (.5,0) {$\times$};
\end{tikzpicture}
\\
\begin{tikzpicture}[scale=.8,baseline={(0,-.5)}]
\draw[thick] (0,0) .. controls +(0,-.5) and +(0,-.5) .. +(.5,0);
\node at (0,0) {$\times$};
\node at (.5,0) {$\times$};
\end{tikzpicture}
&
\begin{tikzpicture}[scale=.8,baseline={(0,-.5)}]
\draw[thick] (0,0) .. controls +(0,-.5) and +(0,-.5) .. +(.5,0);
\node at (0,0) {$\times$};
\node at (.5,0) {$\times$};
\end{tikzpicture}
&
\begin{tikzpicture}[scale=.8,baseline={(0,-.5)}]
\draw[thick] (0,0) .. controls +(0,-.5) and +(0,-.5) .. +(.5,0);
\fill ([xshift=-2.5pt,yshift=-2.5pt].25,-.365) rectangle ++(5pt,5pt);
\node at (0,0) {$\times$};
\node at (.5,0) {$\times$};
\end{tikzpicture}
&
$-$\begin{tikzpicture}[scale=.8,baseline={(0,-.5)}]
\draw[thick] (0,0) .. controls +(0,-.5) and +(0,-.5) .. +(.5,0);
\fill ([xshift=-2.5pt,yshift=-2.5pt].25,-.365) rectangle ++(5pt,5pt);
\node at (0,0) {$\times$};
\node at (.5,0) {$\times$};
\end{tikzpicture}
\\
\begin{tikzpicture}[scale=.8,baseline={(0,-.5)}]
\draw[thick] (.5,0) .. controls +(0,-.5) and +(0,-.5) .. +(.5,0);
\draw[thick] (0,0) -- +(0,-.9);
\node at (0,0) {$\times$};
\node at (.5,0) {$\times$};
\end{tikzpicture}
&
\begin{tikzpicture}[scale=.8,baseline={(0,-.5)}]
\draw[thick] (.5,0) .. controls +(0,-.5) and +(0,-.5) .. +(.5,0);
\draw[thick] (0,0) -- +(0,-.9);
\node at (0,0) {$\times$};
\node at (.5,0) {$\times$};
\end{tikzpicture}
$+$
\begin{tikzpicture}[scale=.8,baseline={(0,-.5)}]
\draw[thick] (0,0) .. controls +(0,-.5) and +(0,-.5) .. +(.5,0);
\draw[thick] (1,0) -- +(0,-.9);
\fill ([xshift=-2.5pt,yshift=-2.5pt].25,-.365) rectangle ++(5pt,5pt);
\fill ([xshift=-2.5pt,yshift=-2.5pt]1,-.5) rectangle ++(5pt,5pt);
\node at (0,0) {$\times$};
\node at (.5,0) {$\times$};
\end{tikzpicture}
&
\begin{tikzpicture}[scale=.8,baseline={(0,-.5)}]
\draw[thick] (.5,0) .. controls +(0,-.5) and +(0,-.5) .. +(.5,0);
\draw[thick] (0,0) -- +(0,-.9);

\fill ([xshift=-2.5pt,yshift=-2.5pt]0,-.5) rectangle ++(5pt,5pt);
\node at (0,0) {$\times$};
\node at (.5,0) {$\times$};
\end{tikzpicture}
&
\begin{tikzpicture}[scale=.8,baseline={(0,-.5)}]
\draw[thick] (.5,0) .. controls +(0,-.5) and +(0,-.5) .. +(.5,0);
\draw[thick] (0,0) -- +(0,-.9);

\fill ([xshift=-2.5pt,yshift=-2.5pt]0,-.5) rectangle ++(5pt,5pt);
\node at (0,0) {$\times$};
\node at (.5,0) {$\times$};
\end{tikzpicture}
$+$
\begin{tikzpicture}[scale=.8,baseline={(0,-.5)}]
\draw[thick] (0,0) .. controls +(0,-.5) and +(0,-.5) .. +(.5,0);
\draw[thick] (1,0) -- +(0,-.9);
\fill ([xshift=-2.5pt,yshift=-2.5pt].25,-.365) rectangle ++(5pt,5pt);
\node at (0,0) {$\times$};
\node at (.5,0) {$\times$};
\end{tikzpicture}
\\
\begin{tikzpicture}[scale=.8,baseline={(0,-.5)}]
\begin{scope}[xshift=3cm]
\draw[thick] (0,0) .. controls +(0,-1) and +(0,-1) .. +(1.5,0);
\draw[thick] (.5,0) .. controls +(0,-.5) and +(0,-.5) .. +(.5,0);
\node at (0,0) {$\times$};
\node at (.5,0) {$\times$};
\end{scope}
\end{tikzpicture}
&
\begin{tikzpicture}[scale=.8,baseline={(0,-.5)}]
\begin{scope}[xshift=3cm]
\draw[thick] (0,0) .. controls +(0,-1) and +(0,-1) .. +(1.5,0);
\draw[thick] (.5,0) .. controls +(0,-.5) and +(0,-.5) .. +(.5,0);
\node at (0,0) {$\times$};
\node at (.5,0) {$\times$};
\end{scope}
\end{tikzpicture}
$+$
\begin{tikzpicture}[scale=.8,baseline={(0,-.5)}]
\begin{scope}[xshift=3cm]
\draw[thick] (0,0) .. controls +(0,-.5) and +(0,-.5) .. +(.5,0);
\draw[thick] (1,0) .. controls +(0,-.5) and +(0,-.5) .. +(.5,0);

\fill ([xshift=-2.5pt,yshift=-2.5pt].25,-.365) rectangle ++(5pt,5pt);
\fill ([xshift=-2.5pt,yshift=-2.5pt]1.25,-.365) rectangle ++(5pt,5pt);
\node at (0,0) {$\times$};
\node at (.5,0) {$\times$};
\end{scope}
\end{tikzpicture}
&
\begin{tikzpicture}[scale=.8,baseline={(0,-.5)}]
\begin{scope}[xshift=3cm]
\draw[thick] (0,0) .. controls +(0,-1) and +(0,-1) .. +(1.5,0);
\draw[thick] (.5,0) .. controls +(0,-.5) and +(0,-.5) .. +(.5,0);

\fill ([xshift=-2.5pt,yshift=-2.5pt].75,-.74) rectangle ++(5pt,5pt);
\node at (0,0) {$\times$};
\node at (.5,0) {$\times$};
\end{scope}
\end{tikzpicture}
&
\begin{tikzpicture}[scale=.8,baseline={(0,-.5)}]
\begin{scope}[xshift=3cm]
\draw[thick] (0,0) .. controls +(0,-1) and +(0,-1) .. +(1.5,0);
\draw[thick] (.5,0) .. controls +(0,-.5) and +(0,-.5) .. +(.5,0);

\fill ([xshift=-2.5pt,yshift=-2.5pt].75,-.74) rectangle ++(5pt,5pt);
\node at (0,0) {$\times$};
\node at (.5,0) {$\times$};
\end{scope}
\end{tikzpicture}
$+$
\begin{tikzpicture}[scale=.8,baseline={(0,-.5)}]
\begin{scope}[xshift=3cm]
\draw[thick] (0,0) .. controls +(0,-.5) and +(0,-.5) .. +(.5,0);
\draw[thick] (1,0) .. controls +(0,-.5) and +(0,-.5) .. +(.5,0);

\fill ([xshift=-2.5pt,yshift=-2.5pt].25,-.365) rectangle ++(5pt,5pt);
\node at (0,0) {$\times$};
\node at (.5,0) {$\times$};
\end{scope}
\end{tikzpicture}
\end{tabular}
\caption{The action of the generator $s_0^D\in\mathcal W_{D_m}$ on a cup diagram (crosses indicate vertices $1$ and $2$).}
\label{tab:s_0^D_action}
\end{table}

\begin{proof}[Proof of Proposition~\ref{proposition:well_defined_action}]
Throughout the proof we fix a generator $s_i^D\in\mathcal W_{D_m}$, $i\in\{0,\ldots,m-1\}$. We distinguish between four different cases depending on what the cup diagram $\ba$ locally looks like at the vertices $i$ and $i+1$.

\bigskip
\noindent {\it Case 1: Vertices i and i+1 are both connected to a ray.} 

If $i$ and $i+1$ are both connected to rays in $\ba$, then we have $\tau_iU=U$ for all $U\in\mathcal U_\ba$ and 
\begin{equation} \label{eq:case_1_calculation}
L_\ba.s_i^D = \sum_{U\in\mathcal U_\ba} (-1)^{\Lambda_\ba(U)}l_{\tau_iU}=\sum_{U\in\mathcal U_\ba} (-1)^{\Lambda_\ba(U)}l_U = L_\ba,
\end{equation}
if $i\in\{1,\ldots,m-1\}$. Now the claim follows from applying the inverse of the isomorphism of Proposition~\ref{prop:explicit_isom} to (\ref{eq:case_1_calculation}). Since $\chi_U=0$ for all $U\in\mathcal U_\ba$, calculation (\ref{eq:case_1_calculation}) also proves that $s_0^D$ acts as indicated in Table~\ref{tab:s_0^D_action}. 

\bigskip
\noindent {\it Case 2: Vertices i and i+1 are connected by a cup.}

We decompose $\mathcal U_\ba=\mathcal U_{\ba,i}\sqcup\mathcal U_{\ba,i+1}$, where $\mathcal U_{\ba,i}$ (resp.\ $\mathcal U_{\ba,i+1}$) consists of all sets $U\in\mathcal U_\ba$ containing $i$ (resp.\ $i+1$). Then we have
\[
L_\ba=\sum_{U\in\mathcal U_\ba} (-1)^{\Lambda_\ba(U)}l_U =\sum_{U\in\mathcal U_{\ba,i}} (-1)^{\Lambda_\ba(U)}l_U + \sum_{U\in\mathcal U_{\ba,i+1}} (-1)^{\Lambda_\ba(U)}l_U.
\]
In the following we abbreviate $L_{\ba,i}=\sum_{U\in\mathcal U_{\ba,i}} (-1)^{\Lambda_\ba(U)}l_U$ and $L_{\ba,i+1}=\sum_{U\in\mathcal U_{\ba,i+1}} (-1)^{\Lambda_\ba(U)}l_U$.

Firstly, note that the assignment $U\mapsto \tau_iU$ induces a self-inverse bijection $\tau_i\colon\mathcal U_{\ba,i}\to\mathcal U_{\ba,i+1}$. Secondly, for all $U\in\mathcal U_\ba$ we have equalities $(-1)^{\Lambda_\ba(\tau_iU)}=(-1)^{\Lambda_\ba(U)-1}$ (resp.\ $(-1)^{\Lambda_\ba(\tau_iU)}=(-1)^{\Lambda_\ba(U)}$) if $i$ and $i+1$ are connected by an unmarked cup (resp.\ marked cup). Now we compute
\begin{align*}
L_{\ba,i}.s_i^D&=\sum_{U\in\mathcal U_{\ba,i}} (-1)^{\Lambda_\ba(U)}l_{\tau_iU}=\sum_{U\in\mathcal U_{\ba,i+1}} (-1)^{\Lambda_\ba(\tau_iU)}l_U \\
				&={\begin{cases}
					\sum_{U\in\mathcal U_{\ba,i+1}} (-1)^{\Lambda_\ba(U)-1}l_U=-L_{\ba,i+1} &\text{if }i\CupConnect (i+1), \\
					\sum_{U\in\mathcal U_{\ba,i+1}} (-1)^{\Lambda_\ba(U)}l_U=L_{\ba,i+1} &\text{if }i\DCupConnect (i+1).
				\end{cases}}
\end{align*}
Analogously, we obtain $L_{\ba,i+1}.s_i^D=-L_{\ba,i}$ (resp.\ $L_{\ba,i+1}.s_i^D=L_{\ba,i}$) if the cup connecting $i$ and $i+1$ is unmarked (resp.\ marked). Hence, we compute
\[
L_\ba.s_i^D=L_{\ba,i}.s_i^D+L_{\ba,i+1}.s_i^D=\begin{cases}
		-L_\ba &\text{if }i\CupConnect (i+1), \\
		L_\ba &\text{if }i\DCupConnect (i+1).
	\end{cases}
\] 
Note that $l_U.s_0^D=-l_{\tau_1U}$ for all $U\in\mathcal U_{\ba}$. Modifying the above calculation with the additional sign yields the claim for the $s_0^D$-action. 

\bigskip
\noindent {\it Case 3: Vertices i and i+1 are incident with one cup and one ray.} 

Assume that $i$ and $j$, $j<i$, are connected by an unmarked cup (if $i+1$ and $j$, $i+1<j$, are connected by a cup we only need to switch $i$ and $i+1$ in the proof below). Decompose $\mathcal U_\ba=\mathcal U_{\ba,i}\sqcup\mathcal U_{\ba,j}$ and $L_\ba=L_{\ba,i}+L_{\ba,j}$ as in {\it Case 2} above. Let ${\bf b}$ be the cup diagram different from $\ba$ appearing in the sum $\ba.s_i^D$ and decompose $\mathcal U_{\bf b}=\mathcal U_{{\bf b},i} \sqcup \mathcal U_{{\bf b},i+1}$ as well as $L_{\bf b}=L_{{\bf b},i}+L_{{\bf b},i+1}$. Since $\tau_iU=U$ for all $U\in\mathcal U_{\ba,j}$, we compute $L_{\ba,j}.s_i^D = L_{\ba,j}$ as in (\ref{eq:case_1_calculation}).

Moreover, we see that the assignment $U\mapsto\tau_iU$ defines an involution $\mathcal U_{\ba,i}\to \mathcal U_{{\bf b},i+1}$ and we have $\Lambda_\ba(\tau_iU)=\Lambda_{\bf b}(U)$ for all $U\in\mathcal U_{{\bf b},i+1}$. Thus, we can compute
%\begin{align*}
\begin{eqnarray*}&
L_{\ba,i}.s_i^D=\sum_{U\in\mathcal U_{\ba,i}} (-1)^{\Lambda_\ba(U)}l_{\tau_iU}=\sum_{U\in\mathcal U_{{\bf b},i+1}} (-1)^{\Lambda_\ba(\tau_iU)}l_U 
=\sum_{U\in\mathcal U_{{\bf b},i+1}} (-1)^{\Lambda_{\bf b}(U)}l_U=L_{{\bf b},i+1},\quad&
\end{eqnarray*}
%\end{align*}
where the second equation is a simple change of the summation index using the bijection $\tau_i$.

We see that $\mathcal U_{\ba,i}=\mathcal U_{{\bf b},i}$ and $\Lambda_\ba(U)=\Lambda_{\bf b}(U)-1$ for all $U\in\mathcal U_{\ba,i}$ which implies $L_{\ba,i}=-L_{{\bf b},i}$ and 
\begin{align*}
L_\ba.s_i^D &= L_{\ba,j}.s_i^D+L_{\ba,i}.s_i^D = L_{\ba,j} + L_{{\bf b},i+1} = L_{\ba,j} + L_{{\bf b},i+1} + L_{\ba,i} - L_{\ba,i} \\
						 &= L_{\ba,j} + L_{{\bf b},i+1} + L_{\ba,i} + L_{{\bf b},i} = L_\ba + L_{\bf b}.
\end{align*}

If the cup connecting $i$ and $j$ is marked, we argue similarly. We omit to justify that $s_0^D$ acts as claimed in Table~\ref{tab:s_0^D_action}. 

\bigskip
\noindent {\it Case 4: Vertices i and i+1 are incident with two different cups.} 

Let $i$ and $j$ as well as $i+1$ and $k$ be connected by a cup. We decompose $\mathcal U_\ba=\mathcal U_{\ba,j,i+1} \sqcup \mathcal U_{\ba,j,k} \sqcup \mathcal U_{\ba,i,i+1} \sqcup \mathcal U_{\ba,i,k}$ and similarly $\mathcal U_{\bf b}=\mathcal U_{{\bf b},i,j} \sqcup \mathcal U_{{\bf b},i,k} \sqcup \mathcal U_{{\bf b},i+1,j} \sqcup \mathcal U_{{\bf b},i+1,k}$, where ${\bf b}$ is the cup diagram different from $\ba$ appearing in the sum $\ba.s_i^D$. Note that $\tau_i\colon\mathcal U_{\ba,j,k} \to \mathcal U_{\ba,j,k}$ and $\tau_i\colon\mathcal U_{\ba,i,i+1}\to\mathcal U_{\ba,i,i+1}$ are identities. Thus, we deduce
\[
L_{\ba,j,k}.s_i^D = \sum_{U\in\mathcal U_{\ba,j,k}} (-1)^{\Lambda_\ba(U)}l_{\tau_iU}=\sum_{U\in\mathcal U_{\ba,j,k}} (-1)^{\Lambda_\ba(U)}l_U = L_{\ba,j,k}
\]
and analogously $L_{\ba,i,i+1}.s_i^D=L_{\ba,i,i+1}$. Since $(-1)^{\chi(U)}=1$ in the above cases, these equations are also true if we act with $s_0^D$. 

It is easy to check (by going through the different cases in the table) that $\Lambda_\ba(\tau_iU)=\Lambda_{\bf b}(U)$ for all $U \in \mathcal U_{{\bf b},j,i}\cup U_{{\bf b},i+1,k}$ if we act with $s_i$, $1\leq i\leq k-1$. If we act with $s_0$, then we have $\Lambda_\ba(\tau_1U)=\Lambda_{\bf b}(U)\pm 1$. Moreover, $\tau_i\colon \mathcal U_{\ba,j,i+1}\to \mathcal U_{{\bf b},j,i}$ and $\tau_i\colon \mathcal U_{\ba,i,k}\to \mathcal U_{{\bf b},i+1,k}$ are self-inverse bijections. Hence, we compute
\begin{eqnarray*}
L_{\ba,j,i+1}.s_i^D=\sum_{U\in\mathcal U_{\ba,j,i+1}} (-1)^{\Lambda_\ba(U)}l_{\tau_iU}=\sum_{U\in\mathcal U_{{\bf b},j,i}} (-1)^{\Lambda_\ba(\tau_iU)}l_U 
										=\sum_{U\in\mathcal U_{{\bf b},j,i}} (-1)^{\Lambda_{\bf b}(U)}l_U=L_{{\bf b},j,i}.\quad\quad
\end{eqnarray*}
If we act with $s_0^D$, then the additional factor of $(-1)$ is killed by $(-1)^{\chi(U)}=-1$. Similarly, one checks that $L_{\ba,i,k}.s_i^D=L_{{\bf b},i+1,k}$.  

Next, we note the equalities of sets $\mathcal U_{\ba,i,k}=\mathcal U_{{\bf b},i,k}$ and $\mathcal U_{\ba,j,i+1}=\mathcal U_{{\bf b},j,i+1}$. Furthermore, we have $\Lambda_\ba(U)=\Lambda_{\bf b}(U)\pm 1$ for all $U\in \mathcal U_{\ba,i,k}\cup\mathcal U_{\ba,j,i+1}$. Again, this follows directly by looking at the table. Putting these two facts together we obtain $L_{{\bf b},i,k}=-L_{\ba,i,k}$ and $L_{{\bf b},j,i+1}=-L_{\ba,j,i+1}$. Thus,

\begin{eqnarray*}
\ba.s_i^D &=& L_{\ba,j,i+1}.s_i^D+L_{\ba,j,k}.s_i^D+L_{\ba,i,i+1}.s_i^D+L_{\ba,i,k}.s_i^D \\
						 &=& L_{{\bf b},j,i} + L_{\ba,j,k} + L_{\ba,i,i+1} + L_{{\bf b},i+1,k} \\
						 &= &L_{{\bf b},j,i} + L_{\ba,j,k} + L_{\ba,i,i+1} + L_{{\bf b},i+1,k} 
						 %&\hspace{1em}
						  + L_{\ba,i,k} - L_{\ba,i,k} + L_{\ba,j,i+1} - L_{\ba,j,i+1} \\
						 &= &L_{{\bf b},j,i} + L_{\ba,j,k} + L_{\ba,i,i+1} + L_{{\bf b},i+1,k} 
						 %&\hspace{1em} 
						 + L_{\ba,i,k} + L_{{\bf b},i,k} + L_{\ba,j,i+1} + L_{{\bf b},j,i+1} 
						 = L_\ba + L_{\bf b}.\!\!\qedhere
\end{eqnarray*} 
\end{proof}

\subsection{Proof of Theorem~\ref{thmB}}
\label{sec:group_actions}
Let $x$ be a nilpotent element of Jordan type $\lambda$ contained in $\mathfrak{sp}_{2m}(\mathbb C)$ (resp.\ $\mathfrak{so}_{2m}(\mathbb C)$). Then the corresponding component group $A^x_{\mathrm{O}_{2m}}$ (resp.\ $A^x_{\mathrm{Sp}_{2m}}$) is isomorphic to $(\mathbb Z/2\mathbb Z)^t$, where $t$ is the number of distinct odd (resp.\ even) parts of the partition $\lambda$, see \cite[I.2.9]{Spa82}. Moreover, we identify $A^{2m-k,k}_{\mathrm{SO}_{2m}}\subseteq A^{2m-k,k}_{\mathrm{O}_{2m}}$ as the subgroup generated by products of an even number of generators of $A^{2m-k,k}_{\mathrm{O}_{2m}}$, see~\cite{Sho83}. This immediately yields the isomorphisms (\ref{compgroupsexplicit}) from the introduction.

Let $\alpha$ be a generator corresponding to a copy of $\mathbb Z/2\mathbb Z$ in $(\mathbb Z/2\mathbb Z)^t$. We have a left $A^{2m-k-1,k-1}_{\mathrm{Sp}_{2m}}$-action on $(\mathbb S^2)^m$, where $\alpha$ acts as the antipode on the first component, i.e.\ $\alpha.(x_1,x_2\ldots,x_m)=(-x_1,x_2\ldots,x_m)$. Thus, in homology we have
\[
\alpha.l_U\;=\;\begin{cases}
l_U &\text{if }1\notin U, \\
-l_U &\text{if }1\in U.
\end{cases}
\]
Note that the special orthogonal component group acts trivially, although it is not always trivial. 
%\begin{lem}
%The left $A^{2m-k-1,k-1}_{\mathrm{Sp}_{2m}}$-action and the right $\mathcal W_{C_{m-1}}$-action on $H_*((\mathbb S^2)^m)$ defined above commute. In particular, the induced actions on $H_*(\SOfiber)$ commute.
%\end{lem}
%\begin{proof}
%The $A^x_{\mathrm{Sp}_{2m}}$-action and the $\mathcal W_{C_{m-1}}$-action on $(\mathbb S^2)^m$ defined above obviously commute.
%\end{proof}
The following proposition is now just a slight reformulation of Theorem~\ref{thmB} from the introduction. 
\begin{prop}
Given a cup diagram $\ba\in H_*(\SOfiber)$ and a generator $\alpha\in A^{2m-k-1,k-1}_{\mathrm{Sp}_{2m}}$ corresponding to a copy of $\mathbb Z/2\mathbb Z$ in $A^{2m-k-1,k-1}_{\mathrm{Sp}_{2m}}$, we claim that 
\[
\alpha.\ba:=\gamma_{2m-k,k}^{-1}\left(\alpha.\gamma_{2m-k,k}(\ba)\right)
\]
yields a well-defined right action of the component group on $H_*(\SOfiber)$ given as follows: %Table~\ref{tab:comp_grp_action}.
\centering
\begin{tabular}{ c | c || c | c}
$\ba$ & $\alpha.\ba$ & $\ba$ & $\alpha.\ba$ \\
\hline
\begin{tikzpicture}[scale=.8,baseline={(0,-.5)}]
\draw[thick] (0,0) -- +(0,-.9);
\node at (0,0) {$\times$};
\end{tikzpicture}
&\begin{tikzpicture}[scale=.8,baseline={(0,-.5)}]
\draw[thick] (0,0) -- +(0,-.9);
\node at (0,0) {$\times$};
\end{tikzpicture}
&
\begin{tikzpicture}[scale=.8,baseline={(0,-.5)}]
\draw[thick] (0,0) -- +(0,-.9);
\fill ([xshift=-2.5pt,yshift=-2.5pt]0,-.5) rectangle ++(5pt,5pt);
\node at (0,0) {$\times$};
\end{tikzpicture}
&\begin{tikzpicture}[scale=.8,baseline={(0,-.5)}]
\draw[thick] (0,0) -- +(0,-.9);
\fill ([xshift=-2.5pt,yshift=-2.5pt]0,-.5) rectangle ++(5pt,5pt);
\node at (0,0) {$\times$};
\end{tikzpicture}
\\
\begin{tikzpicture}[scale=.8,baseline={(0,-.5)}]
\draw[thick] (0,0) .. controls +(0,-.5) and +(0,-.5) .. +(.5,0);
\draw[thick] (1,0) -- +(0,-.9);
\node at (0,0) {$\times$};
\node at (1.5,0) {${}$};
\end{tikzpicture}
&
\begin{tikzpicture}[scale=.8,baseline={(0,-.5)}]
\draw[thick] (0,0) .. controls +(0,-.5) and +(0,-.5) .. +(.5,0);
\draw[thick] (1,0) -- +(0,-.9);
\fill ([xshift=-2.5pt,yshift=-2.5pt].25,-.365) rectangle ++(5pt,5pt);
\fill ([xshift=-2.5pt,yshift=-2.5pt]1,-.5) rectangle ++(5pt,5pt);
\node at (0,0) {$\times$};
\node at (1.5,0) {${}$};
\end{tikzpicture}
&
\begin{tikzpicture}[scale=.8,baseline={(0,-.5)}]
\draw[thick] (0,0) .. controls +(0,-.5) and +(0,-.5) .. +(.5,0);
\draw[thick] (1,0) -- +(0,-.9);
\fill ([xshift=-2.5pt,yshift=-2.5pt]1,-.5) rectangle ++(5pt,5pt);
\node at (0,0) {$\times$};
\node at (1.5,0) {${}$};
\end{tikzpicture}
&
\begin{tikzpicture}[scale=.8,baseline={(0,-.5)}]
\draw[thick] (0,0) .. controls +(0,-.5) and +(0,-.5) .. +(.5,0);
\draw[thick] (1,0) -- +(0,-.9);
\fill ([xshift=-2.5pt,yshift=-2.5pt].25,-.365) rectangle ++(5pt,5pt);
\node at (0,0) {$\times$};
\node at (1.5,0) {${}$};
\end{tikzpicture}
\\
\begin{tikzpicture}[scale=.8,baseline={(0,-.5)}]
\draw[thick] (0,0) .. controls +(0,-.5) and +(0,-.5) .. +(.5,0);
\draw[thick] (1,0) -- +(0,-.9);
\fill ([xshift=-2.5pt,yshift=-2.5pt].25,-.365) rectangle ++(5pt,5pt);
\node at (0,0) {$\times$};
\node at (1.5,0) {${}$};
\end{tikzpicture}
&
\begin{tikzpicture}[scale=.8,baseline={(0,-.5)}]
\draw[thick] (0,0) .. controls +(0,-.5) and +(0,-.5) .. +(.5,0);
\draw[thick] (1,0) -- +(0,-.9);
\fill ([xshift=-2.5pt,yshift=-2.5pt]1,-.5) rectangle ++(5pt,5pt);
\node at (0,0) {$\times$};
\node at (1.5,0) {${}$};
\end{tikzpicture}
&
\begin{tikzpicture}[scale=.8,baseline={(0,-.5)}]
\draw[thick] (0,0) .. controls +(0,-.5) and +(0,-.5) .. +(.5,0);
\draw[thick] (1,0) -- +(0,-.9);
\fill ([xshift=-2.5pt,yshift=-2.5pt].25,-.365) rectangle ++(5pt,5pt);
\fill ([xshift=-2.5pt,yshift=-2.5pt]1,-.5) rectangle ++(5pt,5pt);
\node at (0,0) {$\times$};
\node at (1.5,0) {${}$};
\end{tikzpicture}
&
\begin{tikzpicture}[scale=.8,baseline={(0,-.5)}]
\draw[thick] (0,0) .. controls +(0,-.5) and +(0,-.5) .. +(.5,0);
\draw[thick] (1,0) -- +(0,-.9);
\node at (0,0) {$\times$};
\node at (1.5,0) {${}$};
\end{tikzpicture}
\end{tabular}

\end{prop}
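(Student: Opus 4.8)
The plan is to compute directly with the explicit form of $\gamma_{2m-k,k}$. By Proposition~\ref{prop:explicit_isom} we have $\gamma_{2m-k,k}(\ba)=L_\ba=\sum_{U\in\mathcal U_\ba}(-1)^{\Lambda_\ba(U)}l_U$, and combined with the induced action $\alpha.l_U=(-1)^{|\{1\}\cap U|}l_U$ recorded just above the proposition, linearity gives $\alpha.L_\ba=\sum_{U\in\mathcal U_\ba}(-1)^{\Lambda_\ba(U)+|\{1\}\cap U|}l_U$. For each cup diagram $\ba$ I would exhibit an explicit cup diagram $\bb\in C_\mathrm{KL}^{\leq\lfloor\frac{k}{2}\rfloor}(m)$ with $L_\bb=\alpha.L_\ba$. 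Since $\gamma_{2m-k,k}$ is injective (Proposition~\ref{prop:injection}) and the vectors $\gamma_{2m-k,k}(\ba)$ form a basis of $\im(\gamma_{2m-k,k})\cong H_*(\SOfiber)$ (Proposition~\ref{prop:basis_general_case}), this simultaneously shows that $\alpha$ preserves $\im(\gamma_{2m-k,k})$ — so $\alpha.\ba$ is well defined — identifies $\alpha.\ba=\bb$, and shows the action is grading preserving since $\bb$ has the same number of cups as $\ba$; that $\alpha$ gives a genuine group action is automatic because $\alpha$ already acts with order two on $(\mathbb S^2)^m$.

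If vertex $1$ of $\ba$ is connected to a ray, then no $U\in\mathcal U_\ba$ contains $1$, since a member of $\mathcal U_\ba$ selects one endpoint of each cup and $1$ is not a cup endpoint; hence $|\{1\}\cap U|=0$ for all such $U$ and $\alpha.L_\ba=L_\ba$, so $\alpha$ acts as the identity and we take $\bb=\ba$. This matches the entries in the left half of the table.

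If instead vertex $1$ lies on a cup, connecting vertices $1$ and $j$ with $j>1$, let $\bb$ be obtained from $\ba$ by toggling the marker on this cup and simultaneously toggling the marker on the leftmost ray of $\ba$. One first checks that $\bb$ is a legal cup diagram in $C_\mathrm{KL}^{\leq\lfloor\frac{k}{2}\rfloor}(m)$: the cup at vertex $1$ is accessible from the left vertical edge because vertex $1$ is leftmost; no ray can occur at a vertex strictly between $1$ and $j$ (it would be trapped in the region bounded by the cup and the top edge and could not reach the bottom edge without crossing the cup), so the leftmost ray — which exists whenever the component group is nontrivial, the only exception $m=k$ even having trivial group by \eqref{compgroupsexplicit} — sits at a vertex to the right of $j$ and is joined to the left edge by a path running down the left edge and along the bottom edge, hence is accessible as well; and toggling one cup marker and one ray marker changes the number of unmarked cups and the number of marked rays each by one, so their sum changes by an even number and the parity condition is preserved. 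Since $\ba$ and $\bb$ have the same underlying matching, $\mathcal U_\ba=\mathcal U_\bb$, and $\Lambda_\ba$ and $\Lambda_\bb$ differ only through the contribution of the cup $\{1,j\}$. Using that every $U\in\mathcal U_\ba$ contains exactly one of the vertices $1$ and $j$, a short computation in the two subcases — the cup unmarked in $\ba$ and marked in $\bb$, and conversely — yields in both cases $\Lambda_\bb(U)\equiv\Lambda_\ba(U)+|\{1\}\cap U|\pmod 2$, whence $L_\bb=\sum_{U\in\mathcal U_\ba}(-1)^{\Lambda_\ba(U)+|\{1\}\cap U|}l_U=\alpha.L_\ba$. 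Comparing $\bb$ with the remaining rows of the table completes the argument.

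The step I expect to be the main obstacle is not conceptual but the careful sign bookkeeping in this last case — pinning down exactly how $\Lambda$ changes under the marker toggle and matching the discrepancy with membership of $1$ in $U$ — together with the elementary but easily overlooked verification that the diagram $\bb$ is genuinely a cup diagram, in particular that the leftmost ray remains markable after the toggle.
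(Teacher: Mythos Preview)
Your argument is correct and follows essentially the same route as the paper's own proof: both use the explicit description of $\gamma_{2m-k,k}$ from Proposition~\ref{prop:explicit_isom}, observe that the action is trivial when vertex $1$ carries a ray, and in the cup case split $\mathcal U_\ba$ according to which endpoint of the cup at vertex $1$ lies in $U$ to identify $\alpha.L_\ba$ with $L_{\ba^-}$. Your write-up is in fact more thorough, since you explicitly verify that the toggled diagram $\bb$ satisfies the accessibility and parity conditions for membership in $C_\mathrm{KL}^{\leq\lfloor\frac{k}{2}\rfloor}(m)$, a point the paper leaves implicit.
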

\begin{proof} 
If there is a ray connected to the first vertex there is nothing to show. So suppose there is a cup connecting the first vertex with some vertex $i>1$. We decompose $\mathcal U_\ba=\mathcal U_{\ba,1}\sqcup\mathcal U_{\ba,i}$, where $\mathcal U_{\ba,1}$ (resp.\ $\mathcal U_{\ba,i}$) consists of all sets $U\in\mathcal U_\ba$ containing $1$ (resp.\ $i$). Then we have
\[
L_\ba\;=\;\sum_{U\in\mathcal U_\ba} (-1)^{\Lambda_\ba(U)}l_U\; =\;\sum_{U\in\mathcal U_{\ba,1}} (-1)^{\Lambda_\ba(U)}l_U + \sum_{U\in\mathcal U_{\ba,i}} (-1)^{\Lambda_\ba(U)}l_U
\]
and acting by a generator $\alpha$ yields $\alpha.L_\ba=\sum_{U\in\mathcal U_{\ba,1}} (-1)^{\Lambda_\ba(U)+1}l_U + \sum_{U\in\mathcal U_{\ba,i}} (-1)^{\Lambda_\ba(U)}l_U=L_{\ba^-}$ which proves the claim, i.e.\ the geometric action can be described combinatorially as asserted.
\end{proof}

%\begin{rem}
%The action of the component group was described in \cite{vL89} (see also \cite{Pie04}) in terms of signed domino tableaux. Our description fits with this if one translates the results into the language of cup diagrams using the bijections from~\cite[\S5.2]{ES15}.
%\end{rem}

\section{A geometric construction of type B/C Specht modules for one-row bipartitions}\label{section:section3}
%We define a geometric action of the Weyl group $\mathcal W_{C_m}$ of type $C_m$ on $(\mathbb S^2)^m$ such that the induced action on homology is irreducible in each homological degree. More precisely, we show that the $\mathcal W_{C_m}$-module $H_{2l}((\mathbb S^2)^m)$ is isomorphic to the irreducible Specht module corresponding to the bipartition $((l),(m-l))$ consisting of two $1$-row partitions. This geometric construction explicitly identifies the well-known combinatorial polytabloid basis as the natural homology basis arising from the Cartesian product cell decomposition of $(\mathbb S^2)^m$.  

%\begin{remark}
%According to Springer theory these representations can also be constructed on the homology of the type $B$ Springer fiber corresponding to the nilpotent orbit $?$. In particular, we have an isomorphism $H_*((\mathbb S^2)^m)\cong H_*(\text{B Springer})$ (probably not graded?). We wonder if this isomorphism is induced from a homotopy equivalence (or even homeomorphism) between $(\mathbb S^2)^m$ and the Springer fiber. 
%\end{remark}
In this section we recall the notion of irreducible Specht modules for types B/C and D and provide a geometric construction of all Specht modules corresponding to pairs of $1$-row bipartitions on $H_*((\mathbb S^2)^m)$. This geometric construction will be the key ingredient in determining the decomposition of the Weyl group representation on $H_*(\SOfiber)$ into irreducibles in Section~\ref{section:section5}.

\subsection{Basic recollections on Specht modules}

We start by recalling the definition and basic properties of irreducible Specht modules for Weyl groups of types $B/C$ and $D$. Our exposition follows and summarizes~\cite{Can96}. 

A {\it partition} $\lambda=(\lambda_1,\ldots,\lambda_l)$ of $k$ is a sequence of $l$ weakly decreasing positive integers $\lambda_1\geq\lambda_2\geq\ldots\geq\lambda_l>0$ which sum up to $k$. We write $\vert\lambda\vert=k$ if $\lambda$ is a partition of $k$. Additionally, we also allow the empty partition $\emptyset$. We may depict the partition $\lambda$ as a {\it Young diagram}, i.e.\ a collection of $l$ left-aligned rows of square boxes, where the $i$th row has $\lambda_i$ boxes. %(see also Example~\ref{ex:bipartitions_of_4}) 

\begin{defi}
A {\it bipartition} of $m$ is an ordered pair $(\lambda,\mu)$ of partitions such that $\vert\lambda\vert+\vert\mu\vert=m$.
\end{defi}

%\begin{ex}\label{ex:bipartitions_of_3}
%Here is a list of all bipartitions of $m=3$ depicted as a pairs of Young diagrams:
%\[
%\left(\emptyset,\yng(3)\right)\,,\,\,\left(\emptyset,\yng(2,1)\right)\,,\,\,\left(\emptyset,\yng(1,1,1)\right)\,,\,\,\left(\yng(1),\yng(2)\right)\,,\,\,\left(\yng(1),\yng(1,1)\right)\,,
%\]
%\left(\yng(2),\yng(1)\right)\,,\,\ \left(\yng(1,1),\yng(1)\right)\,\ldots
%\end{ex}
For the rest of this section we fix a bipartition $(\lambda,\mu)$ of $m$.

\begin{defi}
A {\it bitableau of shape} $(\lambda,\mu)$ is a filling of the boxes of the pair of Young diagrams with the numbers $1,\ldots,m$ such that each number appears exactly once. Additionally, each of the numbers comes equipped with a sign (a choice of either $+$ or $-$ for each number). A {\it standard bitableau} has only positive entries (no minus signs) and both underlying tableaux are standard tableaux in the usual sense, i.e.\ the entries of the boxes increase along rows and columns. 
\end{defi}

The group $\mathcal W_{C_m}$ acts (from the right) on the set of bitableaux of shape $(\lambda,\mu)$. By definition the generator $s_i^C$, $i\neq 0$, acts by substituting $i+1$ in place of $i$ at the position of $\pm i$ (the respective sign remains unchanged) and substitutes $i$ in place of $i+1$ at the position of $\pm (i+1)$ (again, the sign remains untouched). %[GENAUER?] 
The generator $s_0^C$ changes the sign in front of $1$. Note that the action of $\mathcal W_{C_m}$ does not stabilize the subset of standard bitableaux.

%\begin{eMample}
%EMample of action (important!)
%\end{eMample} 
 
\begin{defi}
Let $T$ be a bitableau of shape $(\lambda,\mu)$. A {\it row-permutation} (resp.\ {\it column-permutation}) of $T$ is an element $w\in \mathcal W_{C_m}$ which permutes the symbols in each row (resp.\ column) of $T$ and multiplies an arbitrary  number of symbols in the first (resp.\ second) tableau of $T$ with $-1$. The {\it row stabilizer} $R_T$ (resp.\ {\it column stabilizer} $C_T$) of $T$ is defined as the group of row-permutations (resp.\ column permutations).
\end{defi}

%\begin{example}
%Compute the row-stabilizer explicitly in one example. 
%\end{example}

%\begin{lemma}
%Let $t$ be a bitableau of shape $((l),(m-l))$ consisting of only two rows. Then the elements of the row stabilizer commute with the %elements of the column stabilizer.
%\end{lemma}
%\begin{proof}
%The row stabilizer is isomorphic to $S_l \times W(B_{m-l})$, where $S_l$ is the permutations of the set $U$, and the column stabilizer to $(\mathbb Z/2\mathbb Z)^l\times\{e\}$ (the column stabilizer changes signs of elements in U).
%\end{proof}

%\begin{remark}
%Lemma true in general?
%\end{remark}

\begin{defi}
Two bitableaux $T,T^\prime$ of shape $(\lambda,\mu)$ are row equivalent if there exists a row permutation $w$ of $T$ such that $T^\prime=T.w$. The equivalence class containing a bitableau $T$ is denoted by $[T]$ and called a {\it bitabloid} of shape $(\lambda,\mu)$. 
\end{defi}

Let $M_{(\lambda,\mu)}$ be the $\mathbb C$-vector space with basis given by the bitabloids of shape $(\lambda,\mu)$. There is a well-defined action of $\mathcal W_{C_m}$ on $M_{(\lambda,\mu)}$, where $s_i^C$, $i\neq 0$, acts by setting $[T].s_i^C=[T.s_i^C]$ and $s_0^C$ acts as $[T].s_0^C=[T.s_0^C]$ if $1$ appears in the first tableau and $[T].s_0^C=-[T.s_0^C]$ if $1$ appears in the second tableau, see e.g.\ \cite[Section 1]{Can96}. 

Since $\mathcal W_{C_m}$ is isomorphic to the semidirect product $S_m\rtimes\left(\mathbb Z/2\mathbb Z\right)^m$, we can associate a unique pair $(\sigma,\alpha)$, where $\sigma\in S_m$ and $\alpha\in\left(\mathbb Z/2\mathbb Z\right)^m$, to any element $w\in\mathcal W_{C_m}$. We define $\text{sgn}(w):=\text{sgn}(\sigma)$, where $\text{sgn}(\sigma)$ is the usual signum of an element of the symmetric group. Moreover, we define $f(w)$ as the number of $(-1)$'s in the $m$-tuple $\alpha$.    

\begin{defi}
Given a bitableau $T$, we set $\kappa_T=\sum_{w\in C_T} (-1)^{f(w)}\text{sgn}(w)w$. The {\it bipolytabloid} $e_T$ associated with the tableau $T$ is defined as $e_T=[T].\kappa_T$.
\end{defi}

Following~\cite[Definition~3.3, Lemma~3.2]{Can96}, we define the Specht modules as follows:

\begin{defi}
Let $\lambda$ be a bipartition of $m$. The Specht module $V_{(\lambda,\mu)}$ is the submodule of $M_{(\lambda,\mu)}$ spanned by all bipolytabloids $e_T$.
\end{defi}

The importance of the Specht modules is justified by the following proposition which is \cite[Theorem 3.21]{Can96}, see also e.g.  \cite{Mac95}, \cite[\S 8.2]{Ser77}, \cite[Proposition~3]{MS16} for a classification of the irreducible modules using different methods.

\begin{prop}
The Specht modules $V_{(\lambda,\mu)}$, where $(\lambda,\mu)$ varies over all bipartitions of $m$, form a complete set of pairwise non-isomorphic irreducible $\mathcal W_{C_m}$-modules.
\end{prop}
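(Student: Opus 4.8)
The plan is to adapt James's theory of Specht modules for the symmetric group to the present bipartite setting, following \cite{Can96} (an alternative would be Clifford theory for the wreath product $\mathbb Z/2\mathbb Z\wr S_m$, as in \cite{Mac95}). First I would equip $M_{(\lambda,\mu)}$ with the symmetric bilinear form $\langle\,\cdot\,,\,\cdot\,\rangle$ for which the bitabloids form an orthonormal basis; a short check — using that the sign twist in the $s_0^C$-action on $M_{(\lambda,\mu)}$ contributes the \emph{same} sign to both arguments whenever they are equal — shows this form is $\mathcal W_{C_m}$-invariant, and over $\mathbb C$ it is nondegenerate (indeed positive definite on the rational form). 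Next I would record two elementary facts: $V_{(\lambda,\mu)}\neq 0$, since the coefficient of $[T]$ in $e_T=[T].\kappa_T$ equals $\sum_{w\in R_T\cap C_T}(-1)^{f(w)}\mathrm{sgn}(w)=1$ because $R_T\cap C_T=\{e\}$ (a signed permutation stabilizing every row and every column of the bitableau must be trivial); and $V_{(\lambda,\mu)}$ is cyclic, generated by \emph{any} single bipolytabloid $e_T$, since $\mathcal W_{C_m}$ permutes the $e_T$ up to sign.

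The technical heart is a Garnir-type lemma: for every bitabloid $[S]$ one has $[S].\kappa_T\in\mathbb C\,e_T$, hence $u.\kappa_T\in\mathbb C\,e_T$ for all $u\in M_{(\lambda,\mu)}$; moreover $\kappa_T$ acts self-adjointly for the form, which reduces to $f(w^{-1})=f(w)$ and $\mathrm{sgn}(w^{-1})=\mathrm{sgn}(w)$. Granting this, James's submodule theorem goes through verbatim: for a $\mathcal W_{C_m}$-submodule $U\subseteq M_{(\lambda,\mu)}$, either $u.\kappa_T\neq 0$ for some $u\in U$ and some $T$, whence $e_T\in U$ and $V_{(\lambda,\mu)}=\langle e_T\rangle\subseteq U$; or $u.\kappa_T=0$ for all $u\in U$ and all $T$, whence $\langle u,e_T\rangle=\langle u.\kappa_T,[T]\rangle=0$ and therefore $U\subseteq V_{(\lambda,\mu)}^{\perp}$. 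Since the form is nondegenerate on $V_{(\lambda,\mu)}$ we get $V_{(\lambda,\mu)}\cap V_{(\lambda,\mu)}^{\perp}=0$; combining this with the submodule theorem and semisimplicity (Maschke) shows $V_{(\lambda,\mu)}$ has no proper nonzero submodule — a submodule $0\neq W\subsetneq V_{(\lambda,\mu)}$ would be a submodule of $M_{(\lambda,\mu)}$ neither contained in $V_{(\lambda,\mu)}^{\perp}$ (as $W\cap V_{(\lambda,\mu)}^{\perp}\subseteq V_{(\lambda,\mu)}\cap V_{(\lambda,\mu)}^{\perp}=0$) nor containing $V_{(\lambda,\mu)}$, a contradiction. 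So each $V_{(\lambda,\mu)}$ is irreducible.

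For pairwise non-isomorphism I would run the standard dominance argument: put the natural dominance order on bipartitions of $m$ and show, again from the Garnir lemma, that a nonzero $\mathcal W_{C_m}$-homomorphism $M_{(\lambda,\mu)}\to M_{(\lambda',\mu')}$ not annihilating $V_{(\lambda,\mu)}$ forces $(\lambda,\mu)\trianglerighteq(\lambda',\mu')$. If $V_{(\lambda,\mu)}\cong V_{(\lambda',\mu')}$, composing such an isomorphism with a Maschke projection $M_{(\lambda,\mu)}\twoheadrightarrow V_{(\lambda,\mu)}$ and the inclusion $V_{(\lambda',\mu')}\hookrightarrow M_{(\lambda',\mu')}$ produces such homomorphisms in both directions, giving $(\lambda,\mu)=(\lambda',\mu')$. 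Finally, completeness is a counting statement: each element of $\mathcal W_{C_m}\cong S_m\ltimes(\mathbb Z/2\mathbb Z)^m$ has a signed cycle type, an unordered splitting of $m$ into ``positive'' and ``negative'' cycles, and this invariant classifies conjugacy classes; hence the number of conjugacy classes of $\mathcal W_{C_m}$ equals the number of bipartitions of $m$. Having produced that many pairwise non-isomorphic irreducibles, we conclude they exhaust all irreducible $\mathcal W_{C_m}$-modules.

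The main obstacle is the bipartite Garnir lemma $[S].\kappa_T\in\mathbb C\,e_T$ together with the sign bookkeeping needed to check that the coefficients $(-1)^{f(w)}\mathrm{sgn}(w)$ in $\kappa_T$ are compatible both with the twisted $\mathcal W_{C_m}$-action on $M_{(\lambda,\mu)}$ and with self-adjointness of $\kappa_T$; once these are in place, every remaining step is either formal or a direct transcription of the symmetric-group arguments.
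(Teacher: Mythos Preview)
The paper does not give its own proof of this proposition; it simply records it as \cite[Theorem~3.21]{Can96}, with pointers to alternative treatments in \cite{Mac95}, \cite[\S8.2]{Ser77}, \cite{MS16}. Your sketch is precisely the argument of that cited reference --- James's submodule theorem transported to the bipartite setting via the $\mathcal W_{C_m}$-invariant form on $M_{(\lambda,\mu)}$, the Garnir-type lemma $[S].\kappa_T\in\mathbb C\,e_T$, the dominance argument for non-isomorphism, and the conjugacy-class count for completeness --- so your approach agrees with the paper's (by reference) and is correct in outline.
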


%\begin{prop} \label{prop:B_irreps}
%The isomorphism classes of the irreducible representations of the Weyl group $\mathcal W_{C_m}$ are in bijective correspondence with pairs $(\lambda,\mu)$ of partitions such that $|\lambda|+|\mu|=m$.
%\end{prop}

We fix an irreducible representation $V_{(\lambda,\mu)}$ corresponding to $(\lambda,\mu)$. The restriction of $V_{(\lambda,\lambda)}$ to $\mathcal W_{D_m}$ decomposes into two non-isomorphic irreducible $\mathcal W_{D_m}$-modules of the same dimension which we denote by $V_\lambda^+$ and $V_\lambda^-$. If $\lambda\neq\mu$ the restriction of $V_{(\lambda,\mu)}$ to $\mathcal W_{D_m}$ remains irreducible and the restrictions of $V_{(\lambda,\mu)}$ and $V_{(\mu,\lambda)}$ to $\mathcal W_{D_m}$ are isomorphic. We denote this irreducible module by $V_{\{\lambda,\mu\}}$. Then we have, see {\cite{May75}}, or \cite[Theorem 17]{MS16} for a modern treatment:

\begin{prop}
The $\mathcal W_{D_m}$-representations $V_{\{\lambda,\mu\}}$, where $|\lambda|+|\mu|=m$, together with $V_\lambda^+$ and $V_\lambda^-$, where $|\lambda|=\frac{m}{2}$, form a complete set of pairwise non-isomorphic irreducible $\mathcal W_{D_m}$-representations.  
\end{prop}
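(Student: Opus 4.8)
The plan is to deduce the statement from the (already recalled) classification of the irreducible $\mathcal W_{C_m}$-modules by the Specht modules $V_{(\lambda,\mu)}$, together with standard Clifford theory for the index-two normal subgroup $\mathcal W_{D_m}\trianglelefteq\mathcal W_{C_m}$. Let $\epsilon\colon\mathcal W_{C_m}\to\{1,-1\}$ be the linear character whose kernel is exactly $\mathcal W_{D_m}$; writing $w=(\sigma,\alpha)$ under the isomorphism $\mathcal W_{C_m}\cong S_m\rtimes(\mathbb Z/2\mathbb Z)^m$ recalled above, this is $\epsilon(w)=(-1)^{f(w)}$, so $\epsilon(s_0^C)=-1$ and $\epsilon(s_i^C)=1$ for $i\neq 0$. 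For any finite group $G$ with normal subgroup $N$ of index two, Clifford theory gives the dictionary I would use: for $V\in\mathrm{Irr}(G)$ the restriction $\mathrm{Res}^G_N V$ is irreducible provided $V\not\cong V\otimes\epsilon$, and otherwise splits as $V^+\oplus V^-$ with $V^+\not\cong V^-$ irreducible of equal dimension, interchanged by conjugation with any element of $G\setminus N$; moreover every irreducible $N$-module occurs in $\mathrm{Res}^G_N V$ for a $V$ that is unique up to the twist $V\mapsto V\otimes\epsilon$, and $\mathrm{Res}^G_N V\cong\mathrm{Res}^G_N W$ if and only if $W\cong V$ or $W\cong V\otimes\epsilon$. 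Applied with $G=\mathcal W_{C_m}$, $N=\mathcal W_{D_m}$, this reduces the proposition to understanding the $\epsilon$-twist on the Specht modules.

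Concretely, the single identity I would establish is
\[
V_{(\lambda,\mu)}\otimes\epsilon\;\cong\;V_{(\mu,\lambda)}.
\]
I would prove this on the level of the permutation module $M_{(\lambda,\mu)}$ with its bipolytabloid basis. There is an evident linear isomorphism $M_{(\lambda,\mu)}\to M_{(\mu,\lambda)}$ interchanging the roles of the two constituent tableaux of a bitabloid $[T]$. Checking on the Coxeter generators of $\mathcal W_{C_m}$, and using $\epsilon(s_0^C)=-1$, $\epsilon(s_i^C)=1$, one verifies that this map intertwines the $\epsilon$-twisted $\mathcal W_{C_m}$-action on $M_{(\lambda,\mu)}$ with the untwisted $\mathcal W_{C_m}$-action on $M_{(\mu,\lambda)}$: the sign rule for $s_0^C$ on bitabloids (depending on whether $1$ lies in the first or the second tableau) is precisely compensated by $\epsilon(s_0^C)=-1$ once the two tableaux have been swapped. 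Since the operator $\kappa_T=\sum_{w\in C_T}(-1)^{f(w)}\text{sgn}(w)\,w$ and the passage from row to column stabilizers are symmetric under exchanging the two components — up to exactly the signs absorbed into $\epsilon$ — this isomorphism carries bipolytabloids to bipolytabloids and restricts to the desired isomorphism of Specht submodules. (A character-theoretic shortcut is available as well: $\chi_{V_{(\lambda,\mu)}\otimes\epsilon}=\epsilon\cdot\chi_{V_{(\lambda,\mu)}}$, and the classical formula for $\chi_{V_{(\lambda,\mu)}}$ in terms of the $S$-function ring $\Lambda\otimes\Lambda$ makes the equality with $\chi_{V_{(\mu,\lambda)}}$ transparent.)

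Granting the identity, the classification follows formally. One has $V_{(\lambda,\mu)}\cong V_{(\lambda,\mu)}\otimes\epsilon$ iff $V_{(\lambda,\mu)}\cong V_{(\mu,\lambda)}$, hence — since the $V_{(\lambda,\mu)}$ are pairwise non-isomorphic — iff $\lambda=\mu$, which forces $|\lambda|=\tfrac m2$. Thus the orbits of the involution $V\mapsto V\otimes\epsilon$ on $\mathrm{Irr}(\mathcal W_{C_m})$ are the singletons $\{V_{(\lambda,\lambda)}\}$ with $|\lambda|=\tfrac m2$ and the two-element sets $\{V_{(\lambda,\mu)},V_{(\mu,\lambda)}\}$ with $\lambda\neq\mu$. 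By the Clifford dictionary each singleton yields a pair $V_\lambda^{\pm}$ of non-isomorphic irreducible $\mathcal W_{D_m}$-modules of equal dimension, each two-element orbit yields one irreducible $V_{\{\lambda,\mu\}}:=\mathrm{Res}^{\mathcal W_{C_m}}_{\mathcal W_{D_m}}V_{(\lambda,\mu)}$, modules arising from different orbits are non-isomorphic, and together they exhaust $\mathrm{Irr}(\mathcal W_{D_m})$ — which is precisely the asserted list.

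The main obstacle is the identity $V_{(\lambda,\mu)}\otimes\epsilon\cong V_{(\mu,\lambda)}$. One must pin down $\epsilon$ correctly as the character ``parity of the number of sign changes'' (and not, say, the full sign character $w\mapsto\text{sgn}(\sigma)$ of $\mathcal W_{C_m}$), and then carefully track how the sign $(-1)^{f(w)}$ built into $\kappa_T$ and into the $\mathcal W_{C_m}$-action on bitabloids interacts with interchanging the two tableaux. Once that bookkeeping is in place, everything else is the purely formal index-two Clifford theory recalled above.
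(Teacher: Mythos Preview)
Your argument is correct and is the standard Clifford-theoretic route to this classification. The paper, however, does not give its own proof here: the proposition is stated as a quoted result with references to \cite{May75} and \cite[Theorem~17]{MS16}, with the preceding sentence merely describing (without justification) exactly the restriction behaviour that your Clifford dictionary predicts. So there is nothing to compare against beyond noting that your write-up supplies the missing argument.

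One small remark on the key identity $V_{(\lambda,\mu)}\otimes\epsilon\cong V_{(\mu,\lambda)}$: your sketch of the explicit bitabloid map is a little loose, since swapping the two constituent tableaux interchanges row and column stabilizers (the sign-change convention in $R_T$ versus $C_T$ is asymmetric in the two components), so the claim that ``$\kappa_T$ is symmetric under exchanging the two components up to signs absorbed into $\epsilon$'' needs a line or two more care. The character-theoretic shortcut you mention is cleaner and entirely sufficient, so this is only a matter of presentation, not a gap in the logic.
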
 

Now \cite[Theorem 4.2]{Can96} provides an important basis of the Specht modules as follows:

\begin{prop}\label{proposition:Specht_basis}
The bipolytabloids $e_T$, where $T$ varies over all standard bitableaux of shape $(\lambda,\mu)$, form a $\mathbb C$-basis of $V_{(\lambda,\mu)}$. 
\end{prop}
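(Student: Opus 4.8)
The plan is to prove Proposition~\ref{proposition:Specht_basis} by the classical two-step argument used for the standard basis of Specht modules of the symmetric group (James's straightening theorem), carried over to bipartitions and to the signed Weyl group $\mathcal{W}_{C_m}$. Since $V_{(\lambda,\mu)}$ is by definition the span of \emph{all} bipolytabloids $e_T$, the statement amounts to: (i) the standard bipolytabloids are linearly independent, and (ii) every bipolytabloid lies in their span. I would treat (i) directly and, for efficiency, derive (ii) from (i) together with a dimension count, leaving the Garnir straightening as an alternative route.

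For (i), I would first fix a total order $\preceq$ on the set of bitabloids of shape $(\lambda,\mu)$ refining the natural dominance order: given a bitabloid, read off, for each $j \in \{1,\dots,m\}$, the multiset of row-indices occupied by the symbols $\leq j$ (treating the two component diagrams as stacked, with the sign convention on the second component dictated by the $\mathcal{W}_{D_m}$-action on $M_{(\lambda,\mu)}$), and compare these data lexicographically. The key lemma is that for a \emph{standard} bitableau $T$ the expansion $e_T = [T].\kappa_T = \sum_{w \in C_T}(-1)^{f(w)}\mathrm{sgn}(w)\,[T.w]$ has $[T]$ as its unique $\preceq$-maximal term, with coefficient $(-1)^{f(e)}\mathrm{sgn}(e)=1$; this follows because applying a nontrivial column permutation (including the allowed sign changes in the second component) to a column-increasing tableau strictly lowers the bitabloid in $\preceq$. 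Since a standard bitableau is reconstructed from its bitabloid by sorting every column, distinct standard $T$ yield distinct maximal bitabloids $[T]$, so the matrix expressing the standard $e_T$ in the bitabloid basis of $M_{(\lambda,\mu)}$ is unitriangular; hence these $e_T$ are linearly independent, giving $\dim V_{(\lambda,\mu)} \geq d_{(\lambda,\mu)}$, where $d_{(\lambda,\mu)}$ denotes the number of standard bitableaux of shape $(\lambda,\mu)$.

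For (ii) I would then invoke the counting identity $\sum_{(\lambda,\mu)\,\vdash\, m} d_{(\lambda,\mu)}^2 = 2^m\, m! = |\mathcal{W}_{C_m}|$ (which follows from $d_{(\lambda,\mu)} = \binom{m}{|\lambda|}f^\lambda f^\mu$ and the symmetric-group identity $\sum_{\nu\vdash a}(f^\nu)^2 = a!$, or from an RSK-type bijection). By the classification proposition recalled just above, the $V_{(\lambda,\mu)}$ form a complete set of pairwise non-isomorphic irreducible $\mathcal{W}_{C_m}$-modules, so $\sum_{(\lambda,\mu)}(\dim V_{(\lambda,\mu)})^2 = |\mathcal{W}_{C_m}|$. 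Comparing with the previous inequality forces $\dim V_{(\lambda,\mu)} = d_{(\lambda,\mu)}$ for every bipartition; an independent family of the right size is a basis, which proves the proposition. Alternatively, (ii) can be proved intrinsically by a straightening algorithm: reduce to column-increasing, sign-free bitableaux (every other $e_T$ differs from such a one only by a sign, as $\kappa_T$ antisymmetrizes over the column stabilizer and $f$ absorbs its sign changes), and then use Garnir relations to rewrite a non-row-increasing $e_T$ as an integral combination of bipolytabloids with strictly $\preceq$-smaller leading bitabloid, so the procedure terminates at standard bipolytabloids.

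I expect the self-contained straightening step to be the main obstacle: one must write down the precise Garnir relation in type $B$ --- obtained from a coset decomposition of a symmetric subgroup attached to two adjacent columns --- and verify both that it strictly decreases the leading bitabloid in $\preceq$ and that the sign weight $(-1)^{f(w)}$ from the $(\mathbb{Z}/2\mathbb{Z})^m$-part of $\mathcal{W}_{C_m}$ behaves coherently under it; this signed bookkeeping has no analogue in the symmetric-group case and is where the argument becomes delicate. Because of this I would most likely present the linear-independence computation in detail and close the argument via the dimension count, which sidesteps the Garnir relations entirely.
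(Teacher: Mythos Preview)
The paper does not give its own proof of this proposition: it is simply quoted as \cite[Theorem~4.2]{Can96} and used as a black box. There is therefore nothing in the paper to compare your argument against.

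Your outline is essentially the standard proof (and is the strategy carried out in \cite{Can96}): unitriangularity of the standard bipolytabloids with respect to a dominance-type order on bitabloids gives linear independence, and spanning follows either from Garnir straightening or from the global dimension count you describe. Two small points. First, a slip: you write ``the $\mathcal{W}_{D_m}$-action on $M_{(\lambda,\mu)}$'' where you mean $\mathcal{W}_{C_m}$. Second, your dimension-count shortcut for (ii) relies on the classification of irreducibles stated immediately before this proposition, which in the paper is again only a citation to \cite{Can96}; if you want a self-contained argument you should verify that the proof of that classification (via the submodule theorem and counting conjugacy classes) does not already invoke the standard basis, or else carry out the Garnir straightening you sketch. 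The sign bookkeeping you flag as delicate is real: the column stabilizer contains pure sign changes on the second tableau, which produce genuinely different bitabloids, so your order $\preceq$ must be set up to see those as strictly smaller than $[T]$; this is doable but needs to be made explicit.
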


\subsection{A geometric construction of some Specht modules} 

The following fact is evident. 

\begin{lem}\label{lemma:bijection_line_diag_std_tableaux}
There is a bijection between the set of line diagrams on $m$ vertices with $l$ undotted rays and the set of standard $1$-row bitableaux of shape $((m-l),(l))$. This bijection is given by sending a line diagram $l_U$ to the unique standard bitableau $T_U$ whose second tableau is filled with the numbers in $U$.   
\end{lem}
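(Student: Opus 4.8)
The statement is essentially a bookkeeping identity between two sets of combinatorial objects, so the plan is to exhibit the bijection explicitly and verify the two required properties (well-definedness and bijectivity). A line diagram $l_U$ on $m$ vertices is, by definition, determined by the subset $U \subseteq \{1,\ldots,m\}$ of vertices carrying \emph{no} white dot; the condition that $l_U$ has exactly $l$ undotted rays means precisely $|U| = l$. On the other side, a standard $1$-row bitableau of shape $((m-l),(l))$ consists of a filling of a single row of $m-l$ boxes and a single row of $l$ boxes with the numbers $1,\ldots,m$, each used once, with all entries positive and increasing along each row. Such a bitableau is therefore uniquely determined by the set of numbers placed in the second row, which is an arbitrary $l$-element subset of $\{1,\ldots,m\}$ (the first row is then forced, and the increasing condition determines the order inside each row uniquely).

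The map is then the obvious one: send $l_U$ to the unique standard bitableau $T_U$ whose second tableau has entries exactly the elements of $U$ (written in increasing order), the first tableau being filled with the complement $\{1,\ldots,m\} \setminus U$ in increasing order. First I would check that $T_U$ is indeed a standard bitableau of the required shape: it has $m-l$ entries in the first row and $l$ in the second, all entries are positive, and both rows are strictly increasing by construction, so it is standard. Next, well-definedness is immediate since $U$ determines $T_U$ uniquely. For injectivity, note that $U$ is recovered from $T_U$ as the set of entries of its second row, so $T_U = T_{U'}$ forces $U = U'$, hence also $l_U = l_{U'}$ since a line diagram is determined by its undotted vertices. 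For surjectivity, given any standard $1$-row bitableau $T$ of shape $((m-l),(l))$, let $U$ be the set of entries of its second row; then $|U| = l$, the line diagram $l_U$ exists, and $T_U = T$ because a standard $1$-row bitableau is determined by the entries of its rows as sets and both $T$ and $T_U$ have the same second-row entry set (and hence the same first-row entry set).

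There is no serious obstacle here — the only point requiring a moment's care is making explicit that a standard $1$-row bitableau of a fixed two-row-shape is determined by the entry set of one of its rows, which follows because ``standard'' forces each row to be listed in increasing order and the shape fixes the two row lengths. Accordingly I would keep the proof very short, essentially just recording the inverse map $T \mapsto l_{U(T)}$ where $U(T)$ is the second-row entry set of $T$, and observing that the two assignments are mutually inverse. This matches the assertion in the excerpt that the fact is ``evident''.
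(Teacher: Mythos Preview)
Your proposal is correct and matches the paper's treatment: the paper simply declares the lemma ``evident'' and gives no proof, and what you have written is exactly the routine unpacking of why the assignment $l_U \mapsto T_U$ (with inverse $T \mapsto l_{U(T)}$) is a bijection. There is nothing to add.
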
 

%Provide Example!!

The group $\mathcal W_{C_m}$ acts (from the right) on $(\mathbb S^2)^m$ according to the following rules:
\begin{eqnarray*}
(x_1,x_2,\ldots,x_m).s^C_0&=&(-x_1,x_2,\ldots,x_m), \\
(x_1,\ldots,x_i,x_{i+1},\ldots,x_m).s^C_i&=&(x_1,\ldots,x_{i+1},x_i,\ldots,x_m), \hspace{2em} i\in\{1,\ldots,m-1\}.
\end{eqnarray*}
The induced action on homology is then given by 
\[
l_U.s^C_0\;=\;\begin{cases}
l_U &\text{if }1\notin U, \\
-l_U &\text{if }1\in U.
\end{cases}\,\hspace{1.6em}\text{and}\quad l_U.s^C_i=l_{\tau_iU}\,,\,\,i\in\{1,\ldots,m-1\}.
\]

The following proposition gives an elementary geometric construction of all $\mathcal W_{C_m}$-modules (and thus also of all $\mathcal W_{D_m}$-modules by restriction) corresponding to pairs of $1$-row partitions. It explicitly identifies the bipolytabloid basis from Proposition~\ref{proposition:Specht_basis} as the natural homology basis arising from the Cartesian product cell decomposition of $(\mathbb S^2)^m$.   
 
\begin{prop}\label{proposition:Specht_modules_as_homology}
The linear map $H_{2l}((\mathbb S^2)^m)\to V_{((m-l),(l))}$ defined by sending a line diagram $l_U$ to $e_{T_U}$ is an isomorphism of $\mathcal W_{C_m}$-modules.
\end{prop}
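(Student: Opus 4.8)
The plan is to verify that the linear map $\Phi\colon H_{2l}((\mathbb S^2)^m)\to V_{((m-l),(l))}$ sending a line diagram $l_U$ to the bipolytabloid $e_{T_U}$ is well-defined, bijective, and $\mathcal W_{C_m}$-equivariant. Well-definedness and bijectivity are immediate: by Lemma~\ref{lemma:bijection_line_diag_std_tableaux} the assignment $U\mapsto T_U$ is a bijection between subsets $U\subseteq\{1,\ldots,m\}$ with $|U|=l$ and standard $1$-row bitableaux of shape $((m-l),(l))$, the $l_U$ form a basis of $H_{2l}((\mathbb S^2)^m)$, and by Proposition~\ref{proposition:Specht_basis} the $e_{T_U}$ form a basis of $V_{((m-l),(l))}$; since $\Phi$ maps one basis bijectively onto another it is an isomorphism of vector spaces. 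So the whole content is equivariance, and since $\mathcal W_{C_m}$ is generated by $s_0^C,s_1^C,\ldots,s_{m-1}^C$ it suffices to check $\Phi(l_U.s_i^C)=\Phi(l_U).s_i^C=e_{T_U}.s_i^C$ for each generator.

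First I would record the action of the generators on both sides. On homology, from the formulas in the excerpt, $l_U.s_0^C=\pm l_U$ (sign $-1$ iff $1\in U$) and $l_U.s_i^C=l_{\tau_iU}$ for $i\geq 1$. On the Specht side I would use that $e_T=[T].\kappa_T$ with $\kappa_T=\sum_{w\in C_T}(-1)^{f(w)}\mathrm{sgn}(w)w$, and exploit the standard fact (already available from the $C_T$-invariance built into $\kappa_T$, cf.\ \cite{Can96}) that $e_T.w = (-1)^{f(w)}\mathrm{sgn}(w)\,e_T$ for $w\in C_T$, together with $e_{T.w}=e_T.w$ for general $w\in\mathcal W_{C_m}$ (the column stabilizer of $T.w$ is $w^{-1}C_Tw$). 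Because the shape $((m-l),(l))$ has both components a single row, the column stabilizers $C_{T_U}$ are generated only by the sign changes in each box: $C_{T_U}\cong(\mathbb Z/2\mathbb Z)^{l}$ acting by $\pm1$ on the $l$ entries of the second tableau, and no genuine permutations occur. This makes every verification a short combinatorial check.

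For $i\geq 1$: the generator $s_i^C$ acts on the tableau $T_U$ by swapping the labels $i$ and $i+1$, which carries $T_U$ to $T_{\tau_iU}$ as a bitableau (up to a row permutation, which does not matter after passing to tabloids), hence $e_{T_U}.s_i^C=e_{T_{\tau_iU}}=\Phi(l_{\tau_iU})=\Phi(l_U.s_i^C)$. For $i=0$: if $1\notin U$ then $1$ sits in the first (positive-row) tableau, so $s_0^C$ fixes $[T_U]$ and $\kappa_{T_U}$, giving $e_{T_U}.s_0^C=e_{T_U}=\Phi(l_U)=\Phi(l_U.s_0^C)$; if $1\in U$ then $1$ sits in the second tableau, the transposition $s_0^C$ of its sign lies in $C_{T_U}$ with $f(s_0^C)=1$, $\mathrm{sgn}(s_0^C)=1$, and the $\mathcal W_{C_m}$-action on $M_{(\lambda,\mu)}$ gives $[T_U].s_0^C=-[T_U.s_0^C]$, so combining with $e_{T_U.s_0^C}=e_{T_U}$ (columns preserved) we get $e_{T_U}.s_0^C=-e_{T_U}=\Phi(l_U.s_0^C)$. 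This matches exactly.

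The main obstacle, and the only place requiring genuine care, is bookkeeping the signs in the $i=0$ case: one must be consistent about how the $(-1)^{f(w)}\mathrm{sgn}(w)$ factor in $\kappa_T$ interacts with the twisted $\mathcal W_{C_m}$-action on bitabloids (which already carries a sign when $1$ lies in the second diagram), so that the two sign contributions do not double-count or cancel incorrectly. I would resolve this by working out $e_{T_U}.s_0^C$ directly from the definition $e_{T_U}=[T_U].\kappa_{T_U}$ and the explicit action of $s_0^C$ on basis tabloids, rather than quoting a general transformation rule, to keep the conventions of \cite{Can96} transparent. Everything else is a routine unwinding of Lemma~\ref{lemma:bijection_line_diag_std_tableaux} and Proposition~\ref{proposition:Specht_basis}.
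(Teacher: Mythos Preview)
Your proposal is correct and follows essentially the same approach as the paper: reduce to bijectivity on bases via Lemma~\ref{lemma:bijection_line_diag_std_tableaux} and Proposition~\ref{proposition:Specht_basis}, then check equivariance generator by generator. The only cosmetic differences are that the paper isolates the claim ``row-equivalent $1$-row bitableaux give the same bipolytabloid'' as a separate Lemma~\ref{lemma:row_equivalence} (your parenthetical ``up to a row permutation, which does not matter after passing to tabloids'' is exactly this statement, but note that it needs the $1$-row hypothesis since $e_T$ depends on $\kappa_T$, not just on $[T]$), and for $s_0^C$ the paper simply cites \cite[Lemma~3.16]{Can96} rather than unpacking the column-stabilizer sign as you do.
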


%Vermutlich brauchen wir den folgenden Remark gar nicht.
%\begin{rem}
%We have $\dim V_{((m-l),(l))}=\binom{m}{l}$. In particular, $\dim V_{\{(m-l),(l)\}}=\binom{m}{l}$ if $m$ is odd and $\dim V_{\frac{m}{2},\frac{m}{2}}^+=\dim V_{\frac{m}{2},\frac{m}{2}}^-=\frac{1}{2}\binom{m}{\frac{m}{2}}$.
%\end{rem}

For the proof of Proposition~\ref{proposition:Specht_modules_as_homology} we first need the following technical lemma. 

\begin{lem}\label{lemma:row_equivalence}
If $T$ and $T^\prime$ are row equivalent $1$-row bitableaux, then $e_T=e_{T^\prime}$.
\end{lem}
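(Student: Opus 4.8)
\textbf{Proof proposal for Lemma~\ref{lemma:row_equivalence}.} The plan is to reduce the statement to the defining linearity and relations of the bipolytabloid $e_T = [T].\kappa_T$, using that row permutations interact trivially with the action that produces $e_T$. First I would observe that since $T$ and $T^\prime$ are $1$-row bitableaux, the column stabilizers $C_T$ and $C_{T^\prime}$ each act only within single-box columns; the only nontrivial column permutations are the sign changes on the boxes of the second tableau (the $(l)$-part), because a $1$-row tableau has no two entries in the same column. Hence $\kappa_T = \sum_{w\in C_T}(-1)^{f(w)}\mathrm{sgn}(w)w$ simplifies: every $w\in C_T$ is a product of sign changes $s_0^C$-type involutions on entries sitting in the second component, so $\mathrm{sgn}(w)=1$ and $(-1)^{f(w)}$ records the parity of the number of such sign changes. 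Crucially, $C_T$ as a \emph{set} of elements of $\mathcal W_{C_m}$ depends only on which numbers occupy the second tableau, i.e.\ on the underlying bitabloid data, not on the left-to-right order within the row; so $C_T = C_{T^\prime}$ and $\kappa_T = \kappa_{T^\prime}$ as elements of $\mathbb C[\mathcal W_{C_m}]$.

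Next I would use that $T$ and $T^\prime$ being row equivalent means $T^\prime = T.w_0$ for some row permutation $w_0 \in R_T$, and hence $[T^\prime] = [T.w_0] = [T]$ as bitabloids by definition of the equivalence class (a bitabloid is exactly a row-equivalence class, with the appropriate sign when $w_0$ involves sign changes in the first component; but a row permutation of the first row that flips signs acts by $\pm 1$, and one checks the sign conventions are compatible — this is the point to be careful about). Concretely, I would verify that for a $1$-row bitableau the bitabloid $[T]$ in $M_{(\lambda,\mu)}$ is, up to the sign dictated by the $\mathcal W_{C_m}$-action convention, independent of the intra-row ordering and of sign changes within rows of the first component. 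Then $e_T = [T].\kappa_T = [T^\prime].\kappa_{T^\prime} = e_{T^\prime}$ follows immediately from the two facts just established: $[T]=[T^\prime]$ (as elements of $M_{(\lambda,\mu)}$, with matching signs) and $\kappa_T = \kappa_{T^\prime}$.

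The main obstacle I anticipate is bookkeeping the signs: a row permutation is allowed to multiply symbols in the \emph{first} tableau by $-1$, and the $\mathcal W_{C_m}$-action on $M_{(\lambda,\mu)}$ introduces a sign only when $s_0^C$ acts on a symbol in the \emph{second} tableau. So if $w_0 \in R_T$ changes signs only within the first (the $\lambda = (m-l)$) component, then $[T].w_0 = [T]$ with no sign, whereas if we had sign changes in the second component they would not be row permutations of a $1$-row tableau with second row $(l)$ unless… — here one must check that a genuine row permutation of a $1$-row shape cannot force a sign change in the second component beyond what is already absorbed. I would handle this by noting that for $1$-row shapes $R_T$ decomposes as (permutations $\times$ sign changes) acting separately on the two rows, reducing everything to: permuting entries within a row does not change the bitabloid, and sign-flipping an entry in the first row acts as $+1$ on $M_{(\lambda,\mu)}$. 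Both are immediate from the definitions, so the lemma follows. I would keep the write-up short, citing \cite[Section~1]{Can96} for the precise sign conventions and treating the $1$-row simplification of $C_T$ and $R_T$ as the only substantive point.
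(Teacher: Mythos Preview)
Your proof is correct and takes essentially the same approach as the paper: both arguments establish $C_T = C_{T'}$ (hence $\kappa_T = \kappa_{T'}$) together with $[T] = [T']$, the paper via a short commutation argument showing $vw = wv$ for every $v \in C_T$ and the row permutation $w$ (so that $C_T = w^{-1}C_Tw = C_{Tw}$), and you via the direct observation that for one-row shapes $C_T$ is exactly the group of sign-changes on the entries of the second tableau, which is preserved under row equivalence. Your sign worries in the second paragraph are unnecessary: $[T] = [T']$ holds by the very definition of a bitabloid as a row-equivalence class, so no action of $w_0$ on $M_{(\lambda,\mu)}$ enters and there is nothing to check.
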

\begin{proof}
By assumption we have $T^\prime=Tw$ for some row permutation $w$ of $T$. Let $v$ be any column permutation of $T$. Note that since $T$ is a one-row bitableau, the column permutation $v$ only changes some signs of the numbers contained in the second tableau and leaves the first tableau invariant. Furthermore, $w$ permutes the symbols in the second tableau (without any sign changes) and permutes as well as changes some signs in the first tableau. Thus, evidently, we have $T vw=T wv$. 

By the faithfulness of the action of $\mathcal W_{C_m}$ on the set of bitableaux we deduce that $vw=wv$ for all $\sigma\in C_T$. In particular, we have $C_T=w^{-1} C_Tw=C_{T w}$, where the second equality is even true for an arbitrary element $w$ of $\mathcal W_{C_m}$.  We obtain that
\[
\kappa_T=\sum_{v\in C_T} (-1)^{f(v)}\text{sgn}(v)v=\sum_{v\in C_{T w}} (-1)^{f(v)}\text{sgn}(v)v=\kappa_{T w}
\]  
and thus $e_T=[T].\kappa_T=[T].\kappa_{T w}=[T w].\kappa_{T w}=e_{T w}$, where the second last equality follows from the well-definedness of the action on row-equivalence classes.
\end{proof}

%\begin{rem}
%Note that $\sigma\pi T=\pi\sigma T$ is false outside of the $1$-row bitableau case. Betrachte einfach $(2,1)$.

%Evtl. nochmal Gegenbeispiel f\"ur allgemeinen Fall.

%The fact that row and column permutations commute in the case of $1$-row bitableauM is the reason why our construction works. Controlling the failure of this commutativity in the case of arbitrary bitableauM would be a critical point if one wanted to generalize our results beyond the present case.    
%\end{rem}

%For technical purposes we include the following lemma.

%\begin{lemma}
%Let $t$ be a bitableau of shape $\lambda$ and $\pi\in W(B_n)$. Then we have
%\[
%\pi e_t=(-1)^{\phi(\pi,t)}e_{\pit}.
%\]
%If $w_i\in W(B_n)$ (WAS IST $w_i$?) the the following conditions are equivalent
%\begin{enumerate}
%\item $i$ occurs in the $k$-th constituent (DEFINE!) of $t$,
%\item $w_ie_t=(-1)^ke_t$.
%\end{enumerate}
%\end{lemma}

\begin{proof}[Proof of Proposition~\ref{proposition:Specht_modules_as_homology}]
By Proposition~\ref{proposition:Specht_basis} and Lemma~\ref{lemma:bijection_line_diag_std_tableaux} the map sends a basis to a basis. Thus, it suffices to check the $\mathcal W_{C_m}$-equivariance.

Let $i\neq 0$. If $i$ and $i+1$ are both dotted or both undotted in a given line diagram $l_U$, then $s_i^C$ acts as the identity. On the other hand, the numbers $i$ and $i+1$ are either both contained in the left or in the right tableau of $T_U$. Thus, $s_i^C$ acts as a row permutation on $T_U$ and $T_Us_i^C$ and $T_U$ are row equivalent. In particular, we get $e_{T_U}s_i^C=e_{T_Us_i^C}=e_{T_U}$, where the first equality follows from \cite[Lemma~3.2 (ii)]{Can96} and the second one from Lemma~\ref{lemma:row_equivalence}.  

If exactly one of the rays $i$ and $i+1$ is dotted, then $s_i^C$ moves a dot from $i$ to $i+1$ or the other way around. The action of $s_i^C$ swaps the numbers $i$ and $i+1$ which appear in two different constituents. After applying a row permutation to $T_Us_i^C$ we obtain the standard tableau with the same filling as $T_U$ except that precisely $i$ and $i+1$ have changed the respective tableau.

If the first line is dotted in $l_U$, i.e.\ if $1\notin U$, then the generator $s_0^C$ acts on $l_U$ as the identity. In this case, the number $1$ is contained in the first tableau of $T_U$ and we have $e_{T_U}s_0^C=e_{T_U}$ because we have $e_{T_U}s_0^C=(-1)^{i-1}e_{T_U}$ if $1$ occurs in the $i$th tableau of $T_U$, $i\in\{1,2\}$, see \cite[Lemma~3.16]{Can96}. Secondly, if $1\in U$, i.e.\ the first line is undotted in $l_U$, then $s_0^C$ acts as the identity with an additional minus sign in front of the diagram. In this case, the number $1$ is in the second tableau of $T_U$ and thus $e_{T_U}s_0^C=-e_{T_U}$ by \cite[Lemma~3.16]{Can96} as above. This finishes the proof of the $\mathcal W_{C_m}$-equivariance and the proposition follows.
\end{proof}

%\begin{rem}\label{remark:restriction_to_D}
%In the following we abbreviate $M=\{1,\ldots,m\}$. If $m\neq 2l$ we have an isomorphism of irreducible $\mathcal W_{D_m}$-modules $V_{((m-l),(l))}\cong V_{((l),(m-l))}$, where the $\mathcal W_{D_m}$-action is obtained by restricting the $\mathcal W_{C_m}$-action. The corresponding isomorphism (via Proposition~\ref{proposition:Specht_modules_as_homology}) on homology $H_{2l}((\mathbb S^2)^m)\cong H_{2(m-l)}((\mathbb S^2)^m)$ can be constructed by sending a line diagram $l_U$ to the line diagram $l_{M\setminus U}$ obtained by changing all dotted rays to undotted rays and vice versa.    
%\end{rem}
%We now recall the diagrammatic description of the parabolic Hecke module of type $D$ from \cite[Theorem 5.4]{LS13} and provide a similar result for type $C$. 

%\input{res/section3.tex}
\section[sec]{\for{toc}{Diagrammatic description of the parabolic Hecke module}\except{toc}{Diagrammatic description of the parabolic Hecke module in types B/C \& D}}
\label{section:section4}

In this section we recall the definition of the parabolic Hecke module in types $B/C$ and $D$ and provide a diagrammatic description. This generalizes results obtained in \cite{LS13} for type $D$ to types $B/C$. In Section~\ref{section:section5} we compare the diagrammatic construction of the parabolic Hecke module and the Springer representation to prove Theorem~\ref{thm:induced_module}.

\subsection{Some Weyl group combinatorics} \label{subsec:Weyl group comb}

Let $\Lambda^m$ be the set of all ordered sequences of length $m$ consisting of symbols from the alphabet $\{\land,\lor\}$ and let $\Lambda^m_\mathrm{even}$ be the subset consisting of all sequences in which the symbol $\land$ occurs an even number of times. The Weyl group $\mathcal W_{D_m}$ of type $D_m$ acts transitively on $\Lambda^m_\mathrm{even}$ from the right, where $s_i^D$, $i\in\{1,\ldots,m-1\}$, permutes the symbols at positions $i$ and $i+1$ (counted from left to right) and $s_0^D$ exchanges a pair of symbols $\lor\lor$ at positions $1$ and $2$ for the pair $\land\land$ (and vice versa) and acts as the identity otherwise. The parabolic subgroup $\mathcal P_{D_m}$ of $\mathcal W_{D_m}$ generated by $s_1^D,\ldots,s_{m-1}^D$ stabilizes the sequence $\lambda^m_\lor\in\Lambda^m_\mathrm{even}$ consisting of $m$ successive $\lor$'s. This yields a bijection
\begin{equation} \label{eq:coset_identification_D}
\mathcal P^\mathrm{min}_{D_m}\xrightarrow\cong\Lambda^m_\mathrm{even}\,,\,\,w\mapsto \lambda^m_\lor.w\,,
\end{equation}
between $\Lambda^m_\mathrm{even}$ and the set $\mathcal P^\mathrm{min}_{D_m}$ of smallest representatives for the right cosets in $\mathcal P_{D_m}\setminus\mathcal W_{D_m}$ which consists of all $w\in\mathcal W_{D_m}$ satisfying $l_{D_m}(s_i^Dw)>l_{D_m}(w)$ for all $i\in\{1,\ldots,m-1\}$. Here, $l_{D_m}$ is the length function of the Coxeter group $\mathcal W_{D_m}$, see \cite{BB05}, \cite{Hum92} for details. 

\begin{rem} Given $w\in\mathcal P^\mathrm{min}_{D_m}$, we have $ws_i^D\in\mathcal P^\mathrm{min}_{D_m}$ if and only if the action of $s_i^D$ on the $\land\lor$-sequence associated with $w$ changes the sequence. If this is the case, we have $l_{D_m}(ws_i^D)>l_{D_m}(w)$ if and only if the action of $s_i^D$ on the sequence associated with $w$ either changes a pair $\land\lor$ to $\lor\land$ or $\lor\lor$ changes to $\land\land$, see \cite[Lemma~2.5]{ES15}. 
\end{rem}

Similarly, the Weyl group $\mathcal W_{C_{m-1}}$ of type $C_{m-1}$ acts transitively on $\Lambda^{m-1}$ from the right, where $s^C_i$, $i\in\{1,\ldots,m-2\}$, permutes the symbols at positions $i$ and $i+1$ and $s_0^C$ exchanges a $\land$ for a $\lor$ and vice versa at the first position. The parabolic subgroup $\mathcal P_{C_{m-1}}$ generated by $s_1^C,\ldots,s_{m-2}^C$ of $\mathcal W_{C_{m-1}}$ stabilizes the sequence $\lambda_\lor^{m-1}\in\Lambda^{m-1}$ which yields a bijection 
\begin{equation} \label{eq:coset_identification_C}
\mathcal P^\mathrm{min}_{C_{m-1}}\xrightarrow\cong\Lambda^{m-1}\,,\,\,w\mapsto\lambda^{m-1}_\lor.w\,,
\end{equation}
where $\mathcal P^\mathrm{min}_{C_{m-1}}$ denotes the set of smallest representatives for the right cosets in $\mathcal P_{C_{m-1}}\setminus\mathcal W_{C_{m-1}}$.
  
\begin{rem}
Given $w\in\mathcal P^\mathrm{min}_{C_{m-1}}$, we have $ws_i^C\in\mathcal P^\mathrm{min}_{C_{m-1}}$ if and only if the action of $s_i^C$ changes the sequence associated with $w$ in which case we have $l_{C_{m-1}}(ws_i^C)>l_{C_{m-1}}(w)$ if and only if either $\land\lor$ changes to $\lor\land$ or $\lor$ changes to $\land$.
\end{rem}

The map $(\,.\,)^\dagger\colon\Lambda^{m-1}\to\Lambda^m_\mathrm{even}$ given by 
\[
w \mapsto w^\dagger=\begin{cases}
\lor w &\text{ if }w\text{ contains an even number of }\land\text{'s},\\
\land w &\text{ if }w\text{ contains an odd number of }\land\text{'s},
\end{cases}
\]
is clearly a bijection which corresponds to a bijection between the sets of smallest coset representatives $\mathcal P^\mathrm{min}_{D_m}$ and $\mathcal P^\mathrm{min}_{C_{m-1}}$ via  (\ref{eq:coset_identification_D}) and (\ref{eq:coset_identification_C}), see also \cite[\S 9.7]{ES15}. 

\subsection{The parabolic Hecke module with distinguished bases} We continue by recalling some facts from \cite{KL79}, \cite{Soe97}. Let $\mathcal H_{D_m}$ be the Hecke algebra for $\mathcal W_{D_m}$ over the ring of formal Laurent polynomials $\mathcal L=\mathbb C[q,q^{-1}]$. It has an $\mathcal L$-basis $H_w$ indexed by the elements of the Weyl group $w\in\mathcal W_{D_m}$ which satisfy the algebra relations $H_vH_w=H_{vw}$ if $l_{D_m}(v)+l_{D_m}(w)=l_{D_m}(vw)$ and the quadratic relations $H_{s^D_i}^2=H_e+(q^{-1}-q)H_{s_i^D}$, $i\in\{0,\ldots,m-1\}$. As $\mathcal L$-algebra, $\mathcal H_{D_m}$ is generated  by the Kazhdan-Lusztig generators $C_{s^D_i}=H_{s^D_i}-q^{-1}H_e$, $i\in\{0,\ldots,m-1\}$. %Note that usually one works with the generators $H_s+q$ (our generators are dual to those?!).

Let $\mathcal H_{\mathcal P_{D_m}}\subseteq \mathcal H_{\mathcal W_{D_m}}$ be the subalgebra generated by $C_{s_i^D}$, $i\in\{1,\ldots,m-1\}$. The algebra $\mathcal H_{\mathcal P_{D_m}}$ acts on $\mathcal L$ from the right by setting $1_\mathcal L.H_s=q^{-1}1_\mathcal L$ which allows us to define the tensor product $\mathcal M_{D_m}=\mathcal L\otimes_{\mathcal H_{\mathcal P_{D_m}}}\mathcal H_{\mathcal W_{D_m}}$ which is naturally a right $\mathcal H_{\mathcal W_{D_m}}$-module.

\begin{defi} This right $\mathcal H_{\mathcal W_{D_m}}$-module has a {\it standard basis} consisting of elements $M_w=1\otimes H_w$, where $w\in\mathcal P^\mathrm{min}_{D_m}$ and by \cite[\S 3]{Soe97} the $\mathcal H_{\mathcal W_{D_m}}$-action on $\mathcal M_{D_m}$ is given by
\begin{equation} \label{eq:explicit_module_structure}
M_w.C_{s_i^D}=\begin{cases}
M_{ws_i^D}-q^{-1}M_w 	&ws\in\mathcal P^\mathrm{min}_{D_m}\text{ and }l_{D_m}(ws_i^D)>l_{D_m}(w),\\
M_{ws_i^D}-qM_w				&ws\in\mathcal P^\mathrm{min}_{D_m}\text{ and }l_{D_m}(ws_i^D)<l_{D_m}(w),\\
0 								&ws\notin\mathcal P^\mathrm{min}_{D_m}.
\end{cases}
\end{equation}
\end{defi}

\begin{defi} The $\mathbb C$-linear involution $\overline{(\,.\,)}\colon\mathcal H_{\mathcal W_{D_m}}\to\mathcal H_{\mathcal W_{D_m}}$, $H\mapsto\overline{H}$, given by $q\mapsto q^{-1}$ and $H_w\mapsto H_{w^{-1}}^{-1}$ induces an involution on $\mathcal M_{D_m}$. For all $w\in\mathcal P^\mathrm{min}_{D_m}$ there exists precisely one element $\underline{M}_w\in\mathcal M_{D_m}$ such that $\overline{\underline{M}_w}=\underline{M}_w$ and $\underline{M}_w=M_w+\sum_{v\neq w} q^{-1}\mathbb Z[q^{-1}]M_v$. The set $\{\underline{M}_w\mid w\in\mathcal P^\mathrm{min}_{D_m}\}$ is the {\it Kazhdan-Lusztig basis} of $\mathcal M_{D_m}$ (see \cite[Theorem 3.5]{Soe97}). The {\it Kazhdan-Lusztig polynomials} $m_{v,w}\in\mathbb Z[q,q^{-1}]$ are defined implicitly by $\underline{M}_w=\sum_v m_{v,w}M_v$.
\end{defi}

\begin{rem} \label{rem:KL_remark}
In \cite[Theorem 2.10]{LS13} the parabolic Kazhdan-Lusztig polynomials were explicitly computed for the parabolic type $(D_m,A_{m-1})$. Moreover, by \cite[\S 9.7]{ES15}, for any given $v,w\in\mathcal P^\mathrm{min}_{C_{m-1}}$ we have $m^C_{v,w}=m^D_{v^\dagger,w^\dagger}$, i.e.\ the parabolic Kazhdan-Lusztig polynomials of type $(C_{m-1},A_{m-2})$ are determined by the Kazhdan-Lusztig polynomials of type $(D_{m-1},A_{m-1})$.
\end{rem}

\begin{lem} \label{lem:relating_C_D_module_action}
Let $(\,.\,)^\dagger\colon\mathcal M_{C_{m-1}} \to \mathcal M_{D_m}$ be the linear isomorphism sending $M_w$ to $M_{w^\dagger}$. Then the linear isomorphism $(\,.\,)^\dagger\colon\mathcal M_{C_{n-1}} \to \mathcal M_{D_n}$ sends a Kazhdan-Lusztig basis element $\underline{M}_w$ to the Kazhdan-Lusztig element $\underline{M}_{w^\dagger}$ and we have for $i\in\{1,\ldots,m-2\}$,  commutative diagrams
\[
\begin{xy}
	\xymatrix{
\mathcal M_{C_{m-1}} \ar[r]_\cong^{(\,.\,)^\dagger} \ar[d]_{\cdot C_{s_0^C}} & \mathcal M_{D_m} \ar[d]^{\cdot (C_{s_0^D}+C_{s_1^D})} \\ 
\mathcal M_{C_{m-1}} \ar[r]_\cong^{(\,.\,)^\dagger} & \mathcal M_{D_m} 
	}
\end{xy}
\hspace{4em}
\begin{xy}
	\xymatrix{
\mathcal M_{C_{m-1}} \ar[r]_\cong^{(\,.\,)^\dagger} \ar[d]_{\cdot C_{s_i^C}} & \mathcal M_{D_m} \ar[d]^{\cdot C_{s_{i+1}^D}} \\ 
\mathcal M_{C_{m-1}} \ar[r]_\cong^{(\,.\,)^\dagger} & \mathcal M_{D_m} 
	}
\end{xy}
\]
where the vertical maps are the actions with the indicated element of the respective Hecke algebra.
\end{lem}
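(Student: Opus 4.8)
\textbf{Proof plan for Lemma~\ref{lem:relating_C_D_module_action}.}
The plan is to verify the three asserted claims — that $(\,.\,)^\dagger$ sends $\underline M_w$ to $\underline M_{w^\dagger}$, and that the two diagrams commute — by reducing everything to the explicit combinatorics of the $\land\lor$-sequences set up in Subsection~\ref{subsec:Weyl group comb}. First I would translate the $\dagger$-map into sequence language: via the identifications~(\ref{eq:coset_identification_D}) and~(\ref{eq:coset_identification_C}), $(\,.\,)^\dagger$ is exactly the prepending of a symbol $\lor$ or $\land$ to a sequence in $\Lambda^{m-1}$ to produce a sequence in $\Lambda^m_\mathrm{even}$ with the correct parity, as recorded just before this lemma. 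The key observation is that this prepended first symbol is never touched by $s_1^D,\dots,s_{m-1}^D$ acting on $\Lambda^m_\mathrm{even}$, and that a move of $s_i^C$ on a sequence $w\in\Lambda^{m-1}$ at positions $i,i+1$ corresponds precisely to the move of $s_{i+1}^D$ on $w^\dagger$ at positions $i+1,i+2$ (this is a direct comparison, using the two Remarks in Subsection~\ref{subsec:Weyl group comb} which describe when a move stays inside $\mathcal P^\mathrm{min}$ and when it increases length).

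For the statement about Kazhdan–Lusztig bases, I would argue that $(\,.\,)^\dagger$ intertwines the bar involutions: the bar involution on $\mathcal M_{D_m}$ and on $\mathcal M_{C_{m-1}}$ is characterized abstractly (it is $q\mapsto q^{-1}$-semilinear, fixes the appropriate $M_w$ up to lower terms, etc.), so it suffices to check that $\overline{M_{w^\dagger}}$ and $(\overline{M_w})^\dagger$ agree, which follows once one knows the right-hand diagrams commute (the bar involution is determined by its compatibility with the $C_{s}$-action and the normalization on $M_e$). Alternatively, and more cleanly, I would simply quote Remark~\ref{rem:KL_remark}: since $m^C_{v,w}=m^D_{v^\dagger,w^\dagger}$ for all $v,w\in\mathcal P^\mathrm{min}_{C_{m-1}}$, expanding $\underline M_w=\sum_v m^C_{v,w}M_v$ and applying the linear map $(\,.\,)^\dagger$ gives $\sum_v m^D_{v^\dagger,w^\dagger}M_{v^\dagger}=\underline M_{w^\dagger}$ on the nose. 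This reduces the first claim to a citation.

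For the commuting diagrams I would work on the standard bases $M_w$, $w\in\mathcal P^\mathrm{min}_{C_{m-1}}$, and compare both sides using the explicit action formula~(\ref{eq:explicit_module_structure}) together with its evident type-$C$ analogue. The square involving $C_{s_i^C}$ versus $C_{s_{i+1}^D}$ is the easy one: applying $s_{i+1}^D$ to $w^\dagger$ does the same thing to positions $i+1,i+2$ as $s_i^C$ does to positions $i,i+1$ of $w$, the "stays in $\mathcal P^\mathrm{min}$'' and "length goes up/down'' conditions match by the two Remarks, and hence the three cases of~(\ref{eq:explicit_module_structure}) match case by case after applying $(\,.\,)^\dagger$ to the output. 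The genuinely substantial square is the first one, relating $C_{s_0^C}$ to $C_{s_0^D}+C_{s_1^D}$. Here the main obstacle is that $s_0^C$ acts on a single (the first) symbol of a sequence in $\Lambda^{m-1}$ by flipping $\land\leftrightarrow\lor$, whereas on the $\dagger$-side the first symbol of $w^\dagger$ is the parity-correcting symbol that is \emph{not} present in $w$; flipping the (old) first symbol of $w$ corresponds on the $\dagger$-side to a move at positions $1$ and $2$, but simultaneously forces the parity symbol at position $1$ to flip as well — i.e.\ effectively a swap/creation among positions $1,2$ of $w^\dagger$ that is realized by the combination $s_0^D$ and $s_1^D$, not by either alone. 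I would therefore split into the finitely many local cases for the first two symbols of $w^\dagger$ (i.e.\ $\lor\lor$, $\lor\land$, $\land\lor$, $\land\land$, each with the appropriate global parity), compute $M_{w^\dagger}.(C_{s_0^D}+C_{s_1^D})$ using~(\ref{eq:explicit_module_structure}) in each case — noting in particular that when the first two symbols are equal, $s_1^D$ fixes the coset while $s_0^D$ may or may not, and vice versa — and check that the result equals $\big(M_w.C_{s_0^C}\big)^\dagger$. The cross-terms and the $-q^{\pm1}M$ contributions need to be tracked carefully, but this is a bounded bookkeeping exercise; I expect it to be the only part of the proof requiring real case analysis, and I would present it as a short table rather than prose.
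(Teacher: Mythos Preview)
Your proposal is correct and follows essentially the same approach as the paper: the Kazhdan--Lusztig basis claim is reduced to Remark~\ref{rem:KL_remark}, and the commutativity of the right square is checked on standard basis vectors via the explicit formula~(\ref{eq:explicit_module_structure}) and the combinatorial description of the $\land\lor$-sequences. The only difference is that the paper omits the verification of the left square entirely, whereas you outline the natural case analysis for it; your plan for that square is sound and is exactly what one would do to fill in the omitted argument.
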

\begin{proof}
The first part follows directly from the definition of the Kazhdan-Lusztig polynomials and Remark~\ref{rem:KL_remark}. We only prove the commutativity of the right square and omit the left one. We check the commutativity on standard basis vectors. Using (\ref{eq:explicit_module_structure}) one easily checks that the composition 
\[
\mathcal M_{C_{m-1}}\xrightarrow{\cdot C_{s_i^C}}\mathcal M_{C_{m-1}}\xrightarrow{(\,.\,)^\dagger}\mathcal M_{D_m}
\]
is given by
\[
M_w\longmapsto \begin{cases}
M_{(ws^C_i)^\dagger}-q^{-1}M_{w^\dagger} 	&ws^C_0\in\mathcal P^\mathrm{min}_{C_{m-1}}\text{ and }l(ws^C_i)>l(w),\\
M_{(ws^C_i)^\dagger}-qM_{w^\dagger} 				&ws^C_0\in\mathcal P^\mathrm{min}_{C_{m-1}}\text{ and }l(ws^C_i)<l(w),\\
0 								&ws^C_i\notin\mathcal P^\mathrm{min}_{C_{m-1}},
\end{cases}
\]
for $i\in\{1,\ldots,m-2\}$, and the composition
\[
\mathcal M_{C_{m-1}}\xrightarrow{(\,.\,)^\dagger}\mathcal M_{D_m}\xrightarrow{\cdot C_{s_{i+1}^D}}\mathcal M_{D_m}
\]
is given by
\[
M_w\longmapsto \begin{cases}
M_{w^\dagger s^D_{i+1}}-q^{-1}M_{w^\dagger} 	&w^\dagger s^D_{i+1}\in\mathcal P^\mathrm{min}_{D_m}\text{ and }l(ws^D_{i+1})>l(w),\\
M_{w^\dagger s^D_{i+1}}-qM_{w^\dagger} 				&w^\dagger s^D_{i+1}\in\mathcal P^\mathrm{min}_{D_m}\text{ and }l(ws^D_{i+1})<l(w),\\
0 								&w^\dagger s^D_{i+1}\notin\mathcal P^\mathrm{min}_{D_m}.
\end{cases}
\]
By comparing the two results using the combinatorial description, the claim follows.
\end{proof}

\subsection{Diagrammatical reformulation}
\label{sec:diagrammatic_Hecke_module}
Henceforth, we identify $\mathcal M_{D_m}$ with $\mathcal M^\textrm{comb}_m=\mathcal L[C_\mathrm{KL}(m)]$ (as $\mathcal L$-module) by sending $\underline{M}_w$ to the the cup diagram $\ba_w$ associated with $w\in\mathcal P_{D_m}^\mathrm{min}$ which is constructed as follows: Firstly, connect all neighbored $\lor\land$ symbols by an unmarked cup successively, thereby ignoring all pairs which are already connected. Secondly, connect the remaining neighbored $\land$ pairwise by a marked cup starting from the left. Finally, attach rays to the symbols which are not yet connected to a cup, where a ray is marked if and only if it is connected to a $\land$. The assignment $w\mapsto\ba_w$ defines a bijection between $\Lambda^m_\mathrm{even}$ and $C_\mathrm{KL}(m)$, see \cite[Section 5.2]{LS13}.

\begin{prop}[Hecke module in Type D] \label{prop:diagrammatic_Hecke_actionI}
The right action of the Hecke algebra $\mathcal H_{D_m}$ of type $D$ on $\mathcal M_{D_m}$ can be described via the identification $\mathcal M_{D_m}\cong\mathcal M^\textrm{comb}_m$ as follows: Representing the Kazhdan-Lusztig generators $C_{s_0^D}$, $C_{s_i^D}\in \mathcal H_{D_m}$, where $i\in\{1,\ldots,m-1\}$ pictorially as 
\[
C_{s_0^D}:=\begin{tikzpicture}[thick, scale=.8, baseline={(0,-.5)}]
\draw[thick] (0,0) .. controls +(0,-.5) and +(0,-.5) .. +(.5,0);
\fill ([xshift=-2.5pt,yshift=-2.5pt].25,-.365) rectangle ++(5pt,5pt);
\draw[thick] (0,-1) .. controls +(0,.5) and +(0,.5) .. +(.5,0);
\fill ([xshift=-2.5pt,yshift=-2.5pt].25,-.635) rectangle ++(5pt,5pt);
\draw[thick] (1,0) -- +(0,-1);
\draw[thick] (1.5,0) -- +(0,-1);
\draw[thick] (2.5,0) -- +(0,-1);

\node at (2,-.5) {\dots};
\begin{footnotesize}
\node at (0,.2) {1};
\node at (.5,.2) {2};
\end{footnotesize}
\end{tikzpicture}
\hspace{5em}
C_{s_i^D}:=\begin{tikzpicture}[thick, scale=.8, baseline={(0,-.5)}]
\draw[thick] (0,0) -- +(0,-1);
\draw[thick] (.5,0) -- +(0,-1);
\draw[thick] (1.5,0) .. controls +(0,-.5) and +(0,-.5) .. +(.5,0);
\draw[thick] (1.5,-1) .. controls +(0,.5) and +(0,.5) .. +(.5,0);
\draw[thick] (3,0) -- +(0,-1);

\node at (1,-.5) {\dots};
\node at (2.5,-.5) {\dots};

\begin{footnotesize}
\node at (1.4,.22) {i};
\node at (2,.2) {i+1};
\end{footnotesize}
\end{tikzpicture}
\]
then we can compute $\ba.C_{s_i^D}$, where $\ba\in C_\mathrm{KL}(m)$ is a cup diagram, by putting the picture corresponding to the generator $C_{s_i^D}$ on top of $\ba$ and applying the relations 
\begin{equation}
\label{firstrow}
\begin{tikzpicture}[thick, scale=.8]
\draw[dotted] (-.25,-.5) circle(16pt);
\draw[dotted] (3.8,-.5) circle(16pt);
\draw[thick] (-.25,-.5) circle(8pt);

\node at (1.8,-.5) {$= -(q+q^{-1})\cdot$};
\end{tikzpicture}
\hspace{3em}
\begin{tikzpicture}[thick, scale=.8]
\draw[dotted] (-.25,-.5) circle(16pt);
\draw[dotted] (1.9,-.5) circle(16pt);

\draw[thick] (-.75,-.5) -- +(1,0);
\draw[thick] (1.4,-.5) -- +(1,0);
\fill ([xshift=-2.5pt,yshift=-2.5pt]-.05,-.5) rectangle ++(5pt,5pt);
\fill ([xshift=-2.5pt,yshift=-2.5pt]-.45,-.5) rectangle ++(5pt,5pt);

\node at (.8,-.5) {$=$};
\end{tikzpicture}
\hspace{3em}
\begin{tikzpicture}[thick, scale=.8]
\draw[dotted] (-.25,-.5) circle(16pt);
\draw[thick] (-.25,-.5) circle(8pt);

\fill ([xshift=-2.5pt,yshift=-2.5pt]-.25,-.225) rectangle ++(5pt,5pt);

\node at (.8,-.5) {$=\,0$};
\end{tikzpicture}
\end{equation}
\begin{equation}
\label{secondrow}
\begin{tikzpicture}
\draw[dotted] (0,0) .. controls +(0,1) and +(0,1) .. +(1,0);
\draw[thick] (0.25,0) .. controls +(0,.5) and +(0,.5) .. +(.5,0);
\node at (1.5,.25) {$=0$};
\draw[dotted] (0,0) -- +(1,0);
\end{tikzpicture}
\hspace{4em}
\begin{tikzpicture}
\draw[dotted] (0,0) .. controls +(0,1) and +(0,1) .. +(1,0);
\draw[thick] (0.25,0) .. controls +(0,.5) and +(0,.5) .. +(.5,0);
\fill ([xshift=-2.5pt,yshift=-2.5pt].5,.365) rectangle ++(5pt,5pt);
\node at (1.5,.25) {$=1$};
\draw[dotted] (0,0) -- +(1,0);
\end{tikzpicture}
\end{equation}
which yields a new cup diagram in $C_\mathrm{KL}(m)$ equal to $\ba.C_{s_i^D}$. 
\end{prop}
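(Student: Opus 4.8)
The plan is to transport the known diagrammatic description of the $\mathcal{H}_{D_m}$-action on the Kazhdan-Lusztig basis of $\mathcal{M}_{D_m}$ (established in \cite{LS13}, using the cup-diagram conventions recalled in Subsection~\ref{sec:diagrammatic_Hecke_module}) through the fixed bijection $w\mapsto\ba_w$ between $\mathcal{P}^{\mathrm{min}}_{D_m}$ and $C_\mathrm{KL}(m)$, and to verify that the combinatorial output of the skein-type procedure in the statement matches (\ref{eq:explicit_module_structure}). Since $\mathcal{H}_{D_m}$ is generated as an $\mathcal{L}$-algebra by the Kazhdan-Lusztig generators $C_{s_0^D}, C_{s_1^D},\ldots,C_{s_{m-1}^D}$, it suffices to check the formula for $\ba.C_{s_i^D}$ for each generator and each $\ba\in C_\mathrm{KL}(m)$; there are no braid relations to verify separately because the action is \emph{defined} abstractly on $\mathcal{M}_{D_m}$ and we are only giving a new description of it.

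First I would reduce to a purely local computation. Fix a generator $C_{s_i^D}$. The procedure of the proposition only alters $\ba$ in a neighborhood of vertices $i$ and $i+1$ (for $i\geq 1$) or of vertices $1$ and $2$ (for $i=0$), so the possible local pictures of $\ba$ at those vertices fall into finitely many cases: the two relevant vertices are joined (a) by two rays, (b) by a single cup, (c) by one cup and one ray, or (d) by two distinct cups (with, in each case, finitely many marker configurations constrained by the parity condition defining $C_\mathrm{KL}(m)$). This is exactly the case division already used in the proof of Proposition~\ref{proposition:well_defined_action}. For each case I would, on one side, read off $w=w_\ba$ from the $\land\lor$-sequence $\lambda^m_\lor.w$ via the bijection of Subsection~\ref{sec:diagrammatic_Hecke_module}, determine whether $ws_i^D\in\mathcal{P}^{\mathrm{min}}_{D_m}$ and whether the length goes up or down (using the two Remarks in Subsection~\ref{subsec:Weyl group comb} relating this to whether $s_i^D$ changes the sequence and how), and thereby compute $\underline{M}_w.C_{s_i^D}$ from (\ref{eq:explicit_module_structure}) together with the explicit parabolic Kazhdan-Lusztig polynomials of \cite[Theorem 2.10]{LS13} (cf.\ Remark~\ref{rem:KL_remark}); on the other side I would carry out the stacking-and-relations recipe of the proposition and rewrite the result back into the cup-diagram basis using the circle-evaluation rule ($=-(q+q^{-1})$), the double-marker rule, the marked-circle$=0$ rule of (\ref{firstrow}), and the two closure rules of (\ref{secondrow}). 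Then I would compare the two expressions case by case and confirm they agree.

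A cleaner organizing device, which I would use to keep the bookkeeping manageable, is to note that the type-$D$ generators are built from the type-$A$ generators $C_{s_i^D}$, $i\geq 1$ — for which the analogous diagrammatic description is classical (Khovanov's arc-algebra calculus, used in \cite{RT11}, \cite{LS13}) and can be invoked directly — together with the single extra generator $C_{s_0^D}$, so the genuinely new content is the $C_{s_0^D}$-computation and the interaction of markers with the type-$A$ moves (the middle relation of (\ref{firstrow}), the marked-circle relation, and the marked closure in (\ref{secondrow})). For $C_{s_0^D}$ I would further exploit that $s_0^D$ acts on $\land\lor$-sequences only by $\lor\lor\leftrightarrow\land\land$ at positions $1,2$, so on the diagrammatic side the only nontrivial cases are those where vertices $1,2$ either both carry rays that are both marked or both unmarked, or are connected to cups in one of the four configurations above; in each such subcase the relations (\ref{firstrow})–(\ref{secondrow}) collapse the stacked picture to a single term plus a multiple of $\ba$, matching $\underline{M}_w.C_{s_0^D}$.

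The main obstacle I anticipate is neither conceptual nor combinatorial subtlety in any single case but the sheer volume and delicacy of the marker bookkeeping: the parity condition defining $C_\mathrm{KL}(m)$, the convention that markers sit on components accessible from the left, and the possibility that a local move changes which component is leftmost-accessible all mean that, after resolving, one must occasionally slide a marker from a ray to a cup (or create/destroy a marker on the leftmost ray to restore the parity), exactly as in the cutting/gluing normalizations of Section~\ref{section:section1}. Getting every such marker migration to come out with the correct sign and the correct power of $q$ — and checking it is consistent with the explicit parabolic Kazhdan-Lusztig polynomials — is where essentially all the work lies, and where one must be careful to follow the accessibility and parity conventions fixed in Subsection~\ref{sec:diagrammatic_Hecke_module} rather than guessing. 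Once all cases check out, the proposition follows since the generators $C_{s_i^D}$ generate $\mathcal{H}_{D_m}$ over $\mathcal{L}$. \qedhere
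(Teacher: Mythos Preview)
Your approach is essentially the same as the paper's: the paper's own proof is a one-line deferral stating that the proposition is the induced-trivial-module analog of \cite[Theorem~4.17]{LS13} (which treats the induced \emph{sign} module) and can be proven by closely following that argument, and the case-by-case verification you outline is precisely what such an adaptation amounts to. One small framing correction: \cite{LS13} does not literally establish the action formula for the module $\mathcal{M}_{D_m}$ you are working with here (that is the point of this proposition), so rather than ``transporting'' a known result you are adapting the \emph{method} of \cite[Theorem~4.17]{LS13} from the sign to the trivial setting --- which is exactly what your paragraphs two through four describe doing.
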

The relations in \eqref{secondrow} only hold for connected components with both endpoints at the bottom of the diagram. The second relation hereby means that we can simply remove such a component if it is marked, whereas the first relation always kills the entire diagram. 

\begin{prop}[Hecke module in Type C]\label{prop:diagrammatic_Hecke_actionII}
The right action of the Hecke algebra $\mathcal H_{C_{m-1}}$ of type $C_{m-1}$ on $\mathcal M_{C_{m-1}}$ can be described via the identification $\mathcal M_{C_{m-1}}\cong\mathcal M^\textrm{comb}_m$ as follows: Given a cup diagram $\ba\in C_\mathrm{KL}(m)$ and a generator $C_{s_i^C}\in \mathcal H_C$ which we represent pictorially as 
\[
C_{s_0^C}:=\begin{tikzpicture}[thick, scale=.8, baseline={(0,-.5)}]
\draw[thick] (0,0) .. controls +(0,-.5) and +(0,-.5) .. +(.5,0);
\draw[thick] (0,-1) .. controls +(0,.5) and +(0,.5) .. +(.5,0);
\draw[thick] (1,0) -- +(0,-1);
\draw[thick] (1.5,0) -- +(0,-1);
\draw[thick] (2.5,0) -- +(0,-1);

\node at (2,-.5) {\dots};
\begin{footnotesize}
\node at (0,.2) {1};
\node at (.5,.2) {2};
\end{footnotesize}
\end{tikzpicture}
\,\,
+
\begin{tikzpicture}[thick, scale=.8, baseline={(0,-.5)}]
\draw[thick] (0,0) .. controls +(0,-.5) and +(0,-.5) .. +(.5,0);
\fill ([xshift=-2.5pt,yshift=-2.5pt].25,-.365) rectangle ++(5pt,5pt);
\draw[thick] (0,-1) .. controls +(0,.5) and +(0,.5) .. +(.5,0);
\fill ([xshift=-2.5pt,yshift=-2.5pt].25,-.635) rectangle ++(5pt,5pt);
\draw[thick] (1,0) -- +(0,-1);
\draw[thick] (1.5,0) -- +(0,-1);
\draw[thick] (2.5,0) -- +(0,-1);

\node at (2,-.5) {\dots};
\begin{footnotesize}
\node at (0,.2) {1};
\node at (.5,.2) {2};
\end{footnotesize}
\end{tikzpicture}
\hspace{3.5em}
C_{s_i^C}:=\begin{tikzpicture}[thick, scale=.8, baseline={(0,-.5)}]
\draw[thick] (0,0) -- +(0,-1);
\draw[thick] (.5,0) -- +(0,-1);
\draw[thick] (1.5,0) .. controls +(0,-.5) and +(0,-.5) .. +(.5,0);
\draw[thick] (1.5,-1) .. controls +(0,.5) and +(0,.5) .. +(.5,0);
\draw[thick] (3,0) -- +(0,-1);

\node at (1,-.5) {\dots};
\node at (2.5,-.5) {\dots};

\begin{footnotesize}
\node at (1.3,.22) {i+1};
\node at (2.1,.2) {i+2};
\end{footnotesize}
\end{tikzpicture}
\hspace{2em}
\text{with }i\in\{1,\ldots,m-2\},
\]
then $\ba.C_{s_i^C}$ can be computed by putting the picture corresponding to the generator $C_{s_i^C}$ on top of $\ba$ and simplifying the resulting pictures using the relations \eqref{firstrow} and \eqref{secondrow} again.
\end{prop}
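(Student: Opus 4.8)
The plan is to deduce Proposition~\ref{prop:diagrammatic_Hecke_actionII} formally from the already-established type $D$ description in Proposition~\ref{prop:diagrammatic_Hecke_actionI} by transporting everything along the isomorphism $(\,.\,)^\dagger$ of Lemma~\ref{lem:relating_C_D_module_action}. First I would make explicit that the identification $\mathcal M_{C_{m-1}}\cong\mathcal M^{\mathrm{comb}}_m$ used in the statement is (equivalently: can be taken to be) the composite
\[
\mathcal M_{C_{m-1}}\xrightarrow[\cong]{(\,.\,)^\dagger}\mathcal M_{D_m}\xrightarrow{\cong}\mathcal M^{\mathrm{comb}}_m,
\]
so that a Kazhdan--Lusztig basis vector $\underline M_w$ of $\mathcal M_{C_{m-1}}$ is sent to the cup diagram $\ba_{w^\dagger}$; this is compatible with the first assertion of Lemma~\ref{lem:relating_C_D_module_action} and with Remark~\ref{rem:KL_remark}.

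For the generators $C_{s_i^C}$ with $i\in\{1,\ldots,m-2\}$ I would invoke the right-hand commutative square of Lemma~\ref{lem:relating_C_D_module_action}: under the identification above, right multiplication by $C_{s_i^C}$ on $\mathcal M^{\mathrm{comb}}_m$ agrees with right multiplication by $C_{s_{i+1}^D}$. By Proposition~\ref{prop:diagrammatic_Hecke_actionI} the latter is computed by stacking the type $D$ picture of $C_{s_{i+1}^D}$ --- a single cup--cap on the strands in positions $i+1$ and $i+2$ --- on top of the given cup diagram and simplifying with the relations \eqref{firstrow} and \eqref{secondrow}. This is exactly the picture assigned to $C_{s_i^C}$ in the statement, so the two prescriptions literally coincide. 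I would stress that no relabelling of strands occurs at this step: the index shift $i\mapsto i+1$ and the extra first strand implicit in $(\,.\,)^\dagger$ have already been absorbed into the identification $\mathcal M_{C_{m-1}}\cong\mathcal M^{\mathrm{comb}}_m$, not into the pictures.

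For the remaining generator $C_{s_0^C}$ I would use the left-hand square of Lemma~\ref{lem:relating_C_D_module_action}, which gives that right multiplication by $C_{s_0^C}$ equals right multiplication by $C_{s_0^D}+C_{s_1^D}$. By linearity of the module action and Proposition~\ref{prop:diagrammatic_Hecke_actionI}, this is computed by stacking the $C_{s_0^D}$ picture (a marked cup--cap at positions $1,2$), simplifying, then adding the result of stacking the $C_{s_1^D}$ picture (an unmarked cup--cap at positions $1,2$) and simplifying. Since the picture drawn for $C_{s_0^C}$ in the statement is precisely the formal sum of the unmarked and the marked cup--cap at positions $1,2$, the recipe ``stack the $C_{s_0^C}$ picture on $\ba$ and simplify each resulting diagram'' reproduces exactly $\ba\cdot C_{s_0^D}+\ba\cdot C_{s_1^D}=\ba\cdot C_{s_0^C}$. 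That the outputs lie in $\mathcal M^{\mathrm{comb}}_m=\mathcal L[C_\mathrm{KL}(m)]$, and that iterating the stacking procedure along a word in the $C_{s_i^C}$ computes the action of the corresponding product, are inherited from the type $D$ case, using that the $C_{s_i^C}$, $i\in\{0,\ldots,m-2\}$, generate $\mathcal H_{C_{m-1}}$.

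The only step that genuinely needs attention --- and where I expect the bookkeeping to be most delicate --- is the translation in the first paragraph: one has to verify that the cup diagram associated with $w\in\mathcal P^\mathrm{min}_{C_{m-1}}$ really is $\ba_{w^\dagger}$, and that under this dictionary the pictures defined for the type $C$ generators are honest type $D$ generator pictures at the shifted positions (respectively their sum in the case of $s_0^C$). Once this is pinned down, the proposition follows purely formally from Proposition~\ref{prop:diagrammatic_Hecke_actionI} and Lemma~\ref{lem:relating_C_D_module_action} without any further diagram manipulation.
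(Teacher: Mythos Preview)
Your proposal is correct and follows essentially the same approach as the paper: both deduce Proposition~\ref{prop:diagrammatic_Hecke_actionII} from Proposition~\ref{prop:diagrammatic_Hecke_actionI} by transporting the action along the isomorphism $(\,.\,)^\dagger$ using the two commutative squares of Lemma~\ref{lem:relating_C_D_module_action}. Your write-up simply makes explicit the bookkeeping (the identification $\underline M_w\mapsto\ba_{w^\dagger}$ and the index shift) that the paper leaves implicit in its one-line proof.
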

\begin{proof}[Proof of Propositions~\ref{prop:diagrammatic_Hecke_actionI} and~\ref{prop:diagrammatic_Hecke_actionII}.]
Proposition~\ref{prop:diagrammatic_Hecke_actionI} is the analog of \cite[Theorem 4.17]{LS13} for the induced trivial module and can be proven by closely following the argument for the induced sign module. Proposition~\ref{prop:diagrammatic_Hecke_actionII} follows then together with Lemma~\ref{lem:relating_C_D_module_action}.
\end{proof}

\begin{lem} \label{lem:filtration}
There exists a filtration 
\begin{equation} \label{eq:filtration}
\{0\}\subseteq\mathcal M^\textrm{comb}_{m,\lfloor\frac{m}{2}\rfloor}\subseteq\ldots\subset\mathcal M^\textrm{comb}_{m,l}\subseteq\ldots\subseteq\mathcal M^\textrm{comb}_{m,0}=\mathcal M^\textrm{comb}_m
\end{equation}
of both right $\mathcal H_{D_m}$- and $\mathcal H_{C_{m-1}}$-modules, where $\mathcal M^\textrm{comb}_{m,l}$ is the $\mathcal L$-span of all cup diagrams in $C_\mathrm{KL}(m)$ with $l$ or more cups, $0\leq l\leq\lfloor\frac{m}{2}\rfloor$. 
\end{lem}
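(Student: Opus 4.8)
The plan is to prove Lemma~\ref{lem:filtration} by showing that each $\mathcal{L}$-submodule $\mathcal{M}^{\mathrm{comb}}_{m,l}$ is stable under the right action of the Kazhdan--Lusztig generators $C_{s_i^D}$ (for type $D$) and $C_{s_i^C}$ (for type $C$), using the diagrammatic descriptions of the actions furnished by Propositions~\ref{prop:diagrammatic_Hecke_actionI} and~\ref{prop:diagrammatic_Hecke_actionII}. Since the $C_{s_i}$ generate the respective Hecke algebras as $\mathcal{L}$-algebras, stability under the generators is equivalent to stability under the whole algebra, so the filtration is automatically a filtration of modules once each individual step is checked.

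First I would fix $l$ and a cup diagram $\ba \in C_{\mathrm{KL}}(m)$ with at least $l$ cups, and analyze $\ba.C_{s_i^D}$ diagrammatically: stack the generator picture on top of $\ba$ and apply the local relations~\eqref{firstrow} and~\eqref{secondrow} together with the rule that components with both endpoints at the bottom are resolved or killed. The key combinatorial observation is that stacking a cap--cup pair on two adjacent points of a cup diagram and then resolving never \emph{decreases} the number of cups: locally, the turnback either reconnects the two strands entering from below (leaving a cup count unchanged, possibly up to a scalar from a closed circle via the first relation in~\eqref{firstrow}), or it merges/reroutes components in a way that produces a cup diagram with the same number of cups, or it produces a component with both endpoints at the bottom, which by~\eqref{secondrow} is either deleted (keeping cup count) or kills the diagram entirely. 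One should tabulate the finitely many local configurations at vertices $i,i+1$ (both connected to rays, both to a single cup, one cup one ray, two distinct cups), exactly as in the case analysis of Proposition~\ref{proposition:well_defined_action} and Tables~\ref{tab:s_i^D_action}--\ref{tab:s_0^D_action}, and verify in each case that every cup diagram appearing in $\ba.C_{s_i^D}$ has at least as many cups as $\ba$. The same analysis handles $C_{s_0^D}$, where the extra marked cap--cup may create one additional cup but never destroys an existing one. This establishes $\mathcal{M}^{\mathrm{comb}}_{m,l}.C_{s_i^D} \subseteq \mathcal{M}^{\mathrm{comb}}_{m,l}$ for all $i \in \{0,\ldots,m-1\}$.

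For the type $C$ statement, I would invoke Lemma~\ref{lem:relating_C_D_module_action}: the isomorphism $(\,.\,)^\dagger\colon \mathcal{M}_{C_{m-1}} \to \mathcal{M}_{D_m}$ intertwines the action of $C_{s_i^C}$ with that of $C_{s_{i+1}^D}$ (for $i \in \{1,\ldots,m-2\}$) and of $C_{s_0^C}$ with that of $C_{s_0^D} + C_{s_1^D}$. Under the identification $\mathcal{M}_{C_{m-1}} \cong \mathcal{M}^{\mathrm{comb}}_m \cong \mathcal{M}_{D_m}$ this map is (essentially) the identity on the cup-diagram basis, so it preserves the number of cups and hence carries $\mathcal{M}^{\mathrm{comb}}_{m,l}$ to itself. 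Therefore the type $D$ stability already proved, transported via $(\,.\,)^\dagger$, yields $\mathcal{M}^{\mathrm{comb}}_{m,l}.C_{s_i^C} \subseteq \mathcal{M}^{\mathrm{comb}}_{m,l}$ for all $i$; alternatively, one reads this off directly from the diagrammatic description in Proposition~\ref{prop:diagrammatic_Hecke_actionII}, which uses the same local relations~\eqref{firstrow} and~\eqref{secondrow}. Finally, the chain of inclusions $\{0\} \subseteq \mathcal{M}^{\mathrm{comb}}_{m,\lfloor m/2 \rfloor} \subseteq \cdots \subseteq \mathcal{M}^{\mathrm{comb}}_{m,0} = \mathcal{M}^{\mathrm{comb}}_m$ is immediate from the definitions, the rightmost equality holding because every cup diagram in $C_{\mathrm{KL}}(m)$ has at most $\lfloor m/2 \rfloor$ cups.

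The main obstacle is the bookkeeping in the local case analysis: one must be careful that the ``resolve crossings, then kill bottom--bottom components'' procedure genuinely never lowers the cup count, including the subtle configurations where a marked component is removed via the right-hand relation in~\eqref{secondrow} (which changes the diagram but not its cup number) versus an unmarked one which kills the diagram, and the configuration where nesting produces a long cup passing over a shorter one. These are precisely the cases already handled pictorially in Section~\ref{section:section2} and in~\cite[\S5]{LS13}, so the argument is a matter of matching each diagrammatic move against the cup-count inequality rather than any genuinely new difficulty; no nontrivial calculation is required beyond that enumeration.
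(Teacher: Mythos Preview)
Your approach is correct and essentially the same as the paper's: both use the diagrammatic descriptions in Propositions~\ref{prop:diagrammatic_Hecke_actionI} and~\ref{prop:diagrammatic_Hecke_actionII} to observe that acting by a Kazhdan--Lusztig generator never decreases the number of cups (with the only change in cup count coming from the second relation in~\eqref{secondrow}, where two rays are replaced by a cup, so the count in fact increases by one---your parenthetical ``keeping cup count'' slightly understates this, but the inequality you need still holds). The paper's proof is just the one-sentence version of your case analysis.
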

\begin{proof}
It follows from Propositions~\ref{prop:diagrammatic_Hecke_actionI} and~\ref{prop:diagrammatic_Hecke_actionII} that acting with a Kazhdan-Lusztig generator on a cup diagram with $l$ cups always yields a cup diagram with $l$ or more cups (the number of cups changes if and only if we use the second relation from \eqref{secondrow}). In particular, the filtration (\ref{eq:filtration}) of $\mathcal L$-modules is actually a filtration of right $\mathcal H_{D_m}$- and $\mathcal H_{C_{m-1}}$-modules. 
\end{proof}

\begin{rem}
The filtration \eqref{eq:filtration} can be identified with the respective filtrations of $\mathcal M_{D_m}$ and $\mathcal M_{C_{m-1}}$ by Kazhdan-Lusztig cells with subquotients $\mathcal M^\textrm{comb}_{m,l}/\mathcal M^\textrm{comb}_{m,l-1}$ the Kazhdan-Lusztig cell modules.  
\end{rem}

\section[sec]{\for{toc}{Connection to classical Springer theory}\except{toc}{Connection to classical Springer theory and the proof of Theorem~\ref{thm:induced_module}}}
\label{section:section5}

In this section we identify our representations in each homology degree for every two-row Springer fiber by providing an explicit decomposition into irreducible Specht modules. We use the results from Section~\ref{section:section4} but specialized at $q=1$ in which case the Hecke algebra becomes the group algebra and the parabolic Hecke module becomes the induced trivial representation. 

\subsection{Proof of Theorem~\ref{thm:induced_module}}\label{subsection:proof_induced_module}

By \cite[Lemma~5.19]{ES12}, the induced trivial representation of type $D$ decomposes as multiplicity free sum of irreducibles labeled by pairs of one-row partitions. If $m$ is odd, it follows that all of these simples must occur in the induced trivial representation in order for the dimension to add up to $2^{m-1}$ (which is the dimension of the induced trivial module because the $\{\land\lor\}$-sequences in $\Lambda^m_\mathrm{even}$ label a basis). Thus, we have a decomposition
\begin{equation} \label{eq:decomposition_into_irreps_odd}
\mathbb C\otimes_{\mathbb C[S_m]}\mathbb C[\mathcal W_{D_m}]\cong\bigoplus_{l=0}^{\frac{m-1}{2}} V_{\{(m-l),(l)\}}
\end{equation}
(cf.\ \cite[Lemma~5.19]{ES12} and \cite[Remark 5.21]{ES12}). 

If $m$ is even, a similar argument shows that all simples must occur exactly once, except one of the modules $V_{(\frac{m}{2})}^+$ or $V_{(\frac{m}{2})}^-$ is missing (which one depends on the two possible choices of maximal parabolic subgroups of $\mathcal W_{D_m}$) which yields a decomposition
\begin{equation} \label{eq:decomposition_into_irreps_even}
\mathbb C\otimes_{\mathbb C[S_m]}\mathbb C[\mathcal W_{D_m}]\cong V^+_{(\frac{m}{2})} \oplus\bigoplus_{l=0}^{\frac{m}{2}-1} V_{\{(m-l),(l)\}}
\end{equation}

In particular, it follows that the filtration (\ref{eq:filtration}) is already a composition series, i.e.\ a filtration which cannot be refined. Hence, the subquotients $\mathcal M^\textrm{comb}_{m,l}/\mathcal M^\textrm{comb}_{m,l-1}$ are irreducible and isomorphic to the summands in the decompositions (\ref{eq:decomposition_into_irreps_odd}) and (\ref{eq:decomposition_into_irreps_even}). %Since a basis of $V_{\{(m-l),(l)\}}$ can be labeled by the cup diagrams in $C_\mathrm{KL}(m)$ with $l$ cups (cf.\ \cite[Lemma 5.20]{ES12}) we obtain isomorphisms $V_{\{(m-l),(l)\}}\cong\mathcal M^\textrm{comb}_{m,l}/\mathcal M^\textrm{comb}_{m,l-1} $ for all $0\leq l\leq\lfloor\frac{m}{2}\rfloor$ by comparing dimensions.
Note that the subquotients $\mathcal M^\textrm{comb}_{m,l}/\mathcal M^\textrm{comb}_{m,l-1}$ of this filtration are isomorphic to the representations $H_{2l}(\SOfiberequalsize)$ constructed in Theorem~\ref{thm:skein_calculus}. This follows easily by using the relation $H_{s_i^D}=C_{s_i^D}+q^{-1}H_e$ and comparing the local relations for $q=1$. In particular, $H_*\left(\SOfiberequalsize\right)$ is isomorphic to the induced trivial module because both have the same irreducible constituents. This proves Theorem~\ref{thm:induced_module} for type $D$.

For the type $C$ case, note that by Proposition~\ref{prop:diagrammatic_Hecke_actionII} (specialized at $q=1$) the $\mathcal W_{C_{m-1}}$-representation $\mathcal M^\textrm{comb}_m$ is isomorphic to $\mathbb C\otimes_{\mathbb C[S_{m-1}]}\mathbb C[\mathcal W_{C_{m-1}}]$. Thus, it suffices to check that $\mathcal M^\textrm{comb}_m$ is isomorphic to $H_*(\SOfiberequalsize)$ as $\mathcal W_{C_{m-1}}$-module. Since $\mathcal M^\textrm{comb}_m$ is isomorphic to $H_*(\SOfiberequalsize)$ as $\mathcal W_{D_m}$-module by Theorem~\ref{thm:explicit_identification_D}, it suffices to check that the $\mathcal W_{C_{m-1}}$-action on $\mathcal M^\textrm{comb}_m$ is obtained by restricting the $\mathcal W_{D_m}$-action to the subgroup generated by $s_0^Ds_1^D,s_2^D,\ldots,s^D_{m-1}$. But this is clear from Propositions~\ref{prop:diagrammatic_Hecke_actionI} and~\ref{prop:diagrammatic_Hecke_actionII}.

\subsection{Explicit identifications in type D}

In the following we abbreviate $M=\{1,\ldots,m\}$. If $m$ is even, the Specht module $V_{((\frac{m}{2}),(\frac{m}{2}))}$ decomposes as a direct sum of two non-isomorphic irreducible $\mathcal W_{D_m}$-modules. In order to describe this decomposition note that the group $\mathbb Z/2\mathbb Z$ acts on the line diagram basis of $H_m((\mathbb S^2)^m)$ by sending $l_U$ to $l_{M\setminus U}$. Let $\mathcal R_m$ be a set of representatives of the orbits under this action. Then one of the irreducible summands of the $\mathcal W_{D_m}$-module $H_m((\mathbb S^2)^m)$ has a basis given by all sums $l_U+l_{M\setminus U}$, $U\in\mathcal R_m$, and the second one has a basis given by all sums $l_U-l_{M\setminus U}$, $U\in\mathcal R_m$. We write $V^+_{(\frac{m}{2})}$ (resp.\ $V^-_{(\frac{m}{2})}$) to denote these irreducible modules.

\begin{thm} \label{thm:explicit_identification_D}
We have the following isomorphisms of $\mathcal W_{D_m}$-modules:
\begin{enumerate}
\item\label{item:d1} If $k$ is odd, then we have, for all $l\in\{0,\ldots,\lfloor\frac{k}{2}\rfloor\}$, isomorphisms
\[
H_{2l}\left(\SOfiber\right)\cong V_{\{(m-l),(l)\}}.
\]
\item\label{item:d2} 
If $k$ is even (which implies $k=m$), then we have, for all $l\in\{0,\ldots,\frac{m}{2}-1\}$,  isomorphisms
\[
H_m(\SOfiberequalsize)\cong V_{(\frac{m}{2})}^{+} \hspace{1.6em}\text{and}\hspace{1.6em} H_{2l}\left(\SOfiberequalsize\right)\cong V_{\{(m-l),(l)\}}. 
\]
\end{enumerate}
The isomorphisms in (\ref{item:d1}) and (\ref{item:d2}) are explicitly given by 
\begin{equation}\label{eq:explicit_map}
\ba\mapsto\sum_{U\in\mathcal U_\ba}(-1)^{\Lambda_\ba(U)}e_{T_U}.
\end{equation}
\end{thm}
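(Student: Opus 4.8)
The plan is to reduce Theorem~\ref{thm:explicit_identification_D} to the combinatorial facts already established in Sections~\ref{section:section1}--\ref{section:section5}. First I would observe that the map \eqref{eq:explicit_map} is precisely the composition of the isomorphism $\gamma_{2m-k,k}\colon H_*(\SOfiber)\to\im(\gamma_{2m-k,k})\subseteq H_*((\mathbb S^2)^m)$ from Proposition~\ref{prop:explicit_isom} (which sends a cup diagram $\ba$ to $L_\ba=\sum_{U\in\mathcal U_\ba}(-1)^{\Lambda_\ba(U)}l_U$) with the map $H_*((\mathbb S^2)^m)\to\bigoplus_l V_{((m-l),(l))}$ that sends $l_U\mapsto e_{T_U}$ on each graded piece, which by Proposition~\ref{proposition:Specht_modules_as_homology} is a $\mathcal W_{C_m}$-module isomorphism on each homological degree and hence in particular a $\mathcal W_{D_m}$-equivariant map. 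Since $\gamma_{2m-k,k}$ is injective and $\mathcal W_{D_m}$-equivariant by Theorem~\ref{thm:skein_calculus} (more precisely Proposition~\ref{proposition:well_defined_action}, where the action on $H_*(\SOfiber)$ is defined by transport along $\gamma_{2m-k,k}$), the composite \eqref{eq:explicit_map} is an injective $\mathcal W_{D_m}$-module homomorphism from $H_{2l}(\SOfiber)$ into $V_{((m-l),(l))}$. So the content of the theorem is (a) identifying the image, and (b) matching it with the claimed irreducible.

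For the identification of the abstract isomorphism type, I would invoke the results of Subsection~\ref{subsection:proof_induced_module}: the filtration \eqref{eq:filtration} of $\mathcal M^\textrm{comb}_m$, specialized at $q=1$, is a composition series whose $l$-th subquotient $\mathcal M^\textrm{comb}_{m,l}/\mathcal M^\textrm{comb}_{m,l+1}$ is irreducible, isomorphic to $V_{\{(m-l),(l)\}}$ (or to $V^+_{(\frac{m}{2})}$ in the top degree when $m=k$ is even), and isomorphic as a $\mathcal W_{D_m}$-module to $H_{2l}(\SOfiberequalsize)$ via the relation $H_{s_i^D}=C_{s_i^D}+q^{-1}H_e$ and comparison of the local skein relations at $q=1$. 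This already proves the first assertions of \eqref{item:d1} and \eqref{item:d2} abstractly; the reduction $H_*(\SOfiber)\cong H_{<k}(\SOfiberequalsize)$ from Proposition~\ref{prop:basis_general_case} (the isomorphism $\Gamma^{2m-k,k}$, which is visibly $\mathcal W_{D_m}$-equivariant since $\psi_{2m-k,k}(\mathsf G^{2m-k,k}(\ba))=\psi_{m,m}(\mathsf G^{m,m}(\ba))$) then handles the case $k<m$. Combining this with the injection from \eqref{eq:explicit_map} and the fact that $\dim H_{2l}(\SOfiber)=|\{\ba\in C_\mathrm{KL}(m):\ba\text{ has }l\text{ cups}\}|=\dim V_{\{(m-l),(l)\}}$ forces the injection to be an isomorphism, pinning down \eqref{eq:explicit_map} as the explicit map.

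The one genuinely delicate point is the top degree in case \eqref{item:d2}, i.e.\ identifying which of $V^+_{(\frac{m}{2})}$ and $V^-_{(\frac{m}{2})}$ is hit. Here I would compute directly: in homological degree $m$, $H_m(\SOfiberequalsize)$ has a basis given by the single-component cup diagrams $\ba$ with $\frac m2$ cups (the fully nested ones, possibly marked), and $L_\ba=\sum_{U\in\mathcal U_\ba}(-1)^{\Lambda_\ba(U)}l_U$ runs over a set $U$ together with its complement $M\setminus U$ (since each cup contributes exactly one endpoint, $\mathcal U_\ba$ is closed under $U\mapsto M\setminus U$); one checks using the parity conventions in the definition of $C_\mathrm{KL}(m)$ and of $\Lambda_\ba$ that the signs $(-1)^{\Lambda_\ba(U)}$ and $(-1)^{\Lambda_\ba(M\setminus U)}$ agree, so $L_\ba$ is a sum of terms $l_U+l_{M\setminus U}$ with $U\in\mathcal R_m$. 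Matching with the description of $V^+_{(\frac m2)}$ given just before the theorem statement, and then transporting through the isomorphism of Proposition~\ref{proposition:Specht_modules_as_homology}, yields the image is exactly $V^+_{(\frac m2)}$ (the choice of sign being consistent with the choice of maximal parabolic implicit in $C_\mathrm{KL}(m)$, i.e.\ with the definition of $\Lambda_\mathrm{even}^m$). I expect this sign bookkeeping — verifying $(-1)^{\Lambda_\ba(U)}=(-1)^{\Lambda_\ba(M\setminus U)}$ and tracking the correlation with the even/odd parity convention — to be the main obstacle; everything else is a formal assembly of previously proven statements.
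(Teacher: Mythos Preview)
Your proposal is correct and follows essentially the same route as the paper: compose $\gamma_{2m-k,k}$ with the Specht identification of Proposition~\ref{proposition:Specht_modules_as_homology} to get a $\mathcal W_{D_m}$-equivariant injection, then argue it is onto (the paper uses irreducibility of the target $V_{((m-l),(l))}$ directly rather than your detour through Subsection~\ref{subsection:proof_induced_module} and a dimension count, but both work), and handle the top degree for $k=m$ even by the same sign computation $(-1)^{\Lambda_\ba(U)}=(-1)^{\Lambda_\ba(M\setminus U)}$. One slip: the basis of $H_m(\SOfiberequalsize)$ consists of \emph{all} cup diagrams in $C_\mathrm{KL}(m)$ with $\tfrac m2$ cups, not just the fully nested ones---but your complement/sign argument applies verbatim to every cup-only diagram, so this does not affect the proof.
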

\begin{proof}
If $k$ is odd, then the injection 
\begin{equation} \label{eq:composition_yielding_iso_d1}
H_{2l}(\SOfiber)\hookrightarrow H_{2l}((\mathbb S^2)^m)\xrightarrow\cong V_{((m-l),(l))}
\end{equation}
obtained by composing the maps of $\mathcal W_{D_m}$-modules from Proposition~\ref{prop:explicit_isom} and~\ref{proposition:Specht_modules_as_homology} is given by 
\begin{equation} \label{eq:assignment_yielding_iso_d1}
\ba\mapsto L_\ba=\sum_{U\in\mathcal U_\ba}(-1)^{\Lambda_\ba(U)}l_U\mapsto\sum_{U\in\mathcal U_\ba}(-1)^{\Lambda_\ba(U)}e_{T_U}.
\end{equation}
This is an isomorphism for all $l\in\{0,\ldots,\lfloor\frac{k}{2}\rfloor\}$ because $H_{2l}(\SOfiber)\neq 0$ and hence the image of the $\mathcal W_{D_m}$-equivariant injection (\ref{eq:composition_yielding_iso_d1}) must be equal to the irreducible module $V_{((m-l),(l))}$. 

If $k$ is even, the same argument is true except for the top non-vanishing homology. Then, analogous to (\ref{eq:composition_yielding_iso_d1}), we have an injection  
\begin{equation}\label{eq:injection_d}
H_m(\SOfiberequalsize)\hookrightarrow H_m((\mathbb S^2)^m)\xrightarrow\cong V^+_{(\frac{m}{2})} \oplus V^-_{(\frac{m}{2})}.
\end{equation}
given by the same assignment as in (\ref{eq:assignment_yielding_iso_d1}).  

It remains to prove that the image of this map is contained in $V^+_{(\frac{m}{2})}$. Let $\ba$ be a cup diagram viewed as a basis vector of $H_m(\SOfiberequalsize)$. Since $m$ is even, $\ba$ consists of cups only. Given $U\in \mathcal U_{\ba}$, then $M\setminus U$ is also contained in $\mathcal U_{\ba}$. In fact, in comparison to $U$, $M\setminus U$ corresponds to choosing exactly the opposite endpoints of all cups in $\ba$. Thus, we can write
\[
L_\ba=\sum_{U\in\mathcal U_\ba}(-1)^{\Lambda_\ba(U)}l_U = \sum_{U\in\mathcal U'_\ba}\left((-1)^{\Lambda_\ba(U)}l_U + (-1)^{\Lambda_\ba(M\setminus U)}l_{M\setminus U}\right)= \sum_{U\in\mathcal U'_\ba}(-1)^{\Lambda_\ba(U)}\left(l_U + l_{M\setminus U}\right),
\]   
where $\mathcal U'_\ba\subseteq\mathcal U_\ba$ consists of all subsets which are also contained in $\mathcal R_m$. The condition that the number of marked rays plus unmarked cups must be even implies that we have an even number of unmarked cups. Note that $(-1)^{\Lambda_\ba(U)}= (-1)^{\Lambda_\ba(M\setminus U)}$ because opposite endpoints of unmarked cups contribute different signs to the respective exponent. Thus, the injective map (\ref{eq:injection_d}) has its image in the irreducible module $V^+_{(\frac{m}{2})}$.
\end{proof}

%\begin{example}
%If $k=4$, we have 
%\end{example}

\subsection{Explicit identifications in type C}

If $m\neq k$ or $m=k$ is odd, then there is a non-trivial component group action on $H_*(\SOfiber)$. This action on $H_*(\SOfiber)$ restricts to an action on the set $C_\mathrm{KL}^{\leq\lfloor\frac{k}{2}\rfloor}(m)$. Each orbit consists of one or two elements. Let $\mathcal R$ be a chosen set of representatives of the orbits and let $\mathcal R_{\bf 1}$ (resp.\ $\mathcal R_{\bf 2}$) denote the set of representatives contained in a one-element orbit (resp.\ two-element orbit). We thus have a decomposition $\mathcal R=\mathcal R_{\bf 1}\cup\mathcal R_{\bf 2}$. Given a cup diagram $\ba\in\mathcal R_{\bf 2}$, we write $\ba^-$ to denote the second element in its orbit. 

An irreducible character $\phi$ of $(\mathbb Z/2\mathbb Z)^t$ is completely determined by the values $\phi(\alpha_j)\in\{\pm 1\}$, where $\alpha_j$ is the generator of the $j$th copy of $\mathbb Z/2\mathbb Z$. This yields a bijection between the set of irreducible characters of $(\mathbb Z/2\mathbb Z)^t$ and the set of all ordered $\{+,-\}$-sequences of length $t$. We write $\mathbb C_\epsilon$ to denote the irreducible module for $\epsilon$. Let $H_*^\epsilon(\SOfiber)$ be the isotypic subspace of $H_*(\SOfiber)$ corresponding to the irreducible representation $\mathbb C_\epsilon$ of $(\mathbb Z/2\mathbb Z)^t$ labeled by the tuple $\epsilon$. %Since the component group action commutes with the $\mathcal W_{C_{m-1}}$-action, these isotypic subspaces will automatically be $\mathcal W_{C_{m-1}}$-submodules of $H_*(\SOfiber)$. 

The following result describes the decomposition of the homology into irreducibles with respect to the component group action including an explicit basis of each isotypic subspace.

\begin{prop} \label{prop:basis_isotypic_comp}
We have the following decompositions into isotypic components:
\begin{enumerate}
\item If $m$ is odd, we have for all $l\in\{0,\ldots,\lfloor\frac{m}{2}\rfloor\}$ isomorphisms of $\mathbb Z/2\mathbb Z$-modules
\[
H_{2l}(\SOfiberequalsize)\cong\underbrace{\mathbb C_{(1)}\oplus\ldots\oplus\mathbb C_{(1)}}_{\binom{m-1}{l}\text{ copies}}\oplus\underbrace{\mathbb C_{(-1)}\oplus\ldots\oplus\mathbb C_{(-1)}}_{\binom{m-1}{l-1}\text{ copies}}.
\]
\item If $m\neq k$, we have for all $l\in\{0,\ldots,\frac{k-1}{2}\}$ isomorphisms of $\left(\mathbb Z/2\mathbb Z\right)^2$-modules
\[
H_{2l}(\SOfiber)\cong\mathbb C_{(1,1)}\oplus\ldots\oplus\mathbb C_{(1,1)}\oplus\mathbb C_{(-1,-1)}\oplus\ldots\oplus\mathbb C_{(-1,-1)}.
\]
\end{enumerate}
Moreover, the isotypic subspace $H_*^{(1)}(\SOfiberequalsize)$ (resp.\ $H_*^{(1,1)}(\SOfiber)$) has a basis consisting of all cup diagrams $\ba\in\mathcal R_{\bf 1}$ together with all elements $\bb+\bb^{-}$, where $\bb$ varies over all $\mathcal R_{\bf 2}$. The subspace $H_*^{(-1)}(\SOfiberequalsize)$ (resp.\ $H_*^{(-1,-1)}(\SOfiber)$) has a basis consisting of all elements $\bb-\bb^{-}$, where again $\bb$ varies over all $\mathcal R_{\bf 2}$.
\end{prop}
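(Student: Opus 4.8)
The plan is to analyze the component group action directly on the diagrammatic basis of $H_*(\SOfiber)$ provided by Proposition~\ref{prop:basis_general_case} and the explicit description of $\gamma_{2m-k,k}$ in Proposition~\ref{prop:explicit_isom}, combined with the combinatorial description of the generator action established in the (reformulated) Theorem~\ref{thmB}. The key structural observation is that each generator $\alpha_j$ of $A^{2m-k-1,k-1}_{\mathrm{Sp}_{2(m-1)}}\cong(\mathbb Z/2\mathbb Z)^t$ acts on a cup diagram $\ba$ either as the identity (when vertex $1$ is on a ray) or by simultaneously toggling the marker on the cup through vertex $1$ and the marker on the leftmost ray; crucially, one should first check that all generators $\alpha_j$ act in the same way on $H_*(\SOfiber)$ (this is implicit in the statement, since only the orbit structure of a single involution is used), so that effectively the whole group $(\mathbb Z/2\mathbb Z)^t$ acts through a single involution $\iota\colon\ba\mapsto\ba^-$ with $\ba^-=\ba$ exactly when vertex $1$ lies on a ray. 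This reduces the $(\mathbb Z/2\mathbb Z)^t$-module structure to understanding $\mathbb Z/2\mathbb Z$-modules, and the decomposition into $\mathbb C_{(1)}$'s and $\mathbb C_{(-1)}$'s (resp.\ $\mathbb C_{(1,1)}$'s and $\mathbb C_{(-1,-1)}$'s) is then just the $\pm 1$ eigenspace decomposition of $\iota$.

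The explicit bases follow immediately from this: on a two-element orbit $\{\bb,\bb^-\}$ the vectors $\bb+\bb^-$ and $\bb-\bb^-$ span the $+1$ and $-1$ eigenspaces respectively, while each fixed point $\ba\in\mathcal R_{\mathbf 1}$ is already a $+1$-eigenvector. So the plan for this half is: invoke Theorem~\ref{thmB} to identify the involution on the cup-diagram basis, decompose $\mathbb C[C_\mathrm{KL}^{\leq\lfloor\frac k2\rfloor}(m)] = \bigoplus_{\ba\in\mathcal R_{\mathbf 1}}\mathbb C\ba \oplus \bigoplus_{\bb\in\mathcal R_{\mathbf 2}}(\mathbb C\bb\oplus\mathbb C\bb^-)$ as $\mathbb Z/2\mathbb Z$-modules, and take eigenspaces in each graded piece $H_{2l}$. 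Since the involution preserves homological degree (Theorem~\ref{thmB}(1)), everything is compatible with the grading.

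The remaining work — and the one genuinely computational step — is the \emph{multiplicity count}: verifying that in degree $2l$ the number of fixed cup diagrams plus two-element orbits with representative contributing to the $+1$-eigenspace equals $\binom{m-1}{l}$ (resp.\ the number of two-element orbits equals $\binom{m-1}{l-1}$) in the equal-size case, and similarly $\binom{\ldots}{\ldots}$-free balanced counts in the unequal case. For the equal-size case $m=k$ odd: a cup diagram in $C_\mathrm{KL}(m)$ with $l$ cups on which $\iota$ acts trivially must have a ray at vertex $1$; using the bijection between cup diagrams and $\land\lor$-sequences from Subsection~\ref{sec:diagrammatic_Hecke_module} (or directly counting cup diagrams with $l$ cups and a ray — possibly marked — at vertex $1$), one computes that the number of such diagrams, together with one representative per two-element orbit among the rest, totals $\binom{m-1}{l}$; and the non-fixed diagrams split into $\binom{m-1}{l-1}$ orbit pairs. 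The cleanest route here is probably to relate the count directly to $\dim H_{2l}((\mathbb S^2)^{m-1})$-type binomials via the correspondence with line diagrams on $m-1$ vertices, or to use that the $+1$-eigenspace is the isotypic component which by \cite[Theorem 1.1]{HS77} equals the image of $H^*(\mathcal B_G)\to H^*(\mathcal B_G^x)$ and hence has known dimension. For the unequal-size case one additionally uses the identification $H_*(\SOfiber)\cong H_{<k}(\SOfiberequalsize)$ as graded modules (stated in Case 2 of the introduction) together with the fact that the two extra generators force the two $\mathbb Z/2\mathbb Z$-factors to act identically, collapsing $(\mathbb Z/2\mathbb Z)^2$-characters to $\{(1,1),(-1,-1)\}$; the binomial identity needed is then $\binom{m-1}{l}+\binom{m-1}{l-1}=\binom{m}{l}$ matched against $\dim H_{2l}$ from Proposition~\ref{prop:dim_formula}, and the two eigenspaces come out equidimensional precisely because of the parity/marker bookkeeping. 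The main obstacle is thus purely the careful enumeration of fixed points versus free orbits of $\iota$ in each degree; everything else is formal linear algebra over $\mathbb C$.
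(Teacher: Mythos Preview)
Your structural argument is correct and essentially identical to the paper's: both observe that all generators of the component group act via the same involution $\iota$ on the cup-diagram basis (by Theorem~\ref{thmB}), so the isotypic decomposition is just the $\pm 1$ eigenspace decomposition of $\iota$, and the claimed bases of the eigenspaces are immediate.

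Where you diverge from the paper is in the multiplicity count for case~(1), and here your proposal is incomplete. You assert that $|\mathcal R_{\mathbf 1}|+|\mathcal R_{\mathbf 2}|=\binom{m-1}{l}$ and $|\mathcal R_{\mathbf 2}|=\binom{m-1}{l-1}$ but do not actually compute either; the suggested routes (via $\land\lor$-sequences, or via \cite[Theorem~1.1]{HS77}) are not worked out, and the HS77 route in particular is problematic because in type $C$ the canonical map $H^*(\mathcal B_G)\to H^*(\mathcal B_G^x)$ is not surjective, so its image is not immediately known. The paper instead computes $|\mathcal R_{\mathbf 1}|$ directly: diagrams with a ray at vertex~$1$ correspond, after removing that vertex, to undecorated cup diagrams on $m-1$ vertices with $l$ cups, which by \cite[Proposition~3]{SW12} are in bijection with standard Young tableaux of shape $(m-1-l,l)$; the hook-length formula gives $|\mathcal R_{\mathbf 1}|=\binom{m-1}{l}-\binom{m-1}{l-1}$. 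Then $\dim H_{2l}=\binom{m}{l}=2|\mathcal R_{\mathbf 2}|+|\mathcal R_{\mathbf 1}|$ yields $|\mathcal R_{\mathbf 2}|=\binom{m-1}{l-1}$, and hence $\dim H_{2l}^{(1)}=|\mathcal R_{\mathbf 1}|+|\mathcal R_{\mathbf 2}|=\binom{m-1}{l}$.

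Finally, note that in case~(2) the proposition does \emph{not} assert any specific multiplicities, so your remarks about equidimensionality and binomial identities there are unnecessary (and the equidimensionality claim is in fact false in general, since the same counts $\binom{m-1}{l}$ and $\binom{m-1}{l-1}$ apply).
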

\begin{proof}
We only argue for $\SOfiberequalsize$ and omit the other case. Note that the collection of vectors
\[
\ba\in\mathcal R_{\bf 1}\,,\hspace{1em} \bb+\bb^{-}\,,\,\,\bb-\bb^{-}\,,\,\,\bb\in\mathcal R_{\bf 2}
\]
is clearly a basis of $H_*(\SOfiberequalsize)$. Moreover, by Theorem~\ref{thmB}, $\ba\in\mathcal R_{\bf 1}$ and $\bb+\bb^{-}$, $\bb\in\mathcal R_{\bf 2}$, are fixed points under the action of the component group and thus contained in $H_*^{(1)}(\SOfiberequalsize)$. Since the generators of the component group act by exchanging $\bb$ and $\bb^-$, it follows that the vectors $\bb-\bb^{-}$, $\bb\in\mathcal R_{\bf 2}$, are contained in $H_*^{(-1)}(\SOfiberequalsize)$.   

Firstly, since the elements of $\mathcal R_{\bf 1}$ all have a ray at the first vertex, the cardinality of $\mathcal R_{\bf 1}$ equals the cardinality of the set of all undecorated cup diagrams on $m-1$ vertices with $l$ cups. By \cite[Proposition~3]{SW12} the set of all such cup diagrams is in bijection with the set of standard Young tableaux of size $m-1$ and shape $(m-1-l,l)$. By the hook-length formula there are exactly $\binom{m-1}{l}-\binom{m-1}{l-1}$ such tableaux which equals the cardinality of $\mathcal R_{\bf 1}$. Secondly, we have an equality 
\[
\binom{m}{l}=\dim H_{2l}(\SOfiberequalsize)= 2|\mathcal R_{\bf 2}|+|\mathcal R_{\bf 1}| = 2|\mathcal R_{\bf 2}|+\binom{m-1}{l}-\binom{m-1}{l-1}
\]
which yields $|\mathcal R_{\bf 2}|=\binom{m-1}{l-1}$. Thus, $\dim H_{2l}^{(-1)}(\SOfiberequalsize)=\binom{m-1}{l-1}$ and $\dim H_{2l}^{(1)}(\SOfiberequalsize)=\binom{m-1}{l}$.
\end{proof}

\begin{thm} \label{thm:explicit_identification_C1}
We have the following isomorphisms of $\mathcal W_{C_{m-1}}$-modules:
\begin{enumerate}
\item \label{item:C_mm_odd} If $m$ is odd, then we have, for all $l\in\{0,\ldots,\lfloor\frac{m}{2}\rfloor\}$, isomorphisms
\begin{equation}\label{eq:explicit_C_isos_Case1}
H_{2l}^{(1)}(\SOfiberequalsize)\cong V_{((m-l-1),(l))} \hspace{1.6em}\text{and}\hspace{1.6em} H_{2l}^{(-1)}(\SOfiberequalsize)\cong V_{((l-1),(m-l))},
\end{equation}
where the first isomorphism in (\ref{eq:explicit_C_isos_Case1}) is given by
\begin{equation} \label{eq:formula_C_iso1}
\ba+\ba^{-}\mapsto 2\sum_{\begin{subarray}{l}U\in\mathcal U_\ba\\ 1\notin\mathcal U_\ba\end{subarray}}(-1)^{\Lambda_{\ba}(U)}e_{T_{U-1}} \hspace{.3em}(\ba\in\mathcal R_2),\hspace{1.6em}\ba\mapsto\sum_{U\in\mathcal U_\ba}(-1)^{\Lambda_{\ba}(U)}e_{T_{U-1}}\hspace{.3em}(\ba\in\mathcal R_1).
\end{equation}
and the second isomorphism in (\ref{eq:explicit_C_isos_Case1}) is given by
\begin{equation} \label{eq:formula_C_iso2}
\ba-\ba^{-}\mapsto 2\sum_{\begin{subarray}{l}U\in\mathcal U_\ba\\ 1\in\mathcal U_\ba\end{subarray}}(-1)^{\Lambda_{\ba}(U)}e_{T_{(M\setminus U)-1}}\hspace{.3em}(\ba\in\mathcal R_2).
\end{equation}
\item If $m\neq k$, then we have, for all $l\in\{0,\ldots,\frac{k-1}{2}\}$, isomorphisms
\[
H_{2l}^{(1,1)}\left(\SOfiber\right)\cong V_{((m-l-1),(l))} \hspace{1.6em}\text{and}\hspace{1.6em} H_{2l}^{(-1,-1)}(\SOfiber)\cong V_{((l-1),(m-l))},
\]
where the left isomorphism is given by the same formulas as in (\ref{eq:formula_C_iso1}) and the right isomorphism is given by (\ref{eq:formula_C_iso2}).
\item \label{item:C_mm_even} If $m=k$ is even, then we have isomorphisms
\begin{equation}
\label{drei}
H_m(\SOfiberequalsize)\cong V_{((\frac{m}{2}-1),(\frac{m}{2}))}\,\,,\,\,\,\ba\mapsto\sum_{\begin{subarray}{l}U\in\mathcal U_\ba\\ 1\notin\mathcal U_\ba\end{subarray}}(-1)^{\Lambda_{\ba}(U)}e_{T_{U-1}}.
\end{equation}
and for all $l\in\{0,\ldots,\frac{m}{2}-1\}$ 
\begin{align*}
H_{2l}(\SOfiberequalsize)\cong H_{2l}(\SOfiber)&\cong H_{2l}^{(1,1)}\left(\SOfiber\right)\oplus H_{2l}^{(-1,-1)}\left(\SOfiber\right)\\
&\cong V_{((m-l-1),(l))}\oplus V_{((l-1),(m-l))}
\end{align*}
which again can be described explicitly via (\ref{eq:formula_C_iso1}) and (\ref{eq:formula_C_iso2}). 
\end{enumerate}
\end{thm}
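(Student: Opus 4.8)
The plan is to reduce everything to the equal‑block fiber $\SOfiberequalsize$ and then to exploit the fact that, under the embedding $\mathcal W_{C_{m-1}}\hookrightarrow\mathcal W_{D_m}$ of Remark~\ref{rem:embedding_of_Weyl_grps}, the ambient $\mathcal W_{C_{m-1}}$‑module $H_{2l}((\mathbb S^2)^m)$ splits canonically into the two pieces carrying the two Specht modules in the statement. Concretely: by Proposition~\ref{prop:basis_general_case} the isomorphism $\Gamma^{2m-k,k}$ identifies $H_{2l}(\SOfiber)$ with $H_{2l}(\SOfiberequalsize)$ for $2l<k$ as $\mathcal W_{D_m}$‑modules, hence as $\mathcal W_{C_{m-1}}$‑modules, and since on cup diagrams this identification is $\ba\mapsto\ba$ it is compatible with the component group action of Theorem~\ref{thmB}. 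So it suffices to prove the statement for $\SOfiberequalsize$ and, in the cases $m\neq k$, to truncate; the case $m\neq k$ (part~(2)) then follows immediately from part~(\ref{item:C_mm_odd}).

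\textbf{The splitting of the ambient module.} Under $\mathcal W_{C_{m-1}}\hookrightarrow\mathcal W_{D_m}$ one computes from the $\mathcal W_{D_m}$‑action on $(\mathbb S^2)^m$ and from \eqref{actionbeforeProp29} that $s_i^C=s_{i+1}^D$ ($i\geq 1$) only permutes the coordinates $2,\ldots,m$, while $s_0^C=s_0^Ds_1^D$ acts on the line‑diagram basis diagonally, $l_U.s_0^C=(-1)^{|\{1,2\}\cap U|}l_U$. Hence the subspaces $H^{1\notin U}$ and $H^{1\in U}$ of $H_{2l}((\mathbb S^2)^m)$ spanned by the $l_U$ with $1\notin U$, respectively $1\in U$, are $\mathcal W_{C_{m-1}}$‑submodules and $H_{2l}((\mathbb S^2)^m)=H^{1\notin U}\oplus H^{1\in U}$. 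First I would identify these two summands with Specht modules of rank $m-1$. The assignment $l_U\mapsto e_{T_{U-1}}$, where $U-1\subseteq\{1,\ldots,m-1\}$ is the downward shift of $U\subseteq\{2,\ldots,m\}$, gives, after comparing the $s_0^C$‑signs and invoking Lemma~\ref{lemma:row_equivalence} for the remaining generators exactly as in the proof of Proposition~\ref{proposition:Specht_modules_as_homology}, an isomorphism $H^{1\notin U}\cong V_{((m-l-1),(l))}$; similarly $l_U\mapsto e_{T_{(M\setminus U)-1}}$ gives $H^{1\in U}\cong V_{((l-1),(m-l))}$, where one checks that the diagonal sign $(-1)^{1+[2\in U]}$ with which $s_0^C$ acts on $l_U$ agrees with the bipolytabloid sign coming from \cite[Lemma~3.16]{Can96} (it does, since $1\in(M\setminus U)-1$ iff $2\notin U$).

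\textbf{Bringing in the Springer fiber.} By Proposition~\ref{prop:explicit_isom}, $\gamma_{m,m}$ sends $\ba\mapsto L_\ba=\sum_{U\in\mathcal U_\ba}(-1)^{\Lambda_\ba(U)}l_U$. If $\ba$ has a ray at vertex $1$ then every $U\in\mathcal U_\ba$ satisfies $1\notin U$, so $L_\ba\in H^{1\notin U}$. If $\ba$ has a cup connecting vertices $1$ and $i$, then each $U\in\mathcal U_\ba$ contains exactly one of $1,i$, and toggling the marker on that cup (the combinatorial effect of $\alpha$, by Theorem~\ref{thmB}) changes $\Lambda_\ba(U)$ by one precisely when $1\in U$; hence $L_{\ba^-}=\sum_{1\notin U}(-1)^{\Lambda_\ba(U)}l_U-\sum_{1\in U}(-1)^{\Lambda_\ba(U)}l_U$, so $L_\ba+L_{\ba^-}=2\sum_{1\notin U}(-1)^{\Lambda_\ba(U)}l_U\in H^{1\notin U}$ and $L_\ba-L_{\ba^-}=2\sum_{1\in U}(-1)^{\Lambda_\ba(U)}l_U\in H^{1\in U}$. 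Combined with the explicit bases of the isotypic subspaces from Proposition~\ref{prop:basis_isotypic_comp}, this shows $\gamma_{m,m}$ maps $H^{(1)}_{2l}(\SOfiberequalsize)$ into $H^{1\notin U}$ and $H^{(-1)}_{2l}(\SOfiberequalsize)$ into $H^{1\in U}$. Since $\gamma_{m,m}$ is injective (Lemma~\ref{lem:iso_onto_image}) and $\dim H^{(1)}_{2l}=\binom{m-1}{l}=\dim H^{1\notin U}$, $\dim H^{(-1)}_{2l}=\binom{m-1}{l-1}=\dim H^{1\in U}$ by Proposition~\ref{prop:basis_isotypic_comp}, these restrictions are isomorphisms; composing with the identifications of the previous paragraph yields part~(\ref{item:C_mm_odd}) together with \eqref{eq:formula_C_iso1} and \eqref{eq:formula_C_iso2}.

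\textbf{The case $m=k$ even, and the main difficulty.} When $m=k$ is even there is no component group, but the argument still applies: for $l<\tfrac m2$ the map $\gamma_{m,m}$ is an isomorphism $H_{2l}(\SOfiberequalsize)\xrightarrow{\cong}H_{2l}((\mathbb S^2)^m)=H^{1\notin U}\oplus H^{1\in U}$ (both sides have dimension $\binom ml$), giving the two‑summand decomposition and the formulas via the two projections; for $l=\tfrac m2$, $\gamma_{m,m}$ is an isomorphism onto $V^+_{(m/2)}$ by Theorem~\ref{thm:explicit_identification_D}, and the projection $V^+_{(m/2)}\to H^{1\notin U}_m$ is a bijection on the natural bases $\{\,l_U+l_{M\setminus U}\,\}$, which yields \eqref{drei}. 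The step I expect to be most delicate is the sign bookkeeping in the last two paragraphs: one must simultaneously track the sign $(-1)^{\Lambda_\ba(U)}$ produced by $\gamma_{m,m}$, the signs with which $s_0^C$ and the component‑group generator $\alpha$ act on each $l_U$, and the sign with which $s_0^C$ acts on a bipolytabloid $e_{T_V}$ via \cite[Lemma~3.16]{Can96}, and verify their compatibility; the only genuinely new combinatorial input is that toggling the marker on the cup through vertex $1$ alters $\Lambda_\ba$ exactly on the sets $U$ with $1\in U$. Everything else runs in parallel with the proofs of Proposition~\ref{proposition:Specht_modules_as_homology} and Theorem~\ref{thm:explicit_identification_D}, so no new branching‑rule computation (beyond what is already used for Theorem~\ref{thm:explicit_identification_D}) is needed.
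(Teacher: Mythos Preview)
Your argument is correct and essentially coincides with the paper's: the paper packages your splitting $H_{2l}((\mathbb S^2)^m)=H^{1\notin U}\oplus H^{1\in U}$ as the images of auxiliary $\mathcal W_{C_{m-1}}$-equivariant maps $\iota,\kappa,\chi$ from $H_*((\mathbb S^2)^{m-1})$, carries out the identical computation $L_\ba\pm L_{\ba^-}=2\sum_{1\notin U\,/\,1\in U}(-1)^{\Lambda_\ba(U)}l_U$ to land in these images, and concludes via irreducibility of the target rather than your dimension count from Proposition~\ref{prop:basis_isotypic_comp}. One minor point: your reduction of part~(2) to part~(\ref{item:C_mm_odd}) is only complete when $m$ is odd; for $m$ even (still $m\neq k$) you must invoke the lower-degree half of part~(\ref{item:C_mm_even}) instead, which your argument already covers.
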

\begin{proof}
In order to relate the $\mathcal W_{C_{m-1}}$-module $H_*((\mathbb S^2)^{m-1})$ from Proposition~\ref{proposition:Specht_modules_as_homology} (with the bipolytabloid basis) to the $\mathcal W_{C_{m-1}}$-module $H_*((\mathbb S^2)^m)$ from Section~\ref{section:section2} (which contains the Springer representation as a subrepresentation) we define the following $\mathcal W_{C_{m-1}}$-equivariant injections:
\begin{itemize}
\item If $2l<m$ we define $\iota\colon H_{2l}((\mathbb S^2)^{m-1})\to H_{2l}((\mathbb S^2)^m)$ to be the map given by $l_U \mapsto l_{U+1}$, where $U+1$ is obtained from $U$ by adding $1$ to all elements in $U$, i.e.\ $l_{U+1}$ is obtained by adding a dotted line to the left of $l_U$. 
\item If $2l>m$ we define $\kappa\colon H_{2l}((\mathbb S^2)^{m-1})\to H_{2(m-l)}((\mathbb S^2)^m)$ to be the map $l_U\mapsto l_{M\setminus (U+1)}$, i.e.\ $l_{M\setminus (U+1)}$ is obtained from $l_U$ by first adding a dotted line to the left of $l_U$ and then exchanging all dotted lines by undotted lines and vice versa. 
\item If $2l=m$ we define $\chi\colon H_{2l}((\mathbb S^2)^{m-1})\to H_{2l}((\mathbb S^2)^m)$ to be the map $l_U\mapsto l_{U+1}+l_{M\setminus (U+1)}$.
\end{itemize}

In order to prove the first isomorphism in \eqref{eq:explicit_C_isos_Case1} we claim that we have a well-defined composition 
\[
H_{2l}^{(1)}(\mathcal S^{m,m}_\mathrm{KL})\hookrightarrow \im(\iota) \xrightarrow\cong H_{2l}((\mathbb S^2)^{m-1})\xrightarrow\cong V_{((m-l-1),(l))},
\]
i.e.\ we claim that $\im\left(\gamma_m\right)\subseteq\im\left(\iota\right)$. Note that the image of $\iota$ has a basis given by all line diagrams on $m$ vertices with $l$ undotted lines, where the line connected to the first vertex is dotted. If $\ba\in\mathcal R_1$, we obviously have $L_\ba\in \im(\iota)$. If $\ba\in\mathcal R_2$, then we can simplify
\begin{align*}
L_\ba+L_{{\ba}^-}&=\sum_{U\in\mathcal U_\ba}(-1)^{\Lambda_\ba(U)}l_U+\sum_{U\in\mathcal U_{{\ba}^-}}(-1)^{\Lambda_{{\ba}^-}(U)}l_U\\
								 &=\sum_{\begin{subarray}{l}U\in\mathcal U_\ba\\ 1\in\mathcal U_\ba\end{subarray}}(-1)^{\Lambda_\ba(U)}l_U+\sum_{\begin{subarray}{l}U\in\mathcal U_\ba\\ 1\notin\mathcal U_\ba\end{subarray}}(-1)^{\Lambda_\ba(U)}l_U+\sum_{\begin{subarray}{l}U\in\mathcal U_{{\ba}^-}\\ 1\in\mathcal U_{\ba^-}\end{subarray}}(-1)^{\Lambda_{{\ba}^-}(U)}l_U+\sum_{\begin{subarray}{l}U\in\mathcal U_{{\ba}^-}\\ 1\notin\mathcal U_{\ba^-}\end{subarray}}(-1)^{\Lambda_{{\ba}^-}(U)}l_U\\
								 &=2\sum_{\begin{subarray}{l}U\in\mathcal U_\ba\\ 1\notin\mathcal U_\ba\end{subarray}}(-1)^{\Lambda_\ba(U)}l_U,
\end{align*}
because in the second row the first and third summand cancel each other and the second and fourth summand are equal which shows that $L_\ba+L_{{\ba}^-}\in\im(\iota)$. For $\ba\in\mathcal R_1$ the composition is then explicitly given by
\[
\ba\mapsto L_\ba=\sum_{U\in\mathcal U_\ba}(-1)^{\Lambda_\ba(U)}l_U \mapsto \sum_{U\in\mathcal U_\ba}(-1)^{\Lambda_{\ba}(U)}l_{U-1} \mapsto\sum_{U\in\mathcal U_\ba}(-1)^{\Lambda_{\ba}(U)}e_{T_{U-1}}
\]
and for $\ba\in\mathcal R_2$ we have
\[
\ba+\ba^-\mapsto L_\ba+L_{{\ba}^-}=2\sum_{\begin{subarray}{l}U\in\mathcal U_\ba\\ 1\notin\mathcal U_\ba\end{subarray}}(-1)^{\Lambda_\ba(U)}l_U\mapsto 2\sum_{\begin{subarray}{l}U\in\mathcal U_\ba\\ 1\notin\mathcal U_\ba\end{subarray}}(-1)^{\Lambda_\ba(U)}l_{U-1}\mapsto 2\sum_{\begin{subarray}{l}U\in\mathcal U_\ba\\ 1\notin\mathcal U_\ba\end{subarray}}(-1)^{\Lambda_{\ba}(U)}e_{T_{U-1}}.
\]

For the second isomorphism in \eqref{eq:explicit_C_isos_Case1} we consider the composition
\[
H_{2l}^{(-1)}(\SOfiberequalsize)\hookrightarrow \im(\kappa)\xrightarrow\cong H_{2(m-l)}((\mathbb S^2)^{m-1})\xrightarrow\cong V_{((l-1),(m-l))}.
\]
The image of $\kappa$ has a basis given by all line diagrams on $m$ vertices with $m-l$ undotted lines, where the first line is undotted. A similar calculation as for $L_\ba+L_{{\ba}^-}$ shows that we have
\[
L_\ba-L_{\ba^-}=2\sum_{\begin{subarray}{l}U\in\mathcal U_\ba\\ 1\in\mathcal U_\ba\end{subarray}}(-1)^{\Lambda_\ba(U)}l_U
\]
for $\ba\in\mathcal R_1$ which shows that $L_\ba-L_{\ba^-}\in\im(\kappa)$. The composition is then given by
\[
\ba-\ba^-\mapsto L_\ba-L_{\ba^-}=2\sum_{\begin{subarray}{l}U\in\mathcal U_\ba\\ 1\in\mathcal U_\ba\end{subarray}}(-1)^{\Lambda_\ba(U)}l_U\mapsto 2\sum_{\begin{subarray}{l}U\in\mathcal U_\ba\\ 1\in\mathcal U_\ba\end{subarray}}(-1)^{\Lambda_{\ba}(U)}l_{(M\setminus U)-1}\mapsto 2\sum_{\begin{subarray}{l}U\in\mathcal U_\ba\\ 1\in\mathcal U_\ba\end{subarray}}(-1)^{\Lambda_{\ba}(U)}e_{T_{(M\setminus U)-1}}.
\]
Since the $\mathcal W_{C_{m-1}}$-equivariant injections above map to an irreducible target space we see that they are actually isomorphisms.

For the isomorphism in \eqref{drei} we consider the composition
\[
H_m(\SOfiberequalsize)\hookrightarrow\im(\chi)\xrightarrow\cong H_m((\mathbb S^2)^{m-1})\xrightarrow\cong V_{((\frac{m}{2}-1),(\frac{m}{2}))}.
\]
Let $\ba\in H_m(\SOfiberequalsize)$. Then we have
\[
L_\ba = \sum_{U\in\mathcal U_\ba} (-1)^{\Lambda_\ba(U)}l_U = \sum_{U\in\mathcal U'_\ba}(-1)^{\Lambda_\ba(U)}\left(l_U+l_{M\setminus U}\right),
\]   
as in the proof of Theorem~\ref{thm:explicit_identification_D}. From this description we deduce that $L_\ba$ is contained in the image of $\chi$ and the composition is given by 
\[
\ba\mapsto L_\ba=\sum_{U\in\mathcal U'_\ba}(-1)^{\Lambda_\ba(U)}\left(l_U+l_{M\setminus U}\right)\mapsto\sum_{\begin{subarray}{l}U\in\mathcal U_\ba\\ 1\notin\mathcal U_\ba\end{subarray}}(-1)^{\Lambda_{\ba}(U)}l_{U-1} \mapsto\sum_{\begin{subarray}{l}U\in\mathcal U_\ba\\ 1\notin\mathcal U_\ba\end{subarray}}(-1)^{\Lambda_{\ba}(U)}e_{T_{U-1}}.\qedhere
\]
\end{proof}

%WRITE (in INTRO?) THAT in some sense due to the embeddings only the equal-row cases are interesting. 
\subsection{Recovering the Springer correspondence} \label{subsec:Shoji}
Given a bipartition $\tau=(\lambda,\mu)$ such that $|\lambda|+|\mu|=m-1$, we can use the Shoji algorithm \cite{Sho79}, \cite{Sho83} to compute the Jordan type of the nilpotent endomorphism for which the representation $V_{(\lambda,\mu)}$ appears in the top degree of the homology of the corresponding Springer fiber of type $C$. The type $D$ case is similar and omitted due to the trivial component group action. We first recall the algorithm in our special case.

Given a pair of partitions $\tau=(\lambda,\mu)$, define a pair $(d_\tau^{(1)},\varepsilon_\tau^{(1)})$, where $d_\tau^{(1)}=(d_1^{(1)},d_2^{(1)},\ldots)\in\mathbb Z^{\mathbb N}$ and $\varepsilon_\tau^{(1)}=(\varepsilon_1^{(1)},\varepsilon_2^{(1)},\ldots)\in\{\pm 1\}^{\mathbb N}$, as 
\[
d_{2i-1}^{(1)}=2\lambda_i, d_{2i}^{(1)}=2\mu_i, \epsilon_i^{(1)}=1.
\]
Assuming that $(d_\tau^{(j)},\varepsilon_\tau^{(j)})$ is constructed, we define a new pair $(d_\tau^{(j+1)},\varepsilon_\tau^{(j+1)})$ by the rules
\[
\begin{cases}
d_i^{(j+1)}=d_i^{(j)}+1, d_{i+1}^{(j+1)}=d_i^{(j)}+1 &\text{if }d_{i+1}^{(j)}=d_i^{(j)}+2,\\
d_i^{(j+1)}=d_{i+1}^{(j)}-2, d_{i+1}^{(j+1)}=d_i^{(j)}+2 &\text{if }d_{i+1}^{(j)}>d_i^{(j)}+2,\\
d_i^{(j+1)}=d_i^{(j)} &\text{else},
\end{cases}
\]
and
\[
\begin{cases}
\varepsilon_i^{(j+1)}=-\varepsilon_i^{(j)}, \varepsilon_{i+1}^{(j+1)}=-\varepsilon_{i+1}^{(j)} &\text{if }d_{i+1}^{(j)}>d_i^{(j)}+2,\\
\varepsilon_i^{(j+1)}=\varepsilon_i^{(j)} &\text{else}.
\end{cases}
\] 
This is a well-defined algorithm which stabilizes, i.e.\ there exists an integer $q>0$ such that $(d_\tau^{(q)},\varepsilon_\tau^{(q)})=(d_\tau^{(q+1)},\varepsilon_\tau^{(q+1)})=\cdots$. We denote the stable pair by $(d_\tau,\varepsilon_\tau)$. 

Then $d_\tau$ is a partition labeling a nilpotent orbit of type $C$. Moreover, $\varepsilon_\tau$ can be used to define a character of the associated component group by setting $\phi_\tau(\alpha_{d_i})=\varepsilon_i$ for all even parts $d_i$ of the partition $d$. We have $d_i=d_j$ if and only if $\varepsilon_i=\varepsilon_j$ which makes this a well-defined assignment.

\begin{lem}
Let $\tau=((\lambda),(\mu))$ be a pair of one-row partitions, i.e.\ $\lambda,\mu\in\mathbb Z_{\geq 0}$. Then we have
\[
\begin{cases}
d_\tau=(2\lambda+1,2\lambda+1,0,\ldots)\,,\,\,\epsilon_\tau=(1,\ldots) &\text{if }\mu=\lambda+1,\\
d_\tau=(2\mu-2,2\lambda+2,0,\ldots)\,,\,\,\epsilon_\tau=(-1,-1,1,\ldots) &\text{if }\mu>\lambda+1,\\
d_\tau=(2\lambda,2\mu,0,\ldots)\,,\,\,\epsilon_\tau=(1,\ldots) &\text{if }\mu\leq\lambda,
\end{cases}
\]
where the non displayed entries of the sequences are all $0$ or $1$.
\end{lem}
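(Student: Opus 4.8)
The plan is to verify the claimed stable pairs $(d_\tau,\varepsilon_\tau)$ by running the Shoji algorithm directly on the initial data attached to a pair of one-row partitions $\tau=((\lambda),(\mu))$, $\lambda,\mu\in\mathbb Z_{\geq 0}$. By definition the initial sequences are $d_\tau^{(1)}=(2\lambda,2\mu,0,0,\ldots)$ and $\varepsilon_\tau^{(1)}=(1,1,1,\ldots)$. Since all parts beyond the second are zero and only neighboring pairs $(d_i^{(j)},d_{i+1}^{(j)})$ with $d_{i+1}^{(j)}>d_i^{(j)}$ (more precisely, with a gap of $\geq 2$) can be altered, the only positions where anything can ever happen are the pair $(1,2)$ and — once a nonzero value has been pushed into position $2$ while position $3$ stays $0$ — never the pair $(2,3)$, because after the first step the entry in position $2$ is even and position $3$ is $0$, so $d_3^{(j)}=0\not> d_2^{(j)}+2$. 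Thus the algorithm only ever modifies the first two entries, and it suffices to track the pair $(d_1^{(j)},d_2^{(j)},\varepsilon_1^{(j)},\varepsilon_2^{(j)})$.

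First I would dispose of the case $\mu\leq\lambda$: then $d_2^{(1)}=2\mu\leq 2\lambda=d_1^{(1)}$, so neither of the first two rules in the recursion applies (both require $d_{i+1}^{(j)}>d_i^{(j)}$), the sequence is already stable, and $(d_\tau,\varepsilon_\tau)=((2\lambda,2\mu,0,\ldots),(1,1,\ldots))$ as claimed. For $\mu>\lambda$ the interesting pair is $(d_1,d_2)=(2\lambda,2\mu)$ with $d_2-d_1=2(\mu-\lambda)>0$. If $\mu=\lambda+1$ the gap is exactly $2$, so the first rule fires once: $d_1\mapsto 2\lambda+1$, $d_2\mapsto 2\lambda+1$, and the sign rule leaves $\varepsilon$ unchanged; now $d_1=d_2$ so the algorithm has stabilized at $((2\lambda+1,2\lambda+1,0,\ldots),(1,1,\ldots))$. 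If $\mu>\lambda+1$ the gap exceeds $2$, so the second rule fires: $d_1\mapsto d_2-2=2\mu-2$, $d_2\mapsto d_1+2=2\lambda+2$, and $\varepsilon_1,\varepsilon_2$ both flip to $-1$. Here I would check that the hypothesis $\mu>\lambda+1$ forces $2\mu-2\geq 2\lambda+2$, i.e. the new $d_1$ is $\geq$ the new $d_2$, so after this single step the pair is weakly decreasing and no further rule applies; the stable pair is $((2\mu-2,2\lambda+2,0,\ldots),(-1,-1,1,\ldots))$, matching the statement.

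The only genuine subtlety — and the main (mild) obstacle — is checking that after each of these moves the second pair of positions $(d_2^{(j)},d_3^{(j)})$ (and, a fortiori, all later pairs) remains inert, so that the algorithm really does stabilize after the single step described rather than propagating a change rightward. This is where one must be slightly careful about the exact form of the recursion: one needs $d_3^{(j)}=0$ throughout, which holds because position $3$ is never the smaller member of an active pair (it would need position $2$ to have a value $\leq -2$, impossible) nor the larger member forced up (position $2$ is always even and $\geq 0$ after step one, position $3$ is $0$, and $0\not> (\text{even}\geq 0)+2$). I would spell this out as a short remark and then simply assemble the three cases into the displayed formula, noting that in each case the displayed tails ``$0,\ldots$'' for $d_\tau$ and ``$1,\ldots$'' for $\varepsilon_\tau$ are literally the untouched initial tails. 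No deeper input is needed; the proof is a finite case check on a two-entry recursion.
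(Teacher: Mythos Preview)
Your proof is correct and follows exactly the same approach as the paper, which simply records that the result ``follows directly from the Shoji algorithm above''; you have merely written out the three-case verification explicitly. One tiny quibble: in the $\mu=\lambda+1$ case the new value $d_2=2\lambda+1$ is odd, not even as you state, but this is irrelevant since your inertness argument for the pair $(2,3)$ only needs $d_2\geq 0$.
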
 
\begin{proof}
This follows directly from the Shoji algorithm above.
\end{proof}

\begin{prop}
The Springer representation in top degree cohomology of the $(m-1,m-1)$ algebraic Springer fiber of type $C_{m-1}$ is (in Lusztig's normalization) isomorphic to the irreducible labeled by $(\frac{m}{2}-1,\frac{m}{2})$ if $m$ is even. If $m$ is odd, then the top degree representation decomposes as the direct sum of $(\frac{m-1}{2}-1,\frac{m-1}{2}+1)$ and $(\frac{m-1}{2},\frac{m-1}{2})$, where the first irreducible is the isotypic component corresponding to the sign representation of $\mathbb Z/2\mathbb Z$. 

In case of the nilpotent $(n-k-1,k-1)$ with $m\neq k$, we have the irreducibles $(\frac{k-3}{2},\frac{n-k+1}{2})$ and $(\frac{n-k-1}{2},\frac{k-1}{2})$, where the first corresponds to the isotypic component associated with the character $(-1,-1)$ of $\left(\mathbb Z/2\mathbb Z\right)^2$ (and the other one to $(1,1)$). 
\end{prop}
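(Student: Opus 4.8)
The plan is to prove the statement by running Shoji's algorithm \cite[Theorem 3.3]{Sho79}, \cite{Sho83} on the relevant one-row bipartitions and comparing its output with the decomposition of $H_*(\SOfiber)$ already obtained in Theorem~\ref{thm:explicit_identification_C1} and Proposition~\ref{prop:basis_isotypic_comp}. Recall that Shoji's theorem says the following: in Lusztig's normalization of the Springer correspondence, the irreducible $\mathcal W_{C_{m-1}}$-module $V_\tau$ attached to a bipartition $\tau$ of $m-1$ satisfies $H^{\mathrm{top}}_{\phi_\tau}(\mathcal B^{d_\tau}_{\mathrm{Sp}_{2(m-1)}})\cong V_\tau$, where $(d_\tau,\phi_\tau)$ is the pair produced by the algorithm above (the Jordan type $d_\tau$, and the character $\phi_\tau$ of the component group obtained from $\varepsilon_\tau$ via $\phi_\tau(\alpha_{d_j})=\varepsilon_j$); conversely, by Springer's theorem \cite[Theorem 6.10]{Spr76}, every nonzero isotypic component of $H^{\mathrm{top}}(\mathcal B^x_{\mathrm{Sp}_{2(m-1)}})$ is an irreducible $\mathcal W_{C_{m-1}}$-module. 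Passing through the duality $H^*\cong H_*$ and the homeomorphism $\SOfiber\cong\mathcal B^{2m-k-1,k-1}_{\mathrm{Sp}_{2(m-1)}}$ of Remark~\ref{rem:wilbert} (which, by \cite{Wil15}, also identifies the component group actions), Proposition~\ref{prop:basis_isotypic_comp} records the isotypic decomposition of $H^{\mathrm{top}}(\mathcal B^{2m-k-1,k-1}_{\mathrm{Sp}_{2(m-1)}})$, in particular the dimensions of all its isotypic pieces. So what remains is, for each character $\phi$ occurring there, to exhibit a one-row bipartition $\tau$ with $d_\tau$ equal to the Jordan type of the fiber, $\phi_\tau=\phi$, and $\dim V_\tau=\dim H^{\mathrm{top}}_\phi$; irreducibility then forces $H^{\mathrm{top}}_\phi\cong V_\tau$, and the proposition follows.

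The bipartitions to test are exactly the ones appearing in top degree in Theorem~\ref{thm:explicit_identification_C1}, and I would feed each of them into the preceding Lemma, which lists $(d_\tau,\varepsilon_\tau)$ for a one-row bipartition $\tau=((\lambda),(\mu))$ according to whether $\mu\le\lambda$, $\mu=\lambda+1$, or $\mu>\lambda+1$. For instance, in the equal-block case with $m$ even the bipartition $((\tfrac{m}{2}-1),(\tfrac{m}{2}))$ lies in the range $\mu=\lambda+1$, so the Lemma returns $d_\tau=(m-1,m-1)$ and $\varepsilon_\tau=(1,\dots)$, i.e.\ the (trivial) character of the trivial group $A^{m-1,m-1}_{\mathrm{Sp}_{2(m-1)}}$; for $m$ odd the bipartitions $((\tfrac{m-1}{2}),(\tfrac{m-1}{2}))$ and $((\tfrac{m-1}{2}-1),(\tfrac{m-1}{2}+1))$ lie in the ranges $\mu\le\lambda$ and $\mu>\lambda+1$, both returning $d_\tau=(m-1,m-1)$ but with $\varepsilon_\tau=(1,\dots)$ and $\varepsilon_\tau=(-1,-1,1,\dots)$, i.e.\ the trivial and the sign character of $\mathbb Z/2\mathbb Z$ (the two parts of $d_\tau$ being equal); and for $m\ne k$ the bipartitions $((\tfrac{2m-k-1}{2}),(\tfrac{k-1}{2}))$ and $((\tfrac{k-3}{2}),(\tfrac{2m-k+1}{2}))$ lie in the ranges $\mu<\lambda$ and $\mu>\lambda+1$, returning $d_\tau=(2m-k-1,k-1)$ with $\varepsilon_\tau=(1,1,\dots)$ and $\varepsilon_\tau=(-1,-1,1,\dots)$, i.e.\ the characters $(1,1)$ and $(-1,-1)$ of $(\mathbb Z/2\mathbb Z)^2$. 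The same case split in the Lemma also shows that these are the only one-row bipartitions of $m-1$ whose algorithm output has the Jordan type in question, so matching dimensions with Proposition~\ref{prop:basis_isotypic_comp} closes the argument; setting $n=2m$ puts the answer in the stated form, and comparing with the isotypic labels of Proposition~\ref{prop:basis_isotypic_comp} identifies which irreducible sits on which character (the sign character, resp.\ $(-1,-1)$, in the cases with non-trivial component group).

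The arithmetic above is routine; the part that needs genuine care, and which I expect to be the main (indeed essentially only) obstacle, is the bookkeeping of normalizations. One has to stay in Lusztig's normalization of the Springer correspondence throughout — the one in which $H^*(\SOfiberequalsize)$ is the induced trivial module, \cite[Theorem 1.3]{Lus04}, and in which Shoji's algorithm is phrased — and one has to match the indexing of the generators of $A^{2m-k-1,k-1}_{\mathrm{Sp}_{2(m-1)}}\cong(\mathbb Z/2\mathbb Z)^t$ used by Shoji (one generator $\alpha_{d_i}$ per distinct even part $d_i$ of the Jordan type) with the generator $\alpha$ appearing in Theorem~\ref{thmB} (the antipode on the first $\mathbb S^2$-coordinate). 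In the two-block situation $t\le 2$, so once one orders the generators by decreasing part size this identification is unambiguous; the only genuinely delicate point is the equal-parts case $m=k$ odd, where the single generator of $\mathbb Z/2\mathbb Z$ must be checked to act as the algorithm predicts — but that action has already been pinned down geometrically in the proof of Theorem~\ref{thmB}, so no new input is required.
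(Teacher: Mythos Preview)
Your proposal is correct and follows the paper's approach. The paper in fact states this proposition without proof, treating it as an immediate consequence of the preceding Lemma (which computes $(d_\tau,\varepsilon_\tau)$ for one-row bipartitions via Shoji's algorithm); your writeup supplies exactly the case-by-case evaluation of that Lemma at the relevant bipartitions, and your final paragraph correctly isolates the one point requiring care, namely matching the component-group generator $\alpha$ from Theorem~\ref{thmB} with Shoji's generators $\alpha_{d_i}$.
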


\begin{rem}
By comparing the above proposition with Theorem~\ref{thm:explicit_identification_C1} (and keeping in mind the shift in the labels described in Remark~\ref{rem:wilbert}) we see that we have constructed (up to isomorphism) the original Springer representation.
\end{rem}

\appendix
\section{Dimension of cohomology}\label{appendix}
We construct a cell partition of the topological Springer fiber $\SOfiber$ generalizing a construction in~\cite{ES12}, see also \cite{Kho04}, \cite{Rus11} for similar cell partitions for topological Springer fibers of type $A$. By counting the cells we will obtain an explicit dimension formula for the cohomology in  Proposition~\ref{prop:dim_formula}. The Betti numbers of the two-row Springer fibers were independently computed in~\cite{Kim16}. His method uses restrictions of Springer representations. Our approach uses an explicit cell partition and thus sheds some additional light on the geometry of the Springer fiber.  

\subsection{A cell decomposition compatible with intersections}\label{subsec:cell_decomposition}

Given cup diagrams $\ba,\bb\in\cupdiags$, we write $\ba\rightarrow\bb$ if one of the following conditions is satisfied:
\begin{itemize}
\item The diagrams $\ba$ and $\bb$ are identical except at four not necessarily consecutive vertices $\alpha<\beta<\gamma<\delta\in\{1,\ldots,m\}$, where they differ by one of the following local moves: 

\begin{equation} \label{eq:local_moves1}
\begin{array}{cc}
\begin{tikzpicture}[scale=.8,thick]

\node at (-1,0) {I)};

\draw (0,0) node[above]{$\alpha$} .. controls +(0,-.5) and +(0,-.5) .. +(.5,0) node[above]{$\beta$};
\draw (1,0) node[above]{$\gamma$} .. controls +(0,-.5) and +(0,-.5) .. +(.5,0) node[above]{$\delta$};
\draw[->] (1.75,-.2) -- +(1,0);
\draw (3,0) node[above]{$\alpha$} .. controls +(0,-1) and +(0,-1) .. +(1.5,0) node[above]{$\delta$};
\draw (3.5,0) node[above]{$\beta$} .. controls +(0,-.5) and +(0,-.5) .. +(.5,0) node[above]{$\gamma$};
\end{tikzpicture}
&
\begin{tikzpicture}[scale=.8,thick]

\node at (-1,0) {II)};

\draw (0,0) node[above]{$\alpha$} .. controls +(0,-1) and +(0,-1) .. +(1.5,0) node[above]{$\delta$};
\draw (.5,0) node[above]{$\beta$} .. controls +(0,-.5) and +(0,-.5) .. +(.5,0) node[above]{$\gamma$};
\draw[->] (1.75,-.2) -- +(1,0);
\draw (3,0) node[above]{$\alpha$} .. controls +(0,-.5) and +(0,-.5) .. +(.5,0) node[above]{$\beta$};
\fill ([xshift=-2.5pt,yshift=-2.5pt]3.25,-.36) rectangle ++(5pt,5pt);
\draw (4,0) node[above]{$\gamma$} .. controls +(0,-.5) and +(0,-.5) .. +(.5,0) node[above]{$\delta$};
\fill ([xshift=-2.5pt,yshift=-2.5pt]4.25,-.36) rectangle ++(5pt,5pt);
\end{tikzpicture}
\\
\begin{tikzpicture}[scale=.8,thick]

\node at (-1,0) {III)};

\draw (0,0) node[above]{$\alpha$} .. controls +(0,-.5) and +(0,-.5) .. +(.5,0) node[above]{$\beta$};
\fill ([xshift=-2.5pt,yshift=-2.5pt].25,-.36) rectangle ++(5pt,5pt);
\draw (1,0) node[above]{$\gamma$} .. controls +(0,-.5) and +(0,-.5) .. +(.5,0) node[above]{$\delta$};
\draw[->] (1.75,-.2) -- +(1,0);
\draw (3,0) node[above]{$\alpha$} .. controls +(0,-1) and +(0,-1) .. +(1.5,0) node[above]{$\delta$};
\fill ([xshift=-2.5pt,yshift=-2.5pt]3.75,-.74) rectangle ++(5pt,5pt);
\draw (3.5,0) node[above]{$\beta$} .. controls +(0,-.5) and +(0,-.5) .. +(.5,0) node[above]{$\gamma$};
\end{tikzpicture}
&
\begin{tikzpicture}[scale=.8,thick]
\node at (-1,0) {IV)};

\draw (0,0) node[above]{$\alpha$} .. controls +(0,-1) and +(0,-1) .. +(1.5,0) node[above]{$\delta$};
\draw (.5,0) node[above]{$\beta$} .. controls +(0,-.5) and +(0,-.5) .. +(.5,0) node[above]{$\gamma$};
\fill ([xshift=-2.5pt,yshift=-2.5pt].75,-.74) rectangle ++(5pt,5pt);
\draw[->] (1.75,-.2) -- +(1,0);
\draw (3,0) node[above]{$\alpha$} .. controls +(0,-.5) and +(0,-.5) .. +(.5,0) node[above]{$\beta$};
\draw (4,0) node[above]{$\gamma$} .. controls +(0,-.5) and +(0,-.5) .. +(.5,0) node[above]{$\delta$};
\fill ([xshift=-2.5pt,yshift=-2.5pt]4.25,-.36) rectangle ++(5pt,5pt);
\end{tikzpicture}
\end{array}
\end{equation}
\item The diagrams $\ba$ and $\bb$ are identical except at three not necessarily consecutive vertices $\alpha<\beta<\gamma\in\{1,\ldots,m\}$, where they differ by one of the following local moves: 
\begin{equation} \label{eq:local_moves2}
\begin{array}{cc}
\begin{tikzpicture}[scale=.8,thick]
\node at (-1,0) {I')};

\draw (0,0) node[above]{$\alpha$} .. controls +(0,-.5) and +(0,-.5) .. +(.5,0) node[above]{$\beta$};
\draw (1,0) node[above]{$\gamma$} -- +(0,-.8);
\draw[->] (1.25,-.2) -- +(1,0);
\draw (2.5,0) node[above]{$\alpha$} -- +(0,-.8);
\draw (3,0) node[above]{$\beta$} .. controls +(0,-.5) and +(0,-.5) .. +(.5,0) node[above]{$\gamma$};
\end{tikzpicture}
&
\begin{tikzpicture}[scale=.8,thick]
\node at (-1,0) {II')};

\draw (0,0) node[above]{$\alpha$} -- +(0,-.8);
\draw (.5,0) node[above]{$\beta$} .. controls +(0,-.5) and +(0,-.5) .. +(.5,0) node[above]{$\gamma$};
\draw[->] (1.25,-.2) -- +(1,0);
\draw (2.5,0) node[above]{$\alpha$} .. controls +(0,-.5) and +(0,-.5) .. +(.5,0) node[above]{$\beta$};
\fill ([xshift=-2.5pt,yshift=-2.5pt]2.75,-.36) rectangle ++(5pt,5pt);
\draw (3.5,0) node[above]{$\gamma$} -- +(0,-.8);
\fill ([xshift=-2.5pt,yshift=-2.5pt]3.5,-.4) rectangle ++(5pt,5pt);
\end{tikzpicture}
\\
\begin{tikzpicture}[scale=.8,thick]
\node at (-1,0) {III')};

\draw (0,0) node[above]{$\alpha$} .. controls +(0,-.5) and +(0,-.5) .. +(.5,0) node[above]{$\beta$};
\fill ([xshift=-2.5pt,yshift=-2.5pt].25,-.36) rectangle ++(5pt,5pt);
\draw (1,0) node[above]{$\gamma$} -- +(0,-.8);
\draw[->] (1.25,-.2) -- +(1,0);
\draw (2.5,0) node[above]{$\alpha$} -- +(0,-.8);
\fill ([xshift=-2.5pt,yshift=-2.5pt]2.5,-.4) rectangle ++(5pt,5pt);
\draw (3,0) node[above]{$\beta$} .. controls +(0,-.5) and +(0,-.5) .. +(.5,0) node[above]{$\gamma$};
\end{tikzpicture}
&
\begin{tikzpicture}[scale=.8,thick]
\node at (-1,0) {IV')};

\draw (0,0) node[above]{$\alpha$} -- +(0,-.8);
\fill ([xshift=-2.5pt,yshift=-2.5pt]0,-.4) rectangle ++(5pt,5pt);
\draw (.5,0) node[above]{$\beta$} .. controls +(0,-.5) and +(0,-.5) .. +(.5,0) node[above]{$\gamma$};
\draw[->] (1.25,-.2) -- +(1,0);
\draw (2.5,0) node[above]{$\alpha$} .. controls +(0,-.5) and +(0,-.5) .. +(.5,0) node[above]{$\beta$};
\draw (3.5,0) node[above]{$\gamma$} -- +(0,-.8);
\fill ([xshift=-2.5pt,yshift=-2.5pt]3.5,-.4) rectangle ++(5pt,5pt);
\end{tikzpicture}
\end{array}
\end{equation}
\end{itemize}

We use these loval moves to define a partial order on $\cupdiags$ by setting $\ba\prec\bb$ if there exists a finite chain of arrows $\ba\rightarrow {\bf c}_1 \rightarrow\cdots\rightarrow {\bf c}_r \rightarrow\bb$.

\begin{rem}
The local moves (\ref{eq:local_moves1}) and (\ref{eq:local_moves2}) defined in an ad hoc manner above have a natural geometric interpretation in the context of perverse sheaves (constructible with respect to the Schubert stratification) on isotropic Grassmannians%, where the cup diagrams are identified with ?? and the arrows occur as arrows in the Ext quiver 
, see \cite{Bra02} and \cite{ES15}. %for the dictionary.
\end{rem}

Let $\ba\in\cupdiags$ and let $i_1<i_2<\cdots<i_{\lfloor \frac{k}{2}\rfloor}$ be the left endpoints of cups in $\ba$. 

\begin{defi}
Given a vertex $r\in\{1,\ldots,m\}$, we say that a cup connecting vertices $i<j$ or a ray connected to vertex $i$ is to the right of $r$ if $r\leq i$. Let $\sigma(r)$ be the number of marked cups and marked rays to the right of $r$.
\end{defi}

We have the following homeomorphism
\begin{eqnarray} \label{local_coordinates}
\begin{array}{cccl}
\overline{\xi}_\ba :&S_\ba&\longrightarrow&(\mathbb S^2)^{\lfloor \frac{k}{2}\rfloor}\\
&(x_1,\ldots,x_m)&\longmapsto&(y_1,\ldots,y_{\lfloor \frac{k}{2}\rfloor})
\end{array}
\end{eqnarray}
where $y_{r}=(-1)^{i_r+\sigma(i_r)}x_{i_r}$ and $1\leq r\leq \lfloor \frac{k}{2}\rfloor$. 

We define the homeomorphism $t\colon\mathbb S^2\to\mathbb S^2$ as the restriction of the linear endomorphism $(x,y,z)\mapsto (z,y,x)$ of $\mathbb R^3$ to $\mathbb S^2\subseteq\mathbb R^3$. Note that $t(p)=q$. Consider the involutive homeomorphism
\begin{equation} \label{eq:correcting_map}
\begin{array}{cccl}
\Phi_\ba\colon&(\mathbb S^2)^{\lfloor\frac{k}{2}\rfloor}&\longrightarrow&(\mathbb S^2)^{\lfloor\frac{k}{2}\rfloor}\\
&(y_1,\ldots,y_{\lfloor\frac{k}{2}\rfloor})&\longmapsto&\left(y_1,\ldots,y_s,t(y_{s+1}),\ldots,t(y_{\lfloor\frac{k}{2}\rfloor})\right)
\end{array}
\end{equation}
where $s$ is the number of cups which are not to the right of the second leftmost ray (if $m=k$, in which case there is no second ray, the map (\ref{eq:correcting_map}) is the identity).

In the following, the composition of the maps in (\ref{local_coordinates}) and (\ref{eq:correcting_map}) will play a crucial role.

\begin{lem}
The map $\Psi_\ba\colon S_\ba\to(\mathbb S^2)^{\lfloor\frac{k}{2}\rfloor}$ defined as the composition $\Phi_\ba\circ\overline{\xi}_\ba$ is a homeomorphism.
\end{lem}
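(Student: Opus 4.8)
The statement is that $\Psi_\ba = \Phi_\ba \circ \overline{\xi}_\ba \colon S_\ba \to (\mathbb S^2)^{\lfloor k/2\rfloor}$ is a homeomorphism. The plan is simply to observe that $\Psi_\ba$ is a composition of two homeomorphisms, and hence is itself a homeomorphism; the only real content is verifying that each factor is indeed a well-defined homeomorphism. First I would check that $\overline{\xi}_\ba$ is a homeomorphism. This is a minor variant of the homeomorphism $\xi_\ba$ from \eqref{eq:homeo_giving_cell_decomp}: the coordinates $y_r$ of a point in $S_\ba$ are indexed by the left endpoints $i_1 < \cdots < i_{\lfloor k/2\rfloor}$ of the cups of $\ba$, and each $y_r$ is obtained from $x_{i_r}$ by multiplying with the sign $(-1)^{i_r + \sigma(i_r)}$. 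Since multiplication by $\pm 1$ on $\mathbb S^2$ (i.e.\ the antipodal map or the identity) is a homeomorphism of $\mathbb S^2$, the map $\overline{\xi}_\ba$ factors as $\xi_\ba$ followed by a diagonal product of antipodal maps and identities, hence is a homeomorphism. Concretely, the inverse recovers $x_{i_r} = (-1)^{i_r+\sigma(i_r)} y_r$, and the remaining coordinates $x_j$ for $j$ not a left endpoint of a cup are determined by the defining coordinate relations of $S_\ba$ (they are either $\pm x_{i_r}$ for the matching cup, or the fixed points $p$, $-p$, $q$ for rays).

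Next I would check that $\Phi_\ba$ is a homeomorphism. By \eqref{eq:correcting_map}, $\Phi_\ba$ acts as the identity on the first $s$ coordinates and as $t$ on the remaining $\lfloor k/2\rfloor - s$ coordinates, where $t$ is the restriction to $\mathbb S^2$ of the linear involution $(x,y,z)\mapsto(z,y,x)$ of $\mathbb R^3$. Since this linear map is an orthogonal involution preserving the unit sphere, $t$ is a homeomorphism of $\mathbb S^2$ with $t = t^{-1}$; therefore $\Phi_\ba$ is a product of homeomorphisms of $\mathbb S^2$, hence a homeomorphism of $(\mathbb S^2)^{\lfloor k/2\rfloor}$, and moreover is involutive as claimed. (When $m=k$ there is no second ray and $\Phi_\ba$ is literally the identity, so the claim is trivial in that case.)

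Finally, $\Psi_\ba = \Phi_\ba \circ \overline{\xi}_\ba$ is a composition of homeomorphisms and is therefore a homeomorphism, with inverse $\Psi_\ba^{-1} = \overline{\xi}_\ba^{-1} \circ \Phi_\ba$ (using $\Phi_\ba^{-1} = \Phi_\ba$). I do not expect any genuine obstacle here: the only things to be careful about are bookkeeping — making sure the index $r$ ranging over cups matches up correctly with the sign $(-1)^{i_r + \sigma(i_r)}$ and with the threshold $s$ (number of cups not to the right of the second leftmost ray) used in the definition of $\Phi_\ba$ — and confirming that all the maps involved are continuous with continuous inverses, which is automatic since they are built from restrictions of linear maps of $\mathbb R^3$ and sign changes. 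The slightly subtle point, if any, is purely notational: one should record explicitly that $t$ swaps the distinguished points, $t(p) = q$ and $t(-p) = -q$, which is why $\Phi_\ba$ is the correct ``correcting map'' to make the cell structures on the various $S_\ba$ compatible along intersections — but for the bare statement that $\Psi_\ba$ is a homeomorphism this compatibility is not needed.
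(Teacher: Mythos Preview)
Your proposal is correct and is precisely the natural argument: both factors are homeomorphisms (one a variant of $\xi_\ba$ twisted by coordinatewise antipodal maps, the other a product of identities and the orthogonal involution $t$), so their composition is too. The paper in fact gives no proof of this lemma at all, treating it as immediate from the definitions; your write-up simply spells out that immediacy.
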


\begin{ex}
The preimage of $(p,p,p,p)\in(\mathbb S^2)^4$ under $\Psi_\ba$, where 
\[
\ba=\begin{tikzpicture}[scale=.8, baseline={(0,-.6)}]
\draw[thick] (.5,0) .. controls +(0,-.5) and +(0,-.5) .. +(.5,0);
\draw[thick] (0,0) .. controls +(0,-1) and +(0,-1) .. +(1.5,0);
\draw[thick] (2.5,0) .. controls +(0,-.5) and +(0,-.5) .. +(.5,0);
\draw[thick] (4,0) .. controls +(0,-.5) and +(0,-.5) .. +(.5,0);

\draw[thick] (2,0) -- +(0,-1.2);
\draw[thick] (3.5,0) -- +(0,-1.2);

\fill ([xshift=-2.5pt,yshift=-2.5pt]2,-.6) rectangle ++(5pt,5pt);
\end{tikzpicture}
\]
is given by $(p,-p,p,-p,p,p,-p,q,-q,q)$.
\end{ex}

In order to define a cell decomposition of $S_\ba$ for $\ba\in\cupdiags$, we proceed as in \cite[\S4.5]{ES12} and associate a graph $\Gamma_\ba=\left(\mathcal V(\Gamma_\ba),\mathcal E(\Gamma_\ba)\right)$ with each cup diagram $\ba\in\cupdiags$ as follows: 
\begin{itemize}
\item The vertices $\mathcal V(\Gamma_\ba)$ of $\Gamma_\ba$ are given by the cups $(i,j)$ in $\ba$. 
\item Two vertices $(i_1,j_1),(i_2,j_2) \in\mathcal V(\Gamma_\ba)$ are connected by an edge $(i_1,j_1)-(i_2,j_2)\in\mathcal E(\Gamma_\ba)$ in $\Gamma_\ba$ if and only if there exists some $\bb\in\cupdiags$ with $\bb\rightarrow\ba$ such that $\ba$ is obtained from $\bb$ by a local move of type \textbf{I)}-\textbf{IV)} at the vertices $i_1,i_2,j_1,j_2$.
\end{itemize}

%ADD EXAMPLE AS IN \cite[Example 4.15]{ES12}
As in \cite{ES12}, the graph $\Gamma_\ba$ is a forest whose roots $\mathcal R(\Gamma_\ba)$ are precisely the {\it outer cups} of $\ba$, i.e.\  the cups which are not nested in any other cup and do not contain any marked cup to their right. 

We assign to each subset $J \subseteq\mathcal R(\Gamma_\ba) \cup \mathcal E(\Gamma_\ba)$ the subset $C_J'$ of $(\mathbb S^2)^{\lfloor\frac{k}{2}\rfloor}$ given by all elements $(y_1,\ldots,y_{\lfloor\frac{k}{2}\rfloor})\in(\mathbb S^2)^{\lfloor\frac{k}{2}\rfloor}$ which satisfy the following relations:

\begin{enumerate}[label={(C{\arabic*})}]
\item If $(i_q,j_q) \in \mathcal R(\Gamma_\ba) \cap J$ then $y_q = p$,
\item if $(i_q,j_q) \in \mathcal R(\Gamma_\ba)$ but $(i_q,j_q) \not\in J$ then $y_q \neq p$,
\item if $(i_q,j_q) - (i_{q'},j_{q'}) \in \mathcal E(\Gamma_\ba) \cap J$ then $y_q = y_{q'}$,
\item if $(i_q,j_q) - (i_{q'},j_{q'}) \not\in \mathcal E(\Gamma_\ba) \cap J$ then $y_q \neq y_{q'}$.
\end{enumerate}

\begin{lem} \label{decomp_disjoint}
There is a decomposition
\begin{equation}\label{eq:celldec_spheres}
(\mathbb S^2)^{\lfloor\frac{k}{2}\rfloor} = \bigsqcup_{J \subseteq \mathcal R(\Gamma_\ba)\cup \mathcal E(\Gamma_\ba)} C_J'
\end{equation}
 into disjoint cells $C_J'$ homeomorphic to $\mathbb R^{2(\lfloor\frac{k}{2}\rfloor-|J|)}$. Moreover, pushing forward along \eqref{local_coordinates} gives a cell decomposition
\begin{equation}\label{eq:celldec_irrcomponent}
S_\ba = \bigsqcup_{J \subseteq \mathcal R(\Gamma_\ba)\cup \mathcal E(\Gamma_\ba)} C_J,
\end{equation}
where $C_J = (\Psi_\ba)^{-1}(C_J')$.
\end{lem}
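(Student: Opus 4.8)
\textbf{Plan for the proof of Lemma~\ref{decomp_disjoint}.}

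The plan is to reduce the statement to a purely combinatorial claim about the forest $\Gamma_\ba$ together with the standard cell structure on a product of spheres, and then transport everything along the homeomorphism $\Psi_\ba$. First I would observe that the decomposition \eqref{eq:celldec_irrcomponent} of $S_\ba$ follows immediately from \eqref{eq:celldec_spheres} by setting $C_J = (\Psi_\ba)^{-1}(C_J')$, since $\Psi_\ba$ is a homeomorphism by the preceding lemma and a homeomorphism carries a disjoint cell decomposition to a disjoint cell decomposition of the same type. So the entire content lies in proving \eqref{eq:celldec_spheres}: that the sets $C_J'$, as $J$ ranges over subsets of $\mathcal R(\Gamma_\ba)\cup\mathcal E(\Gamma_\ba)$, partition $(\mathbb S^2)^{\lfloor\frac{k}{2}\rfloor}$ into cells with $C_J'\cong\mathbb R^{2(\lfloor\frac k2\rfloor-|J|)}$.

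For the disjointness and exhaustiveness, I would argue as follows. Fix a point $y=(y_1,\ldots,y_{\lfloor\frac k2\rfloor})\in(\mathbb S^2)^{\lfloor\frac k2\rfloor}$. The data of which coordinates equal $p$ and which pairs of coordinates are equal determines a unique candidate set $J$: namely put a root $(i_q,j_q)\in\mathcal R(\Gamma_\ba)$ into $J$ precisely when $y_q=p$, and put an edge $(i_q,j_q)-(i_{q'},j_{q'})\in\mathcal E(\Gamma_\ba)$ into $J$ precisely when $y_q=y_{q'}$. Conditions (C1)--(C4) then say exactly that $y\in C_J'$ for this $J$ and $y\notin C_{J'}'$ for any other $J'$; so the $C_J'$ are pairwise disjoint and their union is everything. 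The one point that needs care here — and which I expect to be the main obstacle — is showing this is genuinely well-defined, i.e.\ that the relation ``$y_q=y_{q'}$ for an edge'' is consistent on each tree of the forest $\Gamma_\ba$ with the root condition. This is where the tree structure is essential: within a connected component of $\Gamma_\ba$ there is a unique root, and if an internal edge forces $y_q=y_{q'}$ we must check this does not conflict with a separately imposed value at the root; the forest property (no cycles, unique root per component) is precisely what rules out such conflicts, and I would invoke the structure of $\Gamma_\ba$ established in \cite{ES12} (that $\Gamma_\ba$ is a forest with roots the outer cups) rather than re-deriving it.

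For the cell claim $C_J'\cong\mathbb R^{2(\lfloor\frac k2\rfloor-|J|)}$, I would reason on each tree $T$ of $\Gamma_\ba$ separately, since $(\mathbb S^2)^{\lfloor\frac k2\rfloor}$ and the conditions (C1)--(C4) factor as a product over the connected components (isolated vertices being trees with one vertex and no edge). On a single tree $T$ with vertex set $\{q_1,\ldots,q_n\}$, root $q_1$, and with $J\cap T$ specifying a subset of the ``contracted'' edges of $T$ and possibly the root, the contracted edges partition $T$ into sub-forests; collapsing each such block to a single sphere coordinate identifies $C_J'\cap(\mathbb S^2)^n$ with a configuration-type space in a smaller product of spheres with certain coordinates required to be $\neq p$, distinct from their neighbours, and possibly one pinned to $p$. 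Each inequality condition ``$\neq p$'' replaces an $\mathbb S^2$ factor by $\mathbb S^2\setminus\{p\}\cong\mathbb R^2$, each ``distinct neighbours'' condition is open and does not change the homeomorphism type (an open dense subset of $\mathbb R^2$-factors; here one uses that the distinctness graph on the surviving coordinates is again a forest so the conditions can be resolved in order, each time cutting out a lower-dimensional locus that leaves a space homeomorphic to $\mathbb R^2$), and pinning a coordinate to $p$ (the root-in-$J$ case) removes one $\mathbb R^2$-factor. Counting: we start with $n$ $\mathbb R^2$-blocks worth of freedom after contracting the $|J\cap\mathcal E(T)|$ chosen edges, namely $n-|J\cap\mathcal E(T)|$ of them, and then possibly lose one more if the root lies in $J$; summing over all trees gives total dimension $2(\lfloor\frac k2\rfloor - |J|)$, as claimed. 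The routine part is the explicit homeomorphisms $\mathbb S^2\setminus\{p\}\cong\mathbb R^2$ and the induction resolving the open ``$\neq$'' conditions along the forest; I would only sketch this and refer to the analogous argument in \cite[\S4.5]{ES12}.
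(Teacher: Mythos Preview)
Your proposal is correct and takes essentially the same approach as the paper: the paper's proof is simply a one-line citation to \cite[Lemma~4.17]{ES12} (noting that the additional rays in the general two-block case play no role in the construction of the $C_J'$) together with the observation that \eqref{eq:celldec_irrcomponent} follows from \eqref{eq:celldec_spheres} via the homeomorphism $\Psi_\ba$, and your outline is precisely a reconstruction of that cited argument. One small remark: the ``well-definedness'' concern you flag in the partition step is not actually needed there---for any $y$ your recipe produces a unique $J$ with $y\in C_J'$ regardless of the graph structure---whereas the forest property is genuinely required (and you use it correctly) only in the cell-identification step to guarantee each $C_J'$ is nonempty and an iterated $\mathbb R^2$-bundle over a point.
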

\begin{proof}
The proof of (\ref{eq:celldec_spheres}) is the same as in the equal-row case \cite[Lemma~4.17]{ES12} because the additional rays do not play any role in the construction of the $C_J'$, and (\ref{eq:celldec_spheres}) implies (\ref{eq:celldec_irrcomponent}).
\end{proof}

%SEE ALSO \cite[Example 4.16]{ES12}

The reason for choosing the homeomorphism $\Psi_\ba$ in the construction of the cell decomposition (\ref{eq:celldec_irrcomponent}) of $S_\ba$ is that the resulting decompositions are compatible with pairwise intersections in the sense of the next lemma which extends \cite[Proposition~4.24]{ES12} to the general two-block case.

\begin{lem} \label{intersection_and_cells}
Let $\ba,\bb \in \cupdiags$ such that $\bb \rightarrow\ba$.
\begin{enumerate}[label={({\arabic*})}]
\item If $\bb\rightarrow\ba$ is of type \textbf{I)}-\textbf{IV)}, then
\[
S_\ba \cap S_\bb = \bigcup_{J \subseteq \mathcal R(\Gamma_\ba) \cup \mathcal E(\Gamma_\ba), e\in J} C_J,
\]
where $e \in\mathcal E(\Gamma_\ba)$ is the edge of $\Gamma_\ba$ determined by the move $\bb \rightarrow\ba$.
\item If $\bb\rightarrow\ba$ is of type \textbf{I')}-\textbf{IV')}, then there is a unique cup $\alpha \in {\rm cups}(\ba)$ such that $\alpha \notin
{\rm cups}(\bb)$. Moreover, $\alpha \in \mathcal R(\Gamma_\ba)$ and
$$S_\ba \cap S_\bb = \bigcup_{J \subseteq \mathcal R(\Gamma_\ba) \cup \mathcal E(\Gamma_\ba),
\alpha \in J} C_J.$$
\end{enumerate}
\end{lem}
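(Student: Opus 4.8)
The plan is to reduce the statement to the already-established equal-row case, Proposition~4.24 of \cite{ES12}, by exploiting the homeomorphism $\Psi_\ba$ (and its counterpart $\Psi_\bb$) which was purpose-built for exactly this compatibility. First I would fix $\ba,\bb\in\cupdiags$ with $\bb\to\ba$ and analyze the local move separately in the two cases. In case (1) — a move of type \textbf{I)}--\textbf{IV)} — only four vertices $\alpha<\beta<\gamma<\delta$ are involved, all of which are endpoints of cups in both $\ba$ and $\bb$; in particular the rays of $\ba$ and $\bb$ coincide, and the additional rays (and the value-fixing on the ray coordinates in the definition of $S_\ba$, $S_\bb$) play no role whatsoever. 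Thus the intersection $S_\ba\cap S_\bb$ is cut out inside $\left(\mathbb S^2\right)^m$ by the defining relations of both diagrams, and via the product homeomorphisms $\Psi_\ba$, $\Psi_\bb$ this becomes precisely the situation treated in \cite[Proposition~4.24]{ES12}: the edge $e\in\mathcal E(\Gamma_\ba)$ determined by the move forces the two cup coordinates $y_q$, $y_{q'}$ attached to its endpoints to be equal, which is exactly the condition $e\in J$ in the cell decomposition \eqref{eq:celldec_irrcomponent}. So one copies the equal-row argument, checking only that the sign corrections $(-1)^{i_r+\sigma(i_r)}$ in \eqref{local_coordinates} and the twist $\Phi_\ba$ of \eqref{eq:correcting_map} are compatible on the overlap — i.e. that $\Psi_\ba$ and $\Psi_\bb$ agree on $S_\ba\cap S_\bb$ up to the coordinate relabelling induced by the local move.

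For case (2) — a move of type \textbf{I')}--\textbf{IV')} — the combinatorics changes in that exactly three vertices $\alpha<\beta<\gamma$ are involved and exactly one cup $\alpha$ of $\ba$ is not a cup of $\bb$ (in $\bb$ the outermost of the two arcs has been cut into a ray configuration, or a ray of $\bb$ has been absorbed into a cup of $\ba$). I would first verify the two claims that $\alpha\in\mathrm{cups}(\ba)\setminus\mathrm{cups}(\bb)$ is unique and that $\alpha\in\mathcal R(\Gamma_\ba)$: uniqueness is immediate from the shape of the moves \eqref{eq:local_moves2}, and the fact that $\alpha$ is a root of $\Gamma_\ba$ follows because in each of \textbf{I')}--\textbf{IV')} the new cup $\alpha$ in $\ba$ is either outermost or, in the marked cases, has no marked cup to its right — precisely the definition of an outer cup. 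Having done this, the key point is that a point of $S_\bb$ has its $\alpha$-line (the ray of $\bb$ at the relevant vertex) sitting at the fixed value $\pm p$ or $q$, so when we intersect with $S_\ba$ — where that vertex is the left endpoint of the cup $\alpha$ — the coordinate $y_q$ attached to $\alpha$ under $\Psi_\ba$ is forced to equal $p$ (after the sign/twist bookkeeping), which is exactly the condition (C1) defining the cells $C_J$ with $\alpha\in J$.

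The main steps in order would therefore be: (i) record the coincidence of rays and cups outside the active vertices, and the resulting description of $S_\ba\cap S_\bb$ as a locus inside $\left(\mathbb S^2\right)^m$; (ii) transport this locus through $\Psi_\ba$ and identify which of the cell-defining relations (C1)--(C4) it imposes; (iii) in case (1), match it with the equal-row computation of \cite[Proposition~4.24]{ES12}; (iv) in case (2), prove the two auxiliary claims about $\alpha$ and then carry out the analogous identification, with $\alpha\in\mathcal R(\Gamma_\ba)$ replacing the edge $e$. I expect the main obstacle to be step (ii): keeping careful track of how the sign factors $(-1)^{i_r+\sigma(i_r)}$ and the partial antipode $t$ in $\Phi_\ba$ interact across the two diagrams $\ba$ and $\bb$, since a local move can change the count $\sigma(i_r)$ of markers to the right of a vertex as well as the index $s$ counting cups not to the right of the second ray. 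This is precisely the reason $\Psi_\ba$ (rather than the naive $\xi_\ba$ of \eqref{eq:homeo_giving_cell_decomp}) was introduced, so the verification should go through, but it is the one place where a genuine case-by-case check against the eight moves in \eqref{eq:local_moves1}--\eqref{eq:local_moves2} is unavoidable. The ray-value conventions (leftmost ray $\mapsto -p$, other rays $\mapsto q$, marked rays $\mapsto p$) must be cross-checked against the twist $t$ with $t(p)=q$ so that the two homeomorphisms $\Psi_\ba$, $\Psi_\bb$ are genuinely compatible on the overlap; once that is in place, the rest is a transcription of the equal-row proof.
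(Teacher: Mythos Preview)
Your proposal is correct and follows the same approach as the paper, which simply states ``This is a straightforward case-by-case analysis'' with no further detail. Your roadmap --- reducing case~(1) to the equal-row situation of \cite[Proposition~4.24]{ES12} via the purpose-built homeomorphism $\Psi_\ba$, verifying in case~(2) that the new cup is a root of $\Gamma_\ba$, and then tracking the sign factors $(-1)^{i_r+\sigma(i_r)}$ and the twist $\Phi_\ba$ through each of the eight local moves --- is exactly the case-by-case check the paper has in mind, just made explicit; your identification of step~(ii) as the only genuinely delicate point is accurate.
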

%\begin{prop} \label{intersection_and_cells}
%Let $a,b \in \mB_k$ and $b \rightarrow a$. Then the following holds:
%\begin{enumerate}[(i)]
%\item If $b \rightarrow a$ is of type \textbf{I)}-\textbf{IV)} then there is a unique edge $e\in\Gamma_a$ not contained in $\cE(\Gamma_b)$ and we have
%$$S_a \cap S_b = \bigcup_{J \subset \cR(\Gamma_a) \cup \cE(\Gamma_a), e \in J} C_J.$$
%\item If $b \rightarrow a$ is of type \textbf{I')}-\textbf{IV')} there is a unique cup $\alpha \in \cR(\Gamma_a)$, not contained in $\cR(\Gamma_b)$ and we have
%$$S_a \cap S_b = \bigcup_{J \subset \cR(\Gamma_a) \cup \cE(\Gamma_a), \alpha \in J} C_J.$$
%\end{enumerate}
%\end{prop}
\begin{proof}
This is a straightforward case-by-case analysis. %We illustrate the type of argument by proving (i) if $\bb\to\ba$ is of type \textbf{I)} and omit the other ones. Note that the condition $e\in J$ picks out precisely those cells of the decomposition (\ref{eq:celldec_spheres}) of $(\mathbb S^2)^{\lfloor\frac{k}{2}\rfloor}$ associated with $\ba$ which satisfy $x_?=x_?$. In particular, the right hand side of (?) consists of precisely those cells in the decomposition (\ref{eq:celldec_irrcomponent}) which satisfy $?$. EXPLANATION. Since the intersection $S_\ba\cap S_\bb$ consists of precisely those elements in $S_\ba$ which additionally satisfy the relation $x_?=x_?$ the claim follows.    
\end{proof}

\subsection{A cell partition of the topological Springer fiber}

Given a cup diagram $\ba\in\cupdiags$, define its {\it extension} $\widetilde{\ba}\in\mathbb B^{2m-k,2m-k}_\mathrm{KL}$ as the unique cup diagram obtained by connecting the loose endpoints of the $m-k$ rightmost rays to $m-k$ newly added vertices to the right of $\ba$ via an unmarked cup such that the resulting diagram is crossingless (see \cite[Definition 3.11]{Rus11} and \cite[\S5.3]{ES15} for a similar but different extension). Let $\mathbb B^{2m-k,2m-k}_{2m-k,k}$ denote the cup diagrams in $\mathbb B^{2m-k,2m-k}_\mathrm{KL}$ which are obtained as an extension of some diagram in $\mathbb B^{2m-k,k}_\mathrm{KL}$, i.e.\ the set of all cup diagrams in $\mathbb B^{2m-k,2m-k}_\mathrm{KL}$ whose $m-k$ rightmost vertices are right endpoints of unmarked cups. If $m=k$, then the extension procedure returns the same diagram. Note that the leftmost ray is never replaced by a cup.

\begin{ex}
Here is an example showing the extension of a cup diagram:
\[
\begin{tikzpicture}[scale=.8, baseline={(0,-.5)}]
\draw[thick] (0,0) .. controls +(0,-.5) and +(0,-.5) .. +(.5,0);
\draw[thick] (1.5,0) .. controls +(0,-1) and +(0,-1) .. +(1.5,0);
\draw[thick] (2,0) .. controls +(0,-.5) and +(0,-.5) .. +(.5,0);
\draw[thick] (4,0) .. controls +(0,-.5) and +(0,-.5) .. +(.5,0);

\draw[thick] (1,0) -- +(0,-1.2);
\draw[thick,dashed] (3.5,0) -- +(0,-1.2);
\draw[thick,dashed] (5,0) -- +(0,-1.2);

\fill ([xshift=-2.5pt,yshift=-2.5pt].25,-.36) rectangle ++(5pt,5pt);
\fill ([xshift=-2.5pt,yshift=-2.5pt]1,-.6) rectangle ++(5pt,5pt);
\end{tikzpicture}
\hspace{1.6em}
\xrightarrow{\text{extend}}
\hspace{1.6em}
\begin{tikzpicture}[scale=.8, baseline={(0,-.5)}]
\draw[thick] (0,0) .. controls +(0,-.5) and +(0,-.5) .. +(.5,0);
\draw[thick] (1.5,0) .. controls +(0,-1) and +(0,-1) .. +(1.5,0);
\draw[thick] (2,0) .. controls +(0,-.5) and +(0,-.5) .. +(.5,0);
\draw[thick,dashed] (3.5,0) .. controls +(0,-1.5) and +(0,-1.5) .. +(2.5,0);
\draw[thick] (4,0) .. controls +(0,-.5) and +(0,-.5) .. +(.5,0);
\draw[thick,dashed] (5,0) .. controls +(0,-.5) and +(0,-.5) .. +(.5,0);

\draw[thick] (1,0) -- +(0,-1.2);

\fill ([xshift=-2.5pt,yshift=-2.5pt].25,-.36) rectangle ++(5pt,5pt);
\fill ([xshift=-2.5pt,yshift=-2.5pt]1,-.6) rectangle ++(5pt,5pt);
\end{tikzpicture}
\]
For illustrative purposes the components of the diagrams which change during extension are drawn in dashed font. %and the newly added vertices are marked with a cross.
\end{ex}

Inspired by the combinatorial completion procedure we define an embedding (see also \cite[Section 5]{Rus11} for a similar map in type $A$)of topological Springer fibers 
\[
\eta_{2m-k,k}\colon\SOfiber\hookrightarrow\mathcal S^{2m-k,2m-k}_\mathrm{KL}\,,\,\,(x_1,\ldots,x_m)\mapsto (x_1,\ldots,x_m,-q,\ldots,-q).
\]
Note that $\eta_{2m-k,k}(S_\ba)\subseteq S_{\widetilde{\ba}}$.

Given $\ba,\bb\in\mathbb B^{2m-k,k}_\mathrm{KL}$, one easily verifies that $\bb\rightarrow\ba$ if and only if $\widetilde{\bb}\rightarrow\widetilde{\ba}$. Thus, we have an isomorphism of posets $\mathbb B^{2m-k,k}_\mathrm{KL}\cong \widetilde{\mathbb B}^{2m-k,2m-k}_\mathrm{KL}$. We equip $\widetilde{\mathbb B}^{2m-k,2m-k}_\mathrm{KL}$ with the induced total order from some fixed total order (which extends the partial order $\prec$ from before) on $\mathbb B^{2m-k,2m-k}_\mathrm{KL}$. We pull this total order over to $\mathbb B^{2m-k,k}_\mathrm{KL}$ via the isomorphism of posets. 

Let $S_{<\ba} = \bigcup_{\bb<\ba} S_\bb$.

\begin{lem} \label{reduction_for_intersection}
For any $\ba\in\mathbb B^{2m-k,k}_\mathrm{KL}$ we have
\begin{eqnarray}
\label{eqintersectionscells}
S_{<\ba} \cap S_\ba &=& \bigcup_{\bb\rightarrow\ba} (S_\bb \cap S_\ba).
\end{eqnarray}
\end{lem}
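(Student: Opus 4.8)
\textbf{Proof strategy for Lemma~\ref{reduction_for_intersection}.}
The plan is to reduce the statement, via the poset isomorphism $\mathbb B^{2m-k,k}_\mathrm{KL}\cong\widetilde{\mathbb B}^{2m-k,2m-k}_\mathrm{KL}$ and the embedding $\eta_{2m-k,k}$, to the analogous statement for the equal-row fiber $\mathcal S^{2m-k,2m-k}_\mathrm{KL}$, and then to prove that equal-row statement directly using the explicit cell decomposition from Lemma~\ref{intersection_and_cells}. The inclusion ``$\supseteq$'' in \eqref{eqintersectionscells} is immediate since each $\bb$ with $\bb\rightarrow\ba$ satisfies $\bb<\ba$ (the total order extends $\prec$), so $S_\bb\subseteq S_{<\ba}$. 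The content is the reverse inclusion: if $\bb<\ba$ and $S_\bb\cap S_\ba\neq\emptyset$, then this intersection is already contained in $\bigcup_{{\bf c}\rightarrow\ba}(S_{\bf c}\cap S_\ba)$.

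First I would observe that, because $\eta_{2m-k,k}(S_\ba)\subseteq S_{\widetilde\ba}$ and $\eta_{2m-k,k}$ is a closed embedding which is compatible with the labelling (i.e.\ $\eta_{2m-k,k}(S_\ba)=\eta_{2m-k,k}(\SOfiber)\cap S_{\widetilde\ba}$, since the last $m-k$ coordinates of any point of $S_{\widetilde\ba}$ must equal $-q$ by the defining relations of the added unmarked cups whose right endpoints are forced), intersections are preserved: $\eta_{2m-k,k}(S_\ba\cap S_\bb)=S_{\widetilde\ba}\cap S_{\widetilde\bb}\cap\eta_{2m-k,k}(\SOfiber)$, and similarly for $S_{<\ba}$. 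Hence it suffices to prove the identity $S_{<\bc}\cap S_\bc=\bigcup_{\bd\rightarrow\bc}(S_\bd\cap S_\bc)$ for $\bc\in\mathbb B^{2m-k,2m-k}_{2m-k,k}$ inside $\mathcal S^{2m-k,2m-k}_\mathrm{KL}$; here the subset of "$\bd$ coming from the smaller fiber" is automatically handled because any $\bd$ with $\bd\rightarrow\bc$ and $\bc$ having its rightmost $m-k$ vertices as right cup-endpoints also has this property (a local move of type I–IV or I$'$–IV$'$ that produces $\bc$ cannot disturb those forced endpoints — or if it does, the corresponding $S_\bd\cap S_\bc$ lies in the image of $\eta$ anyway). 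So the problem genuinely reduces to the equal-row case, for which the cell-compatibility is available.

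For the equal-row case, the key step is the standard argument (as in \cite[proof of Prop.~4.24 / Thm.~4.25]{ES12}): using the cell partition $S_\bc=\bigsqcup_J C_J$ from Lemma~\ref{decomp_disjoint} and the description of pairwise intersections in Lemma~\ref{intersection_and_cells}, each cell $C_J$ with $J\neq\emptyset$ is contained in some $S_\bd\cap S_\bc$ with $\bd\rightarrow\bc$ (choose $\bd$ realizing a root in $J$ or an edge in $J$, which exists precisely because $\Gamma_\bc$ is a forest with roots the outer cups), while the open cell $C_\emptyset$ is disjoint from every $S_\bd$ with $\bd\neq\bc$ — indeed $C_\emptyset$ is an open dense subset of $S_\bc$ not meeting any other $S_\bd$, by the defining inequalities (C2) and (C4) together with a dimension/genericity argument. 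Consequently $S_{<\bc}\cap S_\bc\subseteq S_\bc\setminus C_\emptyset=\bigcup_{J\neq\emptyset}C_J\subseteq\bigcup_{\bd\rightarrow\bc}(S_\bd\cap S_\bc)$, and combined with the trivial reverse inclusion this gives the claim; transporting back through $\eta_{2m-k,k}$ finishes the proof.

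The main obstacle I anticipate is verifying carefully that $C_\emptyset$ (the top-dimensional open cell of $S_\ba$) really does not meet $S_\bb$ for \emph{any} $\bb<\ba$, not merely for $\bb$ incomparable to $\ba$ or $\bb\rightarrow\ba$; one must rule out intersections with $S_\bb$ for $\bb$ far below $\ba$ in the order. This is handled by the observation that if $S_\bb\cap C_\emptyset\neq\emptyset$ then, chasing the defining relations of the $S$-spaces, $\bb$ and $\ba$ would have to agree on enough structure to force $\bb=\ba$ — equivalently, that the union $\bigcup_{\bb\neq\ba}(S_\bb\cap S_\ba)$ is exactly $S_\ba\setminus C_\emptyset$, which is where the precise choice of the homeomorphism $\Psi_\ba$ (incorporating the correcting map $\Phi_\ba$) is essential and where I would spend most of the care, largely by invoking the equal-row computations of \cite{ES12} once the reduction is in place.
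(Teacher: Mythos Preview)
Your reduction step via the embedding $\eta_{2m-k,k}$ is exactly what the paper does: it cites \cite[Lemma~4.23]{ES12} for the equal-row case and then uses $\eta_{2m-k,k}$ together with the poset isomorphism $\mathbb B^{2m-k,k}_\mathrm{KL}\cong\widetilde{\mathbb B}^{2m-k,2m-k}_\mathrm{KL}$ to pull the general statement back from there. So the overall architecture of your argument matches the paper. (One small correction: your justification of $\eta_{2m-k,k}(S_\ba)=\eta_{2m-k,k}(\SOfiber)\cap S_{\widetilde\ba}$ is right in conclusion but wrong in reasoning---the last $m-k$ coordinates of a general point of $S_{\widetilde\ba}$ are \emph{not} forced to be $-q$; rather, they are $-q$ by the definition of $\eta_{2m-k,k}$, and then the extension-cup relations force the corresponding ray conditions of $\ba$.)

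Where your proposal diverges is in attempting to reprove the equal-row case rather than citing it, and there the argument has a genuine gap. You claim
\[
S_{<\bc}\cap S_\bc\;\subseteq\; S_\bc\setminus C_\emptyset\;=\;\bigcup_{J\neq\emptyset}C_J\;\subseteq\;\bigcup_{\bd\rightarrow\bc}(S_\bd\cap S_\bc),
\]
but the last inclusion is false. By Lemma~\ref{intersection_and_cells}, the right-hand union is exactly $\bigcup_{J\cap(\mathcal E(\Gamma_\bc)\cup\mathcal R(\Gamma_\bc)^{\rm sp})\neq\emptyset}C_J$ (this is Corollary~\ref{decreasing_intersection_and_cells}), and there are nonempty $J$ that miss this: any $J$ contained in $\mathcal R(\Gamma_\bc)\setminus\mathcal R(\Gamma_\bc)^{\rm sp}$. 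Such $J$ exist whenever $\bc$ has an outer cup that is not special---for instance, in the even equal-row case there are no rays at all, so $\mathcal R(\Gamma_\bc)^{\rm sp}=\emptyset$ and \emph{every} root gives such a $J$. Your sentence ``choose $\bd$ realizing a root in $J$ or an edge in $J$'' breaks down precisely here: a non-special root is not realized by any move $\bd\rightarrow\bc$. Consequently the ``main obstacle'' is not merely that $C_\emptyset$ avoids $S_{<\bc}$, but that every cell $C_J$ with $J\subseteq\mathcal R(\Gamma_\bc)\setminus\mathcal R(\Gamma_\bc)^{\rm sp}$ does; this is the actual content of \cite[Lemma~4.23]{ES12}, and it requires a finer analysis than a genericity argument about the top cell.
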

\begin{proof}
In case $m=k$ the claimed equality was proven in \cite[Lemma~4.23]{ES12}. 

Since $\rightarrow$ implies $<$, the left side is contained in the right side.

So, assume there exists $\bb<\ba\in\mathbb B^{2m-k,k}_\mathrm{KL}$ and $x\in S_\ba\cap S_\bb$ such that $x\notin\bigcup_{\bb\rightarrow\ba} S_\bb\cap S_\ba$. Then $\eta_{2m-k,k}(x)\in S_{\widetilde{\bb}}\cap S_{\widetilde{\ba}}$ (note that $\widetilde{\bb}<\widetilde{\ba}$ since $\bb<\ba$ and by definition of the total order $<$ on $\mathbb B^{2m-k,k}_\mathrm{KL}$). Hence,
\[
\bigcup_{\overset{\bb<\widetilde{\ba}}{\bb\in\mathbb B^{2m-k,2m-k}_\mathrm{KL}}} S_\bb\cap S_{\widetilde{\ba}} = \bigcup_{\overset{\bb\rightarrow\widetilde{\ba}}{\bb\in\mathbb B^{2m-k,2m-k}_\mathrm{KL}}} S_\bb\cap S_{\widetilde{\ba}} = \bigcup_{\overset{\bb\rightarrow\widetilde{\ba}}{\bb\in\widetilde{\mathbb B}^{2m-k,2m-k}_\mathrm{KL}}} S_\bb\cap S_{\widetilde{\ba}}
\]  
and therefore
\begin{align*}
\eta_{2m-k,k}(\SOfiber)\cap\bigcup_{\overset{\bb<\widetilde{\ba}}{\bb\in\mathbb B^{2m-k,2m-k}_\mathrm{KL}}} S_\bb\cap S_{\widetilde{\ba}} &= \eta_{2m-k,k}(\mathcal S^{2m-k,k}_{\mathrm{KL}})\cap\bigcup_{\overset{\bb\rightarrow\widetilde{\ba}}{\bb\in\widetilde{\mathbb B}^{2m-k,2m-k}_\mathrm{KL}}} S_\bb\cap S_{\widetilde{\ba}}\\
&= \eta_{2m-k,k}\left(\bigcup_{\bb\rightarrow \ba} S_\bb\cap S_\ba\right).
\end{align*}
Since $\eta_{2m-k,k}(x)$ is contained in the leftmost set above, it is also contained in the rightmost set. By the injectivity of $\eta_{2m-k,k}$, we deduce that $x\in\bigcup_{b\rightarrow a} S_\bb\cap S_\ba$, a contradiction.
\end{proof}

Fix a cup diagram $\ba\in\mathbb B^{2m-k,k}_\mathrm{KL}$. A cup in $\ba$ is called {\it special} if there exists some $\bb\in\mathbb B^{2m-k,k}_\mathrm{KL}$ such that $\bb\rightarrow\ba$, where $\ba$ and $\bb$ are related by a local move of type \textbf{I')}-\textbf{IV')}, and this cup is the unique cup which changes under the move $\ba\rightarrow\bb$. Let $\mathcal R(\Gamma_\ba)^{\rm sp}$ be the set of special cups in $\ba$. In particular, we have $\mathcal R(\Gamma_\ba)^{\rm sp}=\emptyset$ if $k=m$ is even, i.e.\ if there are no rays in $\ba$.

\begin{rem}
As in \cite[Remark 4.25]{ES12}, the moves \textbf{I')}-\textbf{IV')} imply that all outer cups (in the sense as defined above) of a given cup diagram $\ba\in\mathbb B^{2m-k,k}_\mathrm{KL}$ are special if the leftmost ray in $\ba$ is marked, whereas only outer cups to the right of the leftmost ray are special if the ray is unmarked.
\end{rem}

\begin{cor} \label{decreasing_intersection_and_cells}
Given $\ba \in \mathbb B^{2m-k,k}_\mathrm{KL}$, then
\begin{eqnarray*}
S_{<\ba} \cap S_\ba&=& \underset{\begin{array}{c} J \subseteq \mathcal R(\Gamma_\ba) \cup \mathcal E(\Gamma_\ba)  \\ J \cap (\mathcal E(\Gamma_\ba) \cup \mathcal R(\Gamma_\ba)^{\rm sp})\neq \emptyset \end{array}}{\bigcup} C_J.
\end{eqnarray*}
\end{cor}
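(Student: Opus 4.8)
\textbf{Proof of Corollary~\ref{decreasing_intersection_and_cells}.}
The plan is to combine Lemma~\ref{reduction_for_intersection} with the two cases of Lemma~\ref{intersection_and_cells}, using the cell decomposition \eqref{eq:celldec_irrcomponent} of $S_\ba$ to rewrite everything in terms of the index sets $J$. First I would start from the identity
\[
S_{<\ba}\cap S_\ba=\bigcup_{\bb\rightarrow\ba}(S_\bb\cap S_\ba)
\]
furnished by Lemma~\ref{reduction_for_intersection}, and then split the union over the arrows $\bb\rightarrow\ba$ into the ones coming from a local move of type \textbf{I)}--\textbf{IV)} and the ones coming from a move of type \textbf{I')}--\textbf{IV')}. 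For an arrow of the first type, Lemma~\ref{intersection_and_cells}(1) gives $S_\bb\cap S_\ba=\bigcup_{J\ni e}C_J$, where $e\in\mathcal E(\Gamma_\ba)$ is the edge determined by the move; conversely, by the very definition of $\mathcal E(\Gamma_\ba)$, every edge $e$ of $\Gamma_\ba$ arises from some $\bb\rightarrow\ba$ of this type. Hence the union of the first-type contributions is exactly $\bigcup_{J\cap\mathcal E(\Gamma_\ba)\neq\emptyset}C_J$. Similarly, for an arrow of the second type, Lemma~\ref{intersection_and_cells}(2) gives $S_\bb\cap S_\ba=\bigcup_{J\ni\alpha}C_J$ where $\alpha\in\mathcal R(\Gamma_\ba)$ is the unique cup changing under the move, and by definition of $\mathcal R(\Gamma_\ba)^{\rm sp}$ the cups $\alpha$ that occur this way are precisely the special cups. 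So the union of the second-type contributions is $\bigcup_{J\cap\mathcal R(\Gamma_\ba)^{\rm sp}\neq\emptyset}C_J$.

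Putting the two pieces together yields
\[
S_{<\ba}\cap S_\ba=\Bigl(\bigcup_{J\cap\mathcal E(\Gamma_\ba)\neq\emptyset}C_J\Bigr)\cup\Bigl(\bigcup_{J\cap\mathcal R(\Gamma_\ba)^{\rm sp}\neq\emptyset}C_J\Bigr)=\bigcup_{\substack{J\subseteq\mathcal R(\Gamma_\ba)\cup\mathcal E(\Gamma_\ba)\\ J\cap(\mathcal E(\Gamma_\ba)\cup\mathcal R(\Gamma_\ba)^{\rm sp})\neq\emptyset}}C_J,
\]
which is the claimed formula. I should also note the degenerate case $m=k$ even separately: then there are no rays, so $\mathcal R(\Gamma_\ba)^{\rm sp}=\emptyset$ and the statement reduces to the one already recorded in \cite[Proposition~4.24]{ES12}; this is consistent with Lemma~\ref{reduction_for_intersection} in that case involving only arrows of type \textbf{I)}--\textbf{IV)}.

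The only genuinely delicate point — and the place I would spend the most care — is the two ``conversely'' claims: that every $e\in\mathcal E(\Gamma_\ba)$ and every $\alpha\in\mathcal R(\Gamma_\ba)^{\rm sp}$ is actually realized by some $\bb\in\cupdiags$ with $\bb\rightarrow\ba$. For edges this is immediate from the definition of $\mathcal E(\Gamma_\ba)$. For special roots one has to check that the target diagram $\bb$ obtained by performing the inverse of a move \textbf{I')}--\textbf{IV')} at an outer cup $\alpha$ again lies in $\cupdiags$ (i.e. has the right number of cups, namely $\lfloor k/2\rfloor$, and satisfies the parity condition on markers), which is exactly what the definition of $\mathcal R(\Gamma_\ba)^{\rm sp}$ builds in, together with the remark preceding the corollary describing which outer cups are special according to whether the leftmost ray is marked. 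Everything else is a bookkeeping repackaging of Lemmas~\ref{reduction_for_intersection} and~\ref{intersection_and_cells}, so no further computation is needed. \qed
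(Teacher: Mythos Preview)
Your proposal is correct and follows exactly the approach of the paper, which simply records that the corollary ``follows directly from the previous two lemmas'' (Lemmas~\ref{reduction_for_intersection} and~\ref{intersection_and_cells}). You have spelled out in detail precisely the bookkeeping the paper leaves implicit, including the observation that every edge of $\Gamma_\ba$ and every special root is realized by some $\bb\rightarrow\ba$, which is immediate from the definitions.
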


\begin{proof}
This follows directly from the previous two lemmas.
\end{proof}

In order to argue inductively in the proof of the following proposition, we introduce the notion of a {\it partial/one-step extension}. This is defined as the ordinary extension introduced above, except that we only replace the rightmost ray with a cup.   

\begin{prop} \label{prop:dim_formula}
If $k\neq m$, then we have the following dimension formula:
\[
\dim H^*(\SOfiber) = \sum_{i=0}^{\frac{k-1}{2}}\binom{m}{i}.
\]
\end{prop}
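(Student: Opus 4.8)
The strategy is to count cells. By the construction in Subsection~\ref{subsec:cell_decomposition} (culminating in Lemma~\ref{decomp_disjoint}, Lemma~\ref{intersection_and_cells}, Lemma~\ref{reduction_for_intersection} and Corollary~\ref{decreasing_intersection_and_cells}), the sets $S_\ba$, $\ba\in\cupdiags$, carry cell decompositions which are compatible with all pairwise intersections, hence with the filtration $S_{<\ba}\subseteq S_{\le\ba}\subseteq\SOfiber$ induced by the chosen total order on $\cupdiags$. Therefore $\SOfiber$ acquires a CW-structure whose open cells are exactly those $C_J\subseteq S_\ba$ with $J\cap(\mathcal E(\Gamma_\ba)\cup\mathcal R(\Gamma_\ba)^{\mathrm{sp}})=\emptyset$, i.e.\ $J\subseteq\mathcal R(\Gamma_\ba)\setminus\mathcal R(\Gamma_\ba)^{\mathrm{sp}}$: these are the cells of $S_\ba$ not already contained in $S_{<\ba}$. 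Since each such $C_J$ is an even-dimensional cell ($\dim_{\mathbb R}C_J=2(\lfloor\frac k2\rfloor-|J|)$), the cellular chain complex has zero differential and $\dim H^*(\SOfiber)=\dim H_*(\SOfiber)$ equals the total number of such cells, summed over all $\ba\in\cupdiags$.

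First I would make precise, for a fixed $\ba\in\cupdiags$, the count of subsets $J\subseteq\mathcal R(\Gamma_\ba)\setminus\mathcal R(\Gamma_\ba)^{\mathrm{sp}}$, which is simply $2^{|\mathcal R(\Gamma_\ba)\setminus\mathcal R(\Gamma_\ba)^{\mathrm{sp}}|}$. Here one uses the description recalled in the Remark preceding Corollary~\ref{decreasing_intersection_and_cells}: the non-special roots of $\Gamma_\ba$ are the outer cups lying to the \emph{left} of the leftmost ray (when that ray is unmarked), and there are no non-special roots when the leftmost ray is marked. Since $k\neq m$ there is always at least one ray, so the leftmost ray is well-defined; and by definition of $\cupdiags$ a cup lying entirely to the left of the leftmost ray must be nested-free on that side, so these are exactly the "outer cups before the first ray''. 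Summing $2^{\#\{\text{outer cups left of the first ray}\}}$ over all $\ba$ then has to be identified with $\sum_{i=0}^{(k-1)/2}\binom{m}{i}$.

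The cleanest way to carry out the final summation is to reduce to the equal-row case by the partial/one-step extension introduced just before the statement, exactly as in Lemma~\ref{reduction_for_intersection}: extending the rightmost ray to a cup gives a bijection of $\cupdiags$ onto a subset of $\mathbb B^{2m-k+?}$... more simply, iterate the one-step extension $m-k$ times to land in $\mathbb B^{2m-k,2m-k}_\mathrm{KL}$, and observe that the extension never touches the leftmost ray nor any cup to its left, so the quantity $\#\{\text{outer cups left of the first ray}\}$ is unchanged. This identifies our cell count with the number of cells, of homological degree $<k$, in the cell partition of $\SOfiberequalsize$ from \cite[\S4.5]{ES12}; equivalently, with the number of cup diagrams in $C_\mathrm{KL}^{\le\lfloor k/2\rfloor}(m)$ via Proposition~\ref{prop:basis_general_case} (this is consistent with the proof of Proposition~\ref{prop:basis_general_case}, which quotes precisely this appendix). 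So the remaining task is the purely combinatorial identity
\[
\bigl|C_\mathrm{KL}^{\le\lfloor k/2\rfloor}(m)\bigr|=\sum_{i=0}^{(k-1)/2}\binom{m}{i},
\]
which I would prove by showing $|\{\ba\in C_\mathrm{KL}(m): \ba \text{ has exactly } i \text{ cups}\}|=\binom{m}{i}-\binom{m}{i-1}$ for $i\ge 1$ (and $=1$ for $i=0$): indeed by \cite[Proposition~3]{SW12} (invoked already in the proof of Proposition~\ref{prop:basis_isotypic_comp}) such cup diagrams are in bijection with standard Young tableaux of shape $(m-i,i)$, whose number is $\binom{m}{i}-\binom{m}{i-1}$ by the hook-length formula; telescoping gives $\sum_{i=0}^{l}\bigl(\binom{m}{i}-\binom{m}{i-1}\bigr)=\binom{m}{l}$, and then summing $\binom{m}{i}$ over $i\le\lfloor k/2\rfloor=(k-1)/2$ (recall $k$ is odd when $k\neq m$) gives the claim. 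Alternatively one can sum $2^{\#\{\text{outer cups left of first ray}\}}$ directly by a generating-function/lattice-path argument, but routing through \cite{SW12} and the hook-length formula is shorter.

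\textbf{Main obstacle.} The genuine content is not the combinatorial identity but verifying that the cell decompositions of the $S_\ba$ glue to an honest CW-structure on $\SOfiber$: one must check that the closure of each $C_J$ is contained in the union of $S_{<\ba}$ with cells of $S_\ba$ of strictly larger codimension, and that attaching maps are continuous. This is precisely what Lemma~\ref{intersection_and_cells}, Lemma~\ref{reduction_for_intersection} and Corollary~\ref{decreasing_intersection_and_cells} are engineered to provide — in particular the careful choice of the homeomorphism $\Psi_\ba=\Phi_\ba\circ\overline\xi_\ba$, whose twist by $t$ on the cups past the second ray is exactly what makes intersections land on unions of cells rather than on messier subsets. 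So the bulk of the proof is already in place in the appendix; what remains for this proposition is to assemble it: conclude that the cellular boundary vanishes (all cells even-dimensional), read off $\dim H^*(\SOfiber)=\sum_{\ba}2^{|\mathcal R(\Gamma_\ba)\setminus\mathcal R(\Gamma_\ba)^{\mathrm{sp}}|}$, translate this count via the extension into $|C_\mathrm{KL}^{\le(k-1)/2}(m)|$, and finish with the hook-length computation above.
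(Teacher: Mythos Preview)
Your cell-counting setup matches the paper: the compatible decompositions of the $S_\ba$ (Lemmas~\ref{decomp_disjoint}--\ref{reduction_for_intersection}, Corollary~\ref{decreasing_intersection_and_cells}) assemble to a CW-structure on $\SOfiber$ with only even-dimensional cells, whence $\dim H^*(\SOfiber)=\sum_{\ba\in\cupdiags}2^{|\mathcal R(\Gamma_\ba)\setminus\mathcal R(\Gamma_\ba)^{\mathrm{sp}}|}$. The gap is in how you evaluate this sum. Routing through Proposition~\ref{prop:basis_general_case} is circular, as you yourself note. Iterating the extension lands in $\mathbb B^{2m-k,2m-k}_\mathrm{KL}$ on $2m-k$ vertices, not in $\mathbb B^{m,m}_\mathrm{KL}$, so you have not identified the count with low-degree cells of $\SOfiberequalsize$ on $m$ vertices. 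And your final combinatorics is incorrect: \cite[Proposition~3]{SW12} concerns \emph{undecorated} cup diagrams, whereas $C_\mathrm{KL}(m)$ carries markers --- for $m=4$, $i=1$ your formula gives $\binom{4}{1}-\binom{4}{0}=3$, but the list in Section~\ref{subsec:intro:cup_and_top_springer} shows four such diagrams --- and even granting your formula, the telescoping yields $|C_\mathrm{KL}^{\le l}(m)|=\binom{m}{l}$ rather than $\sum_{i\le l}\binom{m}{i}$, so the concluding sentence is a non sequitur.

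The paper instead evaluates the sum by induction on $m-k$, with the equal-row result \cite[Proposition~4.28]{ES12} as base. For the step, the one-step extension $\ba\mapsto\tilde\ba\in\mathbb B^{2m-k,k+2}_\mathrm{KL}$ (adding one vertex) preserves the exponent $|\mathcal R(\Gamma_\ba)\setminus\mathcal R(\Gamma_\ba)^{\mathrm{sp}}|$ because the new cup is always special; the diagrams in $\mathbb B^{2m-k,k+2}_\mathrm{KL}$ not in the image of the extension are exactly those with a ray at the last vertex, and deleting that ray gives an exponent-preserving bijection with $\mathbb B^{2(m-1)-k,k+2}_\mathrm{KL}$. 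Hence the sum for $(2m-k,k)$ equals that for $(2m-k,k+2)$ minus that for $(2(m-1)-k,k+2)$, which by induction and Pascal's rule is $\sum_{i=0}^{(k+1)/2}\binom{m+1}{i}-\sum_{i=0}^{(k+1)/2}\binom{m}{i}=\sum_{i=0}^{(k-1)/2}\binom{m}{i}$.
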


\begin{rem} \label{rem:dim_homology_equal-row}
In the special case $m=k$ we have $\dim H^*(\SOfiber)=\sum_{i=0}^{\frac{m-1}{2}}\binom{m}{i}=2^{m-1}$ as proven in \cite[Prop.\ 4.28]{ES12}.
\end{rem}

\begin{proof}
We have a disjoint union $\SOfiber = \bigcup S_{<\ba} \cap S_\ba$. We thus obtain a cell partition of $\SOfiber$ by using the cell partition from Corollary~\ref{decreasing_intersection_and_cells} for each of the $S_{<\ba} \cap S_\ba$. Hence, the dimension of $H^*(\SOfiber)$ can be calculated by counting the cells contained in $S_\ba\setminus S_{<\ba}$ for all $\ba\in\mathbb B^{2m-k,k}_\mathrm{KL}$ and taking their sum. By Corollary~\ref{decreasing_intersection_and_cells} the cells contained in $S_\ba\setminus S_{<\ba}$ correspond bijectively to the outer cups in $\ba$ which are not special. This yields
\begin{equation} \label{eq:dim_cohomology}
\dim H^*(\SOfiber)=\sum_{\ba\in\mathbb B^{2m-k,k}_\mathrm{KL}} 2^{\vert\mathcal R(\Gamma_\ba)\setminus\mathcal R(\Gamma_\ba)^{\rm sp}\vert}.
\end{equation} 

It remains to show that the right hand side of (\ref{eq:dim_cohomology}) equals $\sum_{i=0}^{\frac{k-1}{2}}\binom{m}{i}$. We prove this claim by induction on $m-k$, i.e.\ by induction on the number of rays in the cup diagrams contained in $\mathbb B^{2m-k,k}_\mathrm{KL}$. Note that the base $m-k=0$ of the induction (i.e.\ no ray or one ray; depending on whether $m$ is even or odd) is true by Remark~\ref{rem:dim_homology_equal-row}. 

Thus, we consider a partition $(2m-k,k)$, $m-k>1$. Firstly, note that we have 
\begin{equation} \label{eq:comparing_cells}
2^{\vert\mathcal R(\Gamma_\ba)\setminus\mathcal R(\Gamma_\ba)^{\rm sp}\vert}=2^{\vert\mathcal R(\Gamma_{\tilde{\ba}})\setminus\mathcal R(\Gamma_{\tilde{\ba}})^{\rm sp}\vert}
\end{equation}
for all $\ba\in\mathbb B^{2m-k,k}_\mathrm{KL}$, where $\tilde{\ba}\in\mathbb B^{2m-k,k+2}_\mathrm{KL}$ denotes the one-step extension of $\ba$ (note that the completed cup is always special). Secondly, the elements $\ba\in\mathbb B^{2m-k,k+2}_\mathrm{KL}\setminus\widetilde{\mathbb B}^{2m-k,k+2}_\mathrm{KL}$ are precisely the cup diagrams with an unmarked ray connected to the rightmost vertex $m+1$. Deleting this ray induces a bijection between $\mathbb B^{2m-k,k+2}_\mathrm{KL}\setminus\widetilde{\mathbb B}^{2m-k,k+2}_\mathrm{KL}$ and $\mathbb B^{2(m-1)-k,k+2}_\mathrm{KL}$ and we have 
\begin{equation} \label{eq:comparing_cells2}
2^{\vert\mathcal R(\Gamma_\ba)\setminus\mathcal R(\Gamma_\ba)^{\rm sp}\vert}=2^{\vert\mathcal R(\Gamma_{\bb})\setminus\mathcal R(\Gamma_{\bb})^{\rm sp}\vert},
\end{equation}
where $\bb$ is the diagram obtained from $\ba$ by deleting the rightmost ray. By using (\ref{eq:comparing_cells}) in the first and (\ref{eq:comparing_cells2}) in the third equality we compute 
\begin{align*}
\sum_{a\in\mathbb B^{2m-k,k}_\mathrm{KL}} 2^{\vert\mathcal R(\Gamma_\ba)\setminus\mathcal R(\Gamma_\ba)^{\rm sp}\vert} &= \sum_{\tilde{\ba}\in\widetilde{\mathbb B}^{2m-k,k+2}_\mathrm{KL}} 2^{\vert\mathcal R(\Gamma_{\tilde{\ba}})\setminus\mathcal R(\Gamma_{\tilde{\ba}})^{\rm sp}\vert} \\
		&= \sum_{\ba\in\mathbb B^{2m-k,k+2}_\mathrm{KL}} 2^{\vert\mathcal R(\Gamma_\ba)\setminus\mathcal R(\Gamma_\ba)^{\rm sp}\vert} - \sum_{\ba\in\mathbb B^{2m-k,k+2}_\mathrm{KL}\setminus\widetilde{\mathbb B}^{2m-k,k+2}_\mathrm{KL}} 2^{\vert\mathcal R(\Gamma_\ba)\setminus\mathcal R(\Gamma_\ba)^{\rm sp}\vert} \\
		&= \sum_{\ba\in\mathbb B^{2m-k,k+2}_\mathrm{KL}} 2^{\vert\mathcal R(\Gamma_\ba)\setminus\mathcal R(\Gamma_\ba)^{\rm sp}\vert} - \sum_{\ba\in\mathbb B^{2(m-1)-k,k+2}_\mathrm{KL}} 2^{\vert\mathcal R(\Gamma_\ba)\setminus\mathcal R(\Gamma_\ba)^{\rm sp}\vert}.
\end{align*}

By (\ref{eq:dim_cohomology}) the two terms in the above difference equal the respective dimension of cohomology which we know by induction. Thus, the above chain of equalities equals
\[
\sum_{i=0}^{\frac{k+1}{2}}\binom{m+1}{i} - \sum_{i=0}^{\frac{k+1}{2}}\binom{m}{i}=\sum_{i=1}^{\frac{k+1}{2}}\left(\binom{m+1}{i}-\binom{m}{i}\right) = \sum_{i=1}^{\frac{k+1}{2}}\binom{m}{i-1} = \sum_{i=0}^{\frac{k-1}{2}}\binom{m}{i},
\]
which proves the claim.
\end{proof}

%\begin{rem} \label{rem:bijection_and_counting}
%If $k\neq m$, $H^*(\SOfiber)$ has a basis indexed by all cup diagrams on $m$ vertices with an even number of markers and at most $\frac{k-1}{2}$ cups: in \cite[Lemma 5.20]{ES12}, the authors establish for $k\neq m$ a bijection between the set $\mathbb B^{2m-k,k}_\mathrm{KL}$ and the set of unordered pairs $\{\lambda,\mu\}$ of one-row Young tableaux, where $\lambda$ consists of $\frac{k-1}{2}$ and $\mu$ of $m-\frac{k-1}{2}$ dots, filled with the numbers $1,\ldots,m$ increasing in both rows. Evidently, this set has $\binom{m}{\frac{k-1}{2}}$ elements which is exactly the dimension of the cohomology by Proposition~\ref{prop:dim_formula}.
%\end{rem}

%\input{res/appendix.tex}

\bibliographystyle{amsalpha}
\bibliography{litlist}

\end{document}